\setlist[itemize,description]{leftmargin=*}
\theoremstyle{plain}
    \newtheorem{thm}{Theorem}[section]
    \newtheorem{prop}[thm]  {Proposition}
    \newtheorem{lem}[thm]   {Lemma}
    \newtheorem{cor}[thm]   {Corollary}
    \newtheorem{athm}{Theorem}
\theoremstyle{definition}
    \newtheorem{defn}[thm]  {Definition}
    \newtheorem{nota}[thm]  {Notation}
    \newtheorem{ex}[thm]    {Example}
\newcommand{\C}{\mathbb{C}} 
\newcommand{\fc}{\mathfrak{c}} 
\newcommand{\tfc}{\tilde{\fc}} 
\newcommand{\fe}{\mathfrak{e}} 
\newcommand{\bF}{\mathbb{F}} 
\newcommand{\cH}{\mathcal{H}} 
\newcommand{\fri}{\mathfrak{i}} 
\newcommand{\fM}{\mathfrak{M}}
\newcommand{\N}{\mathbb{N}} 
\newcommand{\bQ}{\mathbb{Q}} 
\newcommand{\R}{\mathbb{R}} 
\newcommand{\fr}{\mathfrak{r}} 
\newcommand{\bS}{\mathbb{S}} 
\newcommand{\fS}{\mathfrak{S}} 
\newcommand{\cS}{\mathcal{S}} 
\newcommand{\uU}{\underline{U}} 
\newcommand{\fU}{\mathfrak{U}} 
\newcommand{\cX}{\mathcal{X}} 
\newcommand{\ucX}{\underline{\cX}} 
\newcommand{\cY}{\mathcal{Y}} 
\newcommand{\ucY}{\underline{\cY}} 
\newcommand{\Z}{\mathbb{Z}} 
\DeclareSymbolFontAlphabet{\mathbb}{AMSb}
\DeclareSymbolFontAlphabet{\mathbbl}{bbold}
\newcommand{\one}{\mathbbl{1}} 
\newcommand{\cG}{\mathcal{G}} 
\newcommand{\Q}{\mathcal{Q}} 
\newcommand{\hQ}{\hat{\Q}} 
\newcommand{\PMQ}{\mathbf{PMQ}} 
\newcommand{\Top}{\mathbf{Top}} 
\newcommand{\PMQGrp}{\mathbf{PMQGrp}} 
\newcommand{\FQ}{\bF\bQ} 
\newcommand{\geo}{^{\mathrm{geo}}} 
\newcommand{\ab}{\mathbf{ab}} 
\newcommand{\gen}{f} 
\newcommand{\fg}{\mathfrak{g}} 
\newcommand{\Hur}{\mathrm{Hur}} 
\newcommand{\psiext}{\psi^{\mathrm{ext}}}
\newcommand{\psipext}{(\psi')^{\mathrm{ext}}}
\newcommand{\HurTQ}{\Hur(\fT;\Q)}%
\newcommand{\HurTQG}{\Hur(\fT;\Q,G)}%
\newcommand{\HurTQGp}{\Hur(\fT;\Q',G')}%
\newcommand{\HurTpQG}{\Hur(\fT';\Q,G)}%
\newcommand{\pr}{p} 
\newcommand{\hur}{\mathrm{hur}} 
\newcommand{\arr}{\mathrm{arr}} 
\newcommand{\Farr}{F^{\arr}} 
\newcommand{\fFarr}{\mathfrak{F}^{\arr}} 
\newcommand{\CmP}{\C\setminus P} 
\newcommand{\zleft}{z^{\mathrm{l}}} 
\newcommand{\zright}{z^{\mathrm{r}}} 
\newcommand{\arcleft}{\arc^{\mathrm{l}}} 
\newcommand{\arcright}{\arc^{\mathrm{r}}} 
\newcommand{\genleft}{\gen^{\mathrm{l}}} 
\newcommand{\genright}{\gen^{\mathrm{r}}} 
\newcommand{\Uleft}{U^{\mathrm{l}}} 
\newcommand{\Uright}{U^{\mathrm{r}}} 
\newcommand{\arc}{\zeta} 
\newcommand{\std}{\mathrm{std}} 
\newcommand{\totmon}{\omega} 
\newcommand{\fG}{\mathfrak{G}} 
\newcommand{\fQ}{\mathfrak{Q}} 
\newcommand{\fQext}{\fQ^{\mathrm{ext}}} 
\newcommand{\Hurext}{\Hur^{\mathrm{ext}}} 
\newcommand{\fT}{\mathfrak{C}} 
\newcommand{\ufT}{\underline{\fT}} 
\newcommand{\NC}{\mathbf{NC}} 
\newcommand{\LNC}{\mathbf{LNC}} 
\newcommand{\expl}{\mathscr{E}} 
\newcommand{\Ran}{\mathrm{Ran}} 
\newcommand{\cR}{\mathcal{R}} 
\newcommand{\mcR}{\mathring{\cR}} 
\newcommand{\bH}{\mathbb{H}} 
\newcommand{\bT}{\mathbb{T}} 
\newcommand{\bJ}{\mathbb{J}} 
\newcommand{\zcentre}{z_{\mathrm{c}}} 
\newcommand{\us}{\underline{s}} 
\newcommand{\ut}{\underline{t}} 
\newcommand{\bary}{\mathfrak{bar}} 
\newcommand{\ubar}{\underline{\bary}} 
\newcommand{\ua}{\underline{a}} 
\newcommand{\Arr}{\mathrm{Arr}} 
\newcommand{\NAdm}{\mathrm{NAdm}} 
\newcommand{\ndeg}{\mathrm{ndeg}} 
\newcommand{\hor}{\mathrm{hor}} 
\newcommand{\ver}{\mathrm{ver}} 
\newcommand{\del}{\partial} 
\newcommand{\bDelta}{\breve{\Delta}} 
\newcommand{\borel}{/\! /} 
\newcommand{\pa}[1]{\left(#1\right)}
\newcommand{\abs}[1]{\left|#1\right|}
\newcommand{\set}[1]{\left\{#1\right\}}
\renewcommand{\phi}{\varphi}
\renewcommand{\epsilon}{\varepsilon}
\DeclareMathOperator{\Id}{Id}
\DeclareMathOperator{\Aut}{Aut} 
\title[Hurwitz-Ran spaces]{Hurwitz-Ran spaces}
\author{Andrea Bianchi}
\thanks{
This work was partially supported by the \emph{Deutsche
  Forschungsgemeinschaft} (DFG, German Research Foundation) under Germany’s
Excellence Strategy (\texttt{EXC-2047/1}, \texttt{390685813}), by the
\emph{European Research Council} under the European Union’s Seventh Framework
Programme (\texttt{ERC StG 716424 - CASe}, PI Karim Adiprasito), and by the
\emph{Danish National Research Foundation} through the \emph{Copenhagen Centre for
Geometry and Topology} (\texttt{DNRF151}).
}
\email{anbi@math.ku.dk}
\address{Department of Mathematical Sciences, University of Copenhagen \newline
Universitetsparken 5, Copenhagen, 2100, Denmark}  
\date{\today}
\keywords{Quandle, partial monoid, Hurwitz space, Ran space, group actions, cell decompositions, homology manifolds.}
\subjclass[2020]{
18F60  
54B15  
55R80. 
}
\begin{document}
\begin{abstract}
Given a couple of subspaces $\mathcal{Y}\subset\mathcal{X}$
of the complex plane $\mathbb{C}$ satisfying some mild conditions (a ``nice couple''), and given a PMQ-pair $(\mathcal{Q},G)$, consisting of a partially multiplicative quandle (PMQ) $\mathcal{Q}$  and a group $G$,
we introduce a ``Hurwitz-Ran'' space $\mathrm{Hur}(\mathcal{X},\mathcal{Y};\mathcal{Q},G)$, containing configurations of points in $\mathcal{X}\setminus\mathcal{Y}$ and in $\mathcal{Y}$ with monodromies in $\mathcal{Q}$ and in $G$, respectively.
We further introduce a notion of morphisms between nice couples, and prove that Hurwitz-Ran spaces are functorial both in the nice couple and in the PMQ-group pair.
For a locally finite PMQ $\Q$ we prove a homeomorphism between $\Hur((0,1)^2;\Q_+)$ and the simplicial Hurwitz space $\Hur^{\Delta}(\Q)$, introduced in previous work of the author: this provides in particular $\Hur((0,1)^2;\Q_+)$ with a cell stratification in the spirit of Fox-Neuwirth and Fuchs.
\end{abstract}

\maketitle

\section{Introduction}
In \cite{Bianchi:Hur1} we introduced the notion of \emph{partially multiplicative quandle} (shortly, PMQ)
and for a PMQ $\Q$ we defined a \emph{simplicial Hurwitz space} $\Hur^{\Delta}(\Q)$:
the space $\Hur^{\Delta}(\Q)$ is the difference of the geometric realisations of two bisimplicial sets, and is thus equipped with a cell stratification. The construction requires $\Q$ to be \emph{augmented} as a PMQ (see \cite[Definition 4.9]{Bianchi:Hur1}).

As discussed in \cite[Section 6]{Bianchi:Hur1}, a point in $\Hur^{\Delta}(\Q)$
can be regarded as the datum of a finite subset $P$ of the open unit square $(0,1)^2\subset\C$, together with a \emph{monodromy} $\psi$, defined on certain loops of
$\CmP$ and taking values in $\Q$.

In this article we introduce, for a \emph{semi-algebraic} subspace
$\cX\subset\bH$ of the closed upper half-plane $\bH=\set{\Re\ge0}\subset\C$, and for a PMQ
$\Q$, a ``Hurwitz-Ran'' space $\Hur(\cX;\Q)$.
More generally we introduce, for a \emph{nice couple} $(\cX,\cY)$ of subspaces $\cY\subseteq\cX$ of $\bH$ (see Definition \ref{defn:nicecouple}) and for a PMQ-group pair $(\Q,G)$ (see \cite[Definition 2.15]{Bianchi:Hur1}), a Hurwitz-Ran space $\Hur(\cX,\cY;\Q,G)$.
The part ``Hurwitz'' in the name suggests that elements of $\Hur(\cX,\cY;\Q,G)$ are configurations of points in the plane with monodromy data, like in classical Hurwitz spaces; the par ``Ran'' refers to the topology, defined in such a way that ``collisions'' between points are allowed (at least under certain circumstances), like in classical Ran spaces.

One can think of $\Hur(\cX,\cY;\Q,G)$ as a relative version of $\Hur(\cX;\Q)$. In the classical theory of configuration spaces, points of $P$ disappear when they move inside the relative subspace $\cY$; in our setting they do not disappear, but are ``downgraded'' to points around which only a $G$-valued monodromy, instead of a $\Q$-valued monodromy, is defined. The Hurwitz-Ran construction has the advantage of being natural not only in the PMQ-group pair $(\Q,G)$, but also in the nice couple $(\cX,\cY)$: for nice couples $(\cX,\cY)$ and $(\cX',\cY')$, and for suitable maps $\xi\colon\C\to\C$ sending $\cX\to\cX'$ and $\cY\to\cY'$, we are able to define a corresponding map $\xi_*\colon\Hur(\cX,\cY;\Q,G)\to\Hur(\cX',\cY';\Q,G)$; most notably, we can do this in certain cases in which $\xi$ is \emph{not injective}, in particular it is \emph{not a homeomorphism} of $\C$.

\subsection{Statement of results}
We prove that the construction of $\Hur(\cX;\Q)$ is functorial both in $\Q$ and in $\cX$; as special cases we have the following:
\begin{itemize}
 \item a morphism of PMQs $\Psi\colon\Q\to\Q'$ induces a continuous map
$\Psi_*\colon \Hur(\cX;\Q)\to\Hur(\cX;\Q')$;
 \item a semi-algebraic homeomorphism $\xi\colon\C\to\C$ with compact support in $\bH$,
 sending $\cX$ inside $\cX'$, induces a continuous map $\xi_*\colon\Hur(\cX;\Q)\to\Hur(\cX';\Q)$.
\end{itemize}
In Definitions \ref{defn:mapnicecouples} and \ref{defn:laxmapnicecouples} we introduce \emph{morphisms} and \emph{lax morphisms} of nice couples: both types of morphisms arise as certain continuous maps $\C\to\C$.
The first, main result of the article is the following theorem, combining Theorems \ref{thm:funPMQgroup}, \ref{thm:funnicecouple} and \ref{thm:funlaxnicecouple}.
\begin{athm}
\label{thm:main1}
The Hurwitz-Ran spaces $\Hur(\cX,\cY;\Q,G)$ are functorial both in the nice couple $(\cX,\cY)$, with respect to morphisms of nice couples, and in the PMQ-group pair. If we restrict to complete PMQs, then functoriality holds also with respect to lax morphisms of nice couples.
\end{athm}

We recall that the simplicial Hurwitz space $\Hur^{\Delta}(\Q)$ was defined in \cite{Bianchi:Hur1} only when $\Q$ is an \emph{augmented} PMQ. For such a PMQ, we can set $\cX=(0,1)^2\subset\C$ and consider the Hurwitz-Ran space $\Hur((0,1)^2;\Q)$. Inside the latter, we let $\Hur((0,1)^2;\Q_+)$, be the subspace of configurations $(P,\psi)$ such that the local
monodromy around each point of $P$ lies in $\Q_+:=\Q\setminus\set{\one}$. The second main result of the article is the following theorem, which is Theorem \ref{thm:upsilonhomeo}.
\begin{athm}
 \label{thm:main2}
If $\Q$ is augmented and \emph{locally finite} (see \cite[Definition 4.12]{Bianchi:Hur1}), then the simplicial Hurwitz space $\Hur^{\Delta}(\Q)$ is homeomorphic to $\Hur((0,1)^2;\Q_+)$.
\end{athm}
Thus Hurwitz-Ran spaces generalise simplicial Hurwitz spaces from \cite{Bianchi:Hur1}.

In \cite[Definition 6.19]{Bianchi:Hur1} we also introduced the notion of \emph{Poincar\'e} PMQ: an augmented and locally finite PMQ $\Q$
is Poincar\'e if each connected component of $\Hur^{\Delta}(\Q)$ is a topological manifold.
More generally, for a commutative ring $R$, we introduce the notion
of $R$-Poincar\'e PMQ: each component of $\Hur^{\Delta}(\Q)$ is required to be an $R$-homology manifold (see Definition \ref{defn:RPoincare}).
An $R$-Poincar\'e PMQ $\Q$ has the following advantage:
the $R$-homology groups of a component of $\Hur^{\Delta}(\Q)$
are isomorphic by Poincar\'e-Lefschetz duality to the reduced $R$-cohomology groups of the one point compactification of that component;
this one point compactification is endowed with a finite cell decomposition à la Fox-Neuwirth-Fuchs \cite[Section 6]{Bianchi:Hur1} which can in principle be used for actual cohomology computations.

The third main result of the article is the following theorem,
combining Theorems \ref{thm:Poincare} and \ref{thm:RPoincare}, and
giving a criterion to recognise Poincar\'e and $R$-Poincar\'e PMQs.
Recall that a PMQ $\Q$ embeds into its completion $\hQ$, and $\hQ$ is strictly larger than $\Q$ unless $\Q$ is already complete.
Recall also from \cite[Theorem 6.14]{Bianchi:Hur1} that the connected components of $\Hur^{\Delta}(\Q)$ are in bijection with $\hQ$.
\begin{athm}
 \label{thm:main3}
In order to prove that $\Q$ is Poincar\'e (respectively, $R$-Poincar\'e), it suffices to check that
for all $a\in\Q$ the corresponding component $\Hur^{\Delta}(\Q)(a)$
of $\Hur^{\Delta}(\Q)$ is a topological manifold (respectively, an $R$-homology manifold).
\end{athm}

\subsection{Outline of the article}
In Section \ref{sec:fundamentalPMQ} we introduce the notion of nice couple $\fT=(\cX,\cY)$ of subspaces $\cY\subset\cX$ of the closed
upper half-plane $\bH$.
For each finite subset $P\subset\cX$, we introduce
the \emph{fundamental PMQ} $\fQ_{\fT}(P)$, arising as a subset of $\pi_1(\CmP)$,
which will allow us to define configurations in $\Hur(\fT;\Q,G)$ supported on the set $P$.
In order to define a convenient topology on the set
$\Hur(\fT;\Q,G)$, we introduce also the notion of \emph{covering} $\uU$
of a finite subset $P\subset\cX$, and associate several PMQs also with the datum of a finite set
and a covering of it.

In Section \ref{sec:defnHurPMQ} we 
define the \emph{Hurwitz-Ran space} $\Hur(\fT;\Q,G)$, for each nice couple $\fT$ and each PMQ-group pair $(\Q,G)$, first as a set and
then as a Hausdorff topological space (see Proposition \ref{prop:Hurtopology}). We also discuss
a variation of the definition, using a contractible subspace $\bT\subset\C$ as ``ambient space''
instead of the entire $\C$.

In Section \ref{sec:functoriality} we introduce \emph{morphisms} and \emph{lax morphisms} of nice couples, and we prove Theorem \ref{thm:main1}.

In Section \ref{sec:applfunctoriality} we give some applications of functoriality
of Hurwitz-Ran spaces, in particular we study some local properties of their topology.
We also specialise Hurwitz-Ran spaces $\Hur(\cX,\cY;\Q,G)$ to the case $\cY=\emptyset$: in this case the resulting space only depends on $\Q$ and not on $G$, and we denote it $\Hur(\cX;\Q)$.

In Section \ref{sec:totmongroupactions} we introduce the \emph{total monodromy}, which is a discrete,
continuous invariant of configurations in Hurwitz-Ran spaces. The total monodromy is a map $\omega\colon\Hur(\fT;\Q,G)\to G$ in the relative case, and $\hat\totmon\colon\Hur(\cX;\Q)\to\hQ$ in the absolute case,
where $\hQ$ is the completion of $\Q$.
We also define three actions
of $G$ on Hurwitz-Ran spaces: the first is the action on $\Hur(\fT;\Q,G)$ \emph{by global conjugation};
the other two actions are only defined on certain subspaces of $\Hur(\fT;\Q,G)$, and leverage the possibility
of changing the local monodromy around a single point $z$ in the support of a configuration, provided that $z$ is the leftmost or rightmost point in the support.

In Section \ref{sec:augmented} we introduce, in the hypothesis that $\Q$ is \emph{augmented},
a subspace $\Hur(\fT;\Q_+,G)$ of $\Hur(\fT;\Q,G)$; using the notion of \emph{explosion}
we prove that the inclusion $\Hur(\fT;\Q_+,G)\subseteq\Hur(\fT;\Q,G)$ is in several cases a homotopy equivalence.

In Section \ref{sec:celldecomposition}, for an augmented PMQ $\Q$, we construct a continuous bijection
$\upsilon\colon|\Arr(\Q)|\to \Hur([0,1]^2;\hQ_+)$, and prove Theorem \ref{thm:main2}: here $\hQ$ is the completion of $\Q$, and
$\Arr(\Q)$ is the bisimplicial set from \cite[Definition 6.6]{Bianchi:Hur1}, whose geometric realisation
contains $\Hur^{\Delta}(\Q)$ as an open subspace.

In Section \ref{sec:Poincare} we prove that $\upsilon\colon\Hur^{\Delta}(\Q)\to\Hur((0,1)^2;\Q_+)$
is a homeomorphism under the additional hypothesis that $\Q$ is a \emph{locally finite} PMQ.
We then move our focus to \emph{Poincar\'e} PMQs and prove Theorem \ref{thm:main3}.

Finally, Appendix \ref{sec:deferred} contains the proofs of the most technical lemmas and propositions of the article; these proofs have been deferred to help the reader focus on the general framework.
Throughout the article we make heavy use of the results of \cite[Sections 2-6]{Bianchi:Hur1}: we cite every time which specific fact we are employing, so that the reader does not need to be familiar with the details of \cite{Bianchi:Hur1}.

\subsection{Motivation}
This is the second article in a series about Hurwitz spaces.
Our motivation to define Hurwitz-Ran spaces is two-fold.
\begin{itemize}
 \item The standard setting for Hurwitz spaces takes as input an integer $k\ge0$ and conjugacy-invariant subset $\Q$ in a group $G$, and gives a space $\hur_k(\Q)$ of moduli of branched $G$-covers of the unit square $(0,1)^2$ with exactly $k$ branch points and local monodromies in $\Q$; see for instance \cite{EVW:homstabhur}. The disjoint union $\hur(\Q):=\coprod_{k\ge0}\hur_k(\Q)$ carries a natural $E_1$-algebra structure, and one can try to extract ``stable'' information about a particular space $\hur_k(\Q)$ from the group-completion $\Omega B\hur(\Q)$.
 As we will see in a subsequent article \cite{Bianchi:Hur3}, Hurwitz-Ran spaces are well-suited for modeling the homotopy type of $B\hur(\Q)$, and provide, in a certain sense, even a double delooping ``$B^2(\hur(\Q))$'' of $\hur(Q)$, as if the latter were an $E_2$-algebra.
 \item The connection between Hurwitz spaces and moduli spaces of Riemann surfaces is well-known, and by using the family of PMQs $\fS_d\geo$ from \cite[Section 7]{Bianchi:Hur1}, we will see in a subsequent article \cite{Bianchi:Hur4} that one can in fact model the homotopy type of moduli spaces $\fM_{g,n}$ as components of Hurwitz-Ran spaces; here $\fM_{g,n}$ denotes
the moduli space of Riemann surfaces of genus $g\ge0$ with $n\ge1$ ordered and parametrised boundary components.
This connection
leads to an alternative proof of the Mumford conjecture on the stable rational cohomology of moduli spaces $\fM_{g,n}$, originally proved by Madsen and Weiss \cite{MadsenWeiss}.
\end{itemize}

\subsection{Acknowledgments}
This series of articles is a generalisation
and a further development of my PhD thesis \cite{BianchiPhD}. I am grateful to
my PhD supervisor
Carl-Friedrich B\"odigheimer,
Bastiaan Cnossen,
Florian Kranhold,
Martin Palmer and
Nathalie Wahl
for helpful comments and mathematical explanations related to this article.
I also thank the anonymous referee for suggesting a new, more appealing name for the article and for several comments that helped improve the exposition.

\tableofcontents
\section{Groups and PMQs from configurations in the plane}
\label{sec:fundamentalPMQ}
In this section we introduce nice couples of subspaces of $\C$ and use them to associate several PMQs to a finite configuration $P$ of points in $\C$.
\subsection{Nice couples}
\label{subsec:nicecouples}
\begin{nota}
 \label{nota:bH}
We endow the complex plane $\C$ with the basepoint $*$ corresponding
to the complex number $-\sqrt{-1}$, which is contained in the \emph{lower} half-plane.
The \emph{closed, upper} half-plane is $\bH:=\set{z\in\C\,|\,\Im(z)\ge0}$.
\end{nota}

\begin{defn}
\label{defn:semialgebraic}
A subset $\bJ\subseteq\C$ is \emph{semi-algebraic} if it can be expressed as a finite union
of subsets $\bJ_1,\dots,\bJ_r\subset\C$, such that each $\bJ_i$
is defined by a finite system of polynomial equalities
and (weak or strict) inequalities in the real, affine coordinates $\Re(z)$ and $\Im(z)$ of $\C$.
Given two semi-algebraic sets $\bJ,\bJ'\subseteq\C$, a continuous map $\xi\colon\bJ\to\bJ'$
is \emph{semi-algebraic} if $\bJ$ can be expressed as a finite union
of semi-algebraic subsets $\bJ_1,\dots,\bJ_r\subset\bJ$, and the coordinates of $\xi|_{\bJ_i}$ are expressed,
in the real affine coordinates of $\bJ_i$, by fractions of polynomials with real coefficients.
\end{defn}
Note that a semi-algebraic subset $\bJ\subset\C$ can be written locally as a finite union of subsets of $\C$
that are diffeomorphic to points, open segments and open triangles; finite unions and intersections
of semi-algebraic subsets are again semi-algebraic.
\begin{defn}
 \label{defn:nicecouple}
A \emph{nice couple} $\fT=(\cX,\cY)$
is a couple of subspaces $\emptyset\subseteq\cY\subseteq\cX\subseteq\bH$, such that the following
properties hold:
\begin{itemize}
 \item $\cX$ and $\cY$ are semi-algebraic; 
 \item $\cY$ is closed in $\cX$.
\end{itemize}
By abuse of notation, for $\cX\subset\bH$ we will denote by $\cX$ also the nice couple $(\cX,\emptyset)$.
\end{defn}

\subsection{Configurations and coverings}
We fix a nice couple $\fT=(\cX,\cY)$ for the rest of the section, and
consider finite configurations of points in $\cX\subset\C$.
\begin{nota}
 \label{nota:P}
We will often denote by $P=\set{z_1,\dots,z_k}\subset\cX$ a finite collection of distinct points,
for some $k\geq 0$. We will assume that there is $0\leq l\leq k$ such that $z_1,\dots,z_l$ are precisely the points
of $P$ lying in $\cX\setminus \cY$.
\end{nota}
\begin{defn}
\label{defn:uUcovering}
Let $P$ as in Notation \ref{nota:P}. A \emph{covering} of $P$ is a sequence $\uU=(U_1,\dots,U_\kappa)$ of convex, semi-algebraic, disjoint open subsets of $\C\setminus\set{*}$, satisfying the following conditions:
\begin{itemize}
 \item $P$ is contained in the union $U_1\cup\dots\cup U_\kappa$;
 \item each $U_i$ intersects $P$ at least in one point;
 \item the closures $\bar{U}_i\subseteq\C$ of the sets $U_i$ are disjoint, compact and do not contain $*$.
\end{itemize}
A covering of $P$ is \emph{adapted} (to $P$) if the following additional properties hold:
\begin{itemize}
 \item $\kappa=k$, and each $U_i$ contains exactly one point of $P$;
 \item for all $1\leq i\leq l$, i.e. for all $i$ such that $z_i\notin\cY$, if $z_i\in U_j$ then the closure $\bar{U}_j$ is disjoint from $\cY$.
\end{itemize}
\end{defn}
Since $\cY$ is closed in $\cX$, each finite $P\subset\cX$ admits some adapted covering.
Note that if $\uU$ is an adapted covering of $P$, then the inclusion $\C\setminus\uU\hookrightarrow\CmP$ is
a homotopy equivalence. See Figure \ref{fig:uUcovering}.
\begin{figure}[ht]
 \begin{tikzpicture}[scale=4,decoration={markings,mark=at position 0.38 with {\arrow{>}}}]
  \draw[dashed,->] (-.1,0) to (1.1,0);
  \draw[dashed,->] (0,-.2) to (0,1.1);
  \fill[black, opacity=.2, looseness=.8] (.1,.1) to[out=180, in=-90] (-.1,.3) to[out=90,in=-100] (.4,.9) to[out=80,in=-180]
  (.8,1) to[out=0,in=0] (.8,0) to[out=180,in=0] (.4,0) to[out=180,in=0] (.1,.1);
  \fill[black, opacity=.3, looseness=.8] (.8,0) to[out=180,in=-100] (.4,.9) to[out=80,in=-180]
  (.8,1) to[out=0,in=0] (.8,0);
  \fill[pattern=vertical lines, opacity=.6, rotate around={60:(.6,.3)}] (.6,.3) ellipse (.4cm and .1cm);
  \fill[pattern=vertical lines, opacity=.6, rotate around={45:(.4,.5)}] (.4,.5) ellipse (.33cm and .09cm);
  \fill[pattern=vertical lines, opacity=.6] (.6,.9) ellipse (.12cm);
  \node at (.2,.3){$\bullet$};\node at (.27,.3){\tiny$z_1$};
  \node at (.47,.1){$\bullet$};\node at (.53,.1){\tiny$z_2$};
  \node at (.67,.5){$\bullet$};\node at(.73,.5){\tiny$z_3$};
  \node at (.57,.9){$\bullet$};\node at (.63,.9){\tiny$z_4$};
  \node at (.3,.9){$\cX$};  \node at (.9,.2){$\cY$};
  \node at (.77,.94){\tiny$U'_1$};
  \node at (.3,.6){\tiny$U'_2$};
  \node at (.7,.3){\tiny$U'_3$};
\begin{scope}[shift={(1.5,0)}]
  \draw[dashed,->] (-.1,0) to (1.1,0);
  \draw[dashed,->] (0,-.2) to (0,1.1);
  \fill[black, opacity=.2, looseness=.8] (.1,.1) to[out=180, in=-90] (-.1,.3) to[out=90,in=-100] (.4,.9) to[out=80,in=-180]
  (.8,1) to[out=0,in=0] (.8,0) to[out=180,in=0] (.4,0) to[out=180,in=0] (.1,.1);
  \fill[black, opacity=.3, looseness=.8] (.8,0) to[out=180,in=-100] (.4,.9) to[out=80,in=-180]
  (.8,1) to[out=0,in=0] (.8,0);
  \fill[pattern=vertical lines, opacity=.6, rotate around={30:(.6,.45)}] (.6,.45) ellipse (.2cm and .1cm);
  \fill[pattern=vertical lines, opacity=.6] (.47,.0) ellipse (.1cm and .2cm);
  \fill[pattern=vertical lines, opacity=.6, rotate around={120:(.2,.3)}] (.2,.3) ellipse (.2cm and .1cm);
  \fill[pattern=vertical lines, opacity=.6] (.6,.9) ellipse (.12cm);
  \node at (.2,.3){$\bullet$};\node at (.27,.3){\tiny$z_1$};
  \node at (.47,.1){$\bullet$};\node at (.53,.1){\tiny$z_2$};
  \node at (.67,.5){$\bullet$};\node at(.73,.5){\tiny$z_3$};
  \node at (.57,.9){$\bullet$};\node at (.63,.9){\tiny$z_4$};
  \node at (.3,.9){$\cX$};  \node at (.9,.2){$\cY$};
  \node at (.77,.94){\tiny$U_4$};
  \node at (.5,-.1){\tiny$U_2$};
  \node at (.2,.2){\tiny$U_1$};
  \node at (.7,.32){\tiny$U_3$};
\end{scope}
 \end{tikzpicture}
 \caption{On left, a nice couple $\fT=(\cX,\cY)$, and a covering $\uU'$ of a configuration $P\subset\cX$;
 on right, an adapted covering $\uU$ of $P$.}
 \label{fig:uUcovering}
\end{figure}

\begin{nota}
 \label{nota:uUcovering}
Let $\uU=(U_1,\dots,U_\kappa)$ be a covering of $P$ as in Definition \ref{defn:uUcovering}.
By abuse of notation we denote also by $\uU$ the union $U_1\cup\dots\cup U_\kappa\subset\C$.
We assume that there is $0\leq \lambda\leq \kappa$ such that $U_1,\dots,U_\lambda$ are precisely
the open sets of $\uU$ with $\bar{U}_i$ disjoint from $\cY$.
If $\uU$ is an adapted covering of $P$, using Notation \ref{nota:P},
we also assume $z_i\in U_i$ for $1\leq i\leq k$.
\end{nota}
The notion of covering is classically used to give a topology to the Ran space $\Ran(\cX)$, as we also recall in Subsection \ref{subsec:Ranspaces}.
Intuitively, a perturbation of a configuration $P$ will be a new configuration $P'$ obtained by slightly moving the points of $P$ and by splitting some points $z_i\in P$ in two or more points of $P'$;
all these splittings occur inside an adapted covering $\uU$, which is also a covering of $P'$.

\subsection{Fundamental group and admissible generating sets}
\label{subsec:admgenset}
\label{subsec:fundamentalgroup}
\begin{defn}
 \label{defn:fGP}
 Let $P$ be as in Notation \ref{nota:P}. The \emph{fundamental group of }$P$,
 denoted $\fG(P)$, is by definition the fundamental group of the complement of $P$ in the plane:
 \[
  \fG(P):=\pi_1(\CmP,*).
 \]
\end{defn}
 For $P$ as in Notation \ref{nota:P}, the group $\fG(P)$ is a free group on $k$ generators:
 in the following we construct an explicit set of free generators for $\fG(P)$. See Figure \ref{fig:admgenset}.
 
\begin{figure}[ht]
 \begin{tikzpicture}[scale=4,decoration={markings,mark=at position 0.38 with {\arrow{>}}}]
  \draw[dashed,->] (-.1,0) to (1.1,0);
  \draw[dashed,->] (0,-1.1) to (0,1.1);
  \node at (0,-1) {$*$};
  \fill[black, opacity=.2, looseness=.8] (.1,.1) to[out=180, in=-90] (-.1,.3) to[out=90,in=-100] (.4,.9) to[out=80,in=-180]
  (.8,1) to[out=0,in=0] (.8,0) to[out=180,in=0] (.4,0) to[out=180,in=0] (.1,.1);
  \fill[black, opacity=.3, looseness=.8] (.8,0) to[out=180,in=-100] (.4,.9) to[out=80,in=-180]
  (.8,1) to[out=0,in=0] (.8,0);
  \node at (.17,.3){$\bullet$};\node at (.23,.3){\tiny$z_1$};  \draw (.2,.3) ellipse (.1cm);
  \node at (.46,.1){$\bullet$};\node at (.52,.1){\tiny$z_2$};  \draw (.5,.1) ellipse (.07cm);
  \node at (.67,.5){$\bullet$};\node at (.73,.5){\tiny$z_3$};  \draw (.7,.5) ellipse (.1cm);
  \node at (.57,.9){$\bullet$};\node at (.63,.9){\tiny$z_4$};  \draw (.6,.9) ellipse (.1cm);
  \node at (.3,.9){$\cX$};  \node at (.93,.2){$\cY$};
  \draw (0,-1) to[out=80,in=-90] node{\tiny$\arc_2$} (.5,.03);
  \draw (0,-1) to[out=50,in=-90] node{\tiny$\arc_3$} (.7,.4);
  \draw (0,-1) to[out=10,in=-10] node{\tiny$\arc_4$}(.7,.9);
  \draw (0,-1) to[out=20,in=0] (.6,.7) node{\tiny$\arc_1$} to[out=180,in=60] (.2,.4);
\begin{scope}[shift={(1.3,0)}]
  \draw[dashed,->] (-.1,0) to (1.1,0);
  \draw[dashed,->] (0,-1.1) to (0,1.1);
  \node at (0,-1) {$*$};
  \fill[black, opacity=.2, looseness=.8] (.1,.1) to[out=180, in=-90] (-.1,.3) to[out=90,in=-100] (.4,.9) to[out=80,in=-180]
  (.8,1) to[out=0,in=0] (.8,0) to[out=180,in=0] (.4,0) to[out=180,in=0] (.1,.1);
  \fill[black, opacity=.3, looseness=.8] (.8,0) to[out=180,in=-100] (.4,.9) to[out=80,in=-180]
  (.8,1) to[out=0,in=0] (.8,0);
  \node at (.17,.3){$\bullet$};\node at (.23,.3){\tiny$z_1$};
  \node at (.46,.1){$\bullet$};\node at (.52,.1){\tiny$z_2$};
  \node at (.67,.5){$\bullet$};\node at (.73,.5){\tiny$z_3$};
  \node at (.57,.9){$\bullet$};\node at (.63,.9){\tiny$z_4$};
  \node at (.3,.9){$\cX$};  \node at (.93,.2){$\cY$};
  \draw[thin, looseness=1.3, postaction={decorate}] (0,-1) to[out=80,in=-90] node[left]{\tiny$\gen_2$} (.42,.05) to[out=90,in=-90] (.34,.12)  to[out=90,in=90] (.57,.12)  to[out=-90,in=90] (.47,.05) to[out=-90,in=75] (0,-1);
  \draw[thin, looseness=1.2, postaction={decorate}] (0,-1) to[out=60,in=-90] node[right]{\tiny$\gen_3$} (.65,.45)
  to[out=90,in=-90] (.55,.5) to[out=90,in=90] (.77,.5) to[out=-90,in=90] (.68,.45) to[out=-90,in=55]  (0,-1);
  \draw[thin, looseness=1.2, postaction={decorate}] (0,-1) to[out=10,in=0] (.7,.85) to[out=180,in=0] (.6,.77) node[right]{\tiny$\gen_4$}
  to[out=180,in=180] (.58,.96) to[out=0,in=180] (.7,.88) to[out=0,in=5] (0,-1);
  \draw[thin, looseness=1.2, postaction={decorate}] (0,-1) to[out=25,in=0] (.6,.65) node{\tiny$\gen_1$} to[out=180,in=90] (.18,.38)
  to[out=-90,in=90] (.25,.3) to[out=-90,in=-90] (.05,.3) to[out=90,in=-90] (.15,.4) to[out=90,in=180] (.6,.7)  to[out=0,in=20] (0,-1);
\end{scope}
 \end{tikzpicture}
 \caption{On left, a nice couple $\fT=(\cX,\cY)$, a configuration $P\subset\cX$, the boundary curves
 of an adapted covering $\uU$ of $P$ and a choice of
 arcs $\arc_i$; on right, the loops representing the corresponding admissible generating set of $\fG(P)$.
}
 \label{fig:admgenset}
\end{figure}

 We choose an adapted covering $\uU$ of $P$, and use Notation \ref{nota:uUcovering}.
 The boundary curves of $\bar{U}_1,\dots,\bar{U}_k$ are denoted by
 $\partial U_1,\dots, \partial U_k$ respectively,
 and are oriented clockwise. We also choose embedded
 arcs $\arc_1,\dots,\arc_k$ joining the basepoint $*$ with the curves $\partial U_1,\dots,\partial U_k$. We assume the following:
 \begin{itemize}
  \item  for all $1\leq i\leq k$, the arc $\arc_i$ has an endpoint at $*$, and the other endpoint on
  $\partial U_i$;
  \item there is no other intersection point between two arcs $\arc_i,\arc_j$ or between an arc $\arc_i$ and a boundary curve $\del U_j$, for $1\le i,j\le k$.
 \end{itemize}
 For $1\leq i\leq k$
 let $\gen_i\in\fG(P)$ be the element represented by a loop that begins at
 $*$, runs along $\arc_i$ until it reaches the intersection with $\partial U_i$, spins clockwise around $\partial U_i$
 and runs back to $*$ along $\arc_i$.
 Then $\gen_1,\dots,\gen_k$ exhibit $\fG(P)$ as a free group on $k$ generators.
\begin{defn}
  \label{defn:admgenset}
  Let $P$ be as in Notation \ref{nota:P}. A set of generators $\gen_1,\dots,\gen_k$ of $\fG(P)$
  obtained as described above is called an \emph{admissible generating set}.
\end{defn}

\subsection{Fundamental PMQ}
\label{subsec:fundamentalPMQ}
In the following we introduce several PMQs arising as subsets of $\fG(P)$, for $P$ as in Notation \ref{nota:P}.
Recall that a conjugacy class in $\fG(P)$ corresponds to a free (i.e. unbased) homotopy
class of maps $S^1\to\CmP$.
\begin{defn}
\label{defn:fQP}
 Let $P$ be as in Notation \ref{nota:P}.
 For all $1\leq i\leq l$ we denote by $\fQ(P,z_i)\subset\fG(P)$ the conjugacy class
 corresponding to a small (unbased) simple closed curve that spins once, clockwise, around $z_i$;
 we define
 \[
 \fQ(P)=\fQ_{\fT}(P):=\set{\one}\cup\bigcup_{1\leq i\leq l} \fQ(P,z_i)\subset \fG(P),
 \]
 and call it the \emph{fundamental PMQ} of $P$ relative to the nice couple $\fT$. We consider on $\fQ(P)$ the PMQ structure inherited
 from $\fG(P)$ (see \cite[Definition 2.8]{Bianchi:Hur1}).
\end{defn}
Note that a \emph{based} loop representing a class in $\fQ(P)$ may intersect essentially $\cY$.
\begin{figure}[ht]
 \begin{tikzpicture}[scale=4,decoration={markings,mark=at position 0.38 with {\arrow{>}}}]
  \draw[dashed,->] (-.1,0) to (1.1,0);
  \draw[dashed,->] (0,-1.1) to (0,1.1);
  \node at (0,-1) {$*$};
  \fill[black, opacity=.2, looseness=.8] (.1,.1) to[out=180, in=-90] (-.1,.3) to[out=90,in=-100] (.4,.9) to[out=80,in=-180]
  (.8,1) to[out=0,in=0] (.8,0) to[out=180,in=0] (.4,0) to[out=180,in=0] (.1,.1);
  \fill[black, opacity=.3, looseness=.8] (.8,0) to[out=180,in=-100] (.4,.9) to[out=80,in=-180]
  (.8,1) to[out=0,in=0] (.8,0);
  \node at (.17,.3){$\bullet$};\node at (.23,.3){\tiny$z_1$};
  \node at (.46,.1){$\bullet$};\node at (.52,.1){\tiny$z_2$};
  \node at (.67,.5){$\bullet$};\node at (.73,.5){\tiny$z_3$};
  \node at (.57,.9){$\bullet$};\node at (.63,.9){\tiny$z_4$};
  \node at (.3,.9){$\cX$};  \node at (.85,.2){$\cY$};
  \draw[thin, looseness=1.3, postaction={decorate}] (0,-1) to[out=80,in=-90]  (.42,-.05) to[out=90,in=-90] (.34,.12)  to[out=90,in=90] (.57,.12)  to[out=-90,in=90] (.47,-.05) to[out=-90,in=75] (0,-1);
  \draw[thin, looseness=1.3, postaction={decorate}, dotted] (.34,.1)  to[out=90,in=90] (.57,.1)  to[out=-90,in=-90] (.34,.1) ;
  \draw[thin, looseness=1.2, postaction={decorate}] (0,-1) to[out=25,in=0] (.6,.65) to[out=180,in=180] (.6,1.05) to[out=0,in=0]
  (.6,.55) to[out=180,in=90] (.18,.38)
  to[out=-90,in=90] (.28,.3) to[out=-90,in=-90] (.05,.3) to[out=90,in=-90] (.15,.4) to[out=90,in=180] (.6,.6) to[out=0,in=0]
  (.6,1) to[out=180,in=180] (.6,.7)  to[out=0,in=20] (0,-1);
\draw[thin, looseness=1.3, postaction={decorate}, dotted] (.1,.3)  to[out=90,in=90] (.3,.3) to[out=-90,in=-90] (.1,.3) ;
  \begin{scope}[shift={(1.3,0)}]
  \draw[dashed,->] (-.1,0) to (1.1,0);
  \draw[dashed,->] (0,-1.1) to (0,1.1);
  \node at (0,-1) {$*$};
  \fill[black, opacity=.2, looseness=.8] (.1,.1) to[out=180, in=-90] (-.1,.3) to[out=90,in=-100] (.4,.9) to[out=80,in=-180]
  (.8,1) to[out=0,in=0] (.8,0) to[out=180,in=0] (.4,0) to[out=180,in=0] (.1,.1);
  \fill[black, opacity=.3, looseness=.8] (.8,0) to[out=180,in=-100] (.4,.9) to[out=80,in=-180]
  (.8,1) to[out=0,in=0] (.8,0);
  \node at (.17,.3){$\bullet$};\node at (.23,.3){\tiny$z_1$};
  \node at (.46,.1){$\bullet$};\node at (.4,.1){\tiny$z_2$};
  \node at (.67,.5){$\bullet$};\node at (.73,.5){\tiny$z_3$};
  \node at (.57,.9){$\bullet$};\node at (.63,.9){\tiny$z_4$};
  \node at (.3,.9){$\cX$};  \node at (.85,.2){$\cY$};
  \draw[thin, looseness=1.2, postaction={decorate}] (0,-1) to[out=25,in=0] (.6,.6) to[out=180,in=90] (.5,.1) to[out=-90,in=-90] (.1,.3) to[out=90,in=-180] (.6,.65) to[out=0,in=20] (0,-1);   
  \draw[thin, looseness=1.2, postaction={decorate}, dotted] (.4,.4)
  to[out=-50,in=90] (.5,.12) to[out=-90,in=-90] (.12,.3) to[out=90,in=130] (.4,.4);   
  \end{scope}
\end{tikzpicture}
\caption{On left, two elements in $\fQ(P)$, lying in $\fQ(P,z_1)$ and $\fQ(P,z_2)$. On right, an element lying in $\fQext(P)$ but not in $\fQ(P)$.}
\label{fig:fQandfQext}
\end{figure}
Let $P$ be as in Notation \ref{nota:P}, and fix an admissible generating set $\gen_1,\dots,\gen_k$ of $\fG(P)$ (see Definition \ref{defn:admgenset}): then the elements of
$\fQ(P)$ are precisely $\one$ and all conjugates in $\fG(P)$ of the elements $\gen_1,\dots,\gen_l$.
In particular the group isomorphism $\fG(P)\cong\bF^k$, given by choosing an admissible generating set,
restricts to a bijection $\fQ(P)\cong\FQ^k_l$ (see \cite[Definition 3.2]{Bianchi:Hur1}), and the
hypotheses required by \cite[Definition 2.8]{Bianchi:Hur1} are fulfilled.
The partial product of $\fQ(P)$ is trivial, and $(\fQ(P),\fG(P))$ is a PMQ-group pair. See Figure \ref{fig:fQandfQext}, left, for examples of elements in $\fQ(P)$.

\subsection{Extended fundamental PMQ}
\label{subsec:extendedfundamentalPMQ}
We extend Definition \ref{defn:fQP} by considering more general simple closed curves.
\begin{defn}
 \label{defn:fQextP}
Let $P$ be as in Notation \ref{nota:P}.
We denote by $\fQext(P)=\fQext_{\fT}(P)\subset\fG(P)$ the union of all conjugacy classes corresponding to (unbased) oriented simple
closed curves $\beta\subset\C\setminus\cY$, such that $\beta$ spins clockwise and $\beta$ bounds
a disc contained in $\C\setminus\cY$. We consider on $\fQext(P)$ the PMQ structure inherited from $\fG(P)$,
and call it the \emph{extended fundamental PMQ} of $P$ relative to the nice couple $\fT$.
\end{defn}
Note that there is an inclusion of sets $\fQ(P)\subseteq\fQext(P)$. See Figure \ref{fig:fQandfQext}, right,
for a non-trivial example of an element in $\fQext(P)$.
The fact that the hypotheses of \cite[Definition 2.8]{Bianchi:Hur1} are fulfilled by $\fQext(P)\subseteq\fG(P)$
needs some explanation: this is contained in the following two propositions. In the following, recall from \cite[Definition 3.5]{Bianchi:Hur1} that a \emph{decomposition} of an element
$\fg\in\fQext(P)$ \emph{with respect to }$\fQ(P)$ is a sequence
$(\fg_1,\dots,\fg_r)$ of elements of $\fQ(P)$ whose product $\fg_1\dots\fg_r$, computed in $\fG(P)$, is equal to $\fg$.

\begin{prop}
 \label{prop:fQgeneratesfQext} 
 Let $P$ be as in Notation \ref{nota:P}.
 The set $\fQext(P)$ is generated under partial product by $\fQ(P)$, i.e.,
 every element $\fg$ of $\fQext(P)$ admits a decomposition $(\fg_1,\dots,\fg_r)$
 with respect to $\fQ(P)$.
\end{prop}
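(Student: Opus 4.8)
The plan is to argue geometrically: I would realise $\fg$ by a simple closed curve, examine the disc it bounds, and decompose $\fg$ as an ordered product of lassos around the points of $P$ enclosed by that curve. First I record a reduction. Since $\fQ(P)$ is, by Definition \ref{defn:fQP}, the union of $\one$ with a set of conjugacy classes of $\fG(P)$, it is invariant under conjugation, and the same holds for $\fQext(P)$ by Definition \ref{defn:fQextP}. Hence conjugation by any $h\in\fG(P)$ carries a decomposition $(\fg_1,\dots,\fg_r)$ of $\fg$ with respect to $\fQ(P)$ (in the sense of \cite[Definition 3.5]{Bianchi:Hur1}) to a decomposition $(h\fg_1h^{-1},\dots,h\fg_rh^{-1})$ of $h\fg h^{-1}$, with all intermediate partial products conjugated accordingly and so still lying in $\fQext(P)$. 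It therefore suffices to decompose one convenient based representative of the conjugacy class determined by $\fg$.

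Next I set up the geometry. Let $\beta$ be a simple closed curve as in Definition \ref{defn:fQextP} representing the conjugacy class of $\fg$, let $D$ be the open disc it bounds, and recall $D\subset\C\setminus\cY$. Since $\beta$ is disjoint from $P$, every point of $P$ in the closed disc lies in $D$; as $P\subset\cX$ and $D\cap\cY=\emptyset$, these points lie in $\cX\setminus\cY$, so they are among $z_1,\dots,z_l$, say $z_{i_1},\dots,z_{i_m}$. If $m=0$ then $\fg=\one\in\fQ(P)$ and there is nothing to prove, so assume $m\ge1$. Inside $\bar D$ I would choose $m$ pairwise disjoint embedded arcs (a ``comb'') joining a fixed point of $\beta$ to the $m$ points, inducing a linear order $z_{i_1},\dots,z_{i_m}$ and a free basis of $\pi_1(\bar D\setminus\{z_{i_1},\dots,z_{i_m}\})$ by clockwise lassos; concatenating with an arc $\alpha$ from $*$ to the chosen point of $\beta$ turns these into based loops $\fg_1,\dots,\fg_m$ in $\fG(P)$.

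By the standard description of the fundamental group of a punctured disc, $\fg=\fg_1\cdots\fg_m$ for this choice of representative, which by the first paragraph is all that is needed. Each $\fg_j$ is a clockwise lasso around the single point $z_{i_j}\in\cX\setminus\cY$, hence lies in $\fQ(P,z_{i_j})\subset\fQ(P)$. Moreover, for every $1\le s\le m$ the partial product $\fg_1\cdots\fg_s$ is represented by a simple closed curve bounding the sub-disc of $D$ cut off by the comb and enclosing exactly $z_{i_1},\dots,z_{i_s}$; this sub-disc lies in $D\subset\C\setminus\cY$, so $\fg_1\cdots\fg_s\in\fQext(P)$ and every intermediate partial product is defined. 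Thus $(\fg_1,\dots,\fg_m)$ is the required decomposition.

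The step I expect to demand the most care is the basepoint and orientation bookkeeping behind the equation $\fg=\fg_1\cdots\fg_m$: one must route the comb and the arc $\alpha$ so that the geometric concatenation of lassos represents the group product in $\fG(P)$ and so that each $\fg_j$ lands in the clockwise class $\fQ(P,z_{i_j})$ rather than its inverse. Keeping every intermediate sub-disc inside $\C\setminus\cY$ — which is exactly what legitimises the partial products in $\fQext(P)$ — relies crucially on $D$ being disjoint from the closed set $\cY$, leaving room to draw the comb and to perturb the bounding curves off $\cY$.
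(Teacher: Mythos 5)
Your proposal is correct and follows essentially the same route as the paper's proof: represent (a conjugate of) $\fg$ by a simple closed curve bounding a disc $D\subset\C\setminus\cY$, choose arcs inside $D$ so that the resulting clockwise lassos around the enclosed points of $P\setminus\cY$ form part of an admissible generating set with product $\fg$, and transport the decomposition back by conjugation using that $\fQ(P)$ and $\fQext(P)$ are unions of conjugacy classes. Your extra check that the intermediate partial products bound sub-discs in $\C\setminus\cY$ is a welcome (though for this statement not strictly required) addition that anticipates Proposition \ref{prop:fQextwelldefined}.
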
 
\begin{prop}
 \label{prop:fQextwelldefined}
 Let $P$ be as in Notation \ref{nota:P}. Then $\fQext(P)\subset\fG(P)$ satisfies
 the hypotheses of \cite[Definition 2.8]{Bianchi:Hur1}, and hence inherits a structure
 of PMQ; as a consequence $(\fQext(P),\fG(P))$ is a PMQ-group pair.
\end{prop}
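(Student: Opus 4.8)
The plan is to check, in turn, the hypotheses that \cite[Definition 2.8]{Bianchi:Hur1} places on a subset of a group for it to inherit a PMQ structure, isolating the conditions on the partial product as the only ones with real content.

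The set-theoretic conditions are immediate. Since $\fQ(P)\subseteq\fQext(P)$, the neutral element $\one$ belongs to $\fQext(P)$; moreover every element of $\fQext(P)\setminus\set{\one}$ encloses at least one point of $\set{z_1,\dots,z_l}$ and hence has strictly positive total winding number around $P$, so $\one$ is the unique element of $\fQext(P)$ of winding number $0$. By construction $\fQext(P)$ is a union of conjugacy classes of $\fG(P)$, so it is invariant under conjugation by all of $\fG(P)$; in particular the conjugation operation $(a,b)\mapsto b^{-1}ab$ of $\fG(P)$, together with its inverse, restricts to $\fQext(P)$, and the quandle axioms for $\fQext(P)$ are inherited verbatim from $\fG(P)$.

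It remains to treat the partial product, which I would define as the restriction of the multiplication of $\fG(P)$: for $a,b\in\fQext(P)$ the product $a\cdot b$ is declared defined exactly when $ab\in\fQext(P)$, and then $a\cdot b:=ab$. Unitality is clear; associativity as an equality in $\fG(P)$ is automatic; and the two axioms relating $\cdot$ to the quandle operation hold automatically, being identities in the ambient group (for instance $(ab)^{-1}c(ab)=b^{-1}(a^{-1}ca)b$, and conjugation distributes over products). The one axiom carrying genuine content is the partial associativity of the \emph{domain}: if $a\cdot b$ and $(a\cdot b)\cdot c$ are defined, one must show that $b\cdot c$ is defined as well, i.e. that $bc\in\fQext(P)$. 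Here I would invoke Proposition \ref{prop:fQgeneratesfQext}: every element of $\fQext(P)$ decomposes into elements of $\fQ(P)$ in the sense of \cite[Definition 3.5]{Bianchi:Hur1}, so the hypotheses $ab,abc\in\fQext(P)$ translate into statements about products of conjugates of the generators $\gen_1,\dots,\gen_l$, whose geometric meaning --- disjoint embedded discs in $\C\setminus\cY$ being amalgamated into a single embedded disc in $\C\setminus\cY$ --- is explicit.

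I expect this domain compatibility to be the main obstacle, and to require a genuinely geometric argument rather than pure group theory. Concretely, given simple closed curves realising $a$, $b$, $c$, the product $ab$ and the product $abc$ as boundaries of embedded discs in $\C\setminus\cY$, one must construct an embedded disc in $\C\setminus\cY$ witnessing $bc\in\fQext(P)$; I would do this by choosing the representing curves to be nested and adjacent with respect to the arcs and basepoint defining the admissible generating set, then carving the desired sub-disc out of the disc bounded by the curve representing $abc$ --- which lies in $\C\setminus\cY$, so every sub-disc does too --- and reading off the resulting conjugacy class. Once the PMQ axioms are verified, the final assertion is formal: the inclusion $\fQext(P)\hookrightarrow\fG(P)$ serves as the structure map, $\fG(P)$ acts on $\fQext(P)$ by conjugation, and $\cdot$ is the restriction of the group multiplication, so $(\fQext(P),\fG(P))$ is a PMQ-group pair in the sense of \cite[Definition 2.15]{Bianchi:Hur1}, exactly as for $(\fQ(P),\fG(P))$.
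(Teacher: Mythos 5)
You correctly locate the substance of the proposition in the compatibility of the domain of the partial product, and your reduction via Proposition \ref{prop:fQgeneratesfQext} to decompositions into elements of $\fQ(P)$ matches the paper's first step. However, the condition you set out to verify is weaker than what is needed: as the proposition is proved and later used (e.g.\ in Lemma \ref{lem:fQPUinherits}), the hypothesis of \cite[Definition 2.8]{Bianchi:Hur1} requires that for \emph{any} decomposition $\fg=\fg_1\dots\fg_\rho$ of an element $\fg\in\fQext(P)$ with all $\fg_i\in\fQext(P)$ --- with no assumption that any intermediate product already lies in $\fQext(P)$ --- every consecutive sub-product $\fg_i\dots\fg_j$ lies in $\fQext(P)$. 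Your formulation takes $ab\in\fQext(P)$ as part of the hypothesis, which is not available in general.

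More seriously, the geometric core of your argument is asserted rather than proved. After decomposing the factors into conjugates of generators, you propose to ``choose the representing curves to be nested and adjacent'' inside the disc $D$ bounded by the simple curve representing the total product, and to carve out a sub-disc. But the decomposition you obtain is an arbitrary tuple of conjugates of the $\gen_i$ whose product happens to be a simple-loop class; there is no a priori reason these factors are simultaneously representable by disjoint, adjacently ordered simple loops in $D$ --- that is precisely the statement to be established, so your choice begs the question. The paper closes this gap with two ingredients missing from your sketch: any two decompositions of $\fg$ with respect to $\fQ(P)$ are related by standard (Hurwitz) moves (\cite[Proposition 3.7]{Bianchi:Hur1}), and each such move on an admissible generating set of $\pi_1\pa{D\setminus P,*}$ is realised by a homeomorphism of $D$ fixing $\del D$ pointwise and $D\cap P$ setwise. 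Together these show that $(\fg_1,\dots,\fg_r)$ is itself an admissible generating set of $\pi_1\pa{D\setminus P,*}$ in standard order, whence every consecutive product is represented by a simple loop in $D\setminus P\subset\C\setminus\cY$. (The reduction of a general $\fg$ to one represented by a simple loop, via conjugation, also needs to be said, though it is routine.)
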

The proofs of Propositions \ref{prop:fQgeneratesfQext} and \ref{prop:fQextwelldefined} are in Subsections \ref{subsec:fQgeneratesfQext} and \ref{subsec:fQextwelldefined} of the appendix, respectively.

It follows that the inclusion $\fQ(P)\subseteq\fQext(P)$ is a map of PMQs, and
the inclusion $(\fQ(P),\fG(P))\subset(\fQext(P),\fG(P))$ is a map of PMQ-group pairs. In the following we study the problem
of extending over $\fQext(P)$ maps of PMQs defined over $\fQ(P)$.
\begin{defn}
\label{defn:fQextPpsi}
Let $P$ be as in Notation \ref{nota:P}, let $\Q$ be a PMQ and let $\psi\colon\fQ(P)\to\Q$ be a map of PMQs.
Let $\fg\in\fQext(P)$ and let $(\fg_1,\dots,\fg_r)$ be a decomposition of $\fg$ with respect to $\fQ(P)$.
We say that $\psi$ \emph{can be extended over} $\fg$ if the product $\psi(\fg_1)\dots\psi(\fg_r)$ is defined in $\Q$.
We denote by $\fQext(P)_{\psi}=\fQext_{\fT}(P)_{\psi}\subseteq\fQext(P)$ the subset containing all elements $\fg$ over which $\psi$ can be extended.
\end{defn}
Some comments on Definition \ref{defn:fQextPpsi} are needed.
For $\fg\in\fQext(P)$, the existence of a decomposition $(\fg_1,\dots,\fg_r)$ of $\fg$ with respect to $\fQ(P)$
is granted by Proposition \ref{prop:fQgeneratesfQext}. This decomposition is in general not unique;
nevertheless, by \cite[Proposition 3.7]{Bianchi:Hur1},
if $(\fg'_1,\dots,\fg'_r)$ is another decomposition of $\fg$ with respect to $\fQ(P)$, then the two decompositions
are connected by a sequence of standard moves (see \cite[Definition 3.6]{Bianchi:Hur1}).
Since $\psi$ is a map of PMQs, we obtain that the sequence $(\psi(\fg_1),\dots,\psi(\fg_r))$ of elements of $\Q$
can be transformed into the sequence $(\psi(\fg'_1),\dots,\psi(\fg'_r))$ by a sequence of standard moves;
it is then a direct consequence of the definition of PMQ,
that the product $\psi(\fg_1)\dots\psi(\fg_r)$ is defined if and only
if the product $\psi(\fg'_1)\dots\psi(\fg'_r)$ is defined, and if both products are defined then
they are equal to each other.
This shows that, whether $\psi $ can be extended over $\fg$, only depends on the element $\fg$ but not on the
decomposition $(\fg_1,\dots,\fg_r)$ of $\fg$ with respect to $\fQ(P)$; moreover
the assignment $\fg\mapsto \psi(\fg_1)\dots\psi(\fg_r)$ gives a well-defined 
map of sets $\psiext\colon\fQext(P)_{\psi}\to\Q$, which extends the map $\psi\colon\fQ(P)\to\Q$.
\begin{prop}
\label{prop:fQextPpsi}
The subset $\fQext(P)_{\psi}\subseteq\fG(P)$ satisfies the requirements
of \cite[Definition 2.8]{Bianchi:Hur1}, and therefore $\fQext(P)_{\psi}$ inherits a structure of PMQ.
The map $\psiext\colon\fQext(P)_{\psi}\to\Q$ is the unique map of PMQs
$\fQext(P)_{\psi}\to\Q$ restricting to $\psi\colon\fQ(P)\to\Q$ on $\fQ(P)$.
\end{prop}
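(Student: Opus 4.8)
The plan is to verify that $\fQext(P)_\psi$ fulfils the hypotheses of \cite[Definition 2.8]{Bianchi:Hur1} — that, inside $\fG(P)$, it is closed under the operations (conjugation, and the partial product given by group multiplication whenever the result lands back in the subset) by which a subset of a group becomes a PMQ — and then to extract from the very same computations that $\psiext$ is a morphism of PMQs. Throughout I would use two facts already in hand: that $\fQ(P)$ and $\fQext(P)$ are unions of conjugacy classes of $\fG(P)$, hence closed under conjugation by all of $\fG(P)$; and that, by the discussion after Definition \ref{defn:fQextPpsi}, the set-map $\psiext$ is well defined independently of the chosen decomposition. The two easy containments come first: $\one$ and every element of $\fQ(P)$ lie in $\fQext(P)_\psi$, witnessed by their length-$\le 1$ decompositions, so $\fQ(P)\subseteq\fQext(P)_\psi$ and $\psiext$ restricts to $\psi$.

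The core step is closure under the quandle operation, proved simultaneously with the quandle-map property. Let $a,b\in\fQext(P)_\psi$ have decompositions $(a_1,\dots,a_s)$ and $(b_1,\dots,b_r)$ over $\fQ(P)$. Since conjugation by $a$ is an automorphism of $\fG(P)$ preserving $\fQ(P)$, the termwise conjugate $(ab_1a^{-1},\dots,ab_ra^{-1})$ is a decomposition of $aba^{-1}$ over $\fQ(P)$. Writing $a=a_1\cdots a_s$ and conjugating $b_i$ step by step — each intermediate element remaining in $\fQ(P)$ because the latter is conjugation-closed — the quandle-map property of $\psi$ together with the PMQ identity $(x_1\cdots x_s)\triangleright y=x_1\triangleright(\cdots\triangleright(x_s\triangleright y))$ in $\Q$ (applicable since $\psi(a_1)\cdots\psi(a_s)=\psiext(a)$ is defined) gives $\psi(ab_ia^{-1})=\psiext(a)\triangleright\psi(b_i)$ for each $i$. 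The PMQ axiom stating that conjugation distributes over defined products then shows that $(\psiext(a)\triangleright\psi(b_1))\cdots(\psiext(a)\triangleright\psi(b_r))$ is defined — because $\psi(b_1)\cdots\psi(b_r)=\psiext(b)$ is — and equals $\psiext(a)\triangleright\psiext(b)$. Hence $aba^{-1}\in\fQext(P)_\psi$ with $\psiext(aba^{-1})=\psiext(a)\triangleright\psiext(b)$; the identical argument applied to the inverse conjugates $a^{-1}b_ia$, using the analogous identity for $\triangleright^{-1}$, handles the inverse quandle operation. Thus $\fQext(P)_\psi$ is a subquandle and $\psiext$ is a quandle map.

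For the partial product I would argue as follows. If $a,b\in\fQext(P)_\psi$ and $a\cdot b$ is defined in $\fQext(P)$, then concatenating decompositions yields a decomposition $(a_1,\dots,a_s,b_1,\dots,b_r)$ of $ab$; by the coherence of partial products in $\Q$, the total product $\psi(a_1)\cdots\psi(a_s)\psi(b_1)\cdots\psi(b_r)$ is defined if and only if $\psiext(a)\cdot\psiext(b)$ is defined, and then equals it. Consequently $ab\in\fQext(P)_\psi$ precisely when $\psiext(a)\cdot\psiext(b)$ is defined, and then $\psiext(ab)=\psiext(a)\psiext(b)$. Taking as partial product on $\fQext(P)_\psi$ the restriction that keeps only those products of $\fQext(P)$ landing back in $\fQext(P)_\psi$ makes it closed by construction and completes the verification of \cite[Definition 2.8]{Bianchi:Hur1}, while the displayed identity says exactly that $\psiext$ preserves defined products; combined with the previous paragraph, $\psiext$ is a morphism of PMQs. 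Uniqueness is then immediate: by Proposition \ref{prop:fQgeneratesfQext} every $\fg\in\fQext(P)_\psi$ is a defined product $\fg_1\cdots\fg_r$ of elements of $\fQ(P)$ inside $\fQext(P)_\psi$ (its initial segments again lying in $\fQext(P)_\psi$, since initial segments of defined products are defined), so any PMQ morphism extending $\psi$ must send $\fg$ to $\psi(\fg_1)\cdots\psi(\fg_r)=\psiext(\fg)$.

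The main obstacle is the bookkeeping in the second step: one must invoke precisely the right compatibility axioms of $\Q$ — the identity turning iterated conjugation into conjugation by a single product, and the distributivity of conjugation over a defined product — and must guarantee that each intermediate expression is genuinely well defined before the factors are multiplied. This is where the earlier decomposition-independence of $\psiext$ is essential: it ensures that the quantities $\psiext(a)\triangleright\psi(b_i)$ do not depend on the auxiliary choices, so that the final products computed along different decompositions of $aba^{-1}$ or of $ab$ agree.
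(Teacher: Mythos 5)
Your proposal is correct and follows essentially the same route as the paper: reduce everything to decompositions over $\fQ(P)$ via Proposition \ref{prop:fQgeneratesfQext}, use the decomposition-independence of $\psiext$, verify the partial-product property by concatenating refined decompositions and using that consecutive sub-products of a defined product in $\Q$ are defined, verify the quandle property by termwise conjugation together with the chain of identities turning iterated conjugation by the $a_i$ into conjugation by $\psiext(a)$, and deduce uniqueness from generation by $\fQ(P)$. The only point where the paper is more explicit is the closure requirement of [Definition 2.8] for decompositions $\fg=\fg_1\cdots\fg_\rho$ of arbitrary length rather than just two factors --- there one must first invoke Proposition \ref{prop:fQextwelldefined} to know that each intermediate product $\fg_i\cdots\fg_j$ lies in $\fQext(P)$ before checking that $\psi$ extends over it --- but your concatenation-and-coherence argument applies to that general case verbatim.
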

The proof of Proposition \ref{prop:fQextPpsi} is in Subsection \ref{subsec:fQextPpsi} of the appendix.
As a consequence of Proposition \ref{prop:fQextPpsi}, $(\fQext(P)_{\psi},\fG(P))$ is naturally
a PMQ-group pair.

\subsection{PMQs from coverings}
\label{subsec:coverings}
We extend the previous definitions by replacing a configuration of points
$P$ with a configuration of open, convex sets $\uU$ in $\C$.

\begin{defn}
\label{defn:fGUfQPU}
Let $\uU$ be a covering of $P$ (see Definition \ref{defn:uUcovering}).
We define $\fG(\uU)$ as the group $\pi_1\pa{\C\setminus\uU,*}$,
and call it the \emph{fundamental group} of $\uU$.

We use Notation \ref{nota:uUcovering} and define $\fQ(\uU)\subset\fG(\uU)$ as the union of $\set{\one}$
and the $\lambda$ conjugacy classes corresponding to the simple closed curves $\partial U_1,\dots,\partial U_\lambda$, oriented
clockwise. Similarly as in Definition \ref{defn:fQP}, we consider on $\fQ(\uU)$ the PMQ structure
inherited from $\fG(\uU)$. The PMQ $\fQ(\uU)$ is called the \emph{fundamental PMQ} of $\uU$.

Finally, we define $\fQ(P,\uU)=\fQ_{\fT}(P,\uU)\subset\fQext(P)$
as the union of all conjugacy classes in $\fG(P)$ represented by simple closed curves $\beta$
which are oriented clockwise and are contained in one of the regions $U_i\setminus P\subset\C\setminus\cY$,
for some $1\leq i\leq \lambda$.
The set $\fQ(P,\uU)$ is called the \emph{relative fundamental PMQ} of $P$ with respect to $\uU$.
\end{defn}
Note that, for $\uU$ as in Notation \ref{nota:uUcovering}, $\fG(\uU)$ is a free group on $\kappa$ generators,
and an admissible generating set $\gen_1,\dots,\gen_\kappa$ can be constructed in the same way as in
Subsection \ref{subsec:fundamentalgroup} to give an isomorphism $\fG(\uU)\cong\bF^\kappa$.
By the same arguments used in Subsection \ref{subsec:fundamentalPMQ}, the previous identification
restricts to an identification $\fQ(\uU)\cong\FQ^\kappa_\lambda$, and therefore, analogously
as in the case of $\fQ(P)\subseteq\fG(P)$, the set $\fQ(\uU)$ inherits from $\fG(\uU)$ a structure of PMQ,
and $(\fQ(\uU),\fG(\uU))$ is a PMQ-group pair.

\begin{lem}
 \label{lem:fQPUinherits}
Using the notation above, the set $\fQ(P,\uU)\subset\fG(P)$ inherits a structure of PMQ
from $\fG(P)$ in the sense of  \cite[Definition 2.8]{Bianchi:Hur1}, and thus
$(\fQ(P,\uU),\fG(P))$ is a PMQ-group pair.
\end{lem}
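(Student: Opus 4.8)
The plan is to verify the hypotheses of \cite[Definition 2.8]{Bianchi:Hur1} for $\fQ(P,\uU)\subseteq\fG(P)$ directly, leveraging the fact that $\fQext(P)$ is already known to be a PMQ (Proposition \ref{prop:fQextwelldefined}) and the region-wise structure imposed by the disjoint convex sets $U_1,\dots,U_\lambda$. First I would dispose of the quandle axioms, which are immediate: by its very definition $\fQ(P,\uU)$ is a union of conjugacy classes of $\fG(P)$, and it contains $\one$, since a small null-homotopic simple closed curve drawn inside any $U_i\setminus P$ with $1\le i\le\lambda$ represents the unit and lies, up to free homotopy, in $U_i\setminus P$. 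Being a union of conjugacy classes, $\fQ(P,\uU)$ is invariant under conjugation by all of $\fG(P)$, so in particular the conjugation quandle operation $\fg\triangleright\fg'=\fg\fg'\fg^{-1}$ restricts to a well-defined quandle operation on it. What genuinely remains is the partial product, which is the one inherited from $\fG(P)$: the product $\fg\cdot\fg'$ is declared defined precisely when $\fg\fg'$ again lies in $\fQ(P,\uU)$, and one should note that this is \emph{strictly smaller} than the restriction of the partial product of $\fQext(P)$, so $\fQ(P,\uU)$ is not a naive sub-PMQ of $\fQext(P)$ and the verification cannot be outsourced wholesale to Proposition \ref{prop:fQextwelldefined}.

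The key geometric observation is that $\fQ(P,\uU)$ splits along the regions. For $1\le i\le\lambda$ let $\fQ(P,\uU)_i\subseteq\fQ(P,\uU)$ be the conjugacy classes represented by simple closed curves lying, up to free homotopy, in $U_i\setminus P$. Since the closures $\bar U_i$ are disjoint, such a curve encloses only points of $P\cap U_i$; as $\bar U_i$ is moreover disjoint from $\cY$, these points all lie in $\cX\setminus\cY$, and by convexity of $U_i$ the curve bounds a disc inside $U_i\subseteq\C\setminus\cY$ (this reproves the inclusion $\fQ(P,\uU)\subseteq\fQext(P)$). The pieces $\fQ(P,\uU)_i$ pairwise intersect only in $\one$. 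I would then establish that the partial product respects this splitting: if $\fg\in\fQ(P,\uU)_i$ and $\fg'\in\fQ(P,\uU)_j$ are non-unit with $i\neq j$, then $\fg\fg'$ is never freely homotopic into a single region, since a representing simple closed curve would have to enclose points of both $P\cap U_i$ and $P\cap U_j$, forcing a disc that meets two disjoint convex sets; hence $\fg\cdot\fg'$ is undefined in $\fQ(P,\uU)$. Conversely, a defined product of two elements of $\fQ(P,\uU)_i$ encloses a subset of $P\cap U_i$, and the only region that can contain that subset is $U_i$ itself, so the product lies back in $\fQ(P,\uU)_i$. Thus $\fQ(P,\uU)$ is the union, glued at the unit, of the pieces $\fQ(P,\uU)_i$, with all cross-terms of non-unit elements undefined.

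To conclude, I would argue that each piece $\fQ(P,\uU)_i$ is itself a PMQ, by running inside the convex region $U_i$ the very arguments used for $\fQext(P)$: the analogue of Proposition \ref{prop:fQgeneratesfQext}, applied to the sub-configuration $P\cap U_i$, shows that every element of $\fQ(P,\uU)_i$ decomposes as a partial product of the small clockwise loops around the individual points of $P\cap U_i$ (which lie in $\fQ(P)\cap\fQ(P,\uU)$), and the analogue of Proposition \ref{prop:fQextwelldefined}, using that any two such decompositions are connected by standard moves \cite[Proposition 3.7]{Bianchi:Hur1}, then yields that the inherited partial product satisfies the partial-monoid and compatibility axioms. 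Granting this, the hypotheses of \cite[Definition 2.8]{Bianchi:Hur1} for $\fQ(P,\uU)$ follow formally: unit and quandle axioms were checked above, and every instance of the associativity and compatibility axioms involving two distinct regions is vacuous because all the relevant products are undefined, while every instance internal to a single region holds since $\fQ(P,\uU)_i$ is a PMQ. Since $\fQ(P,\uU)$ is a union of conjugacy classes on which $\fG(P)$ acts by conjugation, the pair $(\fQ(P,\uU),\fG(P))$ is then a PMQ-group pair.

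The main obstacle I anticipate is the partial-monoid \emph{associativity} for the inherited product: unlike the quandle axioms, it is sensitive to exactly which products are declared defined, and for an arbitrary union of conjugacy classes it can genuinely fail. What rescues the argument here is precisely the disjointness and convexity of $U_1,\dots,U_\lambda$ together with their disjointness from $\cY$, which force the clean region-wise splitting above and reduce every non-vacuous instance to the single-region case. Consequently the bulk of the remaining work is the careful verification, inside one convex $U_i$, of the generation and standard-move statements; but these are verbatim the arguments proving Propositions \ref{prop:fQgeneratesfQext} and \ref{prop:fQextwelldefined}, now applied to $P\cap U_i$, so I expect no essentially new difficulty beyond this bookkeeping.
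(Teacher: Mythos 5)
Your proposal is correct and follows the same strategy as the paper: split $\fQ(P,\uU)$ region by region, observe that the pieces meet only in $\one$ and that no non-trivial interaction between distinct regions can occur, and then reduce the single-region case to the machinery already developed for extended fundamental PMQs. The paper streamlines the two places where you anticipate real work. First, instead of re-running the proofs of Propositions \ref{prop:fQgeneratesfQext} and \ref{prop:fQextwelldefined} inside each convex $U_i$ (which would require relating based loops in $\CmP$ that are freely homotopic into $U_i$ to a fundamental group computed inside $U_i$), it introduces the auxiliary nice couple $\fT_i=(\bH,\bH\setminus U_i)$ and identifies your piece $\fQ(P,\uU)_i$ with $\fQext_{\fT_i}(P)$; Proposition \ref{prop:fQextwelldefined} then applies verbatim, with no new geometric argument. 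Second, where you rule out cross-terms only for binary products, the paper uses the abelianisation $\ab\colon\fG(P)\to\Z^k$: every element of $\fQ(P,\uU)$ has a $\set{0,1}$-valued image supported on the coordinates of $P\cap U_i$ for a single $i$, and since these vectors add with no cancellation, \emph{any} decomposition $\fg=\fg_1\cdots\fg_r$ with all factors in $\fQ(P,\uU)$ and $\fg\in\fQ_{\fT_i}(P)\setminus\set{\one}$ forces all factors into the same $\fQ_{\fT_i}(P)$ (and all factors equal to $\one$ when $\fg=\one$). This $r$-fold statement is what the inheritance condition of \cite[Definition 2.8]{Bianchi:Hur1} actually requires, and your pairwise "cross-products are undefined" observation does not formally cover it: a priori a long product could leave and re-enter a region even though no intermediate binary product is defined. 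Your geometric enclosure argument does extend to arbitrary $r$ (it is the same homological counting), so this is a presentational gap rather than a fatal one, but you should state and use the $r$-fold version explicitly.
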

\begin{proof}
We have to check that if $\fg=\fg_1\dots\fg_r$ is a decomposition of $\fg\in\fQ(P,\uU)$
with all factors $\fg_j\in\fQ(P,\uU)$, then for each $1\le j\le j'\le r$ the product $\fg_j\dots\fg_{j'}$ also lies in $\fQ_{\fT}(P,\uU)$. Let $\gen_1,\dots,\gen_k$ be an admissible generating set for $\fG(P)$, and 
consider the abelianisation map $\ab\colon\fG(P)\to\fG(P)^{\ab}\cong\Z^k$, where the latter isomorphism is given by the basis $\ab(\gen_1),\dots,\ab(\gen_k)$.
For each $1\le i\le \lambda$ consider the nice couple $\fT_i=(\bH,\bH\setminus U_i)$. Then
$\fQ_{\fT}(P,\uU)\subset\fG(P)$ can be identified with the subset $\bigcup_{i=1}^\lambda\fQext_{\fT_i}(P)\subset\fG(P)$.
Moreover $\ab(\fg)$ is a vector with all entries equal to 0 or 1, as $\fg$ is in the conjugacy class represented by a simple closed curve spinning clockwise; if $\fg=\one$,
then all entries are zero, and if $\fg\in\fQext_{\fT_i}(P)\setminus\set{\one}$ for some $1\le i\le \lambda$ then at least one entry
is equal to 1, and all entries equal to 1 correspond to generators $\gen_j$
spinning around points of $P\cap U_i$.
Since $\ab(\fg)=\ab(\fg_1)+\dots\ab(\fg_r)$ is a sum with no cancelation in any coordinate, we conclude that 
there is a unique $1\le i\le \lambda$ such that $\fg_1,\dots,\fg_r\in\fQext_{\fT_i}(P)$ (such $\lambda$ is unique unless all $\fg_j=\one$).
We can now apply Proposition \ref{prop:fQextwelldefined} to $\fQext_{\fT_i}(P)$
to show that, for all $1\le j\le j'\le r$ the product $\fg_j\dots\fg_{j'}$ also lies in $\fQext_{\fT_i}(P)$,
and hence in $\fQ_{\fT}(P,\uU)$.
\end{proof}

We conclude the subsection by analysing which inclusions hold between the groups and PMQs introduced so far.
The inclusion $\C\setminus\uU\subset\CmP$ induces an injection of PMQ-group pairs $(\fQ_\fT(\uU),\fG(\uU))\subseteq(\fQ_\fT(P,\uU),\fG(P))$.

On the other hand we have a chain of inclusions $\fQ_\fT(P,\uU)\subset\fQext_\fT(P)\subset\fG(P)$; observe that for a generic covering $\uU$ we do not have an inclusion $\fQ_\fT(P)\subset\fQ_\fT(P,\uU)$;
yet Proposition \ref{prop:fQgeneratesfQext} specialises to the fact
that $\fQ(P,\uU)$ is generated by $\fQ(P)\cap\fQ(P,\uU)$ under partial multiplication.

\subsection{Maps induced by forgetting points}
\begin{nota}
\label{nota:fri}
Let $\fT=(\cX,\cY)$ be a nice couple and let $P\subseteq P'\subset\cX$. We denote by
$\fri^{P'}_P\colon\fG(P')\to\fG(P)$ the map induced by the inclusion $\CmP'\subseteq\CmP$.
\end{nota}
 
Note that $\fri^{P'}_P$ restricts to maps $\fQ(P')\to\fQ(P)$ and $\fQext(P')\to\fQext(P)$.
To see this, let $[\gamma]\in\fQ(P')$ (respectively $[\gamma]\in\fQext(P')$)
be represented by a loop $\gamma$ which is freely homotopic in $\CmP'$ to a simple curve $\beta\subset\C\setminus(P'\cup\cY)$ spinning clockwise around at most one point (respectively, some points) of $P'\setminus\cY$;
then the same properties hold for $\fri^{P'}_P([\gamma])$ in $\fQ(P)$ (respectively, in $\fQext(P)$).

\section{Hurwitz-Ran spaces with monodromies in a PMQ-group pair}
\label{sec:defnHurPMQ}
In this section we define, for a PMQ-group pair $(\Q,G)$
and a nice couple $\fT=(\cX,\cY)$ as in Definition \ref{defn:nicecouple}, the
Hurwitz-Ran space $\Hur(\fT;\Q,G)$, containing configurations of points in $\fT$ with monodromies in $(\Q,G)$.

Throughout the section we fix a PMQ-group pair $(\Q,G)=(\Q,G,\fe,\fr)$
as in \cite[Definition 2.15]{Bianchi:Hur1}
and a nice couple $\fT=(\cX,\cY)$ as in Definition \ref{defn:nicecouple}.

\subsection{Ran spaces}\label{subsec:Ranspaces}
We recall the definition and the main properties of the Ran space $\Ran(\cX)$, focusing on the case
of a connected subspace $\cX\subset\bH$. We use \cite[Subsection 5.5.1]{LurieHA} as main reference.
\begin{defn}
 \label{defn:Ran}
 Let $\cX\subset\bH$ be a subspace. We define $\Ran(\cX)$ as the set of all finite subsets
 $P\subset\cX$, including $\emptyset$; we denote by $\Ran_+(\cX)$ the set $\Ran(\cX)\setminus\set{\emptyset}$.
 
 We define a topology on $\Ran(X)$. For $P\in\Ran(\cX)$ and $\uU$ an adapted covering of $P$ with respect
 to the nice couple $(\cX,\emptyset)$ (see Definition \ref{defn:uUcovering}), we let
 $\fU(P,\uU)=\fU_{\cX}(P,\uU)\subset\Ran(\cX)$
 be the subset of all $P'\in\Ran(\cX)$ satisfying the following:
 \begin{itemize}
  \item $P'\subset\uU$;
  \item $P'\cap U_i\neq\emptyset$ for all $1\leq i\leq \kappa$, using Notation \ref{nota:uUcovering}.
 \end{itemize}
 A subset of the form $\fU(P,\uU)$ is called a \emph{normal neighbourhood} of $P$ in $\Ran(\cX)$.
 Normal neighbourhoods form the basis of a Hausdorff topology on $\Ran(\cX)$.
 
 For $\emptyset\neq P_0\subset\cX$ we denote by $\Ran(\cX)_{P_0}\subset\Ran_+(\cX)$ the subspace containing
 all $P\subset\cX$ with $P_0\subseteq P$.
 Similarly, for $z_0\in\cX$ we denote $\Ran(\cX)_{z_0}=\Ran(\cX)_{\set{z_0}}$.
\end{defn}
Our definition of $\Ran(\cX)$ differs from the usual one in the literature (e.g. \cite[Definition 5.5.1.2]{LurieHA}) because we allow also $\emptyset$ as a point in $\Ran(\cX)$. Note however
that our $\Ran(\cX)$ is the topological disjoint union of the singleton
$\set{\emptyset}$ and $\Ran_+(\cX)$.

The following results are originally due to Beilinson
and Drinfeld \cite{BDChiral}.
See also \cite[Lemma 5.5.1.8, Theorem5.5.1.6]{LurieHA}.
\begin{lem}
 \label{lem:RancontractibleP0}
 Let $\cX\subset\bH$ be path connected and let $P_0\subset\cX$ be a finite non-empty subset.
 Then $\Ran(\cX)_{P_0}$ is weakly contractible.
\end{lem}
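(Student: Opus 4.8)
The plan is to prove that $\Ran(\cX)_{P_0}$ is weakly contractible by showing that, for every $n\ge0$, each continuous map $f\colon S^n\to\Ran(\cX)_{P_0}$ is freely homotopic to the constant map $c_{P_0}\colon k\mapsto P_0$. Since $\Ran(\cX)_{P_0}$ is nonempty (it contains $P_0$), this is enough: the case $n=0$ yields path-connectedness, and for $n\ge1$ a free null-homotopy of a based representative of a class in $\pi_n(\Ran(\cX)_{P_0},P_0)$ places that class in the $\pi_1$-orbit of the trivial element, which is $\set{0}$; hence every homotopy group vanishes.

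The engine of the argument is a monotonicity principle for maps into Ran spaces: \emph{if $K$ is compact and $f_0,f_1\colon K\to\Ran(\cX)$ are continuous with $f_0(k)\subseteq f_1(k)$ for all $k\in K$, then $f_0\simeq f_1$}; this is the technical core underlying the cited lemma of Beilinson--Drinfeld. Granting it, the proof is immediate: given $f\colon S^n\to\Ran(\cX)_{P_0}$, the inclusion $c_{P_0}(k)=P_0\subseteq f(k)$ holds for every $k$, so the monotonicity principle produces a homotopy $H\colon S^n\times[0,1]\to\Ran(\cX)$ from $c_{P_0}$ to $f$. To keep the homotopy inside $\Ran(\cX)_{P_0}$ I would replace $H$ by $H'(k,t):=H(k,t)\cup P_0$: the union with the fixed finite set $P_0$ is continuous, every value of $H'$ contains $P_0$, and the two endpoints are unchanged because $c_{P_0}(k)$ and $f(k)$ already contain $P_0$. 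Thus $f\simeq c_{P_0}$ within $\Ran(\cX)_{P_0}$.

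What remains, and what I expect to be the main obstacle, is the monotonicity principle itself, and this is exactly where path-connectedness of $\cX$ is used. The intuition is to deform $f_1$ into $f_0$ by merging every point of $f_1(k)$ that does not already lie in $f_0(k)$ into some point of $f_0(k)$, dragging it along a path in $\cX$; path-connectedness guarantees that such paths exist. The genuine difficulty is continuity in $k$: the cardinality of $f_1(k)$ is unbounded over $K$ and jumps discontinuously, so there is no global continuous choice either of the points to be merged or of the paths along which to merge them. Overcoming this requires a compactness argument based on the normal-neighbourhood basis of Definition \ref{defn:Ran}: one covers $K$ by finitely many preimages $f_1^{-1}\pa{\fU(Q_\alpha,\uU_\alpha)}$, on each of which the values of $f_1$ are confined to the fixed open set $\uU_\alpha$ and meet each of its connected components, so that the merging admits a continuous local model, and then glues the local homotopies by a finite induction over the cover using a subordinate partition of unity to interpolate the merging times. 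This gluing, rather than any single local step, is the delicate part; since the whole statement is a particular case of \cite[Lemma 5.5.1.8]{LurieHA}, in practice I would verify that $\Ran(\cX)_{P_0}$ fits the hypotheses there and invoke it directly.
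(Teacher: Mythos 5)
Your reduction is fine as far as it goes: granting the monotonicity principle, the application to $c_{P_0}\subseteq f$ and the correction $H'(k,t)=H(k,t)\cup P_0$ do produce a null-homotopy inside $\Ran(\cX)_{P_0}$, and the passage from free null-homotopies to vanishing of all $\pi_n$ is standard. The gap is that the monotonicity principle, which carries the entire content of the lemma, is not proved, and the strategy you sketch for it does not go through as described. On a single chart $f_1^{-1}\pa{\fU(Q_\alpha,\uU_\alpha)}$ you can indeed contract each convex component of $\uU_\alpha$ to a point and drag the resulting points along paths into $f_0(k)$; but on overlaps the two local homotopies merge different points along different paths, $\Ran(\cX)$ carries no affine structure in which to average them, and ``interpolating the merging times'' with a partition of unity is not well defined because the set of points a given chart is responsible for absorbing is itself discontinuous across overlaps. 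As written the argument is circular: your monotonicity principle is essentially equivalent to the weak contractibility you are trying to establish. Falling back on \cite[Lemma 5.5.1.8]{LurieHA} is legitimate in principle, but that statement is formulated for connected manifolds, whereas $\cX$ here is merely path connected, so you would in any case have to extract the argument rather than quote the result.

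The paper's argument sidesteps the families problem entirely. The space $\Ran(\cX)_{P_0}$ is an abelian topological monoid under $\mu\colon(P,P')\mapsto P\cup P'$ with strict unit $P_0$, and every element is idempotent. Path connectedness of $\cX$ is used only to show that $\Ran(\cX)_{P_0}$ is path connected, which is exactly your merging argument for a single fixed $P$, where no continuity-in-families issue arises. For $n\ge1$, the multiplication induces on $\pi_n(\Ran(\cX)_{P_0},P_0)$ a second operation which, by the Eckmann--Hilton argument, coincides with the group operation; since $\mu\circ(f,f)=f$ holds pointwise (not merely up to homotopy), every class satisfies $[f]=[f]+[f]$ and hence vanishes. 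You should replace the unproved monotonicity principle by this idempotent-monoid argument, which requires only the easy continuity of $\mu$ on normal neighbourhoods.
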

\begin{lem}
\label{lem:Ran+contractible}
 Let $\cX\subset\bH$ be path connected. Then $\Ran_+(\cX)$ is weakly contractible.
\end{lem}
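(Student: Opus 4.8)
The plan is to prove weak contractibility by showing that $\Ran_+(\cX)$ is path connected and that $\pi_n(\Ran_+(\cX),P_*)=0$ for every basepoint $P_*$ and every $n\ge1$, feeding off Lemma \ref{lem:RancontractibleP0} as much as possible. For path connectedness I would use path connectedness of $\cX$ directly: given two configurations $P,Q\in\Ran_+(\cX)$, fix a point $p\in P$ and, for each $q\in Q$, a path in $\cX$ from $p$ to $q$; letting points travel out of $p$ along these paths defines a path in $\Ran_+(\cX)$ from $P$ to $P\cup Q$, and symmetrically one connects $Q$ to $P\cup Q$. Hence every configuration is connected to a chosen singleton, and $\Ran_+(\cX)$ is path connected.

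For the higher homotopy groups I would exploit the union operation $\mu\colon(P,P')\mapsto P\cup P'$, which is continuous, commutative, associative and idempotent. The point is that $\Ran_+(\cX)$ fails to be a topological monoid only in that it lacks a global unit; however, for any fixed $P_*$ the subspace $\Ran(\cX)_{P_*}$ is a commutative idempotent topological monoid with unit $P_*$, and is weakly contractible by Lemma \ref{lem:RancontractibleP0}. Given a based map $\sigma\colon(S^n,*)\to(\Ran_+(\cX),P_*)$, postcomposition with $P\mapsto P\cup P_*$ produces a based map $(S^n,*)\to(\Ran(\cX)_{P_*},P_*)$, which is null-homotopic because its target is weakly contractible. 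Thus $[s\mapsto\sigma(s)\cup P_*]=0$ in $\pi_n(\Ran_+(\cX),P_*)$, and it remains to identify $[\sigma]$ with this class.

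The main obstacle is precisely this last identification: I must produce a based homotopy in $\Ran_+(\cX)$ between $\sigma$ and $s\mapsto\sigma(s)\cup P_*$, i.e. show that adjoining the fixed configuration $P_*$ is homotopic to the identity along the compact family $\sigma$ (one may take $P_*$ to be a single point, adjoining the points of a general $P_*$ one at a time). The naive idea, to let the point of $P_*$ be emitted continuously from a point of $\sigma(s)$, is obstructed by the absence of a continuous section $\Ran_+(\cX)\to\cX$ selecting a point of a configuration: one cannot choose coherently, as $s$ varies, which point of $\sigma(s)$ should spawn $P_*$, especially where the cardinality of $\sigma(s)$ jumps. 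This is exactly the content lying beyond Lemma \ref{lem:RancontractibleP0}. I would resolve it using compactness of $S^n$ together with path connectedness of $\cX$: cover $S^n$ by finitely many pieces on which $\sigma$ stays inside a single normal neighbourhood $\fU(P,\uU)$, where a point of the configuration can be tracked within a component of $\uU$, and glue the resulting local emission homotopies by a partition-of-unity/inductive argument over the finitely many combinatorial types involved.

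Once this homotopy is in place we obtain $[\sigma]=[s\mapsto\sigma(s)\cup P_*]=0$, so all higher homotopy groups vanish; combined with path connectedness this yields weak contractibility. The delicate gluing step is the only nonformal ingredient, and it is the one carried out in full generality by Beilinson--Drinfeld and Lurie in \cite[Theorem 5.5.1.6]{LurieHA}; I would either adapt their argument or invoke it directly.
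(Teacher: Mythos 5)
Your proposal is correct in outline and follows essentially the same route as the paper, which gives no argument of its own for this lemma and simply defers to \cite[Theorem 5.5.1.6]{LurieHA} (originally due to Beilinson--Drinfeld). You correctly isolate the one genuine difficulty --- producing a based homotopy from $\sigma$ to $s\mapsto\sigma(s)\cup P_*$ in the absence of a continuous section $\Ran_+(\cX)\to\cX$ selecting a point of a configuration --- and, like the paper, you ultimately defer that gluing step to the same citation, so your sketch is at least as complete as the proof the paper actually provides.
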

\begin{nota}
 \label{nota:RanfT}
 For a nice couple $\fT=(\cX,\cY)$ we write $\Ran(\fT)=\Ran(\cX)$ and similarly for the subspaces
 introduced in Definition \ref{defn:Ran}.
\end{nota}

\subsection{Hurwitz sets}
\label{subsec:Hurwitzspaces}
We first define $\HurTQG$ as a set, and discuss later in Subsection \ref{subsec:Hurtopology} its Ran topology, mimicking the topology on $\Ran(\fT)$.
\begin{defn}
 \label{defn:Hurset}
Let $(\Q,G)$ be a PMQ-group pair and let $\fT=(\cX,\cY)$ a nice couple.
An element of the \emph{Hurwitz set} $\HurTQG$ is a configuration $\fc=(P,\psi,\phi)$, where
\begin{itemize}
 \item $P=\set{z_1,\dots,z_k}$ is a finite subset of $\cX$, i.e. $P\in\Ran(\cX)$;
 \item $(\psi,\phi)\colon(\fQ(P),\fG(P))\to(\Q,G)$ is a map of PMQ-group pairs (see Definitions \ref{defn:fGP} and  \ref{defn:fQP} for the PMQ-group pair $(\fQ(P),\fG(P))$).
\end{itemize}
If $\fc=(P,\psi,\phi)$, we say that $\fc$ is \emph{supported on }$P$; if $\cS$ is a subspace of $\cX$ and
$P\subset\cS$, we say that $\fc$ is \emph{supported in }$\cS$. The maps $\psi$ and $\phi$ are the \emph{$\Q$-valued} and \emph{$G$-valued monodromies} of $\fc$.
\end{defn}

Roughly speaking, the monodromy $\phi$, with values in $G$, is defined
around \emph{every point} of $P$, whereas the monodromy $\psi$, with values in $\Q$, is defined only around
points of $P$ which lie in $\cX\setminus\cY$. We can think of $\psi$ as a \emph{refinement of $\phi$
away from $\cY$}: indeed the composition $\fe\circ\psi\colon\fQ(P)\to G$ is equal to $\phi|_{\fQ(P)}$,
where the map of PMQs $\fe\colon\Q\to G$ is part of the structure of PMQ-group pair of $(\Q,G)$.
\begin{nota}
 \label{nota:fc}
We usually present a configuration $\fc\in\HurTQG$ as $\fc=(P,\psi,\phi)$ and use Notation \ref{nota:P} for $P$.
Similarly we present another configuration $\fc'$ as $(P',\psi',\phi')$, and write $P'=\set{z'_1,\dots,z'_{k'}}$.
\end{nota}

\subsection{The topology on Hurwitz-Ran spaces}
\label{subsec:Hurtopology}
We introduce a topology on the set $\HurTQG$, in the spirit of the topology of the Ran space $\Ran(\fT)$.\begin{defn}
  \label{defn:normalneigh}
We use Notations \ref{nota:fc} and \ref{nota:uUcovering}.
Let $\fc\in\HurTQG$ and let $\uU$ be an adapted covering of $P$; 
we denote by $\fU(\fc;\uU)=\fU_{\fT}(\fc;\uU)$ the subset of $\HurTQG$ containing all configurations
$\fc'$ satisfying the following conditions:
\begin{itemize}
 \item $P'\in\fU(P,\uU)$ (see Definition \ref{defn:Ran});
 as a consequence there is a natural inclusion of PMQ-group pairs
 $(\fQ(\uU),\fG(\uU))\subseteq(\fQ(P',\uU),\fG(P'))$ (see Definition \ref{defn:fGUfQPU});
 \item $\fQ(P',\uU)$ is contained in $\fQext(P')_{\psi'}$ (see Definition \ref{defn:fQextPpsi}),
 and the following composition of maps of PMQ-group pairs is equal to $(\psi,\phi)$: 
 \[
 \begin{tikzcd}
  (\fQ(P),\fG(P)) & (\fQ(\uU),\fG(\uU)) \ar[l,"\cong"'] \ar[r,hook] & (\fQ(P',\uU),\fG(P')) \ar[ld,"\subseteq"'] \\
  & (\fQext(P')_{\psi'},\fG(P')) \ar[r,"{(\psipext,\phi')}"] & (\Q,G),
 \end{tikzcd}
 \]
 where we use the isomorphism discussed in the remark after Definition \ref{defn:fGUfQPU},
 and the map $\psipext\colon\fQext(P')_{\psi'}\to\Q$ from Proposition \ref{prop:fQextPpsi}.
\end{itemize}
 Each subset $\fU(\fc;\uU)$ is called a \emph{normal neighbourhood}
 of $\fc$ in $\HurTQG$.
\end{defn}

\begin{figure}[ht]
 \begin{tikzpicture}[scale=4,decoration={markings,mark=at position 0.38 with {\arrow{>}}}]
  \draw[dashed,->] (-.1,0) to (1.1,0);
  \draw[dashed,->] (0,-1.1) to (0,1.1);
  \node at (0,-1) {$*$};
  \fill[black, opacity=.2, looseness=.8] (.1,.1) to[out=180, in=-90] (-.1,.3) to[out=90,in=-100] (.4,.9) to[out=80,in=-180]
  (.8,1) to[out=0,in=0] (.8,0) to[out=180,in=0] (.4,0) to[out=180,in=0] (.1,.1);
  \fill[black, opacity=.3, looseness=.8] (.8,0) to[out=180,in=-100] (.4,.9) to[out=80,in=-180]
  (.8,1) to[out=0,in=0] (.8,0);
  \node at (.1,.3){$\bullet$}; 
  \node at (.45,.5){$\bullet$}; 
  \node at (.5,.9){$\bullet$};
  \fill[pattern=vertical lines, opacity=.6] (.07,.3) ellipse (.15cm and .05cm);
  \fill[pattern=vertical lines, opacity=.6] (.45,.52) ellipse (.2cm and .14cm);
  \fill[pattern=vertical lines, opacity=.6] (.5,.9) ellipse (.1cm and .14cm);
  \draw[thin, looseness=1.7, postaction={decorate}] (0,-1) to[out=85,in=-90] (.05,.1) to[out=90,in=-90]  (-.1,.3)  to[out=90,in=90] node[below]{\tiny$a_1=a'_1a'_2$} (.25,.3)  to[out=-90,in=90]  (.08,.1) to[out=-90,in=80] (0,-1);
  \draw[thin, looseness=1.2, postaction={decorate}] (0,-1) to[out=80,in=-90] node[right]{\tiny$g_2=\fe(a'_3)g'_4$} (.4,.3)
  to[out=90,in=-90] (.2,.5) to[out=90,in=90] (.7,.5) to[out=-90,in=90] (.45,.3) to[out=-90,in=75]  (0,-1);
  \draw[thin, looseness=1.2, postaction={decorate}] (0,-1) to[out=10,in=0] (.7,.85) to[out=180,in=0] (.5,.7)
  to[out=180,in=180] (.5,1.05) node[right]{\tiny$g_3=g'_5$}    to[out=0,in=180] (.7,.88) to[out=0,in=5] (0,-1);
\begin{scope}[shift={(1.4,0)}]
  \draw[dashed,->] (-.1,0) to (1.1,0);
  \draw[dashed,->] (0,-1.1) to (0,1.1);
  \node at (0,-1) {$*$};
  \fill[black, opacity=.2, looseness=.8] (.1,.1) to[out=180, in=-90] (-.1,.3) to[out=90,in=-100] (.4,.9) to[out=80,in=-180]
  (.8,1) to[out=0,in=0] (.8,0) to[out=180,in=0] (.4,0) to[out=180,in=0] (.1,.1);
  \fill[black, opacity=.3, looseness=.8] (.8,0) to[out=180,in=-100] (.4,.9) to[out=80,in=-180]
  (.8,1) to[out=0,in=0] (.8,0);
  \node at (.18,.3){$\bullet$}; 
  \node at (-.05,.3){$\bullet$}; 
  \node at (.6,.52){$\bullet$}; 
  \node at (.3,.48){$\bullet$}; 
  \node at (.45,.83){$\bullet$};
  \fill[pattern=vertical lines, opacity=.4] (.07,.3) ellipse (.15cm and .05cm);
  \fill[pattern=vertical lines, opacity=.4] (.45,.52) ellipse (.2cm and .14cm);
  \fill[pattern=vertical lines, opacity=.4] (.5,.9) ellipse (.1cm and .14cm);
  \draw[thin, looseness=1.2, postaction={decorate}] (0,-1) to[out=87,in=-90] (.02,.1) to[out=90,in=-90] node[left]{\tiny$a'_1$}  (-.1,.3)  to[out=90,in=90] (.08,.3)  to[out=-90,in=90] (.04,.1) to[out=-90,in=84] (0,-1);
  \draw[thin, looseness=1.2, postaction={decorate}] (0,-1) to[out=81,in=-90] (.15,.1) to[out=90,in=-90] (.1,.3)  to[out=90,in=90] (.25,.3)  to[out=-90,in=90] node[left]{\tiny$a'_2$} (.17,.1) to[out=-90,in=78] (0,-1);
  \draw[thin, looseness=1.2, postaction={decorate}] (0,-1) to[out=82,in=-90]  (.4,.3)
  to[out=90,in=-90]  (.2,.5) to[out=90,in=90] (.47,.5) to[out=-90,in=90] node[left]{\tiny$a'_3$} (.43,.3) to[out=-90,in=79]  (0,-1);
  \draw[thin, looseness=1.2, postaction={decorate}] (0,-1) to[out=76,in=-90]  (.55,.3)
  to[out=90,in=-90] node[right]{\tiny$g'_4$} (.5,.5) to[out=90,in=90] (.7,.5) to[out=-90,in=90] (.6,.3) to[out=-90,in=73]  (0,-1);
  \draw[thin, looseness=1.2, postaction={decorate}] (0,-1) to[out=10,in=0] (.7,.85) to[out=180,in=0] (.5,.7)
  to[out=180,in=180] (.5,1.05) node[right]{\tiny$g'_5$}    to[out=0,in=180] (.7,.88) to[out=0,in=5] (0,-1);
\end{scope}
 \end{tikzpicture}
 \caption{On left, a configuration $\fc=(P,\psi,\phi)$ in the space $\Hur(\cX,\cY,\Q,G)$ and an adapted covering $\uU$ of $P$; on right, another configuration
 $\fc'$ in the normal neighbourhood $\fU(\fc,\uU)$. The drawn loops are labelled with their $\Q$-valued monodromy if they belong to
 $\fQ(P)$ and $\fQ(P')$ respectively, and are labelled with their $G$-valued monodromy otherwise.}
\label{fig:normalneigh}
\end{figure}

Roughly speaking, if $\fc'\in\fU(\fc;\uU)$, then $P'$ is obtained from $P$ by splitting
each $z_i$ into $r_i\geq 1$ points $z'_{i,1},\dots,z'_{i,r_i}$ inside the neighbourhood $U_i$ of $z_i$.
For all $1\leq i\leq k$ the $G$-valued monodromy around $z_i$ is decomposed as a product of the $G$-valued
monodromies $\phi'$ around the points $z'_{i,1},\dots,z'_{i,r_i}$;
similarly, for $1\leq i\leq l$ the $\Q$-valued monodromy around $z_i$ is decomposed as a product of the
$\Q$-valued monodromies around the points $z'_{i,1},\dots,z'_{i,r_i}$.
See Figure \ref{fig:normalneigh} for an example of two configuration $\fc$ and $\fc'$ with $\fc'$ in a normal neighbourhood of $\fc$.

\begin{prop}
 \label{prop:Hurtopology}
 The subsets $\fU(\fc;\uU)$ for varying $\fc$ and $\uU$ form the basis of a Hausdorff
 topology on the set $\HurTQG$.
\end{prop}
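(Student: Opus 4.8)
The plan is to verify the two standard axioms for a basis of a topology --- that the normal neighbourhoods cover $\HurTQG$ and that they are stable under ``refinement'' at a common point --- and then separately to establish the Hausdorff property. The covering axiom is immediate: any $\fc=(P,\psi,\phi)$ admits at least one adapted covering $\uU$ of $P$, and $\fc\in\fU(\fc;\uU)$ since $P\in\fU(P,\uU)$ trivially (one checks $P'=P$ satisfies both conditions, using that the composition in Definition \ref{defn:normalneigh} reduces to $(\psi,\phi)$ when $P'=P$ and $\uU$ is adapted, so all the inclusions become isomorphisms by the remark after Lemma \ref{lem:fQPUinherits}).

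The core of the basis verification is the intersection condition: given $\fc''\in\fU(\fc;\uU)\cap\fU(\fc';\uU')$, I must produce an adapted covering $\uU''$ of $P''$ with $\fU(\fc'';\uU'')\subseteq\fU(\fc;\uU)\cap\fU(\fc';\uU')$. First I would use the corresponding fact for the Ran space $\Ran(\fT)$: since $\fU(P,\uU)$ and $\fU(P',\uU')$ form a basis there (Definition \ref{defn:Ran}), there is an adapted covering $\uU''$ of $P''$ with $\fU(P'',\uU'')\subseteq\fU(P,\uU)\cap\fU(P',\uU')$; concretely one shrinks the components of $\uU$ and $\uU'$ around the points of $P''$, choosing each $U''_m$ to be a small convex set contained in the unique component of $\uU$ and the unique component of $\uU'$ that contain the corresponding point of $P''$, and disjoint from $\cY$ whenever that point lies in $\cX\setminus\cY$. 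The monodromy compatibility is then a diagram chase: for any $\fc'''\in\fU(\fc'';\uU'')$ the PMQ-group pair inclusions $(\fQ(\uU''),\fG(\uU''))\subseteq(\fQ(P''',\uU''),\fG(P'''))$ refine those for $\uU$, so the composite recovering $(\psi,\phi)$ from $\psi'''$ factors through the composite recovering it from $\psi''$; here the key is that extending $\psi'''$ over elements of $\fQ(P''',\uU'')$ automatically extends it over the coarser loops in $\fQ(P''',\uU'')$ coming from $\uU$, which follows from Proposition \ref{prop:fQextPpsi} (uniqueness of the extension $\psipext$) applied twice.

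For the Hausdorff property, given distinct $\fc,\fc'$ I would separate into two cases. If $P\neq P'$, then $\Ran(\fT)$ being Hausdorff already yields disjoint normal neighbourhoods $\fU(P,\uU)$, $\fU(P',\uU')$ downstairs, and their preimages separate $\fc$ from $\fc'$. If $P=P'$ but $(\psi,\phi)\neq(\psi',\phi')$, then the monodromies differ on some generator; choosing a common adapted covering $\uU$ of $P$, I claim $\fU(\fc;\uU)$ and $\fU(\fc';\uU)$ are disjoint, because any $\fc''$ in the intersection would force $(\psi,\phi)$ and $(\psi',\phi')$ to be recovered from the same map $(\psi'',\phi'')$ via the same composition, hence to coincide --- a contradiction. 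The hard part will be the monodromy bookkeeping in the intersection step: one must check carefully that the ``extendability'' condition $\fQ(P''',\uU'')\subseteq\fQext(P''')_{\psi'''}$ is preserved and that the two factorisations of $(\psi,\phi)$ genuinely agree as maps of PMQ-group pairs, rather than merely on generators; this is where Proposition \ref{prop:fQextPpsi} and the standard-move invariance discussed after Definition \ref{defn:fQextPpsi} do the essential work.
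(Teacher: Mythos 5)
Your proposal is correct and follows essentially the same route as the paper: refine the two coverings by intersecting their components around the points of the common configuration, pass to an adapted covering, and use the transitivity of the monodromy-compatibility condition (via Proposition \ref{prop:fQextPpsi}) for the basis axiom, then split the Hausdorff check into the cases $P=P'$ and $P\neq P'$ exactly as the paper does. The only difference is presentational — you route the point-set part through the Ran space and spell out the extension bookkeeping, where the paper argues directly and more tersely — but the underlying argument is the same.
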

\begin{proof}
 Let $\fc_1=(P_1,\psi_1,\phi_1)$ and $\fc_2=(P_2,\psi_2,\phi_2)$ denote two configurations in $\HurTQG$,
 let $k_1=\abs{P_1}$ and $k_2=\abs{P_2}$, and
 let $\uU_1=(U_{1,1},\dots,U_{1,k_1})$ and $\uU_2=(U_{2,1},\dots,U_{2,k_2})$ be adapted
 coverings of $P_1$ and $P_2$ respectively; finally,
 let $\fU(\fc_1;\uU_1)$ and $\fU(\fc_2;\uU_2)$ be the corresponding normal neighbourhoods.

 Suppose that $\fc'=(P',\psi',\phi')$ lies in $\fU(\fc_1;\uU_1)\cap\fU(\fc_2;\uU_2)$.
 Then we can define $\uU'=(U'_1,\dots,U'_{\kappa'})$ as the family
 of all convex open sets of the form $U_{1,i}\cap U_{2,j}$ that contain at least one
 point of $P'$. By construction $\uU'$ is a covering of $P'$;
 we can then find a covering $\uU''=(U''_1,\dots,U''_{\kappa''})$ of $P'$ which is adapted to $P'$
 and is finer than $\uU'$, i.e. each $U''_i$ is contained in some $U'_j$.
 It follows from Definition \ref{defn:normalneigh} that
\[
 \fU(\fc';\uU'')\subseteq \fU(\fc_1;\uU_1)\cap\fU(\fc_2;\uU_2).
\]
Hence normal neighbourhoods are the basis of a topology on $\HurTQG$.

To see that this topology is Hausdorff, let $\fc,\fc'\in\HurTQG$ and use Notation \ref{nota:fc}.
If $P=P'$, then for any adapted covering $\uU$ of $P$ the two normal neighbourhoods
$\fU(\fc;\uU)$ and $\fU(\fc';\uU)$ are disjoint. If $P\neq P'$, without
loss of generality we can assume that there is a point $z\in P\setminus P'$; let $\uU$ and $\uU'$ be adapted coverings
of $P$ and $P'$ respectively, such that the connected component of $\uU$ containing $z$ is disjoint from $\uU'$;
then $\fU(\fc;\uU)$ and $\fU(\fc';\uU')$ are again disjoint.
\end{proof}
\begin{nota}
 \label{nota:HurTQGrgh}
 The space $\HurTQG$ from Proposition \ref{prop:Hurtopology} is called the \emph{Hurwitz-Ran space} associated with the nice couple $\fT$ and the PMQ-group pair $(\Q,G)$.
\end{nota}

\begin{defn}
 \label{defn:epsilon}
We define $\epsilon\colon\Hur(\fT;\Q,G)\to\Ran(\fT)$ as the map given by the assignment
$\epsilon\colon (P,\psi,\phi)\mapsto P$, i.e. a configuration is sent to its support.
\end{defn}
Note that the preimage of $\fU(P,\uU)\subset\Ran(\fT)$ along $\epsilon$ is the disjoint
union of all normal neighbourhoods $\fU(\fc,\uU)$ for $\fc$ varying in the configurations
of $\Hur(\fT;\Q,G)$ supported on $P$; this shows continuity of $\epsilon$.
Note also that $\epsilon\colon\Hur(\fT;\Q,G)\to\Ran(\fT)$ is a homeomorphism if
$(\Q,G)=(\one,\one)$, where we use the following notation.
\begin{nota}
 \label{nota:oneone}
We denote by $(\one,\one)$ the initial and terminal PMQ-group pair, consisting of the trivial PMQ $\set{\one}$
and of the trivial group $\set{\one}$.
\end{nota}

\begin{nota}
 \label{nota:emptysetoneone}
 For all nice couples $\fT$ we denote by $(\emptyset,\one,\one)\in\Hur(\fT;\Q,G)$ the unique configuration $(P,\psi,\phi)$ with
 $P=\emptyset$; note that in this case the maps $\psi$ and $\phi$ are defined on
 the trivial PMQ and on the trivial group respectively, so they have as images $\set{\one}\subset\Q$
 and $\set{\one}\subset G$ respectively.

 Note that $(\emptyset,\one,\one)$ is an isolated point of the space $\Hur(\fT;\Q,G)$; we denote by $\Hur_+(\fT;\Q,G)$ the closed subspace
 $\Hur(\fT;\Q,G)\setminus\set{(\emptyset,\one,\one)}\subset\Hur(\fT;\Q,G)$.
\end{nota}

\begin{defn}
 \label{defn:basedHur}
  Let $P_0\subset\cX$ be a finite non-empty subset. We denote
 by $\Hur(\fT;\Q,G)_{P_0}\subset\Hur_+(\fT;\Q,G)$ the
 preimage of $\Ran(\fT)_{P_0}$ along $\epsilon$.
 For $\fc\in\Hur(\fT;\Q,G)_{P_0}$ and an adapted covering $\uU$ of $\epsilon(\fc)$ we denote
 \[
 \fU(\fc,\uU)_{P_0}=\fU(\fc,\uU)\cap\Hur(\fT;\Q,G)_{P_0}.
 \]
\end{defn}

\subsection{Change of ambient space}
\label{subsec:changeambient}
Let $\bT\subset\C$ be a contractible space containing $*$, and denote by $\mathring{\bT}$ the interior of $\bT$, i.e. the set of all $z\in\bT$ for which there is an open disc $z\in U\subset\bT$. Let $\fT=(\cX,\cY)$ be a nice couple with $\cY\subset\cX\subset\mathring{\bT}$;
then for all finite subsets $P\subset \cX$ the inclusion $\bT\setminus P\hookrightarrow\CmP$ induces an identification $\pi_1(\bT\setminus P,*)\cong\fG(P)$.
Similarly we identify $\fQ_{\fT}(P)$ with the subset of $\pi_1(\bT\setminus P,*)$ consisting of
$\set{\one}$ and all conjugacy classes
corresponding to small simple closed curves in $\bT\setminus P$ spinning clockwise around one
of the points of $P\setminus\cY$.
We thus have an alternative construction of the Hurwitz set $\Hur(\fT;\Q,G)$, in which we use as \emph{ambient space} the subspace $\bT$
instead of the entire $\C$. The topology
on $\Hur(\fT;\Q,G)$ from Proposition \ref{prop:Hurtopology}
can be obtained by only considering coverings $\uU\subset\bT$.
\begin{defn}
 \label{defn:ambient}
 We denote by $\Hur^{\bT}(\fT;\Q,G)$ the Hurwitz-Ran space constructed using $\bT$ as ambient space.
 For $P\subset \cX$ we let $\fG^{\bT}(P):=\pi_1(\bT\setminus P,*)\cong\fG(P)$, and we denote by
 $\fQ^{\bT}_{\fT}(P)\subseteq\fG^{\bT}(P)$ the subset corresponding to $\fQ_{\fT}(P)\subset\fG(P)$.
\end{defn}

Suppose now that we have a nice couple $\fT=(\cX,\cY)$ and two contractible subspaces $\bT,\bT_1\subset\C$
satisfying the following properties:
\begin{itemize}
 \item $*\in\bT_1\subset\bT$;
 \item $\cX\subset\mathring{\bT}$;
 \item $\cX$ splits as a disjoint union $\cX_1\sqcup\cX_2$,
 with $\cX_1\subset\mathring{\bT}_1$ and $\cX_2$ contained in the interior
 of $\bT\setminus\bT_1$.
\end{itemize}
Denote by $\cY_1=\cY\cap\cX_1$ and by $\fT_1$ the nice couple $(\cX_1,\cY_1)$;
then every finite subset $P\subset\cX$
decomposes as a union of $P_1=P\cap \cX_1$ and $P_2=P\cap \cX_2$;
moreover the inclusion $\bT_1\setminus P_1\hookrightarrow\bT\setminus P$ induces an inclusion of PMQ-group pairs
\[
\iota_{\bT_1}^{\bT}(P_1,P)\colon(\fQ_{\fT_1}^{\bT_1}(P_1),\fG^{\bT_1}(P_1))\hookrightarrow(\fQ_{\fT}^{\bT}(P),\fG^{\bT}(P)),
\]
and if $(\phi,\psi)\colon(\fQ_{\fT}^{\bT}(P),\fG^{\bT}(P))\to(\Q,G)$ is a map of PMQ-group pairs, we can
consider the restriction
$(\phi,\psi)\circ\iota^{\bT}_{\bT_1}(P_1,P)\colon (\fQ_{\fT_1}^{\bT_1}(P_1),\fG^{\bT}(P_1))\to(\Q,G)$.
\begin{defn}
\label{defn:fribT}
The above construction gives a map of sets
\[
\fri_{\bT_1}^{\bT}\colon\Hur^{\bT}(\fT;\Q,G)\to\Hur^{\bT_1}(\fT_1;\Q,G),
\]
defined by sending $\fc=(P,\psi,\phi)$ to
$\fc'=(P',\psi',\phi')$, where $P'=P_1=P\cap \bT_1$
and $(\psi',\phi')=(\phi,\psi)\circ\iota^{\bT}_{\bT_1}(P_1,P)$. See Figure \ref{fig:fribT}.
\end{defn}

\begin{figure}[ht]
 \begin{tikzpicture}[scale=4,decoration={markings,mark=at position 0.38 with {\arrow{>}}}]
  \draw[dashed,->] (-.1,0) to (1.1,0);
  \draw[dashed,->] (0,-1.1) to (0,.8);
  \node at (0,-1) {$*$};
  \draw[pattern=horizontal lines] (-.1,-1.05) to (-.1,.6) to node[above]{\tiny$\bT_1$} (.5,.5)  to (.1,-1.05) to (-.1,-1.05); 
  \fill[white!65!black, looseness=2] (.1,.1) to[out=180, in=-90] (-.05,.3) to[out=90,in=120] (.3,.4) to[out=-60,in=0] (.1,.1);
  \fill[white!50!black, looseness=1] (.1,.2) to[out=180, in=-90] (.05,.3) to[out=90,in=120] (.3,.3) to[out=-60,in=0] (.1,.2);
  \node at (.3,.45){\tiny$\cX_1$}; \node at (.25,.3) {\tiny$\cY_1$}; \node at (.1,.3){$\bullet$}; \node at (.2,.18){$\bullet$};
  \fill[white!50!black, looseness=1] (.8,.3) to[out=180, in=-90] (.6,.5) to[out=90,in=120] (1,.6) to[out=-60,in=0] (.8,.3);
  \fill[white!65!black, looseness=1] (.8,.4) to[out=180, in=-90] (.75,.5) to[out=90,in=120] (.9,.5) to[out=-60,in=0] (.8,.4);
  \node at (1,.65){\tiny$\cX_2$}; \node at (.95,.5) {\tiny$\cY_2$}; \node at (.8,.5){$\bullet$};
  \draw[thin, looseness=2, postaction={decorate}] (0,-1) to[out=88,in=-90] (.08,.2) to[out=90,in=-90] node[below]{\tiny$g_1$} (0,.3)  to[out=90,in=90] (.15,.3)  to[out=-90,in=90] (.1,.2) to[out=-90,in=86] (0,-1);
  \draw[thin, looseness=2, postaction={decorate}] (0,-1) to[out=84,in=-90] (.18,.1) to[out=90,in=-90] (.15,.2)  to[out=90,in=90]
  (.25,.2)  to[out=-90,in=90] node[right]{\tiny$a_2$} (.2,.1) to[out=-90,in=82] (0,-1);
  \draw[thin, looseness=2, postaction={decorate}] (0,-1) to[out=60,in=-90] node[left]{\tiny$a_3$} (.78,.4) to[out=90,in=-90] (0.7,.5)  to[out=90,in=90]
  (.85,.5)  to[out=-90,in=90] (.8,.4) to[out=-90,in=58] (0,-1);
\begin{scope}[shift={(1.5,0)}]
  \draw[dashed,->] (-.1,0) to (1.1,0);
  \draw[dashed,->] (0,-1.1) to (0,.8);
  \node at (0,-1) {$*$};
  \draw[pattern=horizontal lines] (-.1,-1.05) to (-.1,.6) to node[above]{\tiny$\bT_1$} (.5,.5)  to (.1,-1.05) to (-.1,-1.05); 
  \fill[white!65!black, looseness=2] (.1,.1) to[out=180, in=-90] (-.05,.3) to[out=90,in=120] (.3,.4) to[out=-60,in=0] (.1,.1);
  \fill[white!50!black, looseness=1] (.1,.2) to[out=180, in=-90] (.05,.3) to[out=90,in=120] (.3,.3) to[out=-60,in=0] (.1,.2);
  \node at (.3,.45){\tiny$\cX_1$}; \node at (.25,.3) {\tiny$\cY_1$}; \node at (.1,.3){$\bullet$}; \node at (.2,.18){$\bullet$};
  \draw[thin, looseness=2, postaction={decorate}] (0,-1) to[out=88,in=-90] (.08,.2) to[out=90,in=-90] node[below]{\tiny$g_1$} (0,.3)  to[out=90,in=90] (.15,.3)  to[out=-90,in=90] (.1,.2) to[out=-90,in=86] (0,-1);
  \draw[thin, looseness=2, postaction={decorate}] (0,-1) to[out=84,in=-90] (.18,.1) to[out=90,in=-90] (.15,.2)  to[out=90,in=90]
  (.25,.2)  to[out=-90,in=90] node[right]{\tiny$a_2$} (.2,.1) to[out=-90,in=82] (0,-1);
\end{scope}
 \end{tikzpicture}
 \caption{On left: a contractible subspace $\bT_1\subset\C$; a nice couple $\fT=(\cX,\cY)$ decomposing as a disjoint union $\fT_1\sqcup\fT_2$, with
 $\fT_1\subset\mathring{\bT}_1$ and $\fT_2$ contained in the interior of $\C\setminus \bT_1$; and a configuration $\fc\in\Hur(\fT;\Q,G)$. On right,
 the image of $\fri^{\C}_{\bT_1}(\fc)$ in $\Hur^{\bT_1}(\fT_1;\Q,G)$.}
\label{fig:fribT}
\end{figure}
To prove that $\fri_{\bT_1}^{\bT}$ is continuous, let $\fc$ and $\fc'$ be as in Definition \ref{defn:fribT}
and choose an adapted covering $\uU'\subset\bT$ of $P'$ with respect to the nice couple $\fT_1$; since
$P_2$ is contained in the interior of $\bT\setminus\bT_1$, we can extend $\uU'$ to an adapted
covering of $P$ with respect to the nice couple $\fT$, by adjoining open sets contained in
$\bT\setminus\bT_1\subset\C$ and covering $P_2$. We then have that $\fri_{\bT_1}^{\bT}$ sends
$\fU(\fc;\uU)\subset\Hur^{\bT}(\fT;\Q,G)$ inside $\fU(\fc';\uU')\subset\Hur^{\bT_1}(\fT_1;\Q,G)$.

Note also that the canonical homeomorphism from Definition \ref{defn:ambient}
can be rewritten as $\fri^{\C}_{\bT}\colon\Hur(\fT;\Q,G)\cong\Hur^{\bT}(\fT;\Q,G)$.

Consider now the following setting. Let $\bT_1,\bT_2$ be contractible
subspaces of $\C$ containing $*$, such that $\bT_1\cap\bT_2$ and $\bT:=\bT_1\cup\bT_2$, are contractible. Let $\fT=(\cX,\cY)$ be a nice couple of subspaces of $\mathring{\bT}$, and assume that we have a splitting
$\fT=\fT_1\sqcup\fT_2=(\cX_1,\cY_1)\sqcup(\cX_2,\cY_2)$, with $\cX_1\subseteq\mathring{\bT}_1\setminus\bT_2$
and $\cX_2\subseteq\mathring{\bT}_2\setminus\bT_1$.
Given $\fc_1=(P_1,\psi_1,\phi_1)\in\Hur^{\bT_1}(\fT_1;\Q,G)$ and
$\fc_2=(P_2,\psi_2,\phi_2)\in\Hur^{\bT_1}(\fT_1;\Q,G)$, we can define a new configuration
$(P,\phi,\psi)\in\Hur^{\bT}(\fT;\Q,G)$ as follows:
\begin{itemize}
 \item $P=P_1\cup P_2$;
 \item by the theorem of Seifert and van Kampen the group $\fG^{\bT}(P)$ decomposes
 naturally as a free product $\fG^{\bT_1}(P_1)\star\fG^{\bT_2}(P_2)$; we define
 $\phi\colon \fG^{\bT}(P)\to G$ as $\phi_1\star\phi_2$;
 \item the inclusions $\fG^{\bT_1}(P_1)\subset\fG^{\bT}(P)$ and $\fG^{\bT_2}(P_2)\subset\fG^{\bT}(P)$
 restrict to inclusions $\fQ^{\bT_1}_{\fT_1}(P_1)\subset\fQ^{\bT}_{\fT}(P)$
 and $\fQ^{\bT_2}_{\fT_2}(P_2)\subset\fQ^{\bT}_{\fT}(P)$; using \cite[Theorem 3.3]{Bianchi:Hur1}
 we define $\psi\colon\fQ^{\bT}_{\fT}(P)\to G$ by imposing that it restricts to $\psi_1$ on
 $\fQ^{\bT_1}_{\fT_1}(P_1)$ and to $\psi_2$ on $\fQ^{\bT_2}_{\fT_2}(P_2)$,
 and that $(\psi,\phi)\colon(\fG^{\bT}(P),\fQ^{\bT}_{\fT}(P))\to(\Q,G)$ is a map of PMQ-group pairs.
\end{itemize}
\begin{defn}
 \label{defn:sqcupHur}
 The above construction gives a map of sets
 \[
  -\sqcup-\colon \Hur^{\bT_1}(\fT_1;\Q,G)\times \Hur^{\bT_2}(\fT_2;\Q,G)\to \Hur^{\bT}(\fT;\Q,G).
 \]
\end{defn}
To prove that $-\sqcup-$ is continuous, note that if $\uU_1\subset\mathring{\bT}_1$ is an
adapted covering of $P_1$ with respect to $\fT_1$, and $\uU_2\subset\mathring{\bT}_2$
is an adapted covering of $P_2$ with respect to $\fT_2$, then $-\sqcup-$
restricts to a bijection between $\fU(\fc_1,\uU_1)\times\fU(\fc_2,\uU_2)$ and $\fU(\fc,\uU)$,
where $\uU=\uU_1\cup\uU_2\subset\mathring{\bT}$ is also an adapted covering of $P$. This argument
shows in fact that $-\sqcup-$ is a homeomorphism, with inverse given by the map
\[
 \pa{\fri^{\bT}_{\bT_1},\fri^{\bT}_{\bT_2}}
 \colon \Hur^{\bT}(\fT;\Q,G)\to  \Hur^{\bT_1}(\fT_1;\Q,G)\times \Hur^{\bT_2}(\fT_2;\Q,G).
\]

\section{Functoriality}
\label{sec:functoriality}
The construction of the space $\HurTQG$ depends on
a nice couple $\fT$ and a PMQ-group pair $(\Q,G)$. In this section we study how maps of PMQ-group pairs
and maps of nice couples induce maps on the corresponding Hurwitz-Ran spaces.

\subsection{Functoriality in the PMQ-group pair}
\label{subsec:functorialityHurQG}
In this subsection
we fix a nice couple $\fT=(\cX,\cY)$ as in Definition \ref{defn:nicecouple} and prove the following theorem.
\begin{thm}
 \label{thm:funPMQgroup}
 The assignment $(\Q,G)\mapsto\Hur(\fT;\Q,G)$ extends to a functor from the category of PMQ-group pairs to the category of topological spaces.
\end{thm}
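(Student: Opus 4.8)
The plan is to define the functor on morphisms by postcomposition, and then to check continuity against the basis of normal neighbourhoods of Definition \ref{defn:normalneigh}. Given a morphism of PMQ-group pairs $(\Psi,\Phi)\colon(\Q,G)\to(\Q',G')$, I would set
\[
(\Psi,\Phi)_*\colon\Hur(\fT;\Q,G)\to\Hur(\fT;\Q',G'),\qquad(P,\psi,\phi)\mapsto(P,\Psi\circ\psi,\Phi\circ\phi).
\]
This is well defined at the level of sets because a composite of maps of PMQ-group pairs is again a map of PMQ-group pairs, so that $(\Psi\circ\psi,\Phi\circ\phi)\colon(\fQ(P),\fG(P))\to(\Q',G')$ is an admissible monodromy in the sense of Definition \ref{defn:Hurset}. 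Set-level functoriality is then immediate, since postcomposition is strictly unital and associative: $(\Id,\Id)_*=\Id$ and $\big((\Psi',\Phi')\circ(\Psi,\Phi)\big)_*=(\Psi',\Phi')_*\circ(\Psi,\Phi)_*$. Hence the only genuine content of the theorem is the continuity of $(\Psi,\Phi)_*$.

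To prove continuity I would fix $\fc=(P,\psi,\phi)$ and write $\fc_*=(\Psi,\Phi)_*(\fc)=(P,\Psi\circ\psi,\Phi\circ\phi)$; crucially $\fc_*$ is supported on the \emph{same} configuration $P$, so every adapted covering $\uU$ of $P$ simultaneously indexes normal neighbourhoods of $\fc$ and of $\fc_*$. I claim that $(\Psi,\Phi)_*$ carries $\fU(\fc;\uU)$ into $\fU(\fc_*;\uU)$ for every such $\uU$; as the sets $\fU(\fc_*;\uU)$ form a neighbourhood basis at $\fc_*$, this yields continuity at $\fc$. So let $\fc'=(P',\psi',\phi')\in\fU(\fc;\uU)$, with image $(\Psi,\Phi)_*(\fc')=(P',\Psi\circ\psi',\Phi\circ\phi')$, and I must verify the two conditions of Definition \ref{defn:normalneigh} placing this image in $\fU(\fc_*;\uU)$. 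The first, $P'\in\fU(P,\uU)$, is a condition on configurations only and is inherited verbatim from $\fc'\in\fU(\fc;\uU)$.

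The heart of the argument, and the step I expect to be the main obstacle, is the second condition: that $\fQ(P',\uU)\subseteq\fQext(P')_{\Psi\circ\psi'}$ and that the resulting extended composite recovers $(\Psi\circ\psi,\Phi\circ\phi)$. Here I would exploit that $\Psi$, being a map of PMQs, sends defined partial products to defined partial products. Concretely, for $\fg\in\fQ(P',\uU)$ with a $\fQ(P')$-decomposition $(\fg_1,\dots,\fg_r)$, the hypothesis $\fc'\in\fU(\fc;\uU)$ gives $\fg\in\fQext(P')_{\psi'}$, so $\psi'(\fg_1)\cdots\psi'(\fg_r)$ is defined in $\Q$; applying $\Psi$ repeatedly shows $\Psi(\psi'(\fg_1))\cdots\Psi(\psi'(\fg_r))$ is defined in $\Q'$, i.e. $\fg\in\fQext(P')_{\Psi\circ\psi'}$. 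This yields $\fQext(P')_{\psi'}\subseteq\fQext(P')_{\Psi\circ\psi'}$, and in particular the desired inclusion $\fQ(P',\uU)\subseteq\fQext(P')_{\Psi\circ\psi'}$. Moreover $\Psi\circ\psipext$ and $(\Psi\circ\psi')^{\mathrm{ext}}$ are both maps of PMQs $\fQext(P')_{\psi'}\to\Q'$ restricting to $\Psi\circ\psi'$ on $\fQ(P')$, hence coincide by the uniqueness clause of Proposition \ref{prop:fQextPpsi}. Postcomposing with $(\Psi,\Phi)$ the commuting diagram that witnesses $\fc'\in\fU(\fc;\uU)$, and substituting this identity of extensions, then shows that the corresponding composite for the image equals $(\Psi,\Phi)\circ(\psi,\phi)=(\Psi\circ\psi,\Phi\circ\phi)$ (the group part being just the compatibility of $\Phi$ with $\phi'$). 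Thus $(\Psi,\Phi)_*(\fc')\in\fU(\fc_*;\uU)$, which completes the proof of continuity and hence of the theorem.
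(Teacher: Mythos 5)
Your proposal is correct and follows essentially the same route as the paper: define $(\Psi,\Phi)_*$ by postcomposition and check continuity against the basis of normal neighbourhoods. The paper phrases the continuity check dually — the preimage of $\fU(\fc';\uU')$ is the disjoint union of the neighbourhoods $\fU(\fc;\uU')$ over the fibre of $\fc'$ — and leaves implicit the verification you spell out, namely that $\fQext(P')_{\psi'}\subseteq\fQext(P')_{\Psi\circ\psi'}$ because $\Psi$ preserves defined products, and that the two extensions agree; this is exactly the right justification.
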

\begin{proof}
Let $(\Q,G)$ and $(\Q',G')$ be two PMQ-group pairs, and let $(\Psi,\Phi)\colon(\Q,G)\to(\Q',G')$
be a morphism of PMQ-group pairs. In the following we define an induced map
$(\Psi,\Phi)_*\colon\HurTQG\to\HurTQGp$.

Given a configuration $\fc=(P,\psi,\phi)$ in the set $\HurTQG$,
we associate with it the configuration $\fc'=(P,\psi',\phi')$ in the set $\HurTQGp$, where the map of PMQ-group pairs
$(\psi',\phi')\colon(\fQ(P),\fG(P))\to(\Q',G')$ is the composition $(\Psi,\Phi)\circ(\psi,\phi)$:
 \[
  \begin{tikzcd}
   (\psi',\phi')\colon(\fQ(P),\fG(P)) \ar[r,"{(\psi,\phi)}"] & (\Q,G) \ar[r,"{(\Psi,\Phi)}"] &  (\Q',G').
  \end{tikzcd}
 \]
We obtain a map of sets
$(\Psi,\Phi)_*=\Hur(\fT;\Psi,\Phi)\colon\HurTQG\to\HurTQGp$.
To show that $(\Psi,\Phi)_*$ is continuous, let $\fc'=(P',\psi',\phi')\in\HurTQGp$ and
let $\fU(\fc',\uU')\subset\HurTQGp$ be the normal neighbourhood associated with an adapted covering $\uU'$ of $P'$ (see Definition \ref{defn:normalneigh}).
Then the preimage of $\fU(\fc',\uU')$ along $(\Psi,\Phi)_*$ is the disjoint
union $\coprod_\fc\fU(\fc,\uU')$, where $\fc$ ranges over all configurations
in the fibre $(\Psi,\Phi)_*^{-1}(\fc')$; in particular it is an open set.

The assignment $(\Psi,\Phi)\mapsto(\Psi,\Phi)_*$ makes $\Hur(\fT;-)$ into a functor from the category $\PMQGrp$ of PMQ-group pairs to the category $\Top$ of topological spaces.
\end{proof}
Note also that if $(\Psi,\Phi)$ is an injective map of PMQ-group pairs, then it
induces an inclusion of spaces $
(\Psi,\Phi)_*\colon\HurTQG\hookrightarrow\HurTQGp$,
i.e. $(\Psi,\Phi)_*$ is a homeomorphism onto its image.
In particular we can take $\Q'=\hQ$ to be the completion of $\Q$, and consider the
inclusion of PMQ-group pairs $(\Q,G)\subset(\hQ,G)$, yielding an inclusion of $\HurTQG$
into the Hurwitz-Ran space $\Hur(\fT;\hQ,G)$ associated with a PMQ-group pair consisting of a \emph{complete} PMQ and a group.
We further notice that the inclusion $\HurTQG\subset\Hur(\fT;\hQ,G)$ is open: given $\fc\in\HurTQG$
and an adapted covering $\uU$ of $\epsilon(\fc)$, the normal neighbourhood $\fU(\fc;\uU)\subset\HurTQG$ is mapped
\emph{bijectively} onto the corresponding normal neighbourhood $\fU'(\fc;\uU)\subset\Hur(\fT;\hQ,G)$. 

Another consequence of the functoriality in the PMQ-group pair is the following. For all PMQ-group
pairs $(\Q,G)$ there is a unique inclusion of PMQ-group pairs $(\one,\one)\hookrightarrow(\Q,G)$
(see Notation \ref{nota:oneone}).
This induces an inclusion $\Hur(\fT;\one,\one)\to\Hur(\fT;\Q,G)$,
and using the homeomorphism $\epsilon\colon\Hur(\fT;\one,\one)\cong\Ran(\fT)$ (see Definition \ref{defn:epsilon}),
we obtain a natural inclusion $\Ran(\fT)\subset\Hur(\fT;\Q,G)$,
for all nice couples $\fT$ and all PMQ-group pairs $(\Q,G)$.
Viceversa, we can consider
the map $\epsilon\colon\Hur(\fT;\Q,G)\to\Ran(\fT)$ as the map
$\Hur(\fT;\Q,G)\to\Hur(\fT;\one,\one)$ induced by the unique map of PMQ-group pairs
$(\Q,G)\to(\one,\one)$.

\subsection{Two categories of nice couples}
We now fix a PMQ-group pair $(\Q,G)$ throughout the rest of the section. To discuss functoriality of Hurwitz-Ran spaces
in the nice couple $\fT$, we first need a good notion of map between nice couples.
\begin{defn}
 \label{defn:mapnicecouples}
 Recall Definition \ref{defn:nicecouple}, and let $\fT=(\cX,\cY)$ and $\fT'=(\cX',\cY')$ be two nice couples.
 A \emph{morphism of nice couples} $\xi\colon\fT\to\fT'$ is a continuous, pointed map $\xi\colon(\C,*)\to(\C,*)$
 such that the following properties hold:
 \begin{enumerate}
  \item $\xi$ is semi-algebraic and proper from $\C$ to $\C$;
  \item $\xi$ is \emph{orientation-preserving} in the sense that
  the induced map $\xi^*\colon H^2_{c}(\C)\to H^2_{c}(\C)$ in cohomology with compact support is the identity;
  \item $\xi$ restricts to maps $\cX\to\cX'$ and $\cY\to\cY'$;
  \item for all $z\in\C$ the fibre $\xi^{-1}(z)\subset\C$ is non-empty, compact and \emph{contractible};
  \item for all $z\in\cX'\setminus\cY'$ the fibre $\xi^{-1}(z)$ contains at most one point of $\cX\setminus\cY$.
 \end{enumerate}
 The composition of two morphisms $\xi\colon\fT\to\fT'$ and $\xi'\colon\fT'\to\fT''$ is defined as the composition of maps
 $\xi'\circ\xi\colon(\C,*)\to(\C,*)$. We denote by $\NC$ the category of nice couples and morphisms of nice couples.
\end{defn}
Property (4) ensures that
a morphism $\xi$ of nice couples is in particular a \emph{local homotopy equivalence} in the following sense:
if $\bJ\subset\C$ is a semi-algebraic set,
then the restriction $\xi\colon\xi^{-1}(\bJ)\to\bJ$ is a homotopy equivalence; more generally,
if $\bJ\subset\bJ'\subset\C$ are two semi-algebraic sets, then $\xi\colon(\xi^{-1}(\bJ),\xi^{-1}(\bJ'))\to(\bJ,\bJ')$ is a homotopy equivalence of couples. This follows from the main theorem of \cite{Smale57}.

The previous remark holds in particular
when $\bJ=(\xi')^{-1}(z)$ is a fibre of another morphism of nice couples $\xi'$, over some point $z\in\C$:
thus the composition
$\xi\circ\xi'$ also satisfies property (4) of Definition \ref{defn:mapnicecouples}; properties
(1),(2),(3) and (5) are also automatically satisfied by the composition $\xi\circ\xi'$. This proves in particular that Definition \ref{defn:mapnicecouples} is a good definition.
We will sometimes need to relax condition (5) in Definition \ref{defn:mapnicecouples}, hence
we give the following definition.

\begin{defn}
 \label{defn:laxmapnicecouples}
 Let $\fT$ and $\fT'$ be nice couples as in Definition \ref{defn:mapnicecouples}. A \emph{lax morphism}
 of nice couples is a map $\xi\colon(\C,*)\to(\C,*)$ satisfying all conditions in Definition
 \ref{defn:mapnicecouples} except, possibly, condition (5).
 We denote by $\LNC$ the category of nice couples and lax morphisms of nice couples.
\end{defn}
Note that $\NC$ is a subcategory of $\LNC$ containing all objects, but not all morphisms. Whenever
we refer to a morphism of nice couples without specifying the word ``lax'', we will assume that
condition (5) in Definition \ref{defn:mapnicecouples} holds.
 
We conclude the subsection with the following remark: if $D'\subset\C$ is a subspace homeomorphic to a disc and $\xi\colon(\C,*)\to(\C,*)$ satisfies properties (1),(2) and (4) from Definition \ref{defn:mapnicecouples}, then also $\xi^{-1}(D')$ is homeomorphic to a disc.

\subsection{Functoriality in \texorpdfstring{$\NC$}{NC}}
\label{subsec:functorialityHurNC}
In this subsection we prove the following theorem
\begin{thm}
 \label{thm:funnicecouple}
 The assignment $\fT\mapsto\Hur(\fT;\Q,G)$ extends to a functor from the category $\NC$ to the category of topological spaces.
\end{thm}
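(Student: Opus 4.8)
The plan is to send a configuration $\fc=(P,\psi,\phi)$ to a configuration supported on $\xi(P)$, whose monodromy is obtained by precomposing $(\psi,\phi)$ with a homomorphism of fundamental groups running from $\fG(\xi(P))$ back to $\fG(P)$. To build the latter, set $P'=\xi(P)$ and $\tP=\xi^{-1}(P')\supseteq P$. Since $P'$ is finite and $\xi$ is semi-algebraic, both $\C\setminus P'$ and $\tP$ are semi-algebraic, and by the local homotopy equivalence property recorded after Definition \ref{defn:mapnicecouples} the restriction $\xi\colon\C\setminus\tP\to\C\setminus P'$ is a homotopy equivalence; it therefore induces an isomorphism $\pi_1(\C\setminus\tP,*)\xrightarrow{\cong}\fG(P')$. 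Composing its inverse with $\fri^{\tP}_P\colon\pi_1(\C\setminus\tP,*)\to\fG(P)$ (Notation \ref{nota:fri}) yields a homomorphism $\xi^*_P\colon\fG(P')\to\fG(P)$, and I set $\xi_*(\fc)=(P',(\psi,\phi)\circ\xi^*_P)$.

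Next I would verify that $\xi^*_P$ restricts to a map of PMQ-group pairs $(\fQ_{\fT'}(P'),\fG(P'))\to(\fQ_\fT(P),\fG(P))$, so that $\xi_*(\fc)$ is a genuine element of $\HurTpQG$. A standard generator of $\fQ_{\fT'}(P')$ is a small clockwise loop $\beta'$ bounding a disc $D'$ with $D'\cap P'=\{z'_j\}$ and $D'\cap\cY'=\emptyset$, for some $z'_j\in P'\setminus\cY'$. As remarked after Definition \ref{defn:mapnicecouples}, $\xi^{-1}(D')$ is homeomorphic to a disc; it is disjoint from $\cY$ because $\xi(\cY)\subseteq\cY'$ (condition (3)), so every point of $P$ it contains lies in $\cX\setminus\cY$, and by condition (5) there is at most one such point. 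Since $\xi$ is orientation-preserving, $\xi^*_P([\beta'])$ is represented by a small clockwise loop around that point (or is trivial), hence lies in $\fQ_\fT(P)$. The same discussion yields the key injectivity fact: over a point of $\cX'\setminus\cY'$ the fibre of $\xi$ meets $P$ in at most one point, necessarily of $\cX\setminus\cY$, since points of $\cY$ map into $\cY'$. For $\xi=\Id$ one has $\xi^*_P=\Id$, so $\Id_*=\Id$.

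For continuity, fix $\fc$ with image $\fc'=\xi_*(\fc)=(P',\psi',\phi')$ and an adapted covering $\uU'=(U'_1,\dots,U'_{k'})$ of $P'$ for $\fT'$; I must produce an adapted covering $\uU=(U_1,\dots,U_k)$ of $P$ for $\fT$ with $\xi_*(\fU(\fc;\uU))\subseteq\fU(\fc';\uU')$. For each $z_i\in P$ let $U'_{j(i)}$ be the component of $\uU'$ containing $\xi(z_i)$, and choose a small convex semi-algebraic $U_i\ni z_i$ with disjoint closures, with $\xi(\bar U_i)\subseteq U'_{j(i)}$, and with $\bar U_i\cap\cY=\emptyset$ when $z_i\notin\cY$; this is possible by continuity of $\xi$ and closedness of $\cY$. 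For $\fc_1=(P_1,\psi_1,\phi_1)\in\fU(\fc;\uU)$ it is immediate that $\xi(P_1)\in\fU(P',\uU')$. The remaining, and principal, obstacle is the monodromy condition of Definition \ref{defn:normalneigh}, namely that $\fQ_{\fT'}(\xi(P_1),\uU')\subseteq\fQext_{\fT'}(\xi(P_1))_{\psi'_1}$ and that the associated composite recovers $(\psi',\phi')$, where $(\psi'_1,\phi'_1)=(\psi_1,\phi_1)\circ\xi^*_{P_1}$.

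To clear this obstacle I would use the injectivity fact together with the adaptedness of $\uU'$: a component $U'_j$ used by $\fQ_{\fT'}(\cdot,\uU')$ has $\bar U'_j\cap\cY'=\emptyset$ and contains the single point $z'_j\in P'\setminus\cY'$, whose $\xi$-preimage in $P$ is a single point; hence among the chosen sets only one $U_i$ maps into $U'_j$, so that $\xi^{-1}(U'_j)$ meets $\uU$ in the single component $U_i$ and $P_1\cap\xi^{-1}(U'_j)=P_1\cap U_i$. Consequently $\xi^*_{P_1}$ carries each generator of $\fQ_{\fT'}(\xi(P_1),\uU')$ into $\fQ_\fT(P_1,\uU)$, which by the hypothesis $\fc_1\in\fU(\fc;\uU)$ is contained in $\fQext_\fT(P_1)_{\psi_1}$; the extendability of $\psi'_1$ and the matching of extended values then follow from the uniqueness of $\psi_1^{\mathrm{ext}}$ in Proposition \ref{prop:fQextPpsi}, using that $\xi^*_{P_1}$ sends a decomposition with respect to $\fQ_{\fT'}(\xi(P_1))$ (which exists by Proposition \ref{prop:fQgeneratesfQext}) to one with respect to $\fQ_\fT(P_1)$ by the fibrewise disc argument. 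Finally, functoriality $(\xi'\circ\xi)_*=\xi'_*\circ\xi_*$ reduces to the identity $\xi^*_P\circ(\xi')^*_{\xi(P)}=(\xi'\circ\xi)^*_P$, a diagram chase along the tower $\C\setminus(\xi'\xi)^{-1}(P'')\subseteq\C\setminus\xi^{-1}(P')\subseteq\CmP$ using that preimages of fibres under $\xi$ are fibres of $\xi'\circ\xi$ and that the local homotopy equivalences compose. This exhibits $\fT\mapsto\HurTQG$ as a functor $\NC\to\Top$.
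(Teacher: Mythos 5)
Your proposal is correct and follows essentially the same route as the paper: you define $\xi_*$ by pulling back monodromies along $\xi^*\colon\fG(\xi(P))\cong\pi_1(\C\setminus\xi^{-1}(\xi(P)),*)\to\fG(P)$, verify via the disc-preimage and orientation argument that $\xi^*$ restricts to the fundamental PMQs (this is exactly the paper's Lemma \ref{lem:xirestrictsfQ}), and prove continuity by producing an adapted covering $\uU$ of $P$ with $\xi(\uU)\subset\uU'$. The only difference is one of detail: you spell out the monodromy-matching condition of Definition \ref{defn:normalneigh} in the continuity step, which the paper leaves implicit.
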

Fix two nice couples $\fT$ and $\fT'$ and let $\xi\colon\fT\to\fT'$ be a morphism of nice couples.
In the following we construct an induced map $\xi_*\colon\HurTQG\to\HurTpQG$.

Given a configuration $\fc=(P,\psi,\phi)$ in the set $\HurTQG$,
we associate with it a configuration $\fc'=(P',\psi',\phi')$ in the set $\HurTpQG$ as follows.
First, we define $P':=\xi(P)\in\Ran(\cX')$.
To define $\phi'$, note that $\xi$ restricts to a homotopy equivalence $\C\setminus\xi^{-1}(P')\to\CmP$;
 in particular we obtain an isomorphism of groups $\fG(P')\cong \pi_1\pa{\C\setminus\xi^{-1}(P'),*}$.
 Moreover the inclusion $\C\setminus\xi^{-1}(P')\hookrightarrow\CmP$ induces a map of groups
 $\pi_1\pa{\\C\setminus\xi^{-1}(P'),*}\to\fG(P)$.
 We denote by $\xi^*\colon\fG(P')\to\fG(P)$ the composition $\fG(P')\cong\pi_1\pa{\C\setminus\xi^{-1}(P'),*}\to \fG(P)$, and
 define $\phi'\colon\fG(P')\to G$ as the composition $\phi\circ\xi^*$.

\begin{lem}
 \label{lem:xirestrictsfQ}
 The map of groups $\xi^*\colon\fG(P')\to\fG(P)$
 restricts to a map of PMQs $\fQ_{\fT'}(P')\to\fQ_{\fT}(P)$.  
\end{lem}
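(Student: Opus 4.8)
The plan is to observe first that $\xi^*\colon\fG(P')\to\fG(P)$ is a homomorphism of groups, and that the PMQ structures on both $\fQ_{\fT'}(P')$ and $\fQ_{\fT}(P)$ are the ones inherited from the ambient free groups (Definition \ref{defn:fQP}): the quandle operation is conjugation, and the partial product is trivial. Hence a set-level inclusion $\xi^*\big(\fQ_{\fT'}(P')\big)\subseteq\fQ_{\fT}(P)$ automatically upgrades to a map of PMQs, since a group homomorphism preserves conjugation and sends $\one$ to $\one$, so all defined partial products (those involving $\one$) are preserved. Thus the entire content of the lemma is the set-level inclusion, which I would prove by exhibiting a geometric model for $\xi^*(\gamma)$ on each element $\gamma$.

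The element $\one$ is sent to $\one$, so fix $\gamma\in\fQ(P',z')$ for some point $z'\in P'$ lying in $\cX'\setminus\cY'$. Choose a small semi-algebraic disc $D'\subset\C$ with $z'$ in its interior and $D'\cap P'=\set{z'}$, and represent $\gamma$ by the clockwise boundary loop $\del D'$, joined to the basepoint along an arc. By the remark following Definition \ref{defn:mapnicecouples}, the preimage $D:=\xi^{-1}(D')$ is again homeomorphic to a disc, and it contains the compact fibre $\xi^{-1}(z')$ in its interior. Applying the local homotopy equivalence consequence of property (4) to the semi-algebraic set $D'\setminus\set{z'}$, the map $\xi$ restricts to a homotopy equivalence $D\setminus\xi^{-1}(z')\to D'\setminus\set{z'}$; together with orientation-preservation (property (2)) this identifies the generator of $\pi_1(D\setminus\xi^{-1}(z'))$ pulling back $[\del D']$ with the clockwise boundary loop $[\del D]$. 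Tracing through the two maps defining $\xi^*$ — the inverse of the global homotopy equivalence $\fG(P')\cong\pi_1(\C\setminus\xi^{-1}(P'))$ followed by the inclusion into $\fG(P)$ — I conclude that $\xi^*(\gamma)$ is represented in $\CmP$ by the clockwise loop $\del D$.

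It remains to analyse $\del D$ inside $\fG(P)$, and this is where the hypotheses on nice couples enter. The homotopy class of $\del D$ in $\CmP$ depends only on $P\cap D=P\cap\xi^{-1}(z')$, since $D'\cap P'=\set{z'}$. Every point $w$ of this set satisfies $\xi(w)=z'\in\cX'\setminus\cY'$; by property (3) we have $\xi(\cY)\subseteq\cY'$, so $w$ cannot lie in $\cY$, i.e.\ $w\in\cX\setminus\cY$; and by property (5) the fibre $\xi^{-1}(z')$ contains at most one point of $\cX\setminus\cY$. Thus $\abs{P\cap D}\le 1$: either $\del D$ bounds a disc in $\CmP$ and $\xi^*(\gamma)=\one\in\fQ_{\fT}(P)$, or $\del D$ is a clockwise simple loop around the unique point $w\in P\cap(\cX\setminus\cY)$, whence $\xi^*(\gamma)$ lies in the conjugacy class $\fQ(P,w)\subseteq\fQ_{\fT}(P)$. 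In either case $\xi^*(\gamma)\in\fQ_{\fT}(P)$, as required. The main obstacle is the second paragraph, namely pinning down $\xi^*(\gamma)=[\del D]$ on the nose from the zig-zag definition of $\xi^*$; once this geometric picture is secured, the decisive step is the short observation that properties (3) and (5) together force at most one point of $P$ — necessarily in $\cX\setminus\cY$ — in each fibre over $\cX'\setminus\cY'$.
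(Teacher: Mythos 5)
Your proposal is correct and follows essentially the same route as the paper's proof: pick a small disc $D'$ around $z'$ with $D'\cap P'=\set{z'}$, pass to the preimage disc $D=\xi^{-1}(D')$, identify $\xi^*[\del D']$ with the clockwise class $[\del D]$ using orientation-preservation, and use properties (3) and (5) to see that $D$ meets $P$ in at most one point, necessarily in $\cX\setminus\cY$ (in fact exactly one, since $z'\in\xi(P)$, so your ``bounds a disc in $\CmP$'' case is vacuous). The only cosmetic difference is that you obtain the homotopy equivalence between the annular regions by citing the local-homotopy-equivalence consequence of property (4) applied to $D'\setminus\set{z'}$, whereas the paper reproves it in this special case by comparing the cohomology long exact sequences of $(D,D\setminus z)$ and $(D',D'\setminus z')$.
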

The proof of Lemma \ref{lem:xirestrictsfQ} is in Subsection \ref{subsec:xirestrictsfQ} of the appendix.
We can now define $\psi':=\psi\circ\xi^*\colon\fQ_{\fT'}(P')\to\Q$.
Since $\xi^*\colon(\fQ_{\fT'}(P'),\fG(P'))\to(\fQ_{\fT}(P),\fG(P))$ is a map of PMQ-group pairs, also $(\psi',\phi')=(\psi,\phi)\circ\xi^*$ is a map of PMQ-group pairs; hence
$\fc'=(P',\psi',\phi')$ is a well-defined configuration in $\HurTpQG$. This construction gives a map of sets $\xi_*\colon\HurTQG\to\HurTpQG$; 
see Figure \ref{fig:functoriality}.

\begin{figure}[ht]
 \begin{tikzpicture}[scale=4,decoration={markings,mark=at position 0.38 with {\arrow{>}}}]
  \draw[dashed,->] (-.1,0) to (1.1,0);
  \draw[dashed,->] (0,-1.1) to (0,1.1);
  \node at (0,-1) {$*$};
  \fill[black, opacity=.2] (0,0) rectangle (1,1);
  \fill[black, opacity=.3] (.7,0) rectangle (1,1);
  \fill[pattern=horizontal lines, opacity=.4] (.3,0) rectangle (1,1);
  \node at (.1,.3){$\bullet$};\node at (.15,.28) {\tiny$z_1$}; 
  \node at (.4,.5){$\bullet$};\node at (.4,.56) {\tiny$z_2$};
  \node at (.9,.5){$\bullet$};\node at (.85,.52) {\tiny$z_3$};
  \node at (.3,1.05){$\cX$};  \node at (.9,.9){$\cY$};
  \draw[thick] (.3,.5) to (1, .5);
  \draw[thin, looseness=1.2, postaction={decorate}] (0,-1) to[out=85,in=-90]  (.05,.1) to[out=90,in=-90] (-.1,.3)  to[out=90,in=90] (.25,.3)  to[out=-90,in=90] node[below]{\tiny$a_1$} (.1,.1) to[out=-90,in=80] (0,-1);
  \draw[thin, looseness=1.2, postaction={decorate}] (0,-1) to[out=70,in=-90] (.45,.3)
  to[out=90,in=-90] (.3,.5) to[out=90,in=90] (.6,.5)  to[out=-90,in=90] node[below]{\tiny$a_2$} (.49,.3)   to[out=-90,in=67]  (0,-1);
  \draw[thin, looseness=1.2, postaction={decorate}] (0,-1) to[out=66,in=-90]  (.85,.3)
  to[out=90,in=-90] node[left]{\tiny$g_3$} (.8,.5) to[out=90,in=90] (.96,.5) to[out=-90,in=90] (.9,.3) to[out=-90,in=63]  (0,-1);
  \draw[dashed, looseness=1.2, postaction={decorate}] (0,-1) to[out=72,in=-90] (.4,.3)
  to[out=90,in=-90] (.26,.5) to[out=90,in=90] node[below]{\tiny$\fe(a_2)g_3$} (1.05,.5) to[out=-90,in=90] (.93,.3) to[out=-90,in=60]  (0,-1);
\begin{scope}[shift={(1.5,0)}]
  \draw[dashed,->] (-.1,0) to (1.1,0);
  \draw[dashed,->] (0,-1.1) to (0,1.1);
  \node at (0,-1) {$*$};
  \fill[black, opacity=.2] (0,0) rectangle (.7,1);
  \fill[black, opacity=.6] (.7,0) rectangle (.71,1);
  \node at (.3,1.05){$\cX'$};  \node at (.71,.94){$\cY'$};
  \node[anchor=west] at (.2,.3){$\bullet$};\node at (.1,.3) {\tiny$\xi_*(z_1)$}; 
  \node at (.71,.5){$\bullet$};\node at (.7,.55) {\tiny$\xi_*(z_2)=\xi_*(z_3)$};
  \draw[thin, looseness=1.2, postaction={decorate}] (0,-1) to[out=80,in=-90]  (.15,.1) to[out=90,in=-90] (-.1,.3) 
  to[out=90,in=90] (.4,.3)  to[out=-90,in=90] node[below]{\tiny$a_1$}(.2,.1)  to[out=-90,in=75] (0,-1);
  \draw[dashed, looseness=1.2, postaction={decorate}] (0,-1) to[out=72,in=-90] (.65,.3)
  to[out=90,in=-90] (.4,.5) to[out=90,in=90] node[above]{\tiny$\fe(a_2)g_3$} (1,.5) to[out=-90,in=90] (.7,.3) to[out=-90,in=60]  (0,-1);
\end{scope}
 \end{tikzpicture}
 \caption{On left, a configuration $\fc\in\Hur(\cX,\cY,\Q,G)$; on right, its image $\fc'\in\Hur(\cX',\cY';\Q,G)$
 along the map $\xi_*$ induced by a morphism of nice couples $\xi$. The morphism $\xi$ has the effect of collapsing
 horizontally a rectangular region of $\cX$ onto the vertical segment $\cY'$, and of expanding horizontally the complement
 of this rectangular region. The thick horizontal segment is the preimage along $\xi$ of $\xi(z_2)=\xi(z_3)$. The dashed loop on left
 is the image of the dashed loop on right along $\xi^*$.}
\label{fig:functoriality}
\end{figure}

\begin{proof}[Proof of Theorem \ref{thm:funnicecouple}]
It is left to check that the map $\xi_*$ constructed above is continuous.
Let $\fc\in\HurTQG$, denote $\fc'=\xi_*(\fc)$, and use Notation \ref{nota:fc};
let $\fU(\fc',\uU')\subset\HurTpQG$ be a normal neighbourhood associated with an adapted covering $\uU'$
of $P'$.
We have $\xi(P)\subset\uU'$, hence we can find an adapted covering $\uU$
of $P$, such that $\xi(\uU)\subset\uU'$.
Then the entire normal neighbourhood $\fU(\fc;\uU)\subseteq\HurTQG$
is mapped along $\xi_*$ inside $\fU(\fc',\uU')$.
Thus sending $\xi\mapsto\xi_*$ makes $\Hur(-;\Q,G)$ into a functor from $\NC$ to $\Top$.
\end{proof}

Note that if $\fT=(\cX,\cY)$ and $\fT'=(\cX',\cY')$ are two nice couples, and if $\cX\subseteq\cX'$, $\cY\subseteq\cY'$
and $\cY=\cX\cap\cY'$, then $\Id_{\C}$
is a morphism of nice couples $\fT\to\fT'$. The induced map $(\Id_{\C})_*\colon\HurTQG\to\HurTpQG$ is an inclusion of spaces: more precisely,
$\HurTQG$ contains all configurations of $\HurTpQG$ supported in $\cX$.

In particular, given a nice couple $\fT=(\cX,\cY)$, by Definition \ref{defn:nicecouple} the space $\cY$ is closed in $\cX\subseteq\bH$: this means that the closure $\bar{\cY}$ of $\cY$
in $\bH$ is contained in $\bar{\cX}$, and $\cY=\cX\cap\bar{\cY}$. Then $\Id_{\C}$ is
a morphism of nice couples $\fT=(\cX,\cY)\to\bar{\fT}:=(\bar{\cX},\bar{\cY})$.
Thus every
Hurwitz-Ran space $\HurTQG$ can be regarded as a subspace of a Hurwitz-Ran space $\Hur(\bar{\fT};\Q,G)$
associated with a nice couple of \emph{closed} subspaces of $\bH$.

\subsection{A weak form of enriched functoriality}
One can consider $\NC$ as a category enriched in topological spaces:
for all nice couples $\fT$ and $\fT'$ one can consider the compact-open topology on the
set of morphisms $\xi\colon\fT\to\fT'$, considered as a subset of all continuous maps $\xi\colon\C\to\C$.
The functor $\Hur(-;\Q,G)$ is then likely to be a $\Top$-enriched
functor from $\NC$ to $\Top$.  We will not attempt to prove this property in general,
and we will restrict our attention to the following proposition.
\begin{prop}
 \label{prop:functorialityhomotopy}
 Let $\fT=(\cX,\cY)$ and $\fT'=(\cX',\cY')$ be nice couples and let $(\Q,G)$ be a PMQ-group pair. Let $\cS$ be a
 topological space, and let $\cH\colon\C\times \cS\to\C$
 be a continuous map, such that for all $s\in \cS$ the map $\cH(-,s)\colon\C\to\C$ is a morphism of nice couples
 $\fT\to\fT'$ (see Definition \ref{defn:mapnicecouples}). Let
 \[
  \cH_*\colon\HurTQG\times \cS\to\HurTpQG
 \]
 be the map of sets defined by $\cH_*(\fc,s)=(\cH(-,s))_*(\fc)$. Then $\cH_*$ is continuous.
\end{prop}
\begin{proof}
 Fix $(\fc,s)\in\HurTQG\times \cS$, let $\fc'=\cH_*(\fc,s)$ and use Notation \ref{nota:fc}. Let $\uU'$ be an adapted
 covering of $P'$ and let $\fU(\fc',\uU')\subset\HurTpQG$ be the corresponding normal neighbourhood. By continuity of
 $\cH$ we can find a neighbourhood $V\subset \cS$ of $s$
 and an adapted covering $\uU$ of $P$ such that $\cH$ sends $\uU\times V$
 inside $\uU'$: here we regard $\uU$ and $\uU'$ as subsets of $\C$. Then $\cH_*$ sends the product
 neighbourhood $\fU(\fc,\uU)\times V\subset\HurTQG\times \cS$ inside $\fU(\fc',\uU')$.
\end{proof}

\subsection{Functoriality in \texorpdfstring{$\LNC$}{LNC}}
\label{subsec:LNCfunctoriality}
In this subsection we assume that $\Q$ is a complete PMQ (all products are defined), and write $\hQ=\Q$ to stress this choice; hence we work with the PMQ-group pair $(\hQ,G)$. We prove the following theorem.
\begin{thm}
 \label{thm:funlaxnicecouple}
 The assignment $\fT\mapsto\Hur(\fT;\hQ,G)$ extends to a functor from the category $\LNC$ to the category of topological spaces.
\end{thm}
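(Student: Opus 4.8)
The plan is to mimic the construction already carried out for genuine morphisms in Subsection \ref{subsec:functorialityHurNC}, and to identify precisely where condition (5) of Definition \ref{defn:mapnicecouples} was used, replacing that step with an argument that exploits the completeness of $\hQ$. Given a lax morphism $\xi\colon\fT\to\fT'$ and a configuration $\fc=(P,\psi,\phi)\in\Hur(\fT;\hQ,G)$, I would set $P'=\xi(P)$ and, exactly as before, use property (4) together with the theorem of Smale \cite{Smale57} to obtain the homotopy equivalence $\C\setminus\xi^{-1}(P')\to\CmP$ and hence the induced map of groups $\xi^*\colon\fG(P')\to\fG(P)$. The $G$-valued monodromy presents no difficulty: I define $\phi'=\phi\circ\xi^*\colon\fG(P')\to G$, and this makes sense for any lax morphism since it only involves the fundamental groups.

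The crucial difference appears in the definition of the refined monodromy $\psi'$. For a genuine morphism, Lemma \ref{lem:xirestrictsfQ} guarantees that $\xi^*$ carries $\fQ_{\fT'}(P')$ into $\fQ_{\fT}(P)$, precisely because condition (5) ensures that a loop spinning once around a single point $z\in\cX'\setminus\cY'$ pulls back to a loop spinning around at most one point of $P\setminus\cY$. Without condition (5), a generator of $\fQ_{\fT'}(P')$ around a point $z\in\cX'\setminus\cY'$ may pull back under $\xi^*$ to a loop enclosing several points of $P\setminus\cY$ inside the fibre $\xi^{-1}(z)$; the image $\xi^*(\fg)$ then lands not in $\fQ_{\fT}(P)$ but in the \emph{extended} fundamental PMQ $\fQext_{\fT}(P)$ (Definition \ref{defn:fQextP}), since by the remark after Definition \ref{defn:mapnicecouples} the preimage of a disc is again a disc, so $\xi^*(\fg)$ is represented by a simple closed curve bounding a disc disjoint from $\cY$. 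By Proposition \ref{prop:fQgeneratesfQext} such an element admits a decomposition $(\fg_1,\dots,\fg_r)$ with respect to $\fQ_{\fT}(P)$, and I would define $\psi'(\fg)$ as the product $\psi(\fg_1)\cdots\psi(\fg_r)$. Here completeness of $\hQ$ is exactly what makes this well-defined: every such product is automatically defined in a complete PMQ, so $\psi$ extends canonically over all of $\fQext_{\fT}(P)$, i.e. $\fQext_{\fT}(P)_{\psi}=\fQext_{\fT}(P)$, and $\psi'=\psiext\circ\xi^*$ via Proposition \ref{prop:fQextPpsi}. This yields a well-defined map of PMQ-group pairs $(\psi',\phi')\colon(\fQ_{\fT'}(P'),\fG(P'))\to(\hQ,G)$ and hence a configuration $\fc'\in\Hur(\fT';\hQ,G)$, giving the map of sets $\xi_*$.

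The remaining verifications are functoriality, continuity, and compatibility with the $\NC$ construction, and I expect these to be largely routine adaptations. Functoriality $(\xi'\circ\xi)_*=\xi'_*\circ\xi_*$ follows from the contravariant functoriality of $\xi\mapsto\xi^*$ on fundamental groups together with the uniqueness clause of Proposition \ref{prop:fQextPpsi}, which guarantees that the two ways of extending over $\fQext$ agree. Continuity is shown exactly as in the $\NC$ case: given a normal neighbourhood $\fU(\fc',\uU')$ of $\fc'=\xi_*(\fc)$, since $\xi(P)\subset\uU'$ and $\uU'$ is open one finds an adapted covering $\uU$ of $P$ with $\xi(\uU)\subset\uU'$, and then $\xi_*$ maps $\fU(\fc;\uU)$ into $\fU(\fc';\uU')$; the verification that the monodromy conditions of Definition \ref{defn:normalneigh} are preserved again uses completeness so that all the relevant products in $\hQ$ exist.

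The main obstacle I anticipate is the careful proof, replacing Lemma \ref{lem:xirestrictsfQ}, that $\xi^*$ sends $\fQ_{\fT'}(P')$ into $\fQext_{\fT}(P)$ rather than merely into $\fG(P)$. This requires showing that for a generator $\fg\in\fQ_{\fT'}(P')$ around a point $z\in\cX'\setminus\cY'$, represented by a small simple closed curve $\beta'$ bounding a disc $D'$ disjoint from $\cY'$, the preimage $\xi^{-1}(D')$ is a disc (using properties (1),(2),(4) and the remark after Definition \ref{defn:mapnicecouples}) whose interior is disjoint from $\cY$ --- the latter because $\xi(\cY)\subseteq\cY'$ by property (3), so $\xi^{-1}(\C\setminus\cY')\supseteq\C\setminus\cY$ forces $\xi^{-1}(D')\cap\cY=\emptyset$. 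Thus $\xi^*(\fg)$ is represented by the clockwise-oriented simple closed curve $\partial(\xi^{-1}(D'))$, which bounds a disc in $\C\setminus\cY$ and so lies in $\fQext_{\fT}(P)$ by definition. Once this geometric input is in place, the rest of the argument proceeds by the same bookkeeping as in the genuine-morphism case, with completeness of $\hQ$ absorbing every potential failure of a partial product to be defined.
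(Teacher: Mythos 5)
Your proposal is correct and follows essentially the same route as the paper: the paper packages the argument by passing to $\Hurext(\fT;\hQ,G)$ (monodromies defined on all of $\fQext(P)$, which completeness of $\hQ$ makes possible) and replacing Lemma \ref{lem:xirestrictsfQ} by Lemma \ref{lem:xirestrictsfQLNC}, whereas you equivalently keep monodromies on $\fQ(P)$ and set $\psi'=\psiext\circ\xi^*$ — the geometric input (preimage of a disc disjoint from $\cY'$ is a disc disjoint from $\cY$, with orientation-preserving boundary restriction) and the role of completeness are identical. The only blemish is the reversed inclusion in your last paragraph: property (3) gives $\cY\subseteq\xi^{-1}(\cY')$, i.e. $\xi^{-1}(\C\setminus\cY')\subseteq\C\setminus\cY$, which is what yields $\xi^{-1}(D')\cap\cY=\emptyset$.
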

Let $\fT=(\cX,\cY)$ be a nice couple, and let $\fc=(P,\psi,\phi)\in\Hur(\fT;\hQ,G)$.
By Definition \ref{defn:Hurset}, $\psi$ is a map of PMQs defined on $\fQ(P)$;
using the completeness of $\hQ$, Proposition \ref{prop:fQgeneratesfQext} implies the equality
$\fQext(P)=\fQext(P)_{\psi}$ (see also Definitions \ref{defn:fQextP} and \ref{defn:fQextPpsi}).
Proposition \ref{prop:fQextPpsi} yields a map of PMQ-group pairs
\[
(\psiext,\phi)\colon(\fQext(P),\fG(P))\to(\hQ,G),
\]
extending $(\psi,\phi)\colon(\fQ(P),\fG(P))\to(\hQ,G)$.
\begin{defn}
 \label{defn:Hurext}
 We define a set $\Hurext(\fT;\hQ,G)$: it contains triples $\fc=(P,\psi,\phi)$, where $P\in\Ran(\cX)$ is
 a finite subset of $\cX$, and $(\psi,\phi)\colon(\fQext(P),\fG(P))\to(\hQ,G)$ is a map of PMQ-group pairs.
 \end{defn}
 The previous discussion implies that the sets
 $\Hurext(\fT;\hQ,G)$ and $\Hur(\fT;\hQ,G)$ are in natural bijection. We can use this bijection to transfer the topology
 of $\Hur(\fT;\hQ,G)$ to $\Hurext(\fT;\hQ,G)$: in particular, for a configuration $\fc=(P,\psi,\phi)\in\Hurext(\fT;\hQ,G)$
 and for an adapted covering $\uU$ of $P$,
 the normal neighbourhood $\fU(\fc,\uU)\subset\Hurext(\fT;\hQ,G)$ contains all configurations $(P',\psi',\phi')$
 such that
 \begin{itemize}
 \item $P'\subset\uU$; as a consequence there is a natural inclusion of PMQ-group pairs
 $(\fQ(\uU),\fG(\uU))\subseteq(\fQ(P',\uU),\fG(P'))$ (see Definition \ref{defn:fGUfQPU});
 \item the following composition of maps of PMQ-group pairs is equal to the restriction of $(\psi,\phi)$
 on the PMQ-group pair $(\fQ(P),\fG(P))$:
 \[
 \begin{tikzcd}
  (\fQ(P),\fG(P)) & (\fQ(\uU),\fG(\uU)) \ar[l,"\cong"'] \ar[r,"\subseteq"] & (\fQ(P',\uU),\fG(P')) \ar[ld,"\subseteq"'] \\
  & (\fQext(P'),\fG(P')) \ar[r,"{(\psipext,\phi')}"] & (\hQ,G).
 \end{tikzcd}
 \]
\end{itemize}
Given a \emph{lax} morphism of nice couples $\xi\colon\fT\to\fT'$, we can now follow the same
procedure used in Subsection \ref{subsec:functorialityHurNC} and define a continuous map
$\xi_*\colon\Hurext(\fT;\hQ,G)\to\Hurext(\fT',\hQ,G)$. The only difference is that
Lemma \ref{lem:xirestrictsfQ} is replaced by the following lemma, whose proof is in Subsection
\ref{subsec:xirestrictsfQLNC} of the appendix.
\begin{lem}
 \label{lem:xirestrictsfQLNC}
Let $\xi\colon\fT\to\fT'$ be a lax morphism of nice couples,
let $P\subset\cX$ and let $P'=\xi(P)\subset\cX'$. Then the map of groups $\xi^*\colon\fG(P')\to\fG(P)$
restricts to a map $\fQext_{\fT'}(P')\to\fQext_{\fT}(P)$ of PMQs.
\end{lem}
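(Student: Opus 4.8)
The plan is to reduce the statement to the purely set-theoretic inclusion $\xi^*(\fQext_{\fT'}(P'))\subseteq\fQext_{\fT}(P)$. Indeed, $\xi^*$ is a homomorphism of groups, so it automatically respects conjugation and products; since both $\fQext_{\fT'}(P')$ and $\fQext_{\fT}(P)$ carry the PMQ structure \emph{inherited} from their ambient fundamental groups (Definition \ref{defn:fQextP}), once we know that $\xi^*$ carries the former set into the latter, it is automatically a map of PMQs. Moreover $\fQext_{\fT}(P)$ is by definition a union of conjugacy classes, hence closed under conjugation in $\fG(P)$; as $\xi^*$ sends conjugate elements to conjugate elements, it suffices to check the inclusion on a single representative of each conjugacy class occurring in $\fQext_{\fT'}(P')$. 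So I fix $\fg'\in\fQext_{\fT'}(P')$ and choose, up to conjugacy, an oriented simple closed curve $\beta'\subset\C\setminus P'$ representing it, which spins clockwise, is disjoint from $\cY'$, and bounds a disc $D'\subset\C\setminus\cY'$.

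The heart of the argument is a geometric construction of a curve representing $\xi^*(\fg')$. I consider the preimage $\xi^{-1}(D')$. Since a lax morphism satisfies properties (1), (2) and (4) of Definition \ref{defn:mapnicecouples}, the remark preceding Definition \ref{defn:laxmapnicecouples} applies and shows that $\xi^{-1}(D')$ is again homeomorphic to a disc; let $\beta=\partial\xi^{-1}(D')$ be its boundary, a simple closed curve. A short limit argument shows $\xi(\beta)\subseteq\beta'$: a boundary point of $\xi^{-1}(D')$ is a limit of points mapped outside $D'$, hence its image lies in $\partial D'=\beta'$. Using property (3), namely $\xi(\cY)\subseteq\cY'$, together with $\beta'\cap\cY'=\emptyset$ and $D'\cap\cY'=\emptyset$, I then obtain $\beta\cap\cY=\emptyset$ and $\xi^{-1}(D')\subset\C\setminus\cY$; similarly, since $\beta'\cap P'=\emptyset$, it follows that $\xi(\beta)\cap P'=\emptyset$, so $\beta\subset\C\setminus\xi^{-1}(P')\subseteq\CmP$ (recall $P\subseteq\xi^{-1}(P')$). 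Thus $\beta$ represents a class in $\fG(P)$ and bounds the disc $\xi^{-1}(D')\subset\C\setminus\cY$.

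It remains to identify the conjugacy class $[\beta]\in\fG(P)$ with $\xi^*(\fg')$ and to check the orientation. Recall that $\xi^*$ is defined by first pulling back along the homotopy equivalence $\xi\colon\C\setminus\xi^{-1}(P')\to\C\setminus P'$ and then including $\C\setminus\xi^{-1}(P')\subseteq\CmP$. Applying the Smale-type statement recorded after Definition \ref{defn:mapnicecouples} (an application of \cite{Smale57}, using only properties (1), (2), (4)) to the semi-algebraic pair $\beta'\subset D'$, the map $\xi$ restricts to a homotopy equivalence of the corresponding disc pairs, and in particular to a map $\beta\to\beta'$ which, by the orientation-preserving hypothesis (property (2)), has degree $+1$ and preserves the clockwise sense. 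Hence $\xi\circ\beta$ is freely homotopic to $\beta'$ in $\C\setminus P'$, so $[\beta]$ is exactly the class corresponding to $\fg'$ under $\fG(P')\cong\pi_1(\C\setminus\xi^{-1}(P'),*)$, and therefore $\xi^*(\fg')$ is conjugate to $[\beta]$. Since $\beta$ is a clockwise simple closed curve disjoint from $\cY$ bounding a disc in $\C\setminus\cY$, we conclude $[\beta]\in\fQext_{\fT}(P)$, and hence $\xi^*(\fg')\in\fQext_{\fT}(P)$, as desired.

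The main obstacle I anticipate is the orientation and degree bookkeeping in the last paragraph: one must ensure that $\xi$ carries $\beta$ onto $\beta'$ with degree exactly $+1$ (rather than some other degree, or with reversed orientation), so that $\xi^*(\fg')$ is represented by the \emph{single} clockwise simple closed curve $\beta$. This is precisely where the orientation-preserving condition (property (2)) enters decisively, in combination with the fact that \cite{Smale57} makes $\xi$ a homotopy equivalence of the relevant disc-with-boundary pairs. By contrast, condition (5) plays no role here, which is exactly why the argument goes through for lax morphisms and for $\fQext$, where the bounding disc $D'$ is allowed to contain several points of $P'$.
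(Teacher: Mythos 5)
Your geometric strategy is exactly the paper's: represent $\fg'$ (up to conjugacy) by a clockwise simple closed curve $\beta'$ bounding a disc $D'\subset\C\setminus\cY'$, pull back to the disc $D=\xi^{-1}(D')$, and show that $\xi$ carries $\beta=\del D$ onto $\beta'$ with degree $+1$, so that $\xi^*(\fg')$ lies in the conjugacy class of the clockwise simple curve $\beta$, which belongs to $\fQext_{\fT}(P)$. The reduction to conjugacy classes, the verifications that $D\subset\C\setminus\cY$ and $\beta\subset\CmP$, and the observation that condition (5) plays no role are all correct and agree with the paper.

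The gap is in the justification of the degree-one claim, which you yourself identify as the crux. The Smale-type remark applied to the pair $\beta'\subset D'$ gives a homotopy equivalence $\xi\colon\xi^{-1}(\beta')\to\beta'$, but $\beta=\del\xi^{-1}(D')$ is in general a \emph{proper} subset of $\xi^{-1}(\beta')$: a fibre over a point of $\beta'$ may have non-empty interior, and such interior points lie in $\mathring{D}$ rather than on $\del D$. So one cannot simply ``restrict'' that homotopy equivalence to a map $\beta\to\beta'$ and read off that its degree is $\pm1$. Moreover, even granting degree $\pm1$, pinning the sign to $+1$ from condition (2) requires relating the degree of $\xi|_{\beta}$ to the map $\xi^*\colon H^2_{c}(\C)\to H^2_{c}(\C)$; this is not a formal consequence of the phrase ``orientation-preserving''. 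The paper closes both gaps at once by introducing an auxiliary closed disc $K'\subset\mathring{D}'$ containing $P'\cap D'$, setting $K=\xi^{-1}(K')$, noting that the inclusions $\beta\hookrightarrow D\setminus K$ and $\beta'\hookrightarrow D'\setminus K'$ are homotopy equivalences, and comparing the cohomology long exact sequences of the pairs $(D,D\setminus K)$ and $(D',D'\setminus K')$: the relevant map $H^2(D',D'\setminus K')\to H^2(D,D\setminus K)$ identifies with $\xi^*\colon H^2_{c}(\C)\to H^2_{c}(\C)$, which is the identity by condition (2). You need to supply an argument of this kind (or an equivalent winding-number computation of $\beta$ around the compact connected set $\xi^{-1}(w')$ for a point $w'\in\mathring{D}'$) to make the pivotal step precise.
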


Continuity of $\xi_*\colon\Hurext(\fT;\hQ,G)\to\Hurext(\fT';\hQ,G)$ is proved in the same
way as in the case of a (non-lax) morphism of nice couples; similarly one can generalise
Proposition \ref{prop:functorialityhomotopy} to the following.

\begin{prop}
 \label{prop:functorialityhomotopyLNC}
 Let $\fT=(\cX,\cY)$ and $\fT'=(\cX',\cY')$ be nice couples and let $(\hQ,G)$ be a PMQ-group pair with $\hQ$ complete. Let $\cS$ be a topological space,
 and let $\cH\colon\C\times \cS\to\C$
 be a continuous map, such that for all $s\in \cS$ the map $\cH(-,s)\colon\C\to\C$ is a lax morphism of nice couples
 $\fT\to\fT'$ (see Definition \ref{defn:mapnicecouples}). Let
 \[
  \cH_*\colon\Hur(\fT;\hQ,G)\times \cS\to\Hur(\fT';\hQ,G)
 \]
 be the map of sets defined by $\cH_*(\fc,s)=(\cH(-,s))_*(\fc)$. Then $\cH_*$ is continuous.
\end{prop}

\section{Applications of functoriality}
\label{sec:applfunctoriality}
In this section we apply the results from Section \ref{sec:functoriality} to obtain
basic information about Hurwitz-Ran spaces; moreover we introduce the operation of \emph{external product}.

\subsection{Product structure for normal neighbourhoods}
The first application combines the discussion of Subsection \ref{subsec:changeambient} with the functoriality
with respect to inclusions of nice couples.
\begin{prop}
 \label{prop:productneighbourhood}
 Let $\fc\in\Hur(\fT;\Q,G)$, use Notations \ref{nota:fc} and \ref{nota:uUcovering},
 and let $\uU$ be an adapted covering of $P$. Then there exist configurations
 $\fc'_i\in\Hur(\fT;\Q,G)$ supported on $\set{z_i}$ and a homeomorphism
 \[
  \fU(\fc,\uU)\cong\prod_{i=1}^k\fU(\fc'_i,U_i).
 \]
\end{prop}
\begin{proof}
We can fix arcs $\arc_1,\dots,\arc_k$ as in Definition \ref{defn:admgenset}: the arc $\arc_i$ joins $*$ with
a point on $\del U_i$. For all $1\le i\le k$ we define $\bT_i=\arc_i\cup \bar{U}_i$, $\cX_i=\cX\cap U_i$ and
$\cY_i=\cY\cap U_i$. Let moreover $\bT=\bigcup_{i=1}^k\bT_i$, $\ucX=\bigcup_{i=1}^k\cX_i$ and $\ucY=\bigcup_{i=1}^k\cY_i$. Finally, let $\fT_i$ be the nice couple $(\cX_i,\cY_i)$, and let
$\ufT$ be the nice couple $(\ucX,\ucY)$.

We have an open inclusion $(\Id_\C)_*\colon \Hur(\ufT;\Q,G)\subset\Hur(\fT;\Q,G)$ restricting to
a homeomorphism of normal neighbourhoods $\fU_{\ufT}(\fc,\uU)\cong\fU_{\fT}(\fc,\uU)$.
In fact, the normal neighbourhood $\fU_{\ufT}(\fc,\uU)$ coincides with the entire space
$\Hur(\ufT;\Q,G)$, because $\ucX$ is contained in $\uU$.
Since $\ufT$ is a nice couple of spaces that are contained in the interior
of $\bT$, we can identify $\Hur(\ufT;\Q,G)$ with $\Hur^{\bT}(\ufT;\Q,G)$ along $\fri^{\C}_{\bT}$.
Via the decomposition given in Definition \ref{defn:sqcupHur} we write
\[
\Hur^{\bT}(\ufT;\Q,G)\cong\prod_{i=1}^k \Hur^{\bT_i}(\fT_i;\Q,G),
\]
where the homeomorphism is given by the product of the maps $\fri^{\bT}_{\bT_i}$.
We thus obtain a homeomorphism $\fU_{\fT}(\fc;\uU)\cong \prod_{i=1}^k \Hur^{\bT_i}(\fT_i;\Q,G)$.

Let $\fc\in\fU_{\fT}(\fc;\uU)$ correspond to the sequence $(\fc_1,\dots,\fc_k)$ along the above identification, where $\fc_i\in\Hur^{\bT_i}(\fT_i;\Q,G)$.
Note that $\fc_i$ is supported on the singleton $\set{z_i}$. We consider the composition of the homeomorphism
$(\fri^{\C}_{\bT_i})^{-1}\colon\Hur^{\bT_i}(\fT_i;\Q,G)\to\Hur(\fT_i;\Q,G)$
with the open inclusion
$(\Id_\C)_*\colon \Hur(\fT_i;\Q,G)\subset\Hur(\fT;\Q,G)$,
giving an embedding of $\Hur^{\bT_i}(\fT_i;\Q,G)$
in $\Hur(\fT;\Q,G)$. If we denote by $\fc'_i=(\Id_{\C})_*\circ(\fri^{\C}_{\bT_i})^{-1}(\fc_i)$
the image of $\fc_i$ along this embedding, we get a homeomorphism
\[
 (\Id_{\C})_*\circ(\fri^{\C}_{\bT_i})^{-1}\colon \Hur^{\bT_i}(\fT_i;\Q,G)\cong \fU_{\fT}(\fc'_i;U_i)\subset\Hur(\fT;\Q,G).
\]
\end{proof}
The homeomorphism of Proposition \ref{prop:productneighbourhood}
depends in general on the arcs $\arc_i$.

\subsection{Three useful homeomorphisms}
In this subsection we prove three homeomorphisms between Hurwitz-Ran spaces.
First, let $\fT=(\cX,\cY)$ be a nice couple
and let $(\Q,G,\fe,\fr)$ be a PMQ-group pair.
The map of PMQs $\fe\colon\Q\to G$ has an adjoint map of groups $\cG(\fe)\colon\cG(\Q)\to G$,
so that the couple of maps
$\Id_{\Q}\colon\Q\to\Q$ and $\cG(\fe)\colon\cG(\Q)\to G$ yields a map of PMQ-group pairs $(\Id_{\Q},\cG(\fe))\colon(\Q,\cG(\Q))\to(\Q,G)$. By functoriality we obtain a continuous map
\[
 (\Id_{\Q},\cG(\fe))_*\colon \Hur(\fT;\Q,\cG(\Q))\to \Hur(\fT;\Q,G).
\]
\begin{lem}
 \label{lem:usefulhomeocYempty}
 Let $\fT$ be a nice couple of the form $(\cX,\emptyset)$; then the above map $(\Id_{\Q},\cG(\fe))_*$ is a homeomorphism.
\end{lem}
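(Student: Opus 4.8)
The plan is to exhibit $(\Id_{\Q},\cG(\fe))_*$ as a continuous open bijection, and hence a homeomorphism. The forward map is already continuous by the functoriality established in Theorem \ref{thm:funPMQgroup}, so the work lies in bijectivity and in openness. The whole argument rests on the following consequence of the hypothesis $\cY=\emptyset$: every point of $P$ then lies in $\cX\setminus\cY$, so $l=k$ in Notation \ref{nota:P}, and an admissible generating set $\gen_1,\dots,\gen_k$ of $\fG(P)$ (Definition \ref{defn:admgenset}) consists entirely of elements of $\fQ(P)$ and freely generates $\fG(P)$. Consequently, for any configuration $(P,\psi,\phi)$ lying in either $\Hur(\fT;\Q,\cG(\Q))$ or $\Hur(\fT;\Q,G)$, compatibility of the map of PMQ-group pairs $(\psi,\phi)$ with the structure map of the target forces $\phi(\gen_i)$ to be the image of $\psi(\gen_i)$ under that structure map for all $i$; since $\fG(P)$ is free on the $\gen_i$, this determines $\phi$ entirely from $\psi$. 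Thus the forgetful assignment $(P,\psi,\phi)\mapsto(P,\psi)$ is injective on both Hurwitz sets.

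Injectivity of $(\Id_{\Q},\cG(\fe))_*$ is then immediate, since the map fixes the pair $(P,\psi)$, from which $\phi$ is recovered. For surjectivity, given $(P,\psi,\phi')\in\Hur(\fT;\Q,G)$ I would define $\phi\colon\fG(P)\to\cG(\Q)$ on the free generators by $\phi(\gen_i)=\fe_{\cG(\Q)}(\psi(\gen_i))$, where $\fe_{\cG(\Q)}\colon\Q\to\cG(\Q)$ is the unit of the adjunction, i.e.\ the structure map of $(\Q,\cG(\Q))$. By \cite[Theorem 3.3]{Bianchi:Hur1} the pair $(\psi,\phi)$ assembles into a map of PMQ-group pairs $(\fQ(P),\fG(P))\to(\Q,\cG(\Q))$, so that $(P,\psi,\phi)$ is a configuration in $\Hur(\fT;\Q,\cG(\Q))$; that it is a preimage of $(P,\psi,\phi')$ follows from the adjunction identity $\cG(\fe)\circ\fe_{\cG(\Q)}=\fe$, which gives $\cG(\fe)(\phi(\gen_i))=\fe(\psi(\gen_i))=\phi'(\gen_i)$ on each generator, whence $\cG(\fe)\circ\phi=\phi'$ as homomorphisms out of a free group.

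To promote the continuous bijection to a homeomorphism I would prove it is open by matching basic neighbourhoods. Fix $\fc\in\Hur(\fT;\Q,\cG(\Q))$, set $\fc'=(\Id_{\Q},\cG(\fe))_*(\fc)$, and let $\uU$ be an adapted covering of $P$ (common to $\fc$ and $\fc'$, as the map fixes $P$); I claim $(\Id_{\Q},\cG(\fe))_*$ restricts to a bijection $\fU(\fc,\uU)\to\fU(\fc',\uU)$. The defining conditions of a normal neighbourhood in Definition \ref{defn:normalneigh} split into the Ran-space condition $P''\in\fU(P,\uU)$, the condition $\fQ(P'',\uU)\subseteq\fQext(P'')_{\psi''}$, and the requirement that the relevant composition of PMQ-group pair maps equal $(\psi,\phi)$. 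The first two conditions and the $\Q$-valued component of the third concern only the pair $(P'',\psi'')$, which the map leaves unchanged; and when $\cY=\emptyset$ the $G$-valued component of the third is automatic, the group monodromy being the image under $\fe$ of the $\Q$-valued monodromy. Hence membership of $\fc''$ in $\fU(\fc,\uU)$ is equivalent to membership of its image in $\fU(\fc',\uU)$, so the bijection carries basic opens to basic opens and the map is open.

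The step I expect to be the main obstacle is the surjectivity verification, namely checking that the homomorphism $\phi$ forced on the generators is compatible not only with the structure map $\fe$ but also with the conjugation action $\fr$, so that $(\psi,\phi)$ is a genuine map of PMQ-group pairs into $(\Q,\cG(\Q))$; this is exactly the content packaged by the universal-property statement \cite[Theorem 3.3]{Bianchi:Hur1}, which I would invoke rather than reprove. The matching of normal neighbourhoods is conceptually clear but demands care in confirming that the automatically satisfied $G$-component imposes no constraint beyond the $\Q$-component, which is precisely where the hypothesis $\cY=\emptyset$ is used a second time.
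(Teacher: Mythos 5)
Your proof is correct and follows essentially the same route as the paper: both exploit that for $\cY=\emptyset$ an admissible generating set lies entirely in $\fQ(P)$ and freely generates $\fG(P)$, so $\phi$ is determined by $\psi$ via $\eta_{\Q}$, with \cite[Theorem 3.3]{Bianchi:Hur1} supplying the converse construction, and both conclude by matching normal neighbourhoods bijectively. Your elaborations on the adjunction identity $\cG(\fe)\circ\eta_{\Q}=\fe$ and on why the $G$-component of the normal-neighbourhood condition is automatic are sound and merely spell out what the paper leaves implicit.
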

The proof of Lemma \ref{lem:usefulhomeocYempty} is in Subsection \ref{subsec:usefulhomeocYempty}
of the appendix.
Roughly speaking, Lemma \ref{lem:usefulhomeocYempty} says that if we consider a nice couple of the
form $(\cX,\emptyset)$, then the space $\Hur(\fT;\Q,G)$ \emph{only depends on $\Q$}:
the monodromy $\psi$ uniquely determines the monodromy $\phi$. This motivates the following
notation, which can be thought of as an \emph{absolute} definition of Hurwitz-Ran spaces, whereas the general
one, given in Definition \ref{defn:Hurset} and depending on a nice \emph{couple} and a PMQ-group \emph{pair},
can be considered as the general, \emph{relative} definition.
\begin{nota}
 \label{nota:absHur}
 For a subspace $\cX\subset\bH$ and a PMQ $\Q$ we denote by
 $\Hur(\cX;\Q)$ the space $\Hur(\cX,\emptyset;\Q,\cG(\Q))$. A configuration $\fc\in\Hur(\cX;\Q)$ is usually
 presented as $(P,\psi)$ instead of $(P,\psi,\phi)$ as in Notation \ref{nota:fc}, since $\phi$ is uniquely
 determined by $\psi$.
\end{nota}
For the second homeomorphism, let $\fT=(\cX,\cY)$ be any nice couple, and consider the PMQ-group pair $(G,G)$, where $G$ is considered as a PMQ with full product,
the first $G$ maps to the second $G$ by $\Id_G$ and the second $G$ acts on the first $G$ by right conjugation.
Then $\Id_{\C}$ is a morphism of nice couples $(\cX,\cY)\to(\cX,\cX)$.
By functoriality we obtain a continuous map
\[
 (\Id_{\C})_*\colon \Hur(\fT;G,G)\to \Hur(\cX,\cX;G,G).
\]

\begin{lem}
 \label{lem:usefulhomeoGG}
 The above map $(\Id_{\C})_*$ is a homeomorphism.
\end{lem}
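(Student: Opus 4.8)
The plan is to identify both Hurwitz spaces with the same underlying set of pairs $(P,\phi)$, check that $(\Id_\C)_*$ realises this identification, and then compare the two topologies through their normal neighbourhoods. First I would analyse the set level. Since the PMQ-group pair is $(G,G)$ with structure map $\fe=\Id_G$, the compatibility $\fe\circ\psi=\phi|_{\fQ(P)}$ required of a map of PMQ-group pairs $(\psi,\phi)\colon(\fQ(P),\fG(P))\to(G,G)$ forces $\psi=\phi|_{\fQ_\fT(P)}$; thus a configuration of $\Hur(\fT;G,G)$ is determined by the pair $(P,\phi)$, with $P\subset\cX$ finite and $\phi\colon\fG(P)\to G$ a homomorphism. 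On the target side the nice couple is $(\cX,\cX)$, so $\cX\setminus\cX=\emptyset$ and $\fQ_{(\cX,\cX)}(P)=\set{\one}$; hence $\psi$ carries no information there either, and a configuration is again a pair $(P,\phi)$. Unravelling the construction of Subsection \ref{subsec:functorialityHurNC} for $\xi=\Id_\C$ (so $P'=P$, $\xi^*=\Id$, $\phi'=\phi$) shows that $(\Id_\C)_*$ is exactly the resulting bijection $(P,\phi|_{\fQ_\fT(P)},\phi)\mapsto(P,\set{\one}\mapsto\one,\phi)$. It is continuous by Theorem \ref{thm:funnicecouple}, so the whole point is to prove it is open.

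The crux is to show that for a covering $\uU$ adapted to $\fT=(\cX,\cY)$ the map $(\Id_\C)_*$ carries $\fU_\fT(\fc;\uU)$ onto $\fU_{(\cX,\cX)}((\Id_\C)_*\fc;\uU)$; note $\uU$ is automatically adapted to $(\cX,\cX)$, since the second adaptedness clause of Definition \ref{defn:uUcovering} is vacuous when $\cY$ equals the whole space. Membership of $\fc'=(P',\psi',\phi')$ in $\fU_\fT(\fc;\uU)$ (Definition \ref{defn:normalneigh}) asks that $P'\in\fU(P,\uU)$, that $\fQ_\fT(P',\uU)\subseteq\fQext_\fT(P')_{\psi'}$, and that a certain composition of PMQ-group pair maps equals $(\psi,\phi)$. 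Here I would use completeness of $G$: Proposition \ref{prop:fQgeneratesfQext} gives $\fQext_\fT(P')_{\psi'}=\fQext_\fT(P')$, so the inclusion clause is automatic, and $\fe=\Id_G$ gives $\psi'=\phi'|_{\fQ_\fT(P')}$, whence $\psipext(\fg)=\prod_j\psi'(\fg_j)=\phi'(\fg)$ for every $\fg\in\fQext_\fT(P')$ and any decomposition $\fg=\prod_j\fg_j$. Therefore the PMQ-component of the required composition is merely the restriction of $\phi'$, and it recovers $\psi=\phi|_{\fQ_\fT(P)}$ as soon as the group component recovers $\phi$. So membership in $\fU_\fT(\fc;\uU)$ reduces to the two conditions $P'\in\fU(P,\uU)$ and $\phi'|_{\fG(\uU)}=\phi$ under $\fG(\uU)\cong\fG(P)$. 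Running the same reasoning for $(\cX,\cX)$ — where now $\fQ_{(\cX,\cX)}(P',\uU)=\fQext_{(\cX,\cX)}(P')=\set{\one}$ — shows membership in $\fU_{(\cX,\cX)}((\Id_\C)_*\fc;\uU)$ is governed by the very same two conditions, neither of which mentions $\cY$. Hence the two normal neighbourhoods correspond under $(\Id_\C)_*$.

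To finish I would check that coverings adapted to $(\cX,\cY)$ form a neighbourhood basis on both sides. For $\Hur(\fT;G,G)$ this is the definition; for $\Hur(\cX,\cX;G,G)$ I would prove these coverings are cofinal among $(\cX,\cX)$-adapted ones by refinement. Given any $(\cX,\cX)$-adapted $\uU$, shrink each $U_i$ that contains a point $z_i\in\cX\setminus\cY$ to a smaller convex semi-algebraic set whose closure misses $\cY$; this is possible because $\cY$ is closed in $\cX$, so that a point of $\cX\setminus\cY$ cannot lie in the closure $\bar{\cY}$ taken in $\C$ (any such limit point lying in $\cX$ would already belong to $\cY$), whence $z_i\notin\bar{\cY}$ and a small disc about $z_i$ avoids $\cY$. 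The refined covering is adapted to $(\cX,\cY)$ and, being finer, gives a smaller normal neighbourhood, so the $(\cX,\cY)$-adapted neighbourhoods are cofinal. Combining this with the previous paragraph, $(\Id_\C)_*$ sends a basis to a basis and is therefore an open continuous bijection, i.e.\ a homeomorphism. I expect the genuine content to lie in the middle step: it is completeness of $G$ that collapses the PMQ-level monodromy condition onto the purely group-theoretic one, making the normal neighbourhoods of the two couples literally agree; the refinement argument is then routine.
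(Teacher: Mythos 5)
Your proof is correct and follows essentially the same route as the paper: identify configurations on both sides with pairs $(P,\phi)$ using $\fe=\Id_G$, and observe that $(\Id_\C)_*$ carries the normal neighbourhood $\fU_{\fT}(\fc;\uU)$ bijectively onto $\fU_{(\cX,\cX)}(\fc';\uU)$ for any $\fT$-adapted covering $\uU$ (which is automatically $(\cX,\cX)$-adapted); you merely spell out the neighbourhood comparison, which the paper asserts in one line. Note that your final cofinality/refinement step is superfluous: openness already follows from the fact that each basic open set of the source is sent to an open set of the target, without needing the images to form a basis.
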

The proof of Lemma \ref{lem:usefulhomeoGG} is in Subsection \ref{subsec:usefulhomeoGG} of the appendix.

For the third homeomorphism, let $(\Q,G)$ be any PMQ-group pair and
let $\fT$ be a nice couple of the form $(\cX,\cX)$; then for all
finite subset $P\subset\cX$ we have $\fQ_{\fT}(P)=\set{\one}$; in particular for all
$\fc=(P,\psi,\phi)\in\Hur(\fT;\Q,G)$ we have that $\psi\colon\fQ_{\fT}(P)\to\Q$ is the trivial
map of PMQs: roughly speaking, this means that
we can replace $\Q$ by another PMQ fitting with $G$ into
a PMQ-group pair, without changing the topology of $\Hur(\fT;\Q,G)$. For instance we can consider
the map of PMQ-group pairs $(\fe,\Id_G)\colon(\Q,G)\to(G,G)$, thus replacing $\Q$ by $G$.
We obtain the following lemma.
\begin{lem}
\label{lem:usefulhomeoXXGG}
For all $\cX\subset\bH$ and all PMQ-group pair $(\Q,G)$ the following map is a homeomorphism
\[
 (\fe,\Id_G)_*\colon\Hur(\cX,\cX;\Q,G)\to\Hur(\cX,\cX;G,G).
\]
\end{lem}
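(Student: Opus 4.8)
The plan is to recognise $(\fe,\Id_G)_*$ as a continuous bijection which is moreover open, by checking that it carries each basic normal neighbourhood bijectively onto a normal neighbourhood; continuity is free from Theorem \ref{thm:funPMQgroup}. The essential point, already signalled in the paragraph preceding the lemma, is that for the nice couple $\fT=(\cX,\cX)$ one has $\fQ_{\fT}(P)=\set{\one}$ for every finite $P\subset\cX$, since no point of $P$ lies in $\cX\setminus\cY=\emptyset$.

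First I would unwind the two sets. A configuration in either $\Hur(\cX,\cX;\Q,G)$ or $\Hur(\cX,\cX;G,G)$ has its $\Q$-valued (respectively $G$-valued) monodromy $\psi$ equal to the unique map of PMQs out of $\set{\one}$, so it is determined by the pair $(P,\phi)$ with $\phi\colon\fG(P)\to G$ an arbitrary homomorphism. Under this identification $(\fe,\Id_G)_*$ sends $(P,\psi,\phi)$ to $(P,\fe\circ\psi,\phi)$; since $\psi$ takes only the value $\one$, the possibly non-injective map $\fe$ is applied solely to $\one$ and no information is lost. Thus $(\fe,\Id_G)_*$ is the identity on the underlying data $(P,\phi)$, hence a bijection of sets, with inverse sending $(P,\psi',\phi')$ to the configuration with the same $P$ and $\phi'$ and with trivial $\Q$-valued monodromy.

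The remaining work is the comparison of topologies, which is where the only real verification lies. Fixing $\fc=(P,\psi,\phi)$ and an adapted covering $\uU$ of $P$ for $\fT=(\cX,\cX)$, I would first observe that in Notation \ref{nota:uUcovering} the index $\lambda$ is zero: each $U_i$ meets $P\subset\cX=\cY$, so $\bar U_i$ is never disjoint from $\cY$, and since $\lambda$ depends only on $\uU$ and $\cY$ the same holds for every perturbation $P'\in\fU(P,\uU)$. Hence $\fQ(\uU)=\set{\one}$ and $\fQ(P',\uU)=\set{\one}$ for all such $P'$ (see Definition \ref{defn:fGUfQPU}). Feeding this into Definition \ref{defn:normalneigh}, the condition $\fQ(P',\uU)\subseteq\fQext(P')_{\psi'}$ holds automatically, and the PMQ-component of the required identity of PMQ-group-pair maps is vacuous because all three of $\fQ(P)$, $\fQ(\uU)$ and $\fQ(P',\uU)$ equal $\set{\one}$.

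Consequently $\fc'=(P',\psi',\phi')$ lies in $\fU(\fc;\uU)$ precisely when $P'\in\fU(P,\uU)$ and $\phi'$, restricted along $\fG(\uU)\hookrightarrow\fG(P')$ and transported through the isomorphism $\fG(\uU)\cong\fG(P)$, agrees with $\phi$. This description involves only the data $(P',\phi')$ and the common target group $G$, so it is identical in the two spaces; therefore $(\fe,\Id_G)_*$ matches $\fU(\fc;\uU)$ with $\fU((\fe,\Id_G)_*\fc;\uU)$ bijectively, which proves it is open and hence a homeomorphism. The main (and essentially only) obstacle is the bookkeeping establishing $\lambda=0$ and the resulting collapse of the PMQ-data in the normal neighbourhoods; once this is in place the statement follows formally.
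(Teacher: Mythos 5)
Your argument is correct and is essentially the paper's: the paper derives this lemma directly from the observation that $\fQ_{(\cX,\cX)}(P)=\set{\one}$ for every finite $P\subset\cX$, so that $\psi$ carries no information and $\Q$ can be exchanged for $G$ without affecting anything. Your additional bookkeeping (that $\lambda=0$ for any adapted covering, so the PMQ-data in Definition \ref{defn:normalneigh} collapses and normal neighbourhoods match bijectively) just spells out the details the paper leaves implicit.
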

Using Lemmas \ref{lem:usefulhomeocYempty}, \ref{lem:usefulhomeoGG} and \ref{lem:usefulhomeoXXGG},
we can simplify our notation for Hurwitz-Ran spaces $\Hur(\cX,\cY;\Q,G)$
whenever one of the following conditions is satisfied:
\begin{itemize}
 \item $\cY=\emptyset$, then we identify $\Hur(\cX,\emptyset;\Q,G)\cong\Hur(\cX;\Q)$;
 \item $\cY=\cX$, then we identify $\Hur(\cX,\cX;\Q,G)\cong\Hur(\cX,\cX;G,G)\cong\Hur(\cX;G)$;
 \item $\Q=G$, then we identify $\Hur(\cX,\cY;G,G)\cong\Hur(\cX,\cX;G,G)\cong\Hur(\cX;G)$.
\end{itemize}

\subsection{Functoriality and change of ambient space}
Recall Definition \ref{defn:ambient}, let $\fT=(\cX,\cY)$ and $\fT'=(\cX',\cY')$
be two nice couples, and let $\bT$ and $\bT'$ be two contractible semi-algebraic subspaces of $\C$
containing $*$, such that $\cX\subset\mathring{\bT}$ and $\cX'\subset\mathring{\bT}'$.

Let $\xi\colon(\bT,*)\to(\bT,*)$ be a semi-algebraic \emph{homeomorphism}
restricting to an orientation-preserving homeomorphism $\xi\colon\mathring{\bT}\to\mathring{\bT}'$,
and to maps $\xi\colon\cX\to\cX'$ and $\xi\colon\cY\to\cY'$ (we restrict to the case
of a homeomorphism for simplicity, but any map $\xi$ satisfying a suitable analogue of conditions (1)-(5)
in Definition \ref{defn:mapnicecouples} may be used).

We define an induced map $\xi_*\colon\Hur^{\bT}(\fT;\Q,G)\to\Hur^{\bT'}(\fT';\Q,G)$.
Given a configuration $\fc=(P,\psi,\phi)\in\Hur^{\bT}(\fT;\Q,G)$,
we define $\fc'=\xi_*(\fc)=(P',\psi',\phi')\in\Hur^{\bT'}(\fT';\Q,G)$ as follows:
\begin{itemize}
 \item $P'=\xi(P)$; note that $\xi$ restricts to a homeomorphism $\bT\setminus P\to\bT'\setminus P'$;
 \item $(\psi',\phi')\colon (\fQ^{\bT'}_{\fT'}(P')\to(\Q,G)$ is the following composition of maps of PMQ-group pairs
 \[
  \begin{tikzcd}
   (\fQ^{\bT'}_{\fT'}(P'),\fG^{\fT'}(P')) \ar[r,"{(\xi^{-1})_*}"] &
   (\fQ^{\bT}_{\fT}(P),\fG^{\fT}(P)) \ar[r,"{(\psi,\phi)}"] & (\Q,G).
  \end{tikzcd}
 \]
\end{itemize}
The same arguments used in Subsection \ref{subsec:functorialityHurNC} show that $\xi_*$
is continuous. In the next articles of this series
we will use this fact in the particular case in which $\xi$ restricts
also to homeomorphisms $\cX\to\cX'$ and $\cY\to\cY'$: then we can use the inverse homeomorphism
$\xi^{-1}\colon \bT'\to\bT$ to define a map 
$(\xi^{-1})_*\colon\Hur^{\bT'}(\fT';\Q,G)\to\Hur^{\bT}(\fT;\Q,G)$. The maps
$\xi_*$ and $(\xi^{-1})_*$ are inverse homeomorphism, and we obtain in particular
the following proposition.
\begin{prop}
 \label{prop:functorialityambient}
Let $\fT=(\cX,\cY)$ and $\fT'=(\cX',\cY')$ be nice couples, and let
$\bT,\bT'\subset\C$ be contractible semi-algebraic
subspaces containing $*$, with $\cX\subset\mathring{\bT}$ and
$\cX'\subset\mathring{\bT}'$. Let $\xi\colon\C\to\C$ be a map inducing a morphism of nice couples $\fT\to\fT'$ and restricting to homeomorphisms $\bT\cong\bT'$, $\cX\cong\cX'$ and $\cY\cong\cY'$.
Then the map $\xi_*\colon\Hur(\fT;\Q,G)\to\Hur(\fT',\Q,G)$ is a homeomorphism.
\end{prop}
 
\subsection{External products of Hurwitz-Ran spaces}
In this subsection we fix a nice couple $\fT=(\cX,\cY)$ and
two PMQ-group pairs $(\Q,G)$ and $(\Q',G')$.

\begin{defn}
 \label{defn:externalproduct}
Recall from \cite[Definition 2.16]{Bianchi:Hur1} the explicit description of the (categorical) product of two PMQ-group pairs. We define an \emph{external product}
\[
 -\times -\colon \HurTQG\times\HurTQGp\to\Hur(\fT;(\Q,G)\times(\Q',G')).
\]
Let $(\fc,\fc')\in\HurTQG\times\HurTQGp$, and use Notation \ref{nota:fc}. We define $\fc\times\fc'$
as the configuration $(P'',\psi'',\phi'')\in \Hur(\fT;(\Q,G)\times(\Q',G'))$, where:
\begin{itemize}
 \item $P''=P\cup P'\subset\cX$;
 \item $(\psi'',\phi'')\colon(\fQ(P''),\fG(P''))\to(\Q,G)\times(\Q',G)$ is the map of PMQ-group pairs given by
 $\pa{(\psi,\phi)\circ\fri^{P''}_P,(\psi',\phi')\circ\fri^{P''}_{P'}}$ (see Notation \ref{nota:fri}).
\end{itemize}
\end{defn}
\begin{prop}
 The external product $-\times-$ from Definition \ref{defn:externalproduct} is continuous. Denoting
  $p\colon(\Q,G)\times(\Q',G')\to (\Q,G)$ and $p'\colon(\Q,G)\times(\Q',G')\to (\Q',G')$ the projections, we have that $-\times-$ is a retraction of the map
 \[
 (p_*,p'_*)\colon \Hur(\fT;(\Q,G)\times(\Q',G')) \to \HurTQG\times\HurTQGp.
 \]
\end{prop}
\begin{proof}
 Let $\fc,\fc',\fc''$ be as in Definition \ref{defn:externalproduct}, and let $\uU''$ be an adapted covering of $P''$.
 Then we can obtain an adapted covering $\uU$ of $P$ (respectively, $\uU'$ of $P'$) by selecting the components
 of $\uU''$ containing one point of $P$ (respectively, of $P'$). We note that the product of normal neighbourhoods
 $\fU(\fc,\uU)\times\fU(\fc',\uU')$ is mapped by the external product inside $\fU(\fc'',\uU'')$: this shows continuity
 of the external product.
 
 For the second statement, let $\fc=(P,\psi,\phi)\in\Hur(\fT;(\Q,G)\times(\Q',G'))$:
 then both $p_*(\fc)=(P,p \circ(\psi,\phi))$ and $p'_*(\fc)=(P,p' \circ(\psi,\phi))$
 are supported on the set $P\subset\cX$, so that by Definition \ref{defn:externalproduct}
 also $p_*(\fc)\times p'_*(\fc)$ is supported on $P\cup P=P$. It now follows directly
 from Definition \ref{defn:externalproduct} that $p_*(\fc)\times p'_*(\fc)$ is equal to $\fc$.
\end{proof}
\begin{nota}
 \label{nota:simplifiedexternalproduct}
We will mainly use the external product in the case $(\Q',G')=(\one,\one)$.
By abuse of notation we will denote by $-\times-$ also the composition
 \[
 \begin{tikzcd}
  \HurTQG\times\Ran(\fT)\ar[r,"\cong"] &\HurTQG\times\Hur(\fT;\one,\one)\ar[dl,"-\times-",swap]\\
  \Hur(\fT;(\Q,G)\times(\one,\one)) \ar[r,"\big(\pr_{(\Q,G)}\big)_*",swap]  &[1em] \HurTQG.
 \end{tikzcd}
 \]
\end{nota}

\subsection{Contractible normal neighbourhoods}
The following lemma gives an effective way to prove contractibility of normal neighbourhoods in concrete
situations.
\begin{lem}
 \label{lem:contractiblenormalneigh}
Let $\fT=(\cX,\cY)$ be a nice couple and $(\Q,G)$ a PMQ-group pair.
Let $\fc\in\Hur(\cX,\Q)$, use Notations \ref{nota:fc} and \ref{nota:uUcovering},
and let $\uU$ be an adapted covering of $P$. Assume that there is a homotopy $\cH^{\uU}\colon\C\times[0,1]\to\C$
satisfying the following:
 \begin{enumerate}
  \item $\cH^{\uU}(-,t)$ is a lax morphism of nice couples $\fT\to\fT$ for all $0\leq t\leq 1$;
  \item $\cH^{\uU}(-,0)=\Id_{\C}$;
  \item $\cH^{\uU}(-,t)$
  restricts to a map $U_i\to U_i$, for all $1\leq i\leq k$ and all $0\leq t\leq 1$;
  \item $\cH^{\uU}(-,1)$ maps $U_i$ constantly to $z_i$ for all $1\leq i\leq k$.
 \end{enumerate}
 Then the normal neighbourhood $\fU(\fc,\uU)$ is contractible.
\end{lem}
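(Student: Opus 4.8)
The plan is to transport the question into the Hurwitz space of the completed PMQ $\hQ$, where lax morphisms of nice couples act functorially, and then to let the homotopy $\cH^{\uU}$ contract the relevant normal neighbourhood. First I would recall from Subsection \ref{subsec:functorialityHurQG} that the inclusion of PMQ-group pairs $(\Q,G)\subset(\hQ,G)$ induces an open embedding $\Hur(\fT;\Q,G)\hookrightarrow\Hur(\fT;\hQ,G)$ which restricts to a homeomorphism of each normal neighbourhood $\fU(\fc,\uU)$ onto the corresponding normal neighbourhood $\fU'(\fc,\uU)\subset\Hur(\fT;\hQ,G)$, the point being that $\hQ\setminus\Q$ is an ideal in $\hQ$. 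It therefore suffices to prove that $\fU'(\fc,\uU)$ is contractible.

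Since $\hQ$ is complete, I would apply Proposition \ref{prop:functorialityhomotopyLNC} with $\cS=[0,1]$ and the homotopy $\cH^{\uU}$---each slice $\cH^{\uU}(-,t)$ being a lax morphism $\fT\to\fT$ by hypothesis (1)---to obtain a continuous map
\[
\cH_*\colon\Hur(\fT;\hQ,G)\times[0,1]\to\Hur(\fT;\hQ,G),\qquad \cH_*(\fc'',t)=(\cH^{\uU}(-,t))_*(\fc'').
\]
The heart of the argument is to check that $\cH_*$ restricts to a homotopy $\fU'(\fc,\uU)\times[0,1]\to\fU'(\fc,\uU)$. Fixing $\fc''=(P'',\psi'',\phi'')\in\fU'(\fc,\uU)$ and writing $\fc''_t=\cH_*(\fc'',t)$, supported on $P''_t=\cH^{\uU}(P'',t)$, hypothesis (3) gives $P''_t\subset\uU$ with $P''_t\cap U_i\neq\emptyset$ for all $i$, so the support condition for membership in $\fU'(\fc,\uU)$ is preserved. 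For the monodromy condition I would argue that, writing $\xi=\cH^{\uU}(-,t)$, the monodromies of $\fc''_t$ are the precompositions of those of $\fc''$ with $\xi^*$, and that $\xi^*$ fixes the generator $\gen_i\in\fQ(\uU)$ represented by $\del U_i$, because $\xi$ maps $U_i$ into $U_i$ and is orientation-preserving; hence the monodromy of $\fc''_t$ around each $U_i$ coincides with that of $\fc''$, namely $\psi(z_i)$ in $\hQ$ and the corresponding value in $G$. Completeness makes $\psipext$ defined on all of $\fQext(P''_t)$, so no product is left undefined and $\fc''_t\in\fU'(\fc,\uU)$.

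Finally I would identify the endpoints. By hypothesis (2), $\cH^{\uU}(-,0)=\Id_\C$, so $\cH_*(-,0)$ is the identity on $\fU'(\fc,\uU)$. By hypothesis (4), $\cH^{\uU}(-,1)$ collapses each $U_i$ to $z_i$, so for any $\fc''\in\fU'(\fc,\uU)$ the configuration $\cH_*(\fc'',1)$ is supported on $P=\set{z_1,\dots,z_k}$, and the induced map $(\cH^{\uU}(-,1))^*$ carries the small loop around $z_i$ to the boundary class $\gen_i$, so the monodromies around $z_i$ are exactly $\psi(z_i)$ and $\phi(\gen_i)$. Thus $\cH_*(\fc'',1)=\fc$ for every $\fc''$, and $\cH_*$ is a contraction of $\fU'(\fc,\uU)$ onto $\fc$; transporting it through the homeomorphism $\fU(\fc,\uU)\cong\fU'(\fc,\uU)$ yields the claim.

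The main obstacle I anticipate is the second paragraph: proving that the deformation preserves the normal neighbourhood, which comes down to the topological fact that $\xi^*$ fixes each boundary class $\gen_i$, so that the monodromy encircling each $U_i$ is genuinely unchanged along the homotopy. The detour through $\hQ$ is exactly what makes lax-morphism functoriality available and guarantees that every product appearing along the way is defined; without it the intermediate configurations $\fc''_t$ need not lie in $\Hur(\fT;\Q,G)$ at all, even though both endpoints do.
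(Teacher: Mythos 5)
Your overall frame is the same as the paper's: pass to the completion $\hQ$ (where normal neighbourhoods are mapped bijectively onto normal neighbourhoods), apply Proposition \ref{prop:functorialityhomotopyLNC} to get the homotopy $\cH^{\uU}_*$, and identify the time-$0$ and time-$1$ maps. The gap is in your second paragraph. You claim that for each fixed $t$, writing $\xi=\cH^{\uU}(-,t)$, the map $\xi^*$ fixes each boundary class $\gen_i\in\fG(\uU)$ ``because $\xi$ maps $U_i$ into $U_i$ and is orientation-preserving''. This per-slice statement is false. Take $k\ge 2$ and let $\xi$ be a (semi-algebraic model of a) Dehn twist supported in an annulus $A$ encircling all the $U_i$ but disjoint from $\bar\uU$ and from $*$: this is even a non-lax morphism of nice couples (for a suitable $\fT$), it fixes each $U_i$ pointwise and fixes $P''$, yet $\xi_*$ acts on $\pi_1(\CmP'',*)$ by conjugating each $\gen_i$ by the class $w$ of the encircling curve, so the composite $\fG(\uU)\to\fG(\xi(P''))\xrightarrow{\xi^*}\fG(P'')$ sends $\gen_i$ to $w^{\mp1}\gen_i w^{\pm1}\neq\gen_i$, and the monodromy of $\xi_*(\fc'')$ around $U_i$ gets conjugated by $\totmon(\fc'')^{\pm1}$. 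So $\xi_*$ does \emph{not} preserve $\fU(\fc,\uU)$ in general, even though $\xi$ satisfies every property you invoke. The preservation you need is true for the slices $\cH^{\uU}(-,t)$, but only because they are connected to the identity through maps satisfying (3); no such twist can occur along such a homotopy, and this cannot be seen one time-slice at a time.

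The paper's proof extracts exactly this global input by a soft argument that avoids computing $\xi^*$ on boundary classes altogether: property (3) alone shows that $\cH^{\uU}_*$ maps the union $\coprod_{\tfc}\fU(\tfc;\uU)$ (over all $\tfc$ supported on $P$) into itself, since only the support condition is involved; this union is a topological disjoint union of the open sets $\fU(\tfc;\uU)$ by the Hausdorffness argument of Proposition \ref{prop:Hurtopology}; and since $\cH^{\uU}_*(-,0)=\Id$, connectedness of $[0,1]$ forces each path $t\mapsto\cH^{\uU}_*(\fc'',t)$ with $\fc''\in\fU(\fc;\uU)$ to stay in $\fU(\fc;\uU)$. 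Your endpoint analysis then goes through (at $t=1$ the image is supported on $P$, and $\fc$ is the only configuration in $\fU(\fc;\uU)$ supported on $P$). If you want to keep your direct approach, you must replace the false per-slice claim by this clopen/connectedness argument, or by some other argument that genuinely uses the whole homotopy; as written, the step that you yourself flagged as the main obstacle does not hold.
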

\begin{proof}
We include $\Q$ into its completion $\hQ$, and consequently include $(\Q,G)$ into $(\hQ,G)$.
Recall that the inclusion $\Hur(\cX;\Q,G)\subset\Hur(\cX;\hQ,G)$ is open, and more precisely
it maps normal neighbourhoods bijectively onto normal neighbourhoods.
Thus suffices to prove that $\fU(\fc,\uU)$ is contractible when considered as a normal neighbourhood
in $\Hur(\cX;\hQ,G)$.
Proposition \ref{prop:functorialityhomotopyLNC} and property (1) give a homotopy
 \[
  \cH^{\uU}_*\colon\Hur(\cX;\hQ,G)\times[0,1]\to\Hur(\cX;\hQ,G),
 \]
Consider now the union $\coprod_{\tfc}\fU(\tfc;\uU)\subset\Hur(\cX;\hQ,G)$,
where $\tilde{\fc}$ ranges among all configurations of $\Hur(\cX;\hQ,G)$
supported on $P$. By property (3) the map $\cH^{\uU}_*$ restricts to a homotopy
 \[
  \cH^{\uU}_*\colon \coprod_{\tfc}\fU(\tfc;\uU)\times[0,1]\to \coprod_{\tfc}\fU(\tfc;\uU).
 \]
The argument in the proof of Proposition \ref{prop:Hurtopology} shows that $\coprod_{\tfc}\fU(\tfc;\uU)$
is the topological disjoint union of its open subspaces $\fU(\tilde{\fc};\uU)$.
The map $\cH^{\uU}_*(-;0)$ is the identity of $\Hur(\cX;\hQ,G)$ by property (2),
in particular $\cH^{\uU}_*(-;0)$ preserves each subspace $\fU(\tfc;\uU)$. It follows
that $\cH^{\uU}_*$ restricts to a homotopy
 \[
  \cH^{\uU}_*\colon\fU(\tfc,\uU)\times[0,1]\to\fU(\tfc,\uU)
 \]
for each $\tfc$ supported on $P$, in particular for $\tfc=\fc$.
By property (4) the map $\cH^{\uU}_*(-;1)$ takes values in configurations
in $\Hur(\fT;\Q',G)$ supported on the set $P=\set{z_1,\dots,z_k}$, and
the only such configuration inside $\fU(\fc,\uU)$ is $\fc$.
\end{proof}
The hypothesis that the spaces $\cX$ and $\cY$ occurring in a nice couple $\fT$ are semi-algebraic
implies that, given $\fc\in\Hur(\fT;\Q,G)$ supported on a finite set $P$,
one can choose a small enough adapted covering of $P$ for which a homotopy
$\cH^{\uU}$ as in Lemma \ref{lem:contractiblenormalneigh} exists. It follows that
the space $\Hur(\fT;\Q,G)$ is locally contractible.

\section{Total monodromy and group actions}
\label{sec:totmongroupactions}
In this section we define the total monodromy of configurations in $\Hur(\fT;\Q,G)$ and describe
several actions of $G$ on $\Hur(\fT;\Q,G)$ and on certain subspaces of it.

\subsection{Total monodromy}
The total monodromy is the simplest invariant of connected components of $\Hur(\fT;\Q,G)$.
\label{subsec:totmon}
\begin{defn}
\label{defn:totmon}
Let $\fT$ be a nice couple, $(\Q,G)$ a PMQ-group pair, and let $\fc=(P,\psi,\phi)\in\HurTQG$. Let $\gamma\colon[0,1]\to\C$ be a simple
closed loop spinning clockwise around $P$, i.e., $\gamma$ bounds a disc in $\C$ that contains $P$.
We define $\totmon(\fc)=\phi([\gamma])\in G$, and call it the \emph{total monodromy}
of the configuration $\fc$: this gives a function $\totmon\colon\HurTQG\to G$. See Figure \ref{fig:totmon}, left.
\end{defn}

\begin{figure}[ht]
 \begin{tikzpicture}[scale=4,decoration={markings,mark=at position 0.38 with {\arrow{>}}}]
  \draw[dashed,->] (-.1,0) to (1.1,0);
  \draw[dashed,->] (0,-1.1) to (0,1.1);
  \node at (0,-1) {$*$};
  \fill[black, opacity=.2, looseness=.8] (.1,.1) to[out=180, in=-90] (-.1,.3) to[out=90,in=-100] (.4,.9) to[out=80,in=-180]
  (.8,1) to[out=0,in=0] (.8,0) to[out=180,in=0] (.4,0) to[out=180,in=0] (.1,.1);
  \fill[black, opacity=.3, looseness=.8] (.8,0) to[out=180,in=-100] (.4,.9) to[out=80,in=-180]
  (.8,1) to[out=0,in=0] (.8,0);
  \node at (.1,.3){$\bullet$}; 
  \node at (.45,.5){$\bullet$}; 
  \node at (.5,.9){$\bullet$};
  \draw[dashed, looseness=1.2, postaction={decorate}] (0,-1) to[out=91,in=-90](-.13,.6)
  to[out=90,in=90] node[above]{\tiny$\omega(\fc)=\fe(a_1)g_2g_3$}(1.1,.6) to[out=-90,in=30] (0,-1);
  \draw[thin, looseness=1.2, postaction={decorate}] (0,-1) to[out=82,in=-90]  (.05,.1) to[out=90,in=-90] (-.1,.3)  to[out=90,in=90] node[below]{\tiny$a_1$} (.25,.3)  to[out=-90,in=90] (.1,.1) to[out=-90,in=80] (0,-1);
  \draw[thin, looseness=1.2, postaction={decorate}] (0,-1) to[out=80,in=-90] (.4,.3)
  to[out=90,in=-90] node[right]{\tiny$g_2$} (.2,.5) to[out=90,in=90] (.7,.5) to[out=-90,in=90] (.45,.3) to[out=-90,in=75]  (0,-1);
  \draw[thin, looseness=1.2, postaction={decorate}] (0,-1) to[out=53,in=-20] (.6,.75) to[out=160,in=0] (.5,.7)
  to[out=180,in=180] node[right]{\tiny$g_3$} (.5,1)   to[out=0,in=160] (.65,.77) to[out=-20,in=50] (0,-1);
\begin{scope}[shift={(1.5,0)}]
  \draw[dashed,->] (-.1,0) to (1.1,0);
  \draw[dashed,->] (0,-1.1) to (0,1.1);
  \node at (0,-1) {$*$};
  \fill[black, opacity=.2, looseness=.8] (.1,.1) to[out=180, in=-90] (-.1,.3) to[out=90,in=-100] (.4,.9) to[out=80,in=-180]
  (.8,1) to[out=0,in=0] (.8,0) to[out=180,in=0] (.4,0) to[out=180,in=0] (.1,.1);
  \node at (.1,.3){$\bullet$}; 
  \node at (.45,.5){$\bullet$}; 
  \node at (.5,.9){$\bullet$};
  \draw[dashed, looseness=1.2, postaction={decorate}] (0,-1) to[out=91,in=-90](-.13,.6)
  to[out=90,in=90] node[above]{\tiny$\hat\omega(\fc)=a_1a_2a_3$}(1.1,.6) to[out=-90,in=30] (0,-1);
  \draw[thin, looseness=1.2, postaction={decorate}] (0,-1) to[out=82,in=-90]  (.05,.1) to[out=90,in=-90] (-.1,.3)  to[out=90,in=90] node[below]{\tiny$a_1$} (.25,.3)  to[out=-90,in=90] (.1,.1) to[out=-90,in=80] (0,-1);
  \draw[thin, looseness=1.2, postaction={decorate}] (0,-1) to[out=80,in=-90] (.4,.3)
  to[out=90,in=-90] node[right]{\tiny$a_2$} (.2,.5) to[out=90,in=90] (.7,.5) to[out=-90,in=90] (.45,.3) to[out=-90,in=75]  (0,-1);
  \draw[thin, looseness=1.2, postaction={decorate}] (0,-1) to[out=53,in=-20] (.6,.75) to[out=160,in=0] (.5,.7)
  to[out=180,in=180] node[right]{\tiny$a_3$} (.5,1)   to[out=0,in=160] (.65,.77) to[out=-20,in=50] (0,-1);
\end{scope}
 \end{tikzpicture}
 \caption{On left, a configuration $\fc$ in $\Hur(\cX,\cY;\Q,G)$, whose total monodromy is the $G$-valued monodromy of the dashed loop; on right, a configuration $\fc$ in $\Hur(\cX,\Q)$, whose total monodromy is the $\hQ$-valued monodromy of the dashed loop.}
\label{fig:totmon}
\end{figure}

Note that the loop $\gamma$ is well-defined up to homotopy, so that $\totmon$ is well-defined as a map of sets. Since
for any given covering $\uU$ of $P$ we can choose $\gamma$ spinning clockwise also around $\uU$, we note that
$\totmon$ is constant on the normal neighbourhood $\fU(\fc;\uU)$; hence the total monodromy is locally constant
and therefore an invariant of connected components of $\HurTQG$.

Note also that if $\xi\colon\fT\to\fT'$ is a map of nice couples, then $\xi(\gamma)$ is homotopic to a simple 
loop spinning clockwise around $\xi(P)$ (see Definition \ref{defn:mapnicecouples});
in particular $\totmon(\fc)=\totmon(\xi_*(\fc))$, i.e. the total monodromy
is preserved under maps of Hurwitz-Ran spaces induced by maps of nice couples.
If $(\Psi,\Phi)\colon(\Q,G)\to(\Q',G')$ is a morphism of PMQ-group pairs, then
for all $\fc\in\HurTQG$ we have $\Phi(\totmon(\fc))=\totmon((\Psi,\Phi)_*(\fc))$.

\begin{nota}
 \label{nota:Hurwithtotmon}
 For a nice couple $\fT$, a PMQ-group pair $(\Q,G)$ and $g\in G$ we denote by $\HurTQG_g\subset\HurTQG$ the
 preimage of $g$ along $\totmon$. If $P_0$ is as in Definition \ref{defn:basedHur}, we denote by
 $\HurTQG_{P_0;g}$ the corresponding subspace of $\HurTQG_{P_0}$.
\end{nota}
For $P_0\subset\cX$ (possibly $P_0=\emptyset$) we obtain a natural decomposition
\[
 \HurTQG_{P_0} = \coprod_{g\in G}  \HurTQG_{P_0;g} .
\]

In the case $\cY=\emptyset$, we can refine Definition \ref{defn:totmon} by taking the values of $\totmon$
in the completion $\hQ$ of $\Q$, instead of $G$.
Let $\cX\subset\bH$ be a semi-algebraic subset, and let $\fc=(P,\psi)\in\Hur(\cX;\Q)$;
if $\gamma$ is a simple loop in $\CmP$ spinning clockwise around $P$,
then $[\gamma]\in\fQext(P)$, and since $\hQ$ is complete (and contains $\Q$), we can extend
$\psi\colon\fQ(P)\to\Q$ to a map $\psiext\colon\fQext(P)\to\hQ$, compare also with Subsection \ref{subsec:LNCfunctoriality}.
\begin{defn}
\label{defn:totmonhQ}
We define a locally constant map $\hat\totmon\colon\Hur(\cX;\Q)\to\hQ$
by setting $\hat\totmon(\fc):=\psiext([\gamma])$, using the notation above. See Figure \ref{fig:totmon}, right.
For $a\in\hQ$ and $\emptyset\subseteq P_0\subset\cX$ we let
$\Hur(\cX;\Q)_{P_0;a}\subset\Hur(\cX;\Q)$ be the preimage of $a$ along $\hat\totmon$.
\end{defn}
We obtain a decomposition
\[
 \Hur(\cX;\Q)_{P_0} = \coprod_{a\in \hQ}  \Hur(\cX;\Q)_{P_0;a}.
\]
The maps $\totmon\colon\Hur(\cX,\emptyset;\Q,\cG(\Q))\to\cG(\Q)$ and $\hat\totmon\colon\Hur(\cX;\Q)\to\hQ$
are related by the equality $\totmon=\eta_{\hQ}\circ\hat\totmon$, where we the groups $\cG(\Q)$ and $\cG(\hQ)$
are canonically identified, and $\eta_{\hQ}\colon\hQ\to\cG(\hQ)$ is the unit of the adjunction.

As a first application of the total monodromy, we prove the following proposition.
\begin{prop}
 \label{prop:totmoncontractibility}
 Let $\cX$ be a non-empty, semi-algebraic, convex and bounded subset of $\bH$, and let $\hQ$ be a complete PMQ.
 Then the connected components of $\Hur_+(\cX,\hQ)$ are contractible and there is a bijection
 \[
  \hat\totmon\colon\pi_0(\Hur_+(\cX;\hQ))\cong \hQ.
 \]
\end{prop}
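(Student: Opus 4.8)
The plan is to reduce everything to a single statement about the fibres of the total monodromy. For $a\in\hQ$ set
\[
\Hur_+(\cX;\hQ)_a:=\Hur_+(\cX;\hQ)\cap\hat\totmon^{-1}(a).
\]
Since $\hat\totmon$ is locally constant (Definition \ref{defn:totmonhQ}), the subsets $\Hur_+(\cX;\hQ)_a$ are pairwise disjoint, open and closed, and cover $\Hur_+(\cX;\hQ)$; hence it suffices to show that each of them is non-empty and contractible. Indeed, each non-empty connected piece is then exactly a connected component, giving the first assertion, and $\hat\totmon$ induces a bijection $\pi_0(\Hur_+(\cX;\hQ))\cong\hQ$. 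Non-emptiness is immediate: fixing $z_0\in\cX$, the configuration $\fc_a$ supported on $\set{z_0}$ whose generating loop has $\Q$-monodromy $a$ satisfies $\hat\totmon(\fc_a)=a$. So the whole content is to contract $\Hur_+(\cX;\hQ)_a$ onto $\fc_a$.

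First I would construct a homotopy $\cH\colon\C\times[0,1]\to\C$ with $\cH(-,0)=\Id_{\C}$, each slice $\cH(-,s)$ a \emph{lax} morphism of nice couples $(\cX,\emptyset)\to(\cX,\emptyset)$, and $\cH(-,1)$ collapsing all of $\cX$ to $z_0$. Here both convexity and boundedness of $\cX$ are used. As $\bar\cX$ is compact and $*\notin\bar\cX$, a small closed neighbourhood $D$ of $\bar\cX$ is a compact, convex (hence contractible), semi-algebraic region with $z_0\in\mathring D$ and $*\notin D$. Being convex, $D$ is star-shaped around $z_0$, so writing its points as $z_0+t\,u$ in polar coordinates around $z_0$, I would set $\cH(-,s)$ to be the identity outside $D$ and, along each ray, a semi-algebraic reparametrisation fixing $\del D$ that shrinks the segment $\cX\cap(\text{ray})$ by the factor $(1-s)$ while stretching the complementary segment correspondingly. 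For $s<1$ this is a compactly supported homeomorphism of $\C$ carrying $\cX$ into $\cX$; at $s=1$ it collapses $\cX$ to $z_0$ yet stays surjective with contractible fibres, the fibre over $z_0$ being $\cX$ itself. The main obstacle is precisely the verification that every slice satisfies conditions (1)--(4) of Definition \ref{defn:mapnicecouples} — semi-algebraicity, properness, orientation-preservation, and non-empty compact contractible fibres — continuously in $s$ up to and including $s=1$. Note that $\cH(-,1)$ genuinely violates condition (5), which forces us to use lax morphisms and is exactly why the hypothesis that $\hQ$ be complete is needed.

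Granting such an $\cH$, Proposition \ref{prop:functorialityhomotopyLNC} (with $\cS=[0,1]$, using completeness of $\hQ$) yields a continuous map
\[
\cH_*\colon\Hur(\cX;\hQ)\times[0,1]\to\Hur(\cX;\hQ),\qquad \cH_*(-,0)=\Id.
\]
Because each $\cH(-,s)$ restricts to a self-map of $\cX$, sends non-empty sets to non-empty sets, and preserves the total monodromy (the image of a simple loop spinning clockwise around the support is again such a loop, so $\hat\totmon\circ\cH_*(-,s)=\hat\totmon$), the homotopy $\cH_*$ restricts to a self-homotopy of each $\Hur_+(\cX;\hQ)_a$. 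Finally, $\cH_*(-,1)$ sends any $\fc=(P,\psi)\in\Hur_+(\cX;\hQ)_a$ to the configuration supported on $\cH(-,1)(P)=\set{z_0}$ whose monodromy around $z_0$ is the pullback of the generating loop; as this pullback spins around all of $\cH(-,1)^{-1}(z_0)\supseteq P$, its monodromy equals $\hat\totmon(\fc)=a$, so $\cH_*(-,1)$ is constant equal to $\fc_a$. Thus $\cH_*$ is a contraction of $\Hur_+(\cX;\hQ)_a$ onto $\fc_a$, which finishes the proof. One could alternatively invoke Proposition \ref{prop:functorialityambient} to replace $\cX$ by a standard convex model before building $\cH$, but convexity already provides the star-shaped structure used above.
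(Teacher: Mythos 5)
Your proposal is correct and follows essentially the same route as the paper: the paper also fixes $z_0\in\cX$, defines the one-point configurations $\fc_a$, and deformation-retracts onto them via a lax-morphism homotopy (packaged there as Lemma \ref{lem:contractiblenormalneigh}, whose proof invokes Proposition \ref{prop:functorialityhomotopyLNC} exactly as you do), identifying the fibre $\hat\totmon^{-1}(a)$ with the normal neighbourhood $\fU(\fc_a;U)$ for a convex open $U\supset\cX$. The only simplification worth noting is that the paper collapses the whole open convex set $U$ to $z_0$ by the straight-line homotopy, which preserves $\cX$ automatically by convexity and has compact convex fibre $\bar U$ at time $1$, avoiding the delicate radial reparametrisation of $\cX$ itself that you flag as the main obstacle.
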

\begin{proof}
Since $\cX$ is bounded, we can find a bounded, convex, semi-algebraic open set $\cX\subset U\subset\C\setminus\set{*}$.
Fix a point $z_0\in\cX$: for all $a\in \hQ$ we can define a configuration
$\fc_a=(\set{z_0},\psi_a)\in\Hur_+(\cX;\hQ)$ by
setting $\psi_a([\gamma])=a$ for a simple loop $\gamma$ spinning clockwise around $z_0$. By Lemma
\ref{lem:contractiblenormalneigh} each normal neighbourhood $\fU(\fc_a;U)$ deformation retracts onto the configuration $\fc_a$;
the statement follows from the observation that each $\fc\in\Hur_+(\cX;\hQ)$ is contained in one of these normal
neighbourhood, namely in $\fU(\fc_{\hat\totmon(\fc)};U)$.
\end{proof}
The reader will notice that in the proof of Proposition \ref{prop:totmoncontractibility} we only used that $\cX$ is star-shaped around a point $z_0\in\cX$, and not that $\cX$ is convex.
In fact the bijection $\hat\totmon\colon\pi_0(\Hur_+(\cX;\hQ))\cong \hQ$ holds whenever $\cX$ is
path connected; we will not use this more general fact and therefore we will leave its proof to the reader.
Proposition \ref{prop:totmoncontractibility} has the following corollary.
\begin{cor}
 \label{cor:totmoncontractibilityPMQ}
 Let $\cX\subset\bH$ be semi-algebraic, convex and bounded; then for all $a\in\Q$ the space $\Hur_+(\cX;\Q)_a$ is contractible.
\end{cor}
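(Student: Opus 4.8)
The plan is to deduce the corollary from Proposition \ref{prop:totmoncontractibility} by comparing $\Hur_+(\cX;\Q)$ with $\Hur_+(\cX;\hQ)$, where $\hQ$ is the completion of $\Q$. First I would recall, from the discussion of functoriality in the PMQ-group pair, that the inclusion of PMQ-group pairs $(\Q,\cG(\Q))\hookrightarrow(\hQ,\cG(\hQ))$ (using the canonical identification $\cG(\Q)\cong\cG(\hQ)$) induces an \emph{open} inclusion of Hurwitz spaces $\Hur_+(\cX;\Q)\hookrightarrow\Hur_+(\cX;\hQ)$ which maps normal neighbourhoods bijectively onto normal neighbourhoods. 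This inclusion is compatible with the refined total monodromy: for $\fc\in\Hur_+(\cX;\Q)$ the value $\hat\totmon(\fc)\in\hQ$ from Definition \ref{defn:totmonhQ} agrees with the total monodromy of the image of $\fc$ in $\Hur_+(\cX;\hQ)$, since both are computed as $\psiext([\gamma])$ using products inside $\hQ$.

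The key step is to prove the equality of sets
\[
\Hur_+(\cX;\Q)_a=\Hur_+(\cX;\hQ)_a\quad\text{for all }a\in\Q.
\]
The inclusion $\subseteq$ is immediate. For the reverse inclusion I would take a configuration $\fc=(P,\psi)\in\Hur_+(\cX;\hQ)_a$, fix an admissible generating set $\gen_1,\dots,\gen_k$ of $\fG(P)$ as in Definition \ref{defn:admgenset} (note $\cY=\emptyset$, so all generators occur), and write the total monodromy as a decomposition with respect to $\fQ(P)$ into the monodromies around the single points of $P$, namely $\hat\totmon(\fc)=\psiext([\gamma])=\psi(\gen_1)\cdots\psi(\gen_k)$, a product defined in $\hQ$ because $\hQ$ is complete. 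Since $\cJ(\Q)=\hQ\setminus\Q$ is an ideal of $\hQ$, any defined product having a factor in $\cJ(\Q)$ lies in $\cJ(\Q)$; as our product equals $a\in\Q=\hQ\setminus\cJ(\Q)$, no factor lies in $\cJ(\Q)$, so every $\psi(\gen_i)$ lies in $\Q$. Because $\Q$ is a sub-PMQ-group pair of $(\hQ,\cG(\hQ))$ it is closed under the $\cG(\hQ)$-action, so every value of $\psi$ on $\fQ(P)$ — each being a conjugate of some $\psi(\gen_i)$ — also lies in $\Q$. Hence $\psi$ factors through $\Q$ and $\fc\in\Hur_+(\cX;\Q)_a$.

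With this equality in hand the corollary is immediate. Applying Proposition \ref{prop:totmoncontractibility} to the complete PMQ $\hQ$, the refined total monodromy gives a bijection $\hat\totmon\colon\pi_0(\Hur_+(\cX;\hQ))\cong\hQ$ and every connected component of $\Hur_+(\cX;\hQ)$ is contractible. For $a\in\Q\subseteq\hQ$ the subspace $\Hur_+(\cX;\hQ)_a$ is therefore exactly one connected component, hence contractible; by the equality above, so is $\Hur_+(\cX;\Q)_a$.

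I expect the main obstacle to be the bookkeeping in the key step: one must check that the expression for the total monodromy really is a product of the \emph{single-point} monodromies $\psi(\gen_i)$, so that the ideal property of $\cJ(\Q)$ can be applied factor by factor, and that $\cG(\hQ)$-invariance of $\Q$ lets one pass from the generators to all of $\fQ(P)$. Everything else reduces to the cited functoriality properties and to Proposition \ref{prop:totmoncontractibility}.
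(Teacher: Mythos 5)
Your proposal is correct and follows essentially the same route as the paper: the paper's one-line proof likewise observes that the ideal property of $\hQ\setminus\Q$ makes the natural inclusion $\Hur(\cX;\Q)_a\subset\Hur(\cX;\hQ)_a$ a homeomorphism for $a\in\Q$, and then invokes Proposition \ref{prop:totmoncontractibility}. You have merely spelled out the details (the factor-by-factor application of the ideal property and the openness of the inclusion) that the paper leaves implicit.
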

\begin{proof}
 The fact that $\hQ\setminus\Q$ is an ideal of the complete PMQ $\hQ$ implies that, for all $a\in\Q$,
 the natural inclusion $\Hur(\cX;\Q)_a\subset\Hur(\cX;\hQ)_a$ is in fact a homeomorphism.
\end{proof}

\subsection{Action by global conjugation}
\label{subsec:conjaction}
Let $(\Q,G)$ be a PMQ-group pair. Then the group $G$ acts (on right) by conjugation on $\Q$ and on $G$ itself.
In particular the right action of $G$ on $\Q$ takes the form of a map of groups $\fr\colon G\to\Aut_{\PMQ}(\Q)^{op}$,
which is part of the structure of PMQ-group pair.
The actions of $G$ on $\Q$ by conjugation is compatible with respect to the map of PMQs $\fe\colon\Q\to G$,
which is also part of the structure of PMQ-group pair. In the following we define a corresponding
right action of $G$ on the space $\Hur(\fT;\Q,G)$, for any nice couple $\fT$.
\begin{defn}
\label{defn:conjaction}
For $g\in G$ and $\fc=(P,\psi,\phi)\in\Hur(\fT)$ we define $\fc^g=(P,\psi^g,\phi^g)\in\Hur(\fT;\Q,G)$ as follows:
 \begin{itemize}
  \item $\psi^g$ is the composition $\fQ(P)\overset{\psi}{\to}\Q\overset{\fr(g)}{\to} \Q$ of maps of PMQs.
  \item $\phi^g$ is the composition $\fG(P)\overset{\phi}{\to} G \overset{(-)^g}{\to} G$ of maps of groups,
  where $(-)^g\colon g'\mapsto g^{-1}g'g$.
 \end{itemize}
The maps $(-)^g\colon\Hur(\fT,\Q,G)\to\Hur(\fT;\Q,G)$ are homeomorphisms (they map normal neighbourhoods bijectively to normal neighbourhoods)
and assemble into a right action of $G$ on the space $\Hur(\fT;\Q,G)$, called \emph{action by global conjugation}.
\end{defn}
Note that the total monodromy $\totmon$ (see Definition \ref{defn:totmon}) satisfies the formula
$\totmon(\fc^g)=\totmon(\fc)^g\in G$.
Note also that for $P_0\subset\cX$ as in Definition \ref{defn:basedHur} the action by global
conjugation restricts to the subspace $\Hur(\fT)_{P_0}$.

In the case $\cY=\emptyset$, we can refine Definition \ref{defn:conjaction} and let
the completion $\hQ$ of $\Q$ act on $\Hur(\cX;\Q)$. By definition, a (right) action of $\hQ$
on $\Hur(\cX;\Q)$ is a map of PMQs $\hQ\to\Aut_{\Top}(\Hur(\cX;\Q))^{op}$.
For $a\in \hQ$ and $\fc=(P,\psi)\in\Hur(\cX;\Q)$ we define $\fc^a=(P,\psi^a)\in\Hur(\cX;\Q)$
by setting $\psi^a$ to be the composition $\fQ(P)\overset{\psi}{\to}\Q\overset{(-)^a}{\to} \Q$.
Here we use that $\Q\subset\hQ$ is closed under conjugation by elements in $\hQ$.

\subsection{Left and right-based nice couples}
In this subsection we consider other natural actions of $G$, defined on suitable subspaces of Hurwitz-Ran spaces.
\begin{defn}
 \label{defn:taut}
 For $t\in\R$ we define a homeomorphism $\tau_t\colon(\C,*)\to(\C,*)$ by:
  \[
 \def\arraystretch{1.4}
 \tau_t(z)=\left\{
 \begin{array}{ll}
  z & \mbox{if }\Im(z)\leq -1\\
  z+t& \mbox{if } \Im(z)\geq 0\\
  z+(\Im(z)+1)t & \mbox{if }-1\leq \Im(z)\leq 0.
 \end{array}
 \right.  
 \]
\end{defn}
Note that $\tau_t(*)=*$ for all $t\in\R$.
Note also that the assignment $t\mapsto \tau_t$ defines a continuous, piecewise linear action of $\R$ on $\C$.

\begin{nota}
 \label{nota:bS}
 For $t\in\R$ we denote by $\C_{\Re\geq t}\subset\C$ the subspace containing all $z\in\C$ with $\Re(z)\geq t$.
 Similarly we define $\C_{\Re>t}$, $\C_{\Re\leq t}$, $\C_{\Re<t}$ and $\C_{\Re=t}$, the latter being a vertical line.
 For all $-\infty\leq t\leq t'\leq +\infty$ we define a subspace $\bS_{t,t'}\subset\C$ by
 \[
  \bS_{t,t'}=\tau_t(\C_{\Re\geq 0})\cap\tau_{t'}(\C_{\Re\leq 0}),
 \]
 where $\tau_{-\infty}(\C_{\Re\geq0})=\tau_{+\infty}(\C_{\Re\leq0})=\C$
 and $\tau_{-\infty}(\C_{\Re\leq0})=\tau_{+\infty}(\C_{\Re\geq0})=\emptyset$.
\end{nota}

\begin{defn}
 \label{defn:lrnicecouple}
 A \emph{left-based} nice couple is a nice couple $\fT=(\cX,\cY)$
 together with a choice of a point $\zleft\in\cY$ such that $\Re(\zleft)\le\Re(z)$ for all $z\in\cX$.
 We denote by $(\zleft,\fT)$ a left-based nice couple.
 
Similarly, a \emph{right-based} nice couple is a nice couple with a choice of a point $\zright\in\cY$
such that $\Re(z)\le\Re(\zright)$ for all $z\in\cX$.
We denote it by $(\fT,\zright)$.

A \emph{left-right-based} nice couple (shortly, \emph{lr-based}) is a nice couple which is both left- and right-based,
such that $\Re(\zleft)<\Re(\zright)$: we denote it by $(\zleft,\fT,\zright)$.
\end{defn}

\subsection{Action by left and right multiplication}
We define a left action of $G$ on the space $\Hur(\fT;\Q,G)_{\zleft}$, where $(\zleft,\fT)$ is a
left-based nice couple. Similarly, there is a right action of $G$ on
$\Hur(\fT;\Q,G)_{\zright}$ if $(\fT,\zright)$ is a right-based nice couple.
We will describe the construction focusing on the left-based case; the right-based case is analogous, and we will
mention the differences in paretheses.

For the entire subsection fix a left-based (right-based) nice couple
as in Definition \ref{defn:lrnicecouple}.
Choose an arc $\arcleft$ embedded in $\bS_{-\infty,\Re(\zleft)}$ and joining $*$ with $\zleft$;
assume also that the interior of $\arcleft$
is contained in $\mathring{\bS}_{-\infty,\Re(\zleft)}$.
(In the right-based case, we would choose an arc $\arcright$ embedded
in $\bS_{\Re(\zright),+\infty}$ joining $*$ with $\zright$ and whose interior is contained in 
$\mathring{\bS}_{\Re(\zright),+\infty}$.)
\begin{defn}
 \label{defn:leftadmgen}
Let $P\subset\cX$
with $\zleft\in P$ (resp. $\zright\in P$).
An admissible generating set
for $\fG_{\fT}(P)$ is \emph{left-based} (resp. \emph{right-based}) if it can be constructed as in Definition \ref{defn:admgenset}, using $\arcleft$
(resp. $\arcright$) as the arc associated with $\zleft$ (resp. $\zright$), and using only arcs
contained in $\bS_{\Re(\zleft),+\infty}$ (in $\bS_{-\infty,\Re(\zright)}$) for the other points of $P$.
\end{defn}
\begin{nota}
 \label{nota:genleft}
We denote by $\genleft\in\fG(P)$ (resp. $\genright$) the generator represented by a loop spinning around 
$\zleft$ (resp. $\zright$).
\end{nota}
\begin{defn}
\label{defn:actiongfc}
 Let $g\in G$;
 let $\fc\in\Hur(\fT;\Q,G)_{\zleft}$ (resp. $\fc\in\Hur(\fT;\Q,G)_{\zright}$),
 use Notation \ref{nota:fc},
 and let $\gen_1,\dots,\gen_k$ be a left-based (right-based) admissible generating set for $\fG(P)$. We let $g\cdot \fc$
 (resp. $\fc\cdot g$) be the configuration $(P,\psi',\phi')$, where:
 \begin{itemize}
  \item $\phi'$ is defined on the free group $\fG(P)$ by setting $\phi'(\genleft)=g\cdot \phi(\genleft)$ (by setting $\phi'(\genright)=\phi(\genright)\cdot g$)
  and by setting $\phi'(\gen_i)=\phi(\gen_i)$ for $1\leq i\leq k$
  such that $\gen_i\neq\genleft$ (respectively $\gen_i\neq\genright$).
  \item $\psi'$ is defined on $\fQ(P)$ using \cite[Theorem 3.3]{Bianchi:Hur1}, by setting $\psi'(\gen_i)=\psi(\gen_i)$ for all $1\leq i\leq l$
  and imposing that $(\psi',\phi')\colon(\fQ(P),\fG(P))\to(\Q,P)$ is a map of PMQ-group pairs. See Figure \ref{fig:rightaction}.
 \end{itemize}
\end{defn}
\begin{figure}[ht]
 \begin{tikzpicture}[scale=4,decoration={markings,mark=at position 0.38 with {\arrow{>}}}]
  \draw[dashed,->] (-.1,0) to (1.1,0);
  \draw[dashed,->] (0,-1.1) to (0,1.1);
  \node at (0,-1) {$*$};
  \fill[black, opacity=.2, looseness=.8] (.1,.1) to[out=180, in=-90] (-.1,.3) to[out=90,in=-100] (.4,.9) to[out=80,in=-180]
  (.8,1) to[out=0,in=90] (1.03,.5) to[out=-90,in=0] (.8,0) to[out=180,in=0] (.4,0) to[out=180,in=0] (.1,.1);
  \fill[black, opacity=.3, looseness=.8] (.8,0) to[out=180,in=-100] (.4,.9) to[out=80,in=-180]
  (.8,1) to[out=0,in=90] (1.03,.5) to[out=-90,in=0] (.8,0);
  \node at (.18,.3){$\bullet$}; 
  \node at (-.05,.3){$\bullet$}; 
  \node at (.6,.52){$\bullet$}; 
  \node at (.3,.48){$\bullet$}; 
  \node at (1.03,.5){$\bullet$}; \node at (.98,.53){\tiny$\zright$};
  \draw[thin, looseness=1.2, postaction={decorate}] (0,-1) to[out=87,in=-90] (.02,.1) to[out=90,in=-90] node[left]{\tiny$a_1$}  (-.1,.3)  to[out=90,in=90] (.08,.3)  to[out=-90,in=90] (.04,.1) to[out=-90,in=84] (0,-1);
  \draw[thin, looseness=1.2, postaction={decorate}] (0,-1) to[out=81,in=-90] (.15,.1) to[out=90,in=-90] (.1,.3)  to[out=90,in=90] (.25,.3)  to[out=-90,in=90] node[right]{\tiny$a_2$} (.17,.1) to[out=-90,in=78] (0,-1);
  \draw[thin, looseness=1.2, postaction={decorate}] (0,-1) to[out=82,in=-90]  (.4,.3)
  to[out=90,in=-90]  (.2,.5) to[out=90,in=90] (.47,.5) to[out=-90,in=90] node[left]{\tiny$a_3$} (.43,.3) to[out=-90,in=79]  (0,-1);
  \draw[thin, looseness=1.2, postaction={decorate}] (0,-1) to[out=76,in=-90]  (.55,.3)
  to[out=90,in=-90] node[right]{\tiny$g_4$} (.5,.5) to[out=90,in=90] (.7,.5) to[out=-90,in=90] (.6,.3) to[out=-90,in=73]  (0,-1);
  \draw[thin, looseness=1.2, postaction={decorate}] (0,-1) to[out=47,in=-90] (1.02,.35) to[out=90,in=-90] 
  (.8,.55)  to[out=90,in=90] node[above]{\tiny$g_5$}(1.08,.55)  to[out=-90,in=90] (1.04,.35) to[out=-90,in=44] (0,-1);
\begin{scope}[shift={(1.4,0)}]
  \draw[dashed,->] (-.1,0) to (1.1,0);
  \draw[dashed,->] (0,-1.1) to (0,1.1);
  \node at (0,-1) {$*$};
  \fill[black, opacity=.2, looseness=.8] (.1,.1) to[out=180, in=-90] (-.1,.3) to[out=90,in=-100] (.4,.9) to[out=80,in=-180]
  (.8,1) to[out=0,in=90] (1.03,.5) to[out=-90,in=0] (.8,0) to[out=180,in=0] (.4,0) to[out=180,in=0] (.1,.1);
  \fill[black, opacity=.3, looseness=.8] (.8,0) to[out=180,in=-100] (.4,.9) to[out=80,in=-180]
  (.8,1) to[out=0,in=90] (1.03,.5) to[out=-90,in=0] (.8,0);
  \node at (.18,.3){$\bullet$}; 
  \node at (-.05,.3){$\bullet$}; 
  \node at (.6,.52){$\bullet$}; 
  \node at (.3,.48){$\bullet$}; 
  \node at (1.03,.5){$\bullet$}; \node at (.98,.53){\tiny$\zright$};
  \draw[thin, looseness=1.2, postaction={decorate}] (0,-1) to[out=87,in=-90] (.02,.1) to[out=90,in=-90] node[left]{\tiny$a_1$}  (-.1,.3)  to[out=90,in=90] (.08,.3)  to[out=-90,in=90] (.04,.1) to[out=-90,in=84] (0,-1);
  \draw[thin, looseness=1.2, postaction={decorate}] (0,-1) to[out=81,in=-90] (.15,.1) to[out=90,in=-90] (.1,.3)  to[out=90,in=90] (.25,.3)  to[out=-90,in=90] node[right]{\tiny$a_2$} (.17,.1) to[out=-90,in=78] (0,-1);
  \draw[thin, looseness=1.2, postaction={decorate}] (0,-1) to[out=82,in=-90]  (.4,.3)
  to[out=90,in=-90]  (.2,.5) to[out=90,in=90] (.47,.5) to[out=-90,in=90] node[left]{\tiny$a_3$} (.43,.3) to[out=-90,in=79]  (0,-1);
  \draw[thin, looseness=1.2, postaction={decorate}] (0,-1) to[out=76,in=-90]  (.55,.3)
  to[out=90,in=-90] node[right]{\tiny$g_4$} (.5,.5) to[out=90,in=90] (.7,.5) to[out=-90,in=90] (.6,.3) to[out=-90,in=73]  (0,-1);
  \draw[thin, looseness=1.2, postaction={decorate}] (0,-1) to[out=47,in=-90] (1.02,.35) to[out=90,in=-90] 
  (.8,.55)  to[out=90,in=90] node[above]{\tiny$g_5g$}(1.08,.55)  to[out=-90,in=90] (1.04,.35) to[out=-90,in=44] (0,-1);
\end{scope}
 \end{tikzpicture}
 \caption{On left, a configuration $\fc\in\Hur(\fT,\Q,G)_{\zright}$; on right, its image under the right action of $g\in G$.}
\label{fig:rightaction}
\end{figure}

\begin{prop}
 \label{prop:actiongfc}
 For all $g\in\cG(\Q)$ the assignment $\fc\mapsto g\cdot\fc$ (respectively $\fc\mapsto\fc\cdot g$) does not depend on the choice of the
 left-based (right-based) admissible generating set, and gives rise to a continuous self-map $g\cdot-$
 of $\Hur(\fT;\Q,G)_{\zleft}$ (respectively a self-map $-\cdot g$ of $\Hur(\fT;\Q,G)_{\zright}$).
 
 The collection of all maps $g\cdot-$ (all maps $-\cdot g$) gives a left (right) action of $G$
 on $\Hur(\fT;\Q,G)_{\zleft}$ (on $\Hur(\fT;\Q,G)_{\zright}$).
\end{prop}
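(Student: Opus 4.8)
The plan is to treat the left-based case in detail; the right-based case is entirely symmetric, replacing $\arcleft,\genleft$ and $\bS_{-\infty,\min\fT}$ by $\arcright,\genright$ and $\bS_{\max\fT,+\infty}$, and left by right multiplication. Fix $g\in G$ (the assertion for $g\in\cG(\Q)$ follows by precomposing with $\cG(\fe)\colon\cG(\Q)\to G$) and $\fc=(P,\psi,\phi)\in\Hur(\fT;\Q,G)_{\zleft}$. First I would establish that the data entering Definition \ref{defn:actiongfc} is canonical. The element $\genleft\in\fG(P)$ is independent of the chosen left-based admissible generating set: any admissible arc from $*$ to $\zleft$ has interior in $\mathring{\bS}_{-\infty,\min\fT}\subset\C_{\Re<\min\fT}$, which is disjoint from $\cX\supseteq P$, and the only points of $P$ contained in the closed region $\C_{\Re\le\min\fT}$ lie on its boundary line $\C_{\Re=\min\fT}$; since this region is a half-plane, any two such arcs are isotopic rel endpoints through arcs avoiding $P\setminus\set{\zleft}$, so the loop $\genleft$ is well defined in $\fG(P)$. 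Next I would isolate a canonical complementary free factor $H\le\fG(P)$, namely the subgroup of classes represented by loops lying to the right of $\zleft$ (the image in $\fG(P)$ of $\pi_1$ of a region having $P\setminus\set{\zleft}$ in its interior and $\zleft$ on its boundary). The decisive geometric input is that a right-based arc can never be threaded around the boundary point $\zleft$, so every right-based generator of a left-based admissible set lies in $H$; combined with the previous point this yields a choice-free free-product decomposition $\fG(P)=\langle\genleft\rangle\ast H$ of which every left-based admissible generating set is a compatible free basis.

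Granting this, $g\cdot\fc$ admits a description free of choices, which is what proves well-definedness. The homomorphism $\phi'$ is the unique map $\fG(P)\to G$ with $\phi'(\genleft)=g\,\phi(\genleft)$ and $\phi'|_H=\phi|_H$. For the $\Q$-monodromy, I first note that every $\Q$-generator, being a loop around a point of $P\setminus\cY$ and hence around a point other than $\zleft\in\cY$, lies in $H$; I would then define $\psi'$ as the unique PMQ map making $(\psi',\phi')$ a map of PMQ-group pairs with $\psi'|_{\fQ(P)\cap H}=\psi|_{\fQ(P)\cap H}$, existence and uniqueness being \cite[Theorem 3.3]{Bianchi:Hur1}. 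The one point needing care is that the forced formula $\psi'(w\gen_i w^{-1})=\psi(\gen_i)^{\phi'(w)}$ is independent of the representative, i.e.\ unchanged under $w\mapsto w\gen_i^{m}$; this holds because $\phi'(\gen_i)=\fe(\psi(\gen_i))$ and the quandle self-conjugation identity $a^{\fe(a)}=a$ then gives $\psi(\gen_i)^{\phi'(\gen_i)^m}=\psi(\gen_i)$. Since every class of $\fQ(P)$ is a $\phi'$-conjugate of an element of $\fQ(P)\cap H$, this determines $\psi'$ canonically, and hence $g\cdot\fc$ is independent of the chosen generating set.

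Continuity is then the usual normal-neighbourhood argument. Given a normal neighbourhood $\fU(g\cdot\fc,\uU)$, I would pick the adapted covering $\uU$ so that the component $U_{\mathrm l}$ meeting $\zleft$ is small, is reached by $\arcleft$ from the left, and is avoided by all right arcs. For $\fc'\in\fU(\fc,\uU)\cap\Hur(\fT;\Q,G)_{\zleft}$ one extends $\arcleft$ to a left-based admissible generating set of $\fG(P')$ whose first generator still detects $\zleft$; left-multiplying its $\phi'$-value by $g$ amounts to multiplying the leftmost factor of the $G$-monodromy carried by $U_{\mathrm l}$, and one checks directly that the compatibilities of Definition \ref{defn:normalneigh} are preserved, so $g\cdot-$ carries $\fU(\fc,\uU)\cap\Hur(\fT;\Q,G)_{\zleft}$ into $\fU(g\cdot\fc,\uU)$. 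The action axioms are then immediate from $\phi'(\genleft)=g\,\phi(\genleft)$ together with the canonicity of $(\genleft,H)$, which depend only on $P$ and so are unchanged along the action: $1\cdot\fc=\fc$, and $g'\cdot(g\cdot\fc)=(g'g)\cdot\fc$ since the $\genleft$-value composes as $g'(g\,\phi(\genleft))=(g'g)\phi(\genleft)$ while nothing on $H$ moves.

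I expect the main obstacle to be the geometric lemma of the first paragraph — that every left-based admissible generating set is a free basis of $\langle\genleft\rangle\ast H$, equivalently that any two left-based systems differ by a braid supported to the right of $\zleft$. The subtle case is when several points of $P$ share the leftmost line $\C_{\Re=\min\fT}$, so that one cannot simply split off a disjoint left region; I would instead argue directly, via an isotopy-extension/Alexander-method argument controlling arc systems in the half-plane $\C_{\Re\le\min\fT}$ relative to its boundary punctures, and most likely defer this verification to the appendix.
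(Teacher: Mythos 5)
Your proposal is correct and follows essentially the same route as the paper's proof in the appendix: one isolates a canonical free-product splitting $\fG(P)=\langle\genleft\rangle\star H$, characterises $(\psi',\phi')$ intrinsically in terms of it via \cite[Theorem 3.3]{Bianchi:Hur1}, and gets continuity from the bijection $\fU(\fc,\uU)_{\zleft}\to\fU(g\cdot\fc,\uU)_{\zleft}$ on normal neighbourhoods. The only real difference is how the splitting is produced: where you defer an arc-isotopy/Alexander-method argument to handle several points of $P$ on the line $\C_{\Re=\min\fT}$, the paper applies Seifert--van Kampen to the decomposition $\C=\bT_1\cup\bT_2$ with $\bT_1$ the closure of $\bS_{-\infty,\min\fT}\cup\Uleft$ and $\bT_2=\bS_{\min\fT,\infty}\setminus\Uleft$ (intersecting in the contractible $\bS_{\min\fT,\min\fT}\ni *$), which adjoins the small adapted disc around $\zleft$ to the left piece and thereby dissolves that subtle case outright.
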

The proof of Proposition \ref{prop:actiongfc} is in Subsection \ref{subsec:actiongfc} of the appendix.

\subsection{Compatibilities of the left and right actions}
\begin{lem}
 \label{lem:actiontotmon}
 Let $\fT$ be a left-based (right-based) nice couple. Then the total monodromy $\totmon\colon\HurTQ\to G$
 is a $G$-equivariant map, where $G$ acts on itself by left (right) multiplication.
\end{lem}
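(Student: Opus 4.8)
The plan is to compute both sides of the desired identity using a single left-based admissible generating set. Fix $\fc=(P,\psi,\phi)\in\Hur(\fT;\Q,G)_{\zleft}$; by Proposition \ref{prop:actiongfc} we may evaluate $g\cdot\fc$ with respect to any left-based admissible generating set $\gen_1,\dots,\gen_k$ of $\fG(P)$, and by Notation \ref{nota:genleft} the generator $\genleft$ spins around $\zleft$. Writing $g\cdot\fc=(P,\psi',\phi')$, Definition \ref{defn:actiongfc} gives $\phi'(\genleft)=g\cdot\phi(\genleft)$ and $\phi'(\gen_i)=\phi(\gen_i)$ for every $\gen_i\neq\genleft$. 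Let $\gamma$ be a simple loop spinning clockwise around $P$, so that $\totmon(\fc)=\phi([\gamma])$ and $\totmon(g\cdot\fc)=\phi'([\gamma])$ by Definition \ref{defn:totmon}; the same $\gamma$ serves for both configurations since they are supported on $P$.

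The key step is the factorisation
\[
[\gamma]=\genleft\cdot w\quad\text{in }\fG(P),
\]
where $w$ is a word in the generators $\gen_i$ with $\gen_i\neq\genleft$. Indeed, by Definition \ref{defn:leftadmgen} the arc $\arcleft$ lies in $\bS_{-\infty,\min\fT}$ while all other arcs $\arc_i$ lie in $\bS_{\min\fT,+\infty}$ (Notation \ref{nota:bS}), and these regions overlap only along a sheared vertical curve through the basepoint $*=-\sqrt{-1}$. Consequently $\arcleft$, together with $\zleft$, is separated from the remaining arcs and points by a curve through $*$, and this separating curve exhibits a free product decomposition $\fG(P)=\langle\genleft\rangle\star\langle\gen_i:\gen_i\neq\genleft\rangle$. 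Tracing a clockwise loop based at the bottom basepoint, one sweeps the left region — where $\arcleft$ sits — before the right region; splitting $\gamma$ along the separating curve therefore writes it as a clockwise lasso around $\zleft$, representing $\genleft$, followed by a clockwise loop around $P\setminus\set{\zleft}$, whose class is a word $w$ in the remaining generators.

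Granting this factorisation, the conclusion is immediate:
\[
\totmon(g\cdot\fc)=\phi'(\genleft)\,\phi'(w)=\bigl(g\cdot\phi(\genleft)\bigr)\,\phi(w)=g\cdot\phi(\genleft\cdot w)=g\cdot\totmon(\fc),
\]
where $\phi'(w)=\phi(w)$ because $w$ does not involve $\genleft$. This establishes $G$-equivariance in the left-based case. The right-based case is entirely symmetric: using a right-based admissible generating set one instead obtains $[\gamma]=w'\cdot\genright$ with $w'$ a word in the generators other than $\genright$, since $\arcright$ lies in $\bS_{\max\fT,+\infty}$ and is thus the last arc met by a clockwise loop, whence $\totmon(\fc\cdot g)=\totmon(\fc)\cdot g$. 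The one point requiring care is the factorisation, namely correctly identifying $\genleft$ (respectively $\genright$) as the leftmost (respectively rightmost) factor of $[\gamma]$; this rests on combining the clockwise orientation convention with the placement of the basepoint below $\bH$, and is the main obstacle in the argument.
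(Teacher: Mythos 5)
Your proof is correct and follows essentially the same route as the paper: both arguments fix a left-based admissible generating set, factor the total-monodromy class as $[\gamma]=\genleft\cdot w$ with $w$ in the subgroup generated by the remaining generators, and then compute $\phi'([\gamma])=g\cdot\phi([\gamma])$ using that $\phi'$ agrees with $\phi$ away from $\genleft$. The paper simply asserts the factorisation $[\gamma]=\genleft\cdot\gen_2\cdots\gen_k$ (up to reordering the indices $2,\dots,k$) where you justify it via the separating curve through $*$; this is a harmless elaboration of the same idea.
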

\begin{proof}
 We focus on the left-based case.
 Let $\fc\in\HurTQG_{\zleft}$, let $\fc'=g\cdot \fc$ and
 use Notation \ref{nota:fc}. Let $\gen_1,\dots,\gen_k$ be a left-based
 admissible generating set for $P=P'$, suppose $\gen_1=\genleft$ (see Notation \ref{nota:genleft}),
 and suppose, up to permuting the indices from $2$ to $k$, that the product $\gen_1\dots\gen_k$ represents an element $[\gamma]\in\fG(P)$ as in Definition \ref{defn:totmon}.
 Let $\fg=\gen_2\dots\gen_k$, so that $[\gamma]=\genleft\cdot\fg$. Note that $\phi'(\fg)=\phi(\fg)$. Then
\[
  \totmon(g\cdot\fc) = \phi'([\gamma])=\phi'(\genleft)\cdot\phi'(\fg)=g\cdot\phi(\genleft)\cdot\phi(\fg)=
  g\cdot \phi([\gamma])=g\cdot\totmon(\fc).
\]
\end{proof}
Let now $(\zleft,\fT,\zright)$ be a lr-based nice couple:  both spaces $\HurTQG_{\zleft}$ and $\HurTQG_{\zright}$
contain $\HurTQG_{\zleft,\zright}$ as subspace, and this subspace is preserved under both actions of $G$,
on left on $\HurTQG_{\zleft}$ and on right on $\HurTQG_{\zright}$.

\begin{lem}
 \label{lem:leftrightcompatible}
 Let $(\zleft,\fT,\zright)$ be a lr-based nice couple.
 Then the left and the right actions of $G$ on $\HurTQG_{\zleft,\zright}$ commute, i.e.,
 for every $g,h\in\cG(\Q)$ the self-maps $g\cdot-$ and $-\cdot h$ of $\HurTQG_{\zleft,\zright}$ commute.
\end{lem}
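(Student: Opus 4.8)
The plan is to reduce the statement to the observation that, when both actions are computed with respect to a single admissible generating set that is \emph{simultaneously} left-based and right-based, they modify the values of the monodromy at two \emph{distinct} generators of $\fG(P)$, and therefore manifestly commute.

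First I would fix a configuration $\fc=(P,\psi,\phi)\in\HurTQG_{\zleft,\zright}$, so that both $\zleft$ and $\zright$ lie in $P$; recall that $\zleft\neq\zright$ and that $\zleft,\zright\in\cY$ by Definition \ref{defn:lrnicecouple}. The key step is to construct an admissible generating set $\gen_1,\dots,\gen_k$ of $\fG(P)$ which is at the same time left-based and right-based in the sense of Definition \ref{defn:leftadmgen}. To this end I would take $\arcleft\subset\bS_{-\infty,\min\fT}$ as the arc for $\zleft$ and $\arcright\subset\bS_{\max\fT,+\infty}$ as the arc for $\zright$, and route every remaining arc $\arc_i$ (for the points of $P$ other than $\zleft,\zright$) inside the middle strip $\bS_{\min\fT,\max\fT}$. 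This strip has nonempty interior precisely because $\min\fT<\max\fT$, and it contains the basepoint $*$ (indeed $*$ is fixed by every $\tau_t$ and has $\Re(*)=0$, so $*\in\bS_{t,t'}$ for all $t\le t'$). Since $\bS_{\max\fT,+\infty}\subset\bS_{\min\fT,+\infty}$ and $\bS_{\min\fT,\max\fT}\subset\bS_{\min\fT,+\infty}$, all arcs except $\arcleft$ lie in $\bS_{\min\fT,+\infty}$, so the generating set is left-based; symmetrically, since $\bS_{-\infty,\min\fT}\subset\bS_{-\infty,\max\fT}$ and $\bS_{\min\fT,\max\fT}\subset\bS_{-\infty,\max\fT}$, all arcs except $\arcright$ lie in $\bS_{-\infty,\max\fT}$, so it is also right-based. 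The admissibility conditions (pairwise disjointness of the arcs away from $*$, and each arc meeting only the boundary curve of its own point) can be arranged because the three families of arcs can be made to fan out from $*$ into the three regions $\bS_{-\infty,\min\fT}$, $\bS_{\min\fT,\max\fT}$, $\bS_{\max\fT,+\infty}$.

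By Proposition \ref{prop:actiongfc}, the left action $g\cdot-$ may be computed with any left-based admissible generating set and the right action $-\cdot h$ with any right-based one; in particular both may be computed with the common set above. Since neither action moves the points of the configuration, the same set serves for $\fc$, for $g\cdot\fc$ and for $\fc\cdot h$. With respect to this set, Definition \ref{defn:actiongfc} shows that $g\cdot-$ only replaces $\phi(\genleft)$ by $g\cdot\phi(\genleft)$ and leaves $\phi$ unchanged on the other generators, while $-\cdot h$ only replaces $\phi(\genright)$ by $\phi(\genright)\cdot h$. As $\genleft\neq\genright$, computing in either order yields the same $G$-valued monodromy:
\[
\phi''(\genleft)=g\cdot\phi(\genleft),\qquad \phi''(\genright)=\phi(\genright)\cdot h,\qquad \phi''(\gen_i)=\phi(\gen_i)\ \text{for the remaining }i.
\]
Moreover $\genleft$ and $\genright$ correspond to points of $\cY$, hence lie among $\gen_{l+1},\dots,\gen_k$, so both actions preserve the values $\psi(\gen_i)$ for $1\le i\le l$; since by \cite[Theorem 3.3]{Bianchi:Hur1} the map $\psi$ on $\fQ(P)$ is determined by these values together with $\phi$, the $\Q$-valued monodromies of $(g\cdot\fc)\cdot h$ and $g\cdot(\fc\cdot h)$ also agree. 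Therefore $(g\cdot\fc)\cdot h=g\cdot(\fc\cdot h)$.

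The main obstacle I anticipate is precisely the construction of the common generating set in the second paragraph: one must check that the left-based and right-based constraints on the positions of the arcs are jointly satisfiable, which hinges on the strict inequality $\Re(\zleft)<\Re(\zright)$ ensuring that the middle strip $\bS_{\min\fT,\max\fT}$ is genuinely available to host the arcs of all intermediate points while $\arcleft$ and $\arcright$ escape to the far left and far right. Once this geometric point is secured, the rest of the argument is a formal comparison of the two orders of composition.
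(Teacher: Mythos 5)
Your proposal is correct and follows essentially the same route as the paper: one chooses a single admissible generating set that is simultaneously left- and right-based (with $\arcleft$, $\arcright$ escaping into $\bS_{-\infty,\min\fT}$ and $\bS_{\max\fT,+\infty}$ and the remaining arcs in the middle strip $\bS_{\min\fT,\max\fT}$), and then the two actions visibly modify distinct generators and hence commute by Definition \ref{defn:actiongfc}. Your additional remarks on why the joint construction is possible and on the $\Q$-valued monodromy are correct elaborations of what the paper leaves implicit.
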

\begin{proof}
Fix $\fc=(P,\phi,\psi)\in\HurTQG_{\zleft,\zright}$ and let $\arcleft$ and $\arcright$ be arcs as in Definition \ref{defn:leftadmgen}.
We can choose disjoint arcs $\arc_i$ contained in $\bS_{\Re(\zleft),\Re(\zright)}$ completing
$\arcleft,\arcright$ to a system of arcs as in Definition \ref{defn:admgenset} yielding an
admissible generating set for $\fG(P)$ which is both left- and right-based.
The equality $g\cdot(\fc\cdot h)=(g\cdot\fc)\cdot h$
follows directly from Definition \ref{defn:actiongfc}.
\end{proof}
We thus obtain a (left) action of the group $G\times G^{op}$ on $\HurTQG_{\zleft,\zright}$, and by Lemma \ref{lem:actiontotmon} the map $\totmon\colon\HurTQG_{\zleft,\zright}\to G$ is $G\times G^{op}$-equivariant.
\begin{lem}
 \label{lem:freedoublequotient}
 In the hypotheses of Lemma \ref{lem:leftrightcompatible}, the action of
 $G\times G^{op}$ on the space $\HurTQG_{\zleft,\zright}$
 is free and properly discontinuous.
\end{lem}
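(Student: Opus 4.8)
The plan is to exhibit, for each configuration $\fc=(P,\psi,\phi)\in\HurTQG_{\zleft,\zright}$, an explicit normal neighbourhood $V$ such that the only element of $G\times G^{op}$ carrying $V$ into itself is $(\one,\one)$; this establishes at once that the action is free and properly discontinuous. First I would fix, exactly as in the proof of Lemma \ref{lem:leftrightcompatible}, a single admissible generating set $\gen_1,\dots,\gen_k$ of $\fG(P)$ that is simultaneously left- and right-based, with the arc for $\zleft$ equal to $\arcleft$ and the arc for $\zright$ equal to $\arcright$; thus there are indices with $\gen_{i_0}=\genleft$ and $\gen_{i_1}=\genright$. I then choose an adapted covering $\uU$ of $P$ with distinguished components $U_{i_0}\ni\zleft$ and $U_{i_1}\ni\zright$, and set $V=\fU(\fc,\uU)\cap\HurTQG_{\zleft,\zright}$.

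Next I would introduce two \emph{cluster monodromies}: for $\fc'=(P',\psi',\phi')\in\fU(\fc,\uU)$ let $\gamma_L$ (respectively $\gamma_R$) be the loop running along $\arcleft$ (respectively $\arcright$) and encircling $\del U_{i_0}$ (respectively $\del U_{i_1}$) clockwise, and put $\omega_L(\fc')=\phi'([\gamma_L])$ and $\omega_R(\fc')=\phi'([\gamma_R])$. The two facts I need are: (a) $\omega_L$ and $\omega_R$ are constant on $\fU(\fc,\uU)$, with values $\phi(\genleft)$ and $\phi(\genright)$; (b) the left action is equivariant, $\omega_L(g\cdot\fc')=g\,\omega_L(\fc')$, the right action is equivariant, $\omega_R(\fc'\cdot h)=\omega_R(\fc')\,h$, while the left action fixes $\omega_R$ and the right action fixes $\omega_L$. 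Fact (a) is immediate from Definition \ref{defn:normalneigh}: the defining condition of a normal neighbourhood forces the $\phi'$-monodromy around each covering component $U_i$ to equal $\phi(\gen_i)$, and here $[\gamma_L]=[\del U_{i_0}]$ corresponds to $\gen_{i_0}=\genleft$. Fact (b) is a localised version of Lemma \ref{lem:actiontotmon}: since the generating set is both left- and right-based, $\genleft$ and $\genright$ are distinct generators, the left action alters $\phi'$ only on $\genleft$ and the right action only on $\genright$, so each cluster monodromy is fixed by the other action, and the same manipulation as in Lemma \ref{lem:actiontotmon} yields the stated left/right translation.

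With these in hand the conclusion is short. Suppose $(g,h)\cdot V\cap V\neq\emptyset$, so some $\fc'\in V$ has $(g,h)\cdot\fc'\in\fU(\fc,\uU)$. Evaluating $\omega_L$ via fact (a) on both $\fc'$ and $(g,h)\cdot\fc'$ and via fact (b) gives $\phi(\genleft)=\omega_L((g,h)\cdot\fc')=g\,\omega_L(\fc')=g\,\phi(\genleft)$, whence $g=\one$; the symmetric computation with $\omega_R$ gives $h=\one$. Hence $(g,h)V\cap V=\emptyset$ for every $(g,h)\neq(\one,\one)$, which is precisely freeness together with proper discontinuity. The main obstacle I anticipate is the bookkeeping behind fact (b): I must arrange the arcs so that, when the loop around a whole cluster is written as an ordered product of small loops around its individual points, $\genleft$ appears as the leftmost factor of $[\gamma_L]$ and $\genright$ as the rightmost factor of $[\gamma_R]$, since this is what converts multiplication by $g$ on $\phi'(\genleft)$ into left multiplication of $\omega_L$. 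This ordering is forced by the positions of $\arcleft$ in $\bS_{-\infty,\min\fT}$ and $\arcright$ in $\bS_{\max\fT,+\infty}$ relative to the extremal points $\zleft$ and $\zright$.
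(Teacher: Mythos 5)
Your proof is correct and rests on the same two ingredients as the paper's: membership in a normal neighbourhood $\fU(\fc,\uU)$ pins down the $G$-valued monodromy around each component of $\uU$, and the left (resp.\ right) action translates the monodromy around the component containing $\zleft$ (resp.\ $\zright$) by left (resp.\ right) multiplication. The paper packages this as ``the action carries $\fU(\fc,\uU)$ onto $\fU(g\cdot\fc\cdot h,\uU)$, and normal neighbourhoods of distinct configurations with the same support are disjoint'' (invoking Propositions \ref{prop:actiongfc} and \ref{prop:Hurtopology}), whereas you make the cluster monodromies explicit; your fact (b), including the leftmost/rightmost-factor bookkeeping you flag (which does hold, since $\arcleft$ lies in $\bS_{-\infty,\min\fT}$ and all other arcs in $\bS_{\min\fT,+\infty}$), is exactly the normal-neighbourhood equivariance already established in the proof of Proposition \ref{prop:actiongfc} and could simply be cited from there.
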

\begin{proof}
 Let $\fc=(P,\psi,\phi)\in\HurTQG_{\zleft,\zright}$, let $\uU$ be an adapted covering of $P$,
 and denote by $\Uleft$ and $\Uright$ the components of $\uU$ containing
 $\zleft$ and $\zright$ respectively.
 Fix an admissible generating set for $\fG(P)$ which is both left- and right-based,
 and let $\genleft$ and $\genright$ be as in Notation \ref{nota:genleft}.
 
 Let $(g,h)$ be a non-trivial element in $G\times G^{op}$,
 and denote by $\fc'=(P,\psi',\phi')$ the configuration $g\cdot\fc\cdot h$.
 Then either $\phi'(\genleft)=g\cdot\phi(\genleft)\neq \phi(\genleft)$,
 or $\phi'(\genright)=\phi(\genright)\cdot h\neq \phi(\genright)$, or both inequalities hold:
 in any case we conclude $\fc'\neq \fc$,
 so the action of $G\times G^{op}$ on $\HurTQG_{\zleft,\zright}$ is free.
 
 Recall Definition \ref{defn:basedHur}, and note that
 the normal neighbourhood $\fU(\fc,\uU)_{\zleft,\zright}$ is mapped by $g\cdot-\cdot h$ to the normal neighbourhood
 $\fU(g\cdot\fc\cdot h,\uU)_{\zleft,\zright}$;
 since the configurations $\fc$ and $\fc'$ are supported on the same set $P$,
 but $\phi'\neq \phi$, the argument in the proof of Proposition \ref{prop:Hurtopology}
 shows that $\fU(\fc,\uU)$ and $\fU(g\cdot\fc\cdot h,\uU)$ intersect trivially in $\HurTQG$,
 and a fortiori $\fU(\fc,\uU)_{\zleft,\zright}$ and $\fU(g\cdot\fc\cdot h,\uU)_{\zleft,\zright}$
 intersect trivially in $\HurTQG_{\zleft,\zright}$.
 Hence the action of $G\times G^{op}$ on $\HurTQG_{\zleft,\zright}$ is properly discontinuous. 
\end{proof}
Recall Notation \ref{nota:Hurwithtotmon}: we can decompose $\HurTQG_{\zleft,\zright}$
as a disjoint union of subspaces $\HurTQG_{\zleft,\zright;g}$ according to $\totmon$. If we act on $\HurTQG_{\zleft,\zright}$
only on left, these subspaces will be permuted among each other, so that the quotient
of $\HurTQG_{\zleft,\zright}$ by the left action is homeomorphic to $\HurTQG_{\zleft,\zright;\one}$.
The same holds if we quotient $\HurTQG_{\zleft,\zright}$ only by the right action of $G$; we can define
a more interesting space by quotienting $\HurTQG_{\zleft,\zright}$ by both actions.
\begin{nota}
\label{nota:Hur/GG}
 We denote by $\HurTQG_{G,G^{op}}$ the quotient of $\HurTQG_{\zleft,\zright}$ by the left and right
 actions of $G$: the points $\zleft,\zright$ will always be clear from the context and will thus be omitted from the notation. We denote by
 \[
 \pr_{G,G^{op}}\colon\HurTQG_{\zleft,\zright} \to \HurTQG_{G,G^{op}}
 \]
 the projection map.
\end{nota}
By Lemma \ref{lem:freedoublequotient} we have in particular a covering map
\[
\pr_{G,G^{op}}\colon\HurTQG_{\zleft,\zright;\one}\to \HurTQG_{G,G^{op}},
\]
with deck transformation group isomorphic to $G$ and acting transitively on fibres.

\section{Hurwitz-Ran spaces with monodromies in augmented PMQs}
\label{sec:augmented}
In this section we introduce, for an \emph{augmented} PMQ $\Q$, a
subspace $\Hur(\fT;\Q_+,G)$ of $\Hur(\fT;\Q,G)$;
under suitable conditions on $\fT$ the inclusion $\Hur(\fT;\Q_+,G)\hookrightarrow\Hur(\fT;\Q,G)$
is a weak homotopy equivalence. Recall from \cite[Definition 4.9]{Bianchi:Hur1} that $\Q$ is \emph{augmented} if
an equality $ab=\one$ in $\Q$ implies $a=b=\one$.

\begin{defn}
 \label{defn:Huraugmented}
 Let $\fT=(\cX,\cY)$ be a nice couple and let $(\Q,G)$ be a PMQ-group pair, with $\Q$
 augmented. We define $\Hur(\fT;\Q_+,G)\subseteq\Hur(\fT;\Q,G)$ as the subspace containing
 all configurations $\fc=(P,\psi,\phi)$ such that $\psi\colon\fQ(P)\to\Q$ is an \emph{augmented} map
 of PMQs, i.e. $\psi^{-1}(\one_{\Q})=\set{\one_{\fQ(P)}}$ (see \cite[Definition 4.9]{Bianchi:Hur1}).
 If $\cY$ is empty we also write $\Hur(\cX;\Q_+)\subset\Hur(\cX;\Q)$ for the space $\Hur(\fT;\Q_+,\cG(\Q))$.
\end{defn}
Roughly speaking, a configuration $\fc=(P,\psi,\phi)\in\HurTQG$ belongs to $\Hur(\fT;\Q_+,G)$
if the monodromy $\psi$ attains non-trivial values around each point of $P\setminus\cY$:
these are also all points of $P$ around which $\psi$ is defined. By ``non-trivial value''
we mean a value different from $\one_{\Q}$, i.e. a value in $\Q_+$, whence the notation.

Note that if $\xi\colon \fT\to\fT'$ is a morphism of nice couples and $\Q$ is augmented, then the induced map 
$\xi_*\colon\Hur(\fT;\Q,G)\to\Hur(\fT';\Q,G)$ restricts to a map of spaces
$\xi_*\colon\Hur(\fT;\Q_+,G)\to\Hur(\fT';\Q_+,G)$. This is true also for a lax morphism
$\xi\colon \fT\to\fT'$ (see Definition \ref{defn:laxmapnicecouples}), provided that $\Q$ is complete and augmented.

\begin{lem}
 \label{lem:Huraugmentedclosed}
 If $\Q$ is augmented, then $\Hur(\fT;\Q_+,G)$ is closed in $\Hur(\fT;\Q,G)$.
\end{lem}
\begin{proof}
 Let $\fc\in \Hur(\fT;\Q,G)\setminus\Hur(\fT;\Q_+,G)$; then, using Notation \ref{nota:fc}, there is some $1\leq i\leq l$ such that
 $\psi$ sends each element of $\fQ(P,z_i)$ to $\one$ (see also Definition \ref{defn:fQP}).
 Let $\uU$ be an adapted covering of $P$: then we claim that
 the entire normal neighbourhood $\fU(\fc;\uU)$ lies in the difference $ \Hur(\fT;\Q,G)\setminus\Hur(\fT;\Q_+,G)$.
 To see this, let $\fc'=(P',\psi',\phi')\in\fU(\fc,\uU)$, use Notation \ref{nota:uUcovering}, and 
 let $z'$ be a point in $P'\cap U_i$. Then each element $[\gamma']\in\fQ(P',z')$
 is sent by $\psi'$ to an element $\psi'([\gamma'])\in\Q$ which occurs as a factor of a decomposition of
 $\one_{\Q}$ in the partial monoid $\Q$; since $\Q$ is augmented
 we have $\psi'([\gamma'])=\one$ and therefore $\fc'$ does not lie in $\Hur(\fT;\Q_+,G)$.
\end{proof}

\subsection{Homotopy equivalences from augmented PMQs}
The rest of the section is devoted to the proof of the following technical propositions.
\begin{prop}
 \label{prop:Huraugmentedabsolute}
Let $\cX\subset\bH$ be a semi-algebraic, non-empty and connected subspace, and let $\Q$
be an augmented PMQ. Then the spaces $\Hur(\cX;\Q_+)$ and $\Hur_+(\cX;\Q)$
are homotopy equivalent.
\end{prop}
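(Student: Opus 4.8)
The plan is to realise the equivalence through an \emph{explosion} map that erases the points carrying trivial monodromy. Write $\fT=(\cX,\emptyset)$, so $\Hur(\cX;\Q)=\Hur(\fT;\Q,\cG(\Q))$, and recall from Notation \ref{nota:emptysetoneone} and Definition \ref{defn:Huraugmented} that the empty configuration $e=(\emptyset,\one,\one)$ is an isolated point of $\Hur(\cX;\Q_+)$ which does \emph{not} lie in $\Hur_+(\cX;\Q)$. First I would define
\[
 \expl\colon\Hur_+(\cX;\Q)\to\Hur(\cX;\Q_+),
\]
sending a configuration $\fc=(P,\psi)$ to the configuration supported on $P'=\set{z\in P\,:\,\psi(\fQ(P,z))\neq\set{\one}}$, whose monodromy takes the same values as $\psi$ on the loops encircling the surviving points of $P'$; thus $\expl$ discards exactly the points around which $\psi$ is trivial, and it sends an all-trivial configuration to $e$. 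In the opposite direction I would let $s\colon\Hur(\cX;\Q_+)\to\Hur_+(\cX;\Q)$ be the inclusion on the complement of the isolated point $e$, and send $e$ to a fixed one-point configuration $\fc_0=(\set{z_0},\one)$ with $z_0\in\cX$; this is continuous precisely because $e$ is isolated. By construction $\expl\circ s=\Id$.

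The first technical step is continuity of $\expl$, which is where augmentedness of $\Q$ enters. Around a configuration $\fc$ all of whose monodromies are trivial, every $\fc'$ in a normal neighbourhood $\fU(\fc,\uU)$ is again all-trivial: by augmentedness any factor of $\one$ in $\Q$ equals $\one$, exactly as in the proof of Lemma \ref{lem:Huraugmentedclosed}, so the points into which $\fc'$ splits the support of $\fc$ all carry trivial monodromy, and $\expl$ is locally constant equal to $e$ there. Around a configuration with at least one non-trivial point one uses the product decomposition of normal neighbourhoods from Theorem \ref{thm:productneighbourhood} to reduce to the single-point case, where deleting the trivial factors of a splitting is readily checked to be continuous.

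It then remains to produce a homotopy $s\circ\expl\simeq\Id_{\Hur_+(\cX;\Q)}$. Using the merging homotopy of Lemma \ref{lem:contractiblenormalneigh} one may first deform every configuration, within its normal neighbourhood, to an \emph{atomic} one in which each cluster has been collapsed to a single point carrying the product of its monodromies. The remaining task is to erase the trivial atomic points. When a non-trivial atomic point is present, one slides each trivial atomic point into a non-trivial neighbour; since the unit $\one$ is neutral this leaves all surviving monodromies unchanged, keeps the configuration in $\Hur_+(\cX;\Q)$, and ends at $\expl(\fc)=s(\expl(\fc))$. When every atomic point is trivial, the configuration lies in the closed stratum $Z\subset\Hur_+(\cX;\Q)$ of all-trivial configurations; under the homeomorphism $Z\cong\Ran_+(\cX)$ the target $s(\expl(\fc))=\fc_0$ is a single point of $Z$, and since $\cX$ is non-empty and connected, $\Ran_+(\cX)$ is weakly contractible by Lemma \ref{lem:Ran+contractible}, so $\Id_Z$ is homotopic inside $Z$ to the constant map at $\fc_0$.

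The main obstacle is gluing these two regimes into one continuous homotopy across the boundary of $Z$: near a configuration in which a trivial point sits adjacent to a non-trivial point, the prescription ``absorb the trivial point into its non-trivial neighbour'' must degenerate continuously, as the non-trivial point is removed, into the prescription ``contract within $\Ran_+(\cX)$''. I expect this to be the heart of the proof and the reason it is deferred; it is exactly here that the connectedness of $\cX$ is indispensable, since it is what allows the all-trivial stratum to be contracted to a single point while remaining inside $\Hur_+(\cX;\Q)$. Finally, because Hurwitz spaces are locally contractible (the remark following Lemma \ref{lem:contractiblenormalneigh}), the weak contraction of $Z$ furnished by Lemma \ref{lem:Ran+contractible} suffices to conclude a genuine homotopy equivalence.
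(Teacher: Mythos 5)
Your choice of maps is the right one: the retraction that forgets inert points is the paper's $\rho$ (Definition \ref{defn:rhoretraction}), its continuity on $\Hur(\cX;\Q)$ is exactly Corollary \ref{cor:emptyexplosion} and does use augmentedness as you say, and the homotopy inverse is indeed a section adding back a basepoint configuration. The gap is in the homotopy $s\circ\expl\simeq\Id_{\Hur_+(\cX;\Q)}$, and it is not merely a technical gluing issue that one can expect to resolve along the lines you sketch. Neither of your two steps is a well-defined global homotopy: ``deform every configuration within its normal neighbourhood to an atomic one'' depends on a choice of adapted covering, which cannot be made continuously in $\fc$ (configurations have no canonical clusters, and Lemma \ref{lem:contractiblenormalneigh} only contracts one normal neighbourhood at a time); and ``slide each trivial atomic point into a non-trivial neighbour'' requires a choice of neighbour that jumps discontinuously as the configuration moves. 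So the heart of the argument is missing, not deferred.

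The paper's resolution is a genuinely different mechanism that avoids any case distinction between the all-trivial stratum and its complement. One contracts the Ran space: by Lemma \ref{lem:Ran+contractible} there is a standard explosion $\expl^{z_0}\colon\cX\times[0,1]\to\Ran_+(\cX)$ with $\expl^{z_0}(z,0)=\set{z}$ and $\expl^{z_0}(z,1)=\set{z_0}$, and the homotopy is
\[
\fc\longmapsto \rho(\fc)\times\expl^{z_0}\bigl(\epsilon(\fc),t\bigr),
\]
i.e.\ the external product (Notation \ref{nota:simplifiedexternalproduct}) of the reduced configuration with the \emph{entire} original support, inert points included, flowing inside $\Ran_+(\cX)$ towards $z_0$. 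Continuity of this composite is precisely Proposition \ref{prop:explosion}; at $t=0$ it is the identity because the deleted inert points are immediately re-inserted by the external product, and at $t=1$ it lands in, and restricts to the identity on, the subspace $\Hur(\cX;\Q_+)_{z_0}\cong\Hur(\cX;\Q_+)$ of configurations containing $z_0$ with all other points non-inert. Every configuration is treated by the same formula, so the problem of interpolating between ``absorb into a neighbour'' and ``contract within $\Ran_+(\cX)$'' never arises: trivial points are never absorbed into anything, they are simply carried along as elements of the Ran space and merged at $z_0$ at time $1$. This is the idea you would need to supply to complete your argument.
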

\begin{prop}
\label{prop:Huraugmentedrelative}
 Let $(\Q,G)$ be a PMQ-group pair with $\Q$ augmented, and let $\fT=(\cX,\cY)$ be a nice couple
 with both $\cX$ and $\cY$ non-empty and connected. Let $P_0\subset\cY$ be a finite, non-empty subset.
 Then the inclusion $\Hur(\fT;\Q_+,G)_{P_0}\subset\Hur(\fT;\Q,G)_{P_0}$
 is a homotopy equivalence.
\end{prop}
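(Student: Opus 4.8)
The plan is to exhibit the inclusion as a strong deformation retraction. Call a point $z\in P\setminus\cY$ of a configuration $\fc=(P,\psi,\phi)$ a \emph{phantom} if $\psi$ is trivial on the conjugacy class $\fQ(P,z)$ (see Definition \ref{defn:fQP}); since $\phi|_{\fQ(P)}=\fe\circ\psi$, the monodromy $\phi$ is then also trivial on every loop encircling $z$ alone. Thus $\fc$ lies in $\Hur(\fT;\Q_+,G)$ precisely when it has no phantom points. Write $P_{\mathrm{vis}}=P\setminus\{\text{phantoms}\}$ and note $P_0\subseteq P_{\mathrm{vis}}$, because $P_0\subset\cY$ whereas phantoms lie in $\cX\setminus\cY$. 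The retraction $r$ will forget the phantom points: the loops around the phantoms normally generate the kernel of the forgetting homomorphism $\fri^{P}_{P_{\mathrm{vis}}}\colon\fG(P)\to\fG(P_{\mathrm{vis}})$ of Notation \ref{nota:fri}, and since $\psi$ and $\phi$ vanish on these loops, the pair $(\psi,\phi)$ descends along $\fri^{P}_{P_{\mathrm{vis}}}$ to a map of PMQ-group pairs $(\psi_{\mathrm{vis}},\phi_{\mathrm{vis}})$, giving $r(\fc)=(P_{\mathrm{vis}},\psi_{\mathrm{vis}},\phi_{\mathrm{vis}})\in\Hur(\fT;\Q_+,G)_{P_0}$.

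First I would verify that $r$ is continuous. Inside a normal neighbourhood $\fU(\fc,\uU)$ the augmentation of $\Q$ forces, exactly as in the proof of Lemma \ref{lem:Huraugmentedclosed}, that every point splitting off a phantom of $\fc$ is again a phantom, while the visible points splitting off a non-phantom $z_i$ still have product monodromy equal to $\phi(\gen_i)$. Since phantoms carry the unit $\one$, dropping them from any partial-product decomposition leaves the compatibility conditions of Definition \ref{defn:normalneigh} intact; hence $r$ carries $\fU(\fc,\uU)$ into the normal neighbourhood of $r(\fc)$ cut out by the sub-covering indexed by $P_{\mathrm{vis}}$. Plainly $r\circ\iota=\Id$, since an augmented configuration has no phantom points.

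It then remains to connect $\iota\circ r$ to the identity by a homotopy that drags the phantom points into $\cY$ and absorbs them at the basepoint $z_*\in P_0\subset\cY$. Because a phantom has trivial monodromy, moving it conjugates no other generator and leaves its own (trivial) value unchanged; hence a motion of the phantom points alone lifts canonically to a path of Hurwitz configurations, and the homotopy is governed purely by a motion in the Ran space. Using that $\cX$ is connected I would first push every phantom from $\cX\setminus\cY$ into $\cY$, and using that $\cY$ is connected I would then merge the resulting trivially decorated points of $\cY$ into $z_*$; the weak contractibility of the based Ran space $\Ran(\cX)_{z_*}$ of Lemma \ref{lem:RancontractibleP0} is the structural input guaranteeing such a contraction exists. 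Since augmented configurations have no phantoms, this homotopy is stationary on $\Hur(\fT;\Q_+,G)_{P_0}$, so $\iota$ is a strong deformation retract and in particular a homotopy equivalence.

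The main obstacle is the continuity of this last homotopy across the loci where a phantom splits off from, or is about to merge into, a visible point or a $\cY$-point. Naively dragging each phantom a fixed fraction of the way to $z_*$ is discontinuous: a phantom that has just split off infinitesimally from a $\cY$-point must be moved only infinitesimally, so as to agree with the unsplit configuration where nothing moves at all. The remedy is to scale the motion of each phantom by its degree of separation from the visible part of the configuration, which is precisely the role played by the \emph{explosion} construction, so that phantoms near a merge are transported slowly; making this scaling compatible with the adapted coverings and uniform enough to yield a globally continuous homotopy is the technical heart of the argument. This parallels the absolute statement of Proposition \ref{prop:Huraugmentedabsolute}, where the same phantoms are instead absorbed into the visible points of a non-empty configuration; here the hypothesis $P_0\subset\cY$ supplies a canonical sink $z_*$ and upgrades the homotopy equivalence to a deformation retraction.
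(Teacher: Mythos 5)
There is a genuine gap at the very first step: your retraction $r$ is exactly the map $\rho$ of Definition \ref{defn:rhoretraction}, and the paper points out explicitly, right after introducing $\rho$, that it is \emph{not} continuous when $\cY\neq\emptyset$. The case your continuity argument misses is a non-phantom point $z_i\in\cY$ whose $G$-monodromy is $\one_G$: if $z_i$ is an accumulation point of $\cX\setminus\cY$, then arbitrarily close to $\fc$ there are configurations $\fc'$ in which the only point of $P'$ in the component $U_i$ lies in $\cX\setminus\cY$ and carries $\Q$-monodromy $\one_{\Q}$ (this is consistent, since $\fe(\one_{\Q})=\one_G=\phi(\gen_i)$). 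That point is a phantom of $\fc'$, so $r(\fc')$ has empty support in $U_i$ while $r(\fc)$ retains $z_i$; hence $\epsilon\circ r$, and therefore $r$, is discontinuous at $\fc$. Your dichotomy (``points splitting off a phantom are phantoms; points splitting off a non-phantom have product monodromy $\phi(\gen_i)$'') is correct but does not exclude that \emph{all} points splitting off a non-phantom $\cY$-point are phantoms. The same defect reappears in your closing homotopy: moving only the phantoms is discontinuous at the locus where a phantom is about to merge into a visible point or a $\cY$-point, and ``scaling the motion by the degree of separation from the visible part'' is not what the explosion construction does, nor is it clear how to make such a configuration-dependent scaling continuous.

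The paper's mechanism is different and is designed precisely to circumvent the discontinuity of $\rho$: the map $\expl_*(\fc,t)=\rho(\fc)\times\expl(P,t)$ of Proposition \ref{prop:explosion} explodes \emph{every} point of the support $P$ (not just the phantoms) into a cloud $\expl(z,t)$ defined on the ambient space, with $z\in\expl(z,t)$ forced for $z\in\cY$, and then re-adds the non-inert part via the external product; the composite is continuous because a deleted phantom is always simultaneously replaced by a nearby nonempty cloud, whether or not it is on the verge of merging. Concretely the proof uses a standard explosion $\expl^{\cX,\cY}$ pushing all exploded points into $\cY$ at time $1$, followed by a second homotopy $\cH^{\cY}$ contracting the resulting $\cY$-clouds onto $P_0$ inside $\Ran_+(\cY)$. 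Note also that this yields only a homotopy equivalence, not the strong deformation retraction you claim: the first homotopy is not stationary on $\Hur(\fT;\Q_+,G)_{P_0}$, since it adds $\cY$-points to augmented configurations as well, which is exactly why the second homotopy is needed.
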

In the rest of the section we fix a PMQ-group pair $(\Q,G)$ with $\Q$ augmented.
Let $\fT=(\cX,\cY)$ be a nice couple.
\begin{defn}
 \label{defn:inert}
 Let $\fc=(P,\psi,\phi)\in\Hur(\fT;\Q,G)$. A point $z\in P$ is \emph{inert} for $\fc$ if
 $z\in \cX\setminus\cY$ and $\psi$ maps each element of $\fQ(P,z)$ to $\one_{\Q}$
 (see Definition \ref{defn:fQP}).
\end{defn}
\begin{defn}
 \label{defn:rhoretraction}
We define a retraction of sets
$\rho\colon\Hur(\fT;\Q,G)\to\Hur(\fT;\Q_+,G)$ of the inclusion $\Hur(\fT;\Q_+,G)\subset\Hur(\fT;\Q,G)$:
for a configuration $\fc\in \Hur(\fT;\Q,G)$, we construct $\rho(\fc)$ by ``forgetting''
its inert points.
More precisely, using Notation \ref{nota:fc}, if $P'\subset P$
is the subset of non-inert points for $\fc$, then we note
that $\phi\colon\fG(P)\to G$ and $\psi\colon\fQ(P)\to\Q$ factor
through maps $\phi'\colon\fG(P')\to G$ and $\psi'\colon\fQ(P')\to\Q$
along the surjections $\fri^{P}_{P'}\colon\fG(P)\to\fG(P')$ and $\fri^{P}_{P'}\colon\fQ(P)\to\fQ(P')$,
(see Notation \ref{nota:fri}), and we define
$\rho\colon(P,\psi,\phi)\mapsto (P',\psi',\phi')$.
\end{defn}

Unfortunately, even assuming that $\Q$ is augmented, $\rho$ is in general not continuous:
for instance, if $P$ contains a point $z_i\in\cY$ whose local monodromy with respect to $\phi$
is $\one\in G$, then $\rho(P,\psi,\phi)$ is a configuration supported also on the point
$z_i$; however if we perturb slightly $z_i$ so that it ``enters'' in $\cX\setminus\cY$ (for this,
suppose that $z_i$ is an accumulation point for $\cX\setminus\cY$),
then in defining $\rho(P,\psi,\phi)$ we forget $z_i$ and we do not
replace it by any other point close to it.

\subsection{Explosions}
The previous issue can only occur when $\cY\neq\emptyset$, and in fact if $\cY=\emptyset$,
then $\rho\colon\Hur(\fT;\Q,G)\to\Hur(\fT;\Q_+,G)$ is continuous, as we will see in Corollary \ref{cor:emptyexplosion}.
In the general case we cannot just let an inert point $z_i\in P\setminus\cY$ disappear;
what we can do is to let every point $z_i\in P$
\emph{explode} (including non-inert points),
by replacing $z_i$ with one or more other points of $\cX$. This idea is elaborated
in the following definition.

\begin{defn}
\label{defn:explosion}
Let $\fT=(\cX,\cY)$ be a nice couple. An \emph{explosion} $\expl$ of $\fT$ is a
continuous map $\expl\colon\cX\times[0,1]\to\Ran(\fT)$ such that 
for all $z\in \cY$ and all $0\leq t\leq 1$, $z\in\expl(z,t)$.
An explosion $\expl$ is \emph{standard} if $\expl(z,0)=\set{z}\in\Ran(\fT)$ for all $z\in\cX$.

Given an explosion $\expl$, a finite subset $P\subset\cX$ and a time $0\leq t\leq 1$,
we can define a subset $\expl(P,t)\in\Ran(\fT)$ as the union of the subsets
$\expl(z,t)$ for $z\in P$.
Thus an explosion $\expl$ induces a continuous map
$\Ran(\fT)\times[0,1]\to\Ran(\fT)$, that
by abuse of notation we still denote $\expl$.
If $\expl$ is standard, then $\expl(-,0)$ is the identity of $\Ran(\fT)$.
\end{defn}

\begin{prop}
 \label{prop:explosion}
 Let $\fT=(\cX,\cY)$ be a nice couple, let $\expl\colon\cX\times[0,1]\to\Ran(\fT)$ be an explosion, and
 let $(\Q,G)$ be a PMQ-group pair with $\Q$ augmented.
 Recall Definition \ref{defn:epsilon} and Notation \ref{nota:simplifiedexternalproduct}, and let
 $\expl_*\colon\Hur(\fT;\Q,G)\times[0,1]\to\Hur(\fT;\Q,G)$ be the following composition of maps of sets:
 \[
  \begin{tikzcd}[column sep=45pt]
   \Hur(\fT;\Q,G)\times[0,1]
   \ar[r,"{\big(\rho\,,\,\epsilon\big)\times\Id}"]
   & \Hur(\fT;\Q_+,G)\times \Ran(\fT) \times [0,1]
   \ar[dl,"\Id\times\expl",swap]\\
   \Hur(\fT;\Q_+,G)\times \Ran(\fT)  \ar[r,"-\times-"]  & \Hur(\fT;\Q,G).
  \end{tikzcd}
 \] 
 Then $\expl_*$ is continuous. If moreover $\expl$ is standard, then
 $\expl_*(-,0)$ is the identity of $\Hur(\fT;\Q,G)$.
\end{prop}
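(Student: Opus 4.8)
The plan is to isolate the unique source of discontinuity in the composite defining $\expl_*$, namely the set-map $\rho$ of Definition \ref{defn:rhoretraction}, and to show that the subsequent external product with a continuously varying explosion cluster exactly compensates for it. Of the three maps composed to form $\expl_*$, the map $\Id\times\expl$ is continuous because $\expl$ is, and the external product $-\times-$ is continuous by the discussion following Definition \ref{defn:externalproduct} and Notation \ref{nota:simplifiedexternalproduct}; thus the entire content of the continuity assertion is that $\expl_*(\fc,t)=\rho(\fc)\times\expl(\epsilon(\fc),t)$ is continuous even though $\rho$ alone is not. Two structural facts will do the work. \emph{(i)} An inert point (Definition \ref{defn:inert}) carries trivial monodromy in \emph{both} $\Q$ and $G$: a point $z\in\cX\setminus\cY$ with $\psi$ trivial on $\fQ(P,z)$ also has $\phi$ trivial there, since $\fe\circ\psi=\phi|_{\fQ(P)}$; hence the points $\rho$ forgets contribute nothing, and re-inserting nearby points carrying the trivial decoration of the $\Ran(\fT)$-factor changes no monodromy. \emph{(ii)} Because $\Q$ is augmented (\cite[Definition 4.9]{Bianchi:Hur1}), a partial product in $\Q$ equals $\one$ iff every factor does; so when $z_i$ splits into the points of a nearby configuration inside a covering component $U_i$, it is inert precisely when all its children in $\cX\setminus\cY$ are inert, and a non-inert $z_i$ always has at least one non-inert child.

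For the continuity verification I would argue at an arbitrary $(\fc,t_0)$, writing $\fc=(P,\psi,\phi)$ and $\fc^*=\expl_*(\fc,t_0)$, whose support is $S=P'\cup\expl(P,t_0)$ with $P'$ the set of non-inert points. Given a normal neighbourhood $\fU(\fc^*;\uW)$, I would use continuity of $\expl$ to pick an adapted covering $\uU=(U_1,\dots,U_k)$ of $P$ and a neighbourhood $V$ of $t_0$ so small that each $U_i$ lies inside the $\uW$-component containing $z_i$ whenever $z_i\in P'$, and so that for all $z\in U_i\cap\cX$ and $t\in V$ the cluster $\expl(z,t)$ lies in $\uW$ and meets exactly the components of $\uW$ already met by $\expl(z_i,t_0)$. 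Then for $(\fc_1,t)\in\fU(\fc;\uU)\times V$ I would verify the conditions of Definition \ref{defn:normalneigh} for $\expl_*(\fc_1,t)\in\fU(\fc^*;\uW)$: the support lies in $\uW$; every component $W_j$ is hit (those meeting $\expl(P,t_0)$ by the explosion clusters of the children, those containing a non-inert $z_i$ by fact \emph{(ii)}); and the monodromy, collapsed along $\uW$, reproduces that of $\fc^*$. The last point is where facts \emph{(i)} and \emph{(ii)} combine: around each $W_j$ the inert children and all explosion points contribute trivially, so the collapsed $\Q$- and $G$-monodromies reduce to the product of the non-inert children's values, which by the compatibility built into membership in $\fU(\fc;\uU)$ equals the monodromy of $\fc^*$ around $W_j$.

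For the final clause, if $\expl$ is standard then $\expl(\epsilon(\fc),0)=\epsilon(\fc)=P$, so $\expl_*(\fc,0)=\rho(\fc)\times P$. This configuration is supported on $P'\cup P=P$; around the non-inert points it carries the monodromy of $\rho(\fc)$, i.e.\ of $\fc$, while around the inert points $P\setminus P'$ it carries the trivial decoration of the $\Ran(\fT)$-factor, which by fact \emph{(i)} is exactly the monodromy those points already had in $\fc$. The two maps of PMQ-group pairs thus agree on every admissible generator, hence coincide, giving $\expl_*(\fc,0)=\fc$. I expect the main obstacle to be the continuity verification, and within it the monodromy-matching around the covering components $W_j$ that simultaneously receive a non-inert point and part of an explosion cluster: tracking which children and which explosion points land in which component of $\uW$, and confirming that the trivially decorated explosion points never disturb the collapsed monodromy, is precisely the bookkeeping that augmentedness (fact \emph{(ii)}) and the triviality of inert monodromies (fact \emph{(i)}) are designed to control.
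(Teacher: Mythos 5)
Your proposal is correct and follows essentially the same route as the paper's proof: fix a target normal neighbourhood, use continuity of $\expl$ into $\Ran(\fT)$ to choose $\uU$ and $V$, then verify support containment, coverage of every component, and monodromy matching, with augmentedness guaranteeing that inert points have only inert children (so forgotten points stay forgotten) and that non-inert points always retain a non-inert child (so their components are still hit). The only cosmetic difference is that the paper splits the component-coverage step into three cases, handling a non-inert point $z_i\in\cY$ via the clause $z_i\in\expl(z_i,t)$ rather than via your fact (ii), which as stated only applies to points of $\cX\setminus\cY$; your first case (components meeting $\expl(P,t_0)$) already absorbs this situation.
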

The proof of Proposition\ref{prop:explosion} is in Subsection \ref{subsec:explosion} of the appendix.
A particular application of Proposition \ref{prop:explosion} is the following:
\begin{cor}
 \label{cor:emptyexplosion}
Let $\cX\subset\bH$ be a semi-algebraic
set and let $\Q$ be an augmented PMQ; then the map $\rho\colon\Hur(\cX;\Q)\to\Hur(\cX;\Q_+)$ is continuous.
\end{cor}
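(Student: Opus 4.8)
The plan is to apply Proposition \ref{prop:explosion} to a carefully chosen, non-standard explosion. The key observation is that we are working with a nice couple of the form $(\cX,\emptyset)$, so $\cY=\emptyset$, and hence the only condition that Definition \ref{defn:explosion} imposes on an explosion --- namely that $z\in\expl(z,t)$ for all $z\in\cY$ and all $t$ --- is vacuously satisfied. I would therefore take the constant map $\expl\colon\cX\times[0,1]\to\Ran(\cX)$ sending every pair $(z,t)$ to the empty subset $\emptyset\in\Ran(\cX)$. This map is continuous, being constant, and so it is a legitimate (though non-standard) explosion of $\fT=(\cX,\emptyset)$.

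For this explosion one has $\expl(P,t)=\bigcup_{z\in P}\expl(z,t)=\emptyset$ for every finite subset $P\subset\cX$ and every $t\in[0,1]$. I would then trace through the composition defining $\expl_*$ in Proposition \ref{prop:explosion}: its final step is the external product taken against $\expl(\epsilon(\fc),t)=\emptyset$, i.e. against the empty configuration $(\emptyset,\one,\one)\in\Ran(\fT)$ of Notation \ref{nota:emptysetoneone}. By Definition \ref{defn:externalproduct} together with Notation \ref{nota:simplifiedexternalproduct}, the external product with $(\emptyset,\one,\one)$ leaves the support ($P'\cup\emptyset=P'$) and both monodromies unchanged, so it is the identity. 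Consequently $\expl_*(\fc,t)=\rho(\fc)$ for every $\fc\in\Hur(\cX;\Q)$ and every $t\in[0,1]$, where $\rho(\fc)$ is viewed as an element of $\Hur(\cX;\Q_+)\subset\Hur(\cX;\Q)$ via Definition \ref{defn:Huraugmented}; note that standardness of $\expl$ is not needed here, only continuity of $\expl_*$.

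To finish, I would invoke Proposition \ref{prop:explosion}, which guarantees that $\expl_*$ is continuous; restricting to any slice, say $t=0$, then shows that the composite $\Hur(\cX;\Q)\xrightarrow{\rho}\Hur(\cX;\Q_+)\subset\Hur(\cX;\Q)$ is continuous. Since $\Hur(\cX;\Q_+)$ carries the subspace topology inside $\Hur(\cX;\Q)$ and $\rho$ takes values in it, continuity of $\rho\colon\Hur(\cX;\Q)\to\Hur(\cX;\Q_+)$ follows at once.

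The only real point to watch is the very first step. For a general nice couple the discontinuity of $\rho$ arises precisely from points lying in $\cY$ (as illustrated in the discussion preceding Definition \ref{defn:explosion}), and it is exactly the vanishing of $\cY$ that promotes the constant map to $\emptyset$ into an admissible explosion. Once this is observed, the identity $\expl_*(-,t)=\rho$ is immediate and the continuity of $\rho$ follows formally; I do not expect any further analytic obstacle.
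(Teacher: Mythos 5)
Your proposal is correct and is essentially the paper's own proof: the paper also takes the constant explosion $\expl^{\emptyset}$ with value $\emptyset\in\Ran(\cX)$ and identifies $\expl^{\emptyset}_*(-,0)$ with $\rho$, invoking Proposition \ref{prop:explosion} for continuity. Your additional remarks (vacuousness of the explosion condition when $\cY=\emptyset$, the external product with the empty configuration being the identity, and the subspace-topology point for the codomain) are accurate elaborations of the same argument.
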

\begin{proof}
Consider the explosion $\expl^{\emptyset}\colon\cX\times[0,1]\to\Ran(\cX)$
taking the constant value $\emptyset\in\Ran(\cX)$; then $\expl^{\emptyset}_*(-,0)=\rho$ is a continuous
map. 
\end{proof}

\subsection{Proof or Propositions \ref{prop:Huraugmentedabsolute} and \ref{prop:Huraugmentedrelative}}
\begin{proof}[Proof of Proposition \ref{prop:Huraugmentedabsolute}]
Recall Definition \ref{defn:externalproduct} and Notation \ref{nota:simplifiedexternalproduct},
and fix a point $z_0\in\cX$.
We claim that the map $-\times z_0\colon \Hur(\cX;\Q_+)\to\Hur_+(\cX;\Q)$
is a homotopy equivalence, where we denote by $z_0$ also the singleton $\set{z_0}\in\Ran(\cX)$.

Let $\Hur(\cX;\Q_+)_{z_0}$ be the subspace of $\Hur_+(\cX;\Q)$
containing
all configurations $\fc=(P,\psi)$ such that
$z_0\in P$ and all points of $P\setminus\set{z_0}$ are not inert;
then $-\times z_0$ gives a homeomorphism
$\Hur(\cX;\Q_+)\overset{\cong}{\to}\Hur(\cX;\Q_+)_{z_0}$, with inverse given by the restriction of $\rho$,
which is continuous by Corollary \ref{cor:emptyexplosion}.
It suffices therefore to prove that the inclusion $\Hur(\cX;\Q_+)_{z_0}\hookrightarrow\Hur_+(\cX;\Q)$
is a homotopy equivalence.
By Lemma \ref{lem:Ran+contractible} the space $\Ran_+(\cX)$ is weakly contractible; since $\cX$ is homeomorphic
to a CW complex, there is a homotopy $\expl^{z_0}\colon\cX\times [0,1]\to\Ran_+(\cX)$ with
$\expl^{z_0}(z,0)=\set{z}$ and $\expl^{z_0}(z,1)=\set{z_0}$ for all $z\in\cX$; $\expl^{z_0}$
is a standard explosion and gives rise to an extended explosion
$\expl^{z_0}\colon\Ran(\cX)\times[0,1]\to\Ran(\cX)$.

Proposition \ref{prop:explosion} yields a homotopy
$\expl^{z_0}_*\colon\Hur(\cX;\Q)\times[0,1]\to\Hur(\cX;\Q)$,
which restricts to a homotopy of $\Hur_+(\cX;\Q)$. We note the following:
\begin{itemize}
 \item $\expl^{z_0}_*(-,0)$ is the identity of $\Hur_+(\cX;\Q)$, again by Proposition \ref{prop:explosion};
 \item $\expl^{z_0}_*(-,1)$ restricts to the identity on the subspace $\Hur(\cX;\Q_+)_{z_0}$:
 indeed if $\fc=(P,\psi)\in\Hur(\cX;\Q_+)_{z_0}$, then $\rho(\fc)$
 is either equal to $\fc$, or is obtained by forgetting $z_0\in P$ in case $z_0$ is inert;
 since $\expl^{z_0}(-,1)$ is constant on $\Ran(\cX)$ with value $z_0$, we have anyway the equality
 $\rho(\fc)\times\expl^{z_0}(P,1)=\rho(\fc)\times z_0=\fc$, i.e. the point $z_0$ is
 added again in the further composition defining $\expl_*^{z_0}(-,1)$;
 \item $\expl_*^{z_0}(-,1)$ takes values in $\Hur(\cX;\Q_+)_{z_0}$: this follows again
from the equality $\expl_*(\fc,1)=\rho(\fc)\times z_0$, holding for all $\fc\in\Hur(\cX;\Q)$.
\end{itemize}
The homotopy $\expl^{z_0}_*$ shows that the inclusion
$\Hur(\cX;\Q_+)_{z_0}\hookrightarrow\Hur_+(\cX;\Q)$ is a homotopy equivalence.
\end{proof}

Note that in the particular case $\Q=\set{\one}$, Proposition \ref{prop:Huraugmentedabsolute}
implies that $\Ran_+(\cX)$ is contractible; this is a mild improvement of the statement of Lemma \ref{lem:Ran+contractible}.
\begin{proof}[Proof of Proposition \ref{prop:Huraugmentedrelative}]
 The proof is similar to the one of Proposition \ref{prop:Huraugmentedabsolute}.
 By Lemma \ref{lem:Ran+contractible} the spaces $\Ran_+(\cY)\subset \Ran_+(\cX)$ are weakly
 contractible. The couple $(\cX,\cY)$ is homeomorphic to a couple of CW complexes,
 therefore we can find a homotopy $\expl^{\cX,\cY}\colon\cX\times [0,1]\to\Ran_+(\cX)$ with
 $\expl^{\cX,\cY}(z,t)=\set{z}$ whenever $z\in\cY$ or $t=0$, and such that 
 $\expl^{\cX,\cY}(z,1)\in\Ran_+(\cY)$ for all $z\in\cX$.
 In particular $\expl^{\cX,\cY}$ is a standard explosion, inducing
 an extended explosion $\expl^{\cX,\cY}\colon\Ran(\cX)\times[0,1]\to\Ran(\cX)$. By
 Proposition \ref{prop:explosion}, and using that $P_0\subset\cY$, we obtain a homotopy $\expl^{\cX,\cY}_*\colon\Hur(\fT;\Q,G)_{P_0}\times[0,1]\to\Hur(\fT;\Q,G)_{P_0}$ with the following properties:
 \begin{itemize}
  \item $\expl^{\cX,\cY}_*(-,0)$ is the identity of $\Hur(\fT;\Q,G)_{P_0}$;
  \item $\expl^{\cX,\cY}(-,1)$ takes values in $\Hur(\fT;\Q_+,G)_{P_0}$.
\end{itemize}
It suffices now to prove that there is a homotopy of maps
$\Hur(\fT;\Q_+,G)_{P_0}\to \Hur(\fT;\Q_+,G)_{P_0}$ from $\expl^{\cX,\cY}(-,1)|_{\Hur(\fT;\Q_+,G)_{P_0}}$
to the identity of $\Hur(\fT;\Q_+,G)_{P_0}$.

Using weak contractibility of $\Ran_+(\cY)$ (see Lemma \ref{lem:Ran+contractible}) together with the fact
that $\cX$ is homeomorphic to a CW complex, we can find a homotopy
$\expl^{\cY}\colon\cX\times[0,1]\to\Ran_+(\cY)$ satisfying the following properties:
\begin{itemize}
 \item $\expl^{\cY}(-,0)=\expl^{\cX,\cY}(-,1)$;
 \item $\expl^{\cY}(-,1)$ is the constant map with value $P_0\in\Ran_+(\cY)$.
\end{itemize}
Denote by $\expl^{\cY}\colon\Ran(\cX)_{P_0}\times[0,1]\to\Ran_+(\cY)$ also the induced map on Ran spaces.
Consider the homotopy $\cH^{\cY}\colon\Hur(\fT;\Q_+,G)_{P_0}\times[0,1]\to\Hur(\fT;\Q_+,G)_{P_0}$
given by the composition
\[
\begin{tikzcd}[column sep=2cm]
 \Hur(\fT;\Q_+,G)_{P_0}\times[0,1]\ar[r,"{(\Id,\epsilon)\times\Id}"] &\Hur(\fT;\Q_+,G)_{P_0}\times\Ran(\cX)_{P_0}\times[0,1]\ar[dl,"\Id\times\expl^{\cY}"']\\
 \Hur(\fT;\Q_+,G)_{P_0}\times \Ran_+(\cY)\ar[r,"-\times-"] & \Hur(\fT;\Q_+,G)_{P_0},
\end{tikzcd}
\]
where $\epsilon$ is from Definition \ref{defn:epsilon}
and in the last step we use Notation \ref{nota:simplifiedexternalproduct}. 
Since $\rho$ restricts to the identity on $\Hur(\fT;\Q_+,G)_{P_0}$, the map $\cH^{\cY}(-,0)$
concides with the restriction of $\expl^{\cX,\cY}(-,1)$ to $\Hur(\fT;\Q_+,G)_{P_0}$.
On the other hand, $\cH^{\cY}(-,1)$ is the identity of $\Hur(\fT;\Q_+,G)_{P_0}$.
\end{proof}

\subsection{An application of contractibility of Ran spaces}
The following proposition deals with a generic PMQ-group pair $(\Q,G)$, with $\Q$ possibly non-augmented; its proof relies on Proposition \ref{prop:Huraugmentedabsolute}.
\begin{prop}
\label{prop:basedHur}
 Let $\fT=(\cX,\cY)$ be a nice couple with $\cX$ non-empty and connected, let $P_0\in\Ran_+(\cX)$,
 and let $(\Q,G)$ be a PMQ-group pair. Then the inclusion $\Hur(\fT;\Q,G)_{P_0}\hookrightarrow\Hur_+(\fT;\Q,G)$ is a homotopy equivalence.
\end{prop}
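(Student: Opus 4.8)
The plan is to exhibit the external product with $P_0$ as a homotopy inverse of the inclusion. Write $\iota\colon\Hur(\fT;\Q,G)_{P_0}\hookrightarrow\Hur_+(\fT;\Q,G)$ for the inclusion and, abusing notation, regard the fixed finite set $P_0$ as a configuration in $\Ran(\fT)\cong\Hur(\fT;\one,\one)$ carrying the trivial monodromy. First I would define
\[
 a\colon\Hur_+(\fT;\Q,G)\to\Hur(\fT;\Q,G)_{P_0},\qquad a(\fc)=\fc\times P_0,
\]
using the external product of Notation \ref{nota:simplifiedexternalproduct}. By Definition \ref{defn:externalproduct} the configuration $\fc\times Q$ is supported on $P\cup Q$, carries the monodromy of $\fc$ on the punctures in $P$ (pulled back along $\fri^{P\cup Q}_{P}$) and the trivial monodromy on the punctures of $Q\setminus P$. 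In particular $\fc\times P_0$ always contains $P_0$ in its support, so $a$ lands in $\Hur(\fT;\Q,G)_{P_0}$, and it is continuous because the external product is. The same description shows $\fc\times Q=\fc$ whenever $Q\subseteq P$; hence for $\fc\in\Hur(\fT;\Q,G)_{P_0}$ (so $P\supseteq P_0$) we get $a(\iota(\fc))=\fc\times P_0=\fc$, i.e. $a\circ\iota=\Id$.

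Next I would produce a homotopy showing $\iota\circ a\simeq\Id$ on $\Hur_+(\fT;\Q,G)$. Since $\cX$ is non-empty, connected and semi-algebraic, it is locally path connected, hence path connected, so $\Ran_+(\cX)$ is weakly contractible by Lemma \ref{lem:Ran+contractible}; being semi-algebraic, $\cX$ is triangulable and thus homeomorphic to a CW complex, so the two maps $z\mapsto\set{z}$ and $z\mapsto P_0$ from $\cX$ to $\Ran_+(\cX)$ are homotopic. Choose such a homotopy $\expl\colon\cX\times[0,1]\to\Ran_+(\cX)\subset\Ran(\cX)$ with $\expl(z,0)=\set{z}$ and $\expl(z,1)=P_0$ for all $z$. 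Regarding $\expl$ as a standard explosion of the nice couple $(\cX,\emptyset)$ — for which the condition on $\cY$ in Definition \ref{defn:explosion} is vacuous — the remark following Definition \ref{defn:explosion} provides a continuous induced map $\expl\colon\Ran(\cX)\times[0,1]\to\Ran(\cX)$ given by $\expl(P,t)=\bigcup_{z\in P}\expl(z,t)$; note $\expl(P,0)=P$ and $\expl(P,1)=P_0$.

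Then I would set
\[
 H\colon\Hur_+(\fT;\Q,G)\times[0,1]\to\Hur_+(\fT;\Q,G),\qquad H(\fc,t)=\fc\times\expl(\epsilon(\fc),t),
\]
where $\epsilon$ is the support map of Definition \ref{defn:epsilon}. This is continuous, being the composite of $(\Id,\epsilon\times\Id)$, $\Id\times\expl$ and the external product, and it takes values in $\Hur_+(\fT;\Q,G)$ since the support $P\cup\expl(P,t)$ always contains the non-empty set $P$. At $t=0$ we have $\expl(P,0)=P$, so $H(\fc,0)=\fc\times P=\fc$; at $t=1$ we have $\expl(P,1)=P_0$, so $H(\fc,1)=\fc\times P_0=\iota(a(\fc))$. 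Thus $H$ is a homotopy from $\Id$ to $\iota\circ a$, and together with $a\circ\iota=\Id$ this proves that $\iota$ is a homotopy equivalence with homotopy inverse $a$.

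I expect the real work to lie in two routine-but-essential verifications: the identities for the external product on overlapping supports (that $\fc\times Q=\fc$ when $Q\subseteq P$, and that adjoining $P_0$ with trivial monodromy never disturbs the $G$-valued monodromy already recorded around punctures in $\cY$), and the continuity of the union map on the Ran space induced by $\expl$. The conceptual point that makes the statement hold for an arbitrary, possibly non-augmented, PMQ-group pair is that no puncture is ever forgotten: the explosion is only fed into the external product, which preserves the whole original configuration $\fc$. Consequently the retraction $\rho$ of Definition \ref{defn:rhoretraction}, together with the augmentation hypothesis it requires, is never invoked; likewise the condition on $\cY$ in the definition of explosion is sidestepped by carrying out the explosion for the nice couple $(\cX,\emptyset)$, whose Ran space coincides with $\Ran(\fT)$.
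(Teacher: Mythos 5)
Your proposal is correct and follows essentially the same route as the paper: the paper's homotopy $\cH^{P_0}$ is exactly your $H(\fc,t)=\fc\times\expl(\epsilon(\fc),t)$, built from the support map $\epsilon$, a contraction of $\Ran_+(\fT)$ onto $P_0$, and the external product, and the paper likewise concludes by observing that the time-$1$ map lands in $\Hur(\fT;\Q,G)_{P_0}$ and restricts to the identity there. The only (immaterial) difference is that you obtain the contraction of the Ran space from Lemma \ref{lem:Ran+contractible} together with an explosion-induced union map, while the paper cites Proposition \ref{prop:Huraugmentedabsolute}, whose own proof produces the contraction in the same way.
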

\begin{proof}
By Proposition \ref{prop:Huraugmentedabsolute} there is a homotopy
$\expl^{P_0}\colon\Ran_+(\fT)\times[0,1]\to \Ran_+(\fT)$
contracting $\Ran_+(\fT)$ onto the configuration $P_0$.
We use $\expl^{P_0}$ to define a homotopy $\cH^{P_0}\colon\Hur_+(\fT;\Q,G)\times[0,1]\to\Hur_+(\fT;\Q,G)$ as the composition
 \[
 \begin{tikzcd}[column sep =2cm]
  \Hur_+(\fT;\Q,G)\times[0,1] \ar[r,"{(\Id,\epsilon)\times\Id}"] &
  \Hur_+(\fT;\Q,G)\times \Ran_+(\fT) \times[0,1] \ar[dl,"\Id\times\expl^{P_0}",swap]\\ 
  \Hur_+(\fT;\Q,G)\times \Ran_+(\fT) \ar[r,"-\times-"]&  \Hur_+(\fT;\Q,G),
 \end{tikzcd}
 \]
where $\epsilon$ and $-\times-$ are from Definition \ref{defn:epsilon}
and Notation \ref{nota:simplifiedexternalproduct}. Note the following:
\begin{itemize}
 \item $\cH^{P_0}(-;0)$ is the identity of $\Hur_+(\fT;\Q,G)$;
 \item $\cH^{P_0}(-;1)$ restricts to the identity on $\Hur(\fT;\Q,G)_{P_0}$;
 \item for all $0\leq s\leq 1$ and $\fc\in\Hur_+(\fT;\Q,G)$,
 if we denote $\fc'=\cH^{P_0}(\fc;s)$ and use Notation \ref{nota:fc}, then $P\subseteq P'$; if moreover we assume $s=1$, then $P_0\subseteq P'$.
\end{itemize}
In particular $\cH^{P_0}$ preserves $\Hur(\fT;\Q,G)_{P_0}$ at all times and
$\cH^{P_0}(-;1)$ takes values in $\Hur(\fT;\Q,G)_{P_0}$. This implies the statement.
\end{proof}

\section{Cell stratifications}
\label{sec:celldecomposition}
Throughout the section we fix an augmented PMQ $\Q$ with completion $\hQ$.
\begin{nota}
 \label{nota:mcR}
 We denote by $\mcR$ the open unit square $(0,1)^2\subset\bH$, and by $\cR$ the closed unit square
 $[0,1]^2\subset\bH$.
\end{nota}
In this section we introduce a \emph{cell stratification} on $\Hur(\mcR;\Q_+)$.
More precisely, we will do the following:
\begin{enumerate}
 \item we
 regard $\Hur(\mcR;\Q_+)$ as an open subspace of $\Hur(\cR;\hQ_+)$, by applying functoriality
 to the inclusions $\mcR\hookrightarrow\cR$ and $\Q\hookrightarrow\hQ$;
 \item we consider the bisimplicial complex $\Arr(\Q)$ from \cite[Definition 6.6]{Bianchi:Hur1}; 
 and define a continuous bijection $\upsilon\colon|\Arr(\Q)|\to\Hur(\cR;\hQ_+)$,
 by glueing suitable continuous maps $e^{\ua}\colon\Delta^p\times\Delta^q\to\Hur(\cR;\hQ_+)$, one for every non-degenerate array $\ua\in\Arr_{p,q}(\Q)$;
 \item the map $\upsilon$ restricts to a continuous bijection with source the simplicial Hurwitz space
 \[
 \upsilon\colon\Hur^{\Delta}(\Q):=|\Arr(\Q)|\setminus|\NAdm(\Q)|\to \Hur(\mcR;\Q),
 \]
 and in the additional hypothesis that $\Q$ is a locally finite PMQ, this latter bijection is a homeomorphism.
\end{enumerate}

\subsection{A construction with simplices}
We start by fixing some notation and by making some constructions with simplices and products of simplices.
For $p\geq 0$, we regard the standard $p$-simplex $\Delta^p$ as the subspace
of $[0,1]^p$ containing all $p$-tuples $\us=(s_1,\dots,s_p)$ with $0\le s_1\leq\dots\leq s_p\le1$.
\begin{nota}
 \label{nota:extracoordsimplex}
 Whenever needed, we extend each $p$-tuple $\us=(s_1,\dots,s_p)$ representing a point in $\Delta^p$ to a $p+2$-tuple
 $s_0,\dots,s_{p+1}$ by setting $s_0=0$ and $s_{p+1}=1$.
\end{nota}
\begin{nota}
 \label{nota:barycentresimplex}
 For $p\geq0$ we denote by $\ubar^p=(\bary^p_1,\dots,\bary^p_p)=(\frac 1{p+1},\dots,\frac p{p+1})$
 the barycentre of $\Delta^p$.
\end{nota}

\begin{defn}
 \label{defn:bDeltapp}
 We denote by $\bDelta^{p,p}\subset\Delta^p\times\Delta^p$ the subspace containing all pairs
 $(\us,\us')$ such that the following holds: for all $0\leq i\leq p$, if $s_i=s_{i+1}$ then $s'_i=s'_{i+1}$.
\end{defn}
\begin{defn}
 \label{defn:cHp}
 We define a continuous map $\cH^p\colon\R\times\bDelta^{p,p}\to\R$ by the formula
 \[
  \cH^p(x;\us,\us')=\left\{
  \begin{array}{cl}
   x & \mbox{if } x\in\R\setminus(0,1);\\[5pt]
   \frac{x-s_i}{s_{i+1}-s_i}s'_{i+1} + \frac{s_{i+1}-x}{s_{i+1}-s_i}s'_i & \mbox{if }s_i\neq s_{i+1}\mbox{ and }x\in[s_i,s_{i+1}];\\[5pt]
   s'_i=s'_{i+1} & \mbox{if }x=s_i=s_{i+1}.
  \end{array}
  \right.
 \]
\end{defn}
Roughly speaking, $\cH^p(-;\us,\us')\colon\R\to\R$ is constructed by fixing $(-\infty,0]\cup[1,\infty)$ pointwise,
by mapping each $s_i\mapsto s'_i$ and by extending by linear interpolation on the segments
$[0,s_1],\dots,[s_p,1]$; some of these segments might be degenerate, in this case no extension is needed.
The subspace $\bDelta^{p,p}\subset\Delta^p\times\Delta^p$ is essentially defined as the subspace of couples
$(\us,\us')$ for which $\cH^p(-;\us,\us')\colon\R\to\R$ is well-defined and continuous.
The previous description of $\cH^p$ implies directly the following lemma.
\begin{lem}
 \label{lem:bDeltacompositionR}
Let $\us,\us',\us''\in\Delta^p$
such that both pairs $(\us,\us')$ and $(\us',\us'')$ lie in $\bDelta^{p,p}$; then also $(\us,\us'')\in\bDelta^{p,p}$,
and the map $\cH^p(-;\us,\us'')\colon\R\to\R$ coincides with the composition
$\cH^p(-;\us',\us'')\circ\cH^p(-;\us,\us')$.
\end{lem}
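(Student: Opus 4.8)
The plan is to verify the two assertions separately, first the membership $(\us,\us'')\in\bDelta^{p,p}$ and then the functional identity, both by direct inspection of Definitions \ref{defn:bDeltapp} and \ref{defn:cHp}; the whole argument is elementary once the index bookkeeping is set up.

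For the membership, I would reason facet by facet. Fix $0\le i\le p$ and suppose $s_i=s_{i+1}$; I must produce $s''_i=s''_{i+1}$. Since $(\us,\us')\in\bDelta^{p,p}$, the equality $s_i=s_{i+1}$ forces $s'_i=s'_{i+1}$; now $s'_i=s'_{i+1}$ together with $(\us',\us'')\in\bDelta^{p,p}$ forces $s''_i=s''_{i+1}$. As $i$ was arbitrary, every degenerate facet of $\us$ is also a degenerate facet of $\us''$, which is exactly the defining condition of $\bDelta^{p,p}$.

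For the functional identity, write $g=\cH^p(-;\us,\us')$, $h=\cH^p(-;\us',\us'')$ and $f=\cH^p(-;\us,\us'')$; the goal is $f=h\circ g$. Outside $(0,1)$ all three maps are the identity and $g$ preserves $\R\setminus(0,1)$, so the equality is immediate there. On $[0,1]$ the key structural observation is that $g$ sends each node to the node of the same index, $g(s_j)=s'_j$ for $0\le j\le p+1$ (using the third clause of Definition \ref{defn:cHp} together with $(\us,\us')\in\bDelta^{p,p}$ at degenerate nodes), and is affine, possibly constant, on each subinterval $[s_i,s_{i+1}]$. Consequently $g$ maps $[s_i,s_{i+1}]$ onto exactly the $i$-th subinterval $[s'_i,s'_{i+1}]$ of the $\us'$-partition, monotonically. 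Since $h$ is affine or constant on that single subinterval, the restriction of $h\circ g$ to $[s_i,s_{i+1}]$ is again affine and sends $s_i\mapsto s''_i$, $s_{i+1}\mapsto s''_{i+1}$; this is precisely the piece of $f$ on $[s_i,s_{i+1}]$. Reassembling over $i=0,\dots,p$ yields $f=h\circ g$ on $[0,1]$, hence everywhere.

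The only point requiring care — and the main, if modest, obstacle — is the bookkeeping of collapsed intervals, i.e.\ checking that the index-matching $g(s_i)=s'_i$ survives degeneracies so that $h$ always acts on $g([s_i,s_{i+1}])$ as a \emph{single} affine piece. When $s_i<s_{i+1}$ but $s'_i=s'_{i+1}$, the map $g$ is constant on $[s_i,s_{i+1}]$ and $h$ evaluates at $s'_i=s'_{i+1}$ via its third clause, returning $s''_i=s''_{i+1}$ (again invoking $(\us',\us'')\in\bDelta^{p,p}$); this matches $f$, whose affine piece there is constant because $s''_i=s''_{i+1}$. When $s_i=s_{i+1}$, all three maps take third-clause values at the single point and agree by the membership argument. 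The only genuinely moving case, $s_i<s_{i+1}$ and $s'_i<s'_{i+1}$, reduces to the elementary fact that the composite of two increasing affine maps between matched subintervals is affine with the expected endpoint values, which closes the verification.
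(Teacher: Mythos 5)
Your proof is correct: the membership claim follows exactly as you argue by chaining the two implications, and the functional identity reduces to the observation that $g$ carries the $i$-th node to the $i$-th node and each subinterval $[s_i,s_{i+1}]$ affinely (or constantly) onto $[s'_i,s'_{i+1}]$, so that $h\circ g$ and $f$ are both affine with matching endpoint values on each piece; your treatment of the collapsed-interval cases is the only point where care is needed and you handle it properly. The paper states this lemma without proof, declaring that it "follows immediately," so your write-up is simply the explicit verification the paper leaves implicit.
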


\begin{defn}
 \label{defn:cHpq}
 For all $p,q\geq0$ we define a map
 $\cH^{p,q}\colon\C\times\bDelta^{p,p}\times\bDelta^{q,q}\to\C$ by
 \[
  \cH^{p,q}(x+y\sqrt{-1}\ ;\ \us,\us'\ ;\ \ut,\ut')=\cH^p(x;\us,\us')+\cH^q(y;\ut,\ut')\sqrt{-1}.
 \]
\end{defn}
Note that for all $(\us,\us';\ut,\ut')\in\bDelta^{p,p}\times\bDelta^{q,q}$ the map
$\cH^{p,q}(-;\us,\us';\ut,\ut')\colon\C\to\C$ is a lax self morphism of the nice couple $(\cR,\emptyset)$
(see Definition \ref{defn:laxmapnicecouples}).
By Proposition \ref{prop:functorialityhomotopyLNC} we obtain a map
\[
 \cH^{p,q}_*\colon \Hur(\cR;\hQ_+)\times\bDelta^{p,p}\times\bDelta^{q,q}\to \Hur(\cR;\hQ_+).
\]

Note also that $\Delta^p$ embeds diagonally into $\bDelta^{p,p}$; the restricted map
\[
\cH^{p,q}_*\colon \Hur(\cR;\hQ_+)\times\Delta^p\times\Delta^q\to\Hur(\cR;\hQ_+)
\]
is just the projection on the first factor, since for all $(\us,\ut)\in\Delta^p\times\Delta^q$
the map
$\cH^{p,q}(-;\us,\us;\ut,\ut)\colon\C\to\C$
is the identity of $\C$.

Lemma \ref{lem:bDeltacompositionR} applied twice, together with functoriality,
yields the following lemma.
\begin{lem}
 \label{lem:bDeltacompositionC}
 Let $\us,\us',\us''\in\Delta^p$ such that $(\us,\us'),(\us',\us'')\in\bDelta^{p,p}$,
 and let $\ut,\ut',\ut''\in\Delta^q$ such that $(\ut,\ut'),(\ut',\ut'')\in\bDelta^{q,q}$.
 Then $(\us,\us'';\ut,\ut'')\in\bDelta^{p,p}\times\bDelta^{q,q}$, and the following equality
 of maps $\Hur(\cR;\hQ_+)\to\Hur(\cR;\hQ_+)$ holds:
 \[
  \cH^{p,q}_*(-;\us',\us'';\ut',\ut'')\circ\cH^{p,q}_*(-;\us,\us';\ut,\ut')=
  \cH^{p,q}_*(-;\us,\us'';\ut,\ut'').
 \]
\end{lem}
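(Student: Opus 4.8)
The plan is to reduce the asserted identity of maps of Hurwitz spaces to the corresponding identity of lax self-morphisms of the nice couple $(\cR,\emptyset)$, and then to invoke functoriality. First I would record that the right-hand side is even defined: applying Lemma \ref{lem:bDeltacompositionR} to the triple $\us,\us',\us''$ shows $(\us,\us'')\in\bDelta^{p,p}$, and applying it again to $\ut,\ut',\ut''$ shows $(\ut,\ut'')\in\bDelta^{q,q}$, so that $(\us,\us'';\ut,\ut'')\in\bDelta^{p,p}\times\bDelta^{q,q}$ and $\cH^{p,q}_*(-;\us,\us'';\ut,\ut'')$ indeed makes sense via Proposition \ref{prop:functorialityhomotopyLNC}.

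The key step is the pointwise identity of continuous maps $\C\to\C$:
\[
\cH^{p,q}(-;\us',\us'';\ut',\ut'')\circ\cH^{p,q}(-;\us,\us';\ut,\ut')=\cH^{p,q}(-;\us,\us'';\ut,\ut'').
\]
This follows immediately from the product structure in Definition \ref{defn:cHpq}: writing a point of $\C$ as $x+y\sqrt{-1}$, the first map sends it to $\cH^p(x;\us,\us')+\cH^q(y;\ut,\ut')\sqrt{-1}$, and the second map then acts separately on the real part and on the imaginary coefficient. The real part of the composite is $\cH^p(\cH^p(x;\us,\us');\us',\us'')$, which Lemma \ref{lem:bDeltacompositionR} identifies with $\cH^p(x;\us,\us'')$; likewise the imaginary coefficient becomes $\cH^q(y;\ut,\ut'')$. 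Hence the composite equals $\cH^{p,q}(-;\us,\us'';\ut,\ut'')$ as a self-map of $\C$.

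It then remains to pass from this identity of lax morphisms to the claimed identity of induced maps on $\Hur(\cR;\hQ_+)$. For each fixed choice of parameters, $\cH^{p,q}(-;\cdot\,;\cdot)$ is a lax self-morphism of $(\cR,\emptyset)$, and $\cH^{p,q}_*(-;\cdot\,;\cdot)$ is precisely the map it induces on the Hurwitz space under functoriality in $\LNC$ (Theorem \ref{thm:funlaxnicecouple}). Since that functor is covariant and carries composition of lax morphisms to composition of maps, the pointwise identity above transports directly to
\[
\cH^{p,q}_*(-;\us',\us'';\ut',\ut'')\circ\cH^{p,q}_*(-;\us,\us';\ut,\ut')=\cH^{p,q}_*(-;\us,\us'';\ut,\ut''),
\]
which is the assertion. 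The only point requiring care—and the closest thing to an obstacle—is checking that the parametrised construction of Proposition \ref{prop:functorialityhomotopyLNC} agrees, for each fixed value of the parameters, with the single-morphism functoriality of Theorem \ref{thm:funlaxnicecouple}; this is immediate from the definitions, since fixing the parameter in $\cH^{p,q}$ recovers exactly the lax morphism whose induced map enters the functor.
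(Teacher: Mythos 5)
Your proof is correct and follows exactly the paper's (very terse) argument: the paper states that Lemma \ref{lem:bDeltacompositionR} applied twice, together with functoriality, yields the result, and your write-up simply spells out those two applications (one for the real part, one for the imaginary part, using the product structure of $\cH^{p,q}$ in Definition \ref{defn:cHpq}) and the passage through functoriality in $\LNC$. No discrepancies to report.
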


\subsection{The array filtration}
The next step is to define a filtration on $\Hur(\cR;\hQ_+)$ by closed subspaces $\Farr_\nu\Hur(\cR;\hQ_+)$,
for $\nu\ge-1$.
\begin{defn}
 \label{defn:Farr}
 Let $P\in\Ran(\cR)$ be a finite, possibly empty subset of $\cR$, and use Notation \ref{nota:P}-
 We define
 the \emph{horizontal array degree} of $P$, denoted $\arr_{\hor}(P)\geq0$, as the cardinality
 of the finite set $\Re(P)\setminus\set{0,1}=\set{\Re(z_1),\dots,\Re(z_k)}\setminus\set{0,1}$;
 similarly we define
 the \emph{vertical array degree} of $P$, denoted $\arr_{\ver}(P)\geq0$, as the cardinality
 of the finite set $\Im(P)\setminus\set{0,1}=\set{\Im(z_1),\dots,\Im(z_k)}\setminus\set{0,1}$.
 The \emph{array bidegree} $\arr(P)$ is defined as the couple $(\arr_{\hor}(P),\arr_{\ver}(P))$,
 and the \emph{total array degree} is defined as $|\arr|(P)=\arr_{\hor}(P)+\arr_{\ver}(P)$.

 For $\fc=(P,\psi)\in\Hur(\cR;\hQ_+)$ we define $\arr(\fc)=(\arr_{\hor}(\fc),\arr_{\ver}(\fc)):=\arr(P)$,
 and $|\arr|(\fc)=|\arr|(P)$. 
 For $\nu\geq -1$ we define $\Farr_\nu\Hur(\cR;\hQ_+)$ as the subspace
 of $\Hur(\cR;\hQ_+)$ containing all configurations $\fc$ with $|\arr|(\fc)\leq\nu$.
 For $\nu\geq0$ we define $\fFarr_\nu\Hur(\cR;\hQ_+)$ as the difference
 $\Farr_\nu\Hur(\cR;\hQ_+)\setminus\Farr_{\nu-1}\Hur(\cR;\hQ_+)$.
\end{defn}
Roughly speaking, $\Farr_\nu\Hur(\cR;\hQ_+)$ contains configurations $\fc=(P,\psi)$
such that the total number of horizontal and vertical lines passing through some point of $P$,
excluding the sides of $\cR$, does not exceed $\nu$.
Similarly, $\fFarr_\nu\Hur(\cR;\hQ_+)$ contains those configurations
$\fc$ for which this total number of lines is equal to $\nu$.

\begin{lem}
 \label{lem:Farrclosed}
 For $\nu\geq -1$ the subspace $\Farr_\nu\Hur(\cR;\hQ_+)\subset\Hur(\cR;\hQ_+)$ is closed.
\end{lem}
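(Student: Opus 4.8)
The plan is to deduce the statement from the lower semicontinuity of the total array degree $\abs{\arr}\colon\Hur(\cR;\hQ_+)\to\N$. Since $\Farr_\nu\Hur(\cR;\hQ_+)$ is exactly the sublevel set $\set{\abs{\arr}\le\nu}$ (and is empty, hence closed, when $\nu=-1$, so we may assume $\nu\ge0$), it suffices to show that every configuration $\fc$ admits a normal neighbourhood on which $\abs{\arr}\ge\abs{\arr}(\fc)$. Indeed, granting this, the complement of $\Farr_\nu\Hur(\cR;\hQ_+)$, namely the set $\set{\abs{\arr}\ge\nu+1}$, is a union of such normal neighbourhoods of its points and is therefore open.

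To produce such a neighbourhood, I would fix $\fc=(P,\psi)$ with $P=\set{z_1,\dots,z_k}$, and let $x_1<\dots<x_a$ be the distinct values in $\Re(P)\cap(0,1)$ and $y_1<\dots<y_b$ the distinct values in $\Im(P)\cap(0,1)$, so that $\abs{\arr}(\fc)=a+b$. Choose $\delta>0$ smaller than half the minimal gap among the $x_j$, half the minimal gap among the $y_j$, smaller than the distance from each $x_j$ and each $y_j$ to $\set{0,1}$, and small enough that the balls $B_\delta(z_i)$ are pairwise disjoint and avoid $*$ (possible since $*=-\sqrt{-1}$ lies at distance $\ge1$ from $\cR\subset\bH$). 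As $\cY=\emptyset$, adaptedness only requires each component to be a convex set containing its single point, so I can fix an adapted covering $\uU=(U_1,\dots,U_k)$ of $P$ with $U_i\subset B_\delta(z_i)$. Now take any $\fc'=(P',\psi')\in\fU(\fc;\uU)$; by Definitions \ref{defn:normalneigh} and \ref{defn:Ran} we have $P'\subset\uU$ and $P'\cap U_i\neq\emptyset$ for every $i$. For each $1\le j\le a$ choose $z_{i_j}\in P$ with $\Re(z_{i_j})=x_j$ and a point $w_j\in P'\cap U_{i_j}$; then $\Re(w_j)\in(x_j-\delta,x_j+\delta)\subset(0,1)$, and these intervals are pairwise disjoint, so $\Re(w_1),\dots,\Re(w_a)$ are $a$ distinct values in $\Re(P')\setminus\set{0,1}$, giving $\arr_{\hor}(\fc')\ge a$. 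The identical argument applied to imaginary parts yields $\arr_{\ver}(\fc')\ge b$, whence $\abs{\arr}(\fc')\ge a+b=\abs{\arr}(\fc)$, as required.

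The only point needing care is the choice of $\delta$: it must simultaneously separate the distinct horizontal lines from one another, the distinct vertical lines from one another, and all of these from the excluded boundary values $\set{0,1}$, so that a point forced to lie in the thin box $B_\delta(z_i)$ can neither collapse two distinct lines nor drift from $(0,1)$ onto a boundary line. This is exactly what makes the array degree nondecreasing under passage to a normal neighbourhood: the splitting of a single point and the movement of points within their covering components can only create new horizontal or vertical lines or preserve existing ones, never destroy them. I expect this separation bookkeeping to be the only mild subtlety; the remaining steps are immediate from the definitions of normal neighbourhood and array degree, and I do not anticipate any genuine obstacle.
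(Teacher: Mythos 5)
Your proof is correct and follows essentially the same route as the paper: both show the complement of $\Farr_\nu\Hur(\cR;\hQ_+)$ is open by choosing an adapted covering fine enough that the real and imaginary projections of the components separate the distinct horizontal and vertical lines through $P$ (and keep them off $\set{0,1}$), and then observing that the array degree can only increase on the resulting normal neighbourhood. Your explicit $\delta$-bookkeeping and choice of witnesses $w_j$ just spells out what the paper phrases as the condition that $\Re(U_i)$ meets $\Re(P)\cup\set{0,1}$ only in $\Re(z_i)$, and likewise for imaginary parts.
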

\begin{proof}
 We prove that $\Hur(\cR;\hQ_+)\setminus\Farr_\nu\Hur(\cR;\hQ_+)$ is open. Let $\fc\in\Hur(\cR;\hQ_+)$
 be a configuration with $|\arr(\fc)|\geq\nu+1$, use Notations \ref{nota:fc} and \ref{nota:uUcovering},
 and let $\uU$ be an adapted covering of $P$ with the following property: for all $1\leq i\leq k$
 the projection $\Re(U_i)\subset\R$ intersects the finite set $\Re(P)\cup\set{0,1}$ only
 in the point $\Re(z_i)$, and the projection $\Im(U_i)\subset\R$ intersects the finite set $\Im(P)\cup\set{0,1}$
 only in the point $\Im(z_i)$.
 
 We claim that $\fU(\fc;\uU)$ is contained in $\Hur(\cR;\hQ_+)\setminus\Farr_\nu\Hur(\cR;\hQ_+)$.
 Let $\fc'\in\fU(\fc;\uU)$; then $P'$ intersects each $U_i$ in at least one point;
 by the choice of $\uU$ we have $\arr_{\hor}(\fc')\geq\arr_{\hor}(\fc)$
 and $\arr_{\ver}(\fc')\geq\arr_{\ver}(\fc)$, hence $|\arr|(\fc')\geq |\arr|(\fc)\geq\nu+1$.
\end{proof}

\subsection{Standard generating set}
\label{subsec:stdgenset}
We introduce, for a finite set $P\subset\cR$, a particular admissible generating set
of $\fG(P)$, the \emph{standard generating set}.
Fix $P\subset\cR$, and let $\Re(P)\cup\set{0,1}$ consist of the points $0=x_0<x_1<\dots<x_p<x_{p+1}=1$,
where $p=\arr_{\hor}(P)$; similarly let $0=y_0<y_1<\dots<y_q<y_{q+1}=1$ be the elements of $\Im(P)\cup\set{0,1}$,
where $q=\arr_{\ver}(P)$.
For all $(i,j)\in\set{0,\dots,p+1}\times\set{0,\dots,q+1}$ denote by $z_{i,j}$ the complex number
$x_i+y_j\sqrt{-1}\in\C$, and let
$I(P)\subset\set{0,\dots,p+1}\times\set{0,\dots,q+1}$
be the subset of pairs $(i,j)$ such that $z_{i,j}$ is a point of $P$.

Recall Notation \ref{nota:bS}.For all $(i,j)\in I(P)$ with 
$0\leq i\leq p$ let $\arc^{P,\std}_{i,j}$ be an arc contained in $\bS_{x_i,x_{i+1}}$
and joining $*$ with $z_{i,j}$.
Similarly, for all $(p+1,j)\in I(P)$
let $\arc^{P,\std}_{p+1,j}$ be an arc contained in $\bS_{1,\infty}$ joining $*$ with $z_{p+1,j}$.
Up to changing the arcs by an isotopy, we may assume that the arcs $\arc^{P,\std}_{i,j}$ are disjoint away from $*$.
Note also that these arcs are uniquely determined up to an ambient isotopy of $\C$
that fixes $P$ pointwise and preserves each subspace $\bS_{x_i,x_{i+1}}$.
\begin{defn}
 \label{defn:stdgenset}
 Recall Definition \ref{defn:admgenset}. We denote by $(\gen^{P,\std}_{i,j})_{(i,j)\in I(P)}$ the admissible generating set of $\fG(P)$ associated with the arcs
$\arc^{P,\std}_{i,j}$, and call it the \emph{standard generating set} for $\fG(P)$. See Figure \ref{fig:stdgenset}.
\end{defn}

\begin{figure}[ht]
 \begin{tikzpicture}[scale=4,decoration={markings,mark=at position 0.38 with {\arrow{>}}}]
  \fill[black, opacity=.2] (0,0) rectangle (1,1);
  \draw[dashed,->] (-.3,0) to (1.3,0);
  \draw[dashed,->] (0,-1.1) to (0,1.1);
  \node at (0,-1) {$*$};
  \node at (-.05,-.05){\tiny $0$};
  \node at (1,-.05){\tiny $1$};
  \node[anchor=east] at (-.01,-1){\tiny $-\sqrt{-1}$};
  \node[anchor=east] at (-.01,1){\tiny $\sqrt{-1}$};
  \node at (.2,-.05){\tiny $s_1$};
  \node at (.6,-.05){\tiny $s_2$};
  \node[anchor=east] at (-.01,.2){\tiny $t_1\sqrt{-1}$};
  \node[anchor=east] at (-.01,.5){\tiny $t_2\sqrt{-1}$};
  \node[anchor=east] at (-.01,.9){\tiny $t_3\sqrt{-1}$};
  \draw[dotted] (0,.2) to (1,.2);
  \draw[dotted] (0,.5) to (1,.5);
  \draw[dotted] (0,.9) to (1,.9);
  \draw[dotted] (.2,0) to (.2,1);
  \draw[dotted] (.6,0) to (.6,1);
  \node at (.2,.2){$\bullet$};
  \node at (.2,.5){$\bullet$};
  \node at (.2,.9){$\bullet$};
  \node at (.6,.5){$\bullet$};
  \node at (.6,.9){$\bullet$};
  \draw[thin, looseness=1.2, postaction={decorate}] (0,-1) to[out=70,in=-90] (.25,.1) to[out=90,in=-90] (.1,.25) to[out=90,in=90] node[below]{\tiny $\gen_{1,1}$} (.28,.25)  to[out=-90,in=68] (0,-1);
  \draw[thin, looseness=1.2, postaction={decorate}] (0,-1) to[out=65,in=-90] (.32,.3) to[out=90,in=-90] (.08,.52) to[out=90,in=90] node[below]{\tiny $\gen_{1,2}$} (.34,.52)  to[out=-90,in=62] (0,-1);
  \draw[dashed, looseness=1.2, postaction={decorate}] (0,-1) to[out=80,in=-90] (.1,0) to[out=90,in=-90] (0.02,.5) to[out=90,in=90] node[left]{\tiny $c\gen_{1,3}$} (.4,.6)  to[out=-90,in=60] (0,-1);
  \draw[thin, looseness=1.2, postaction={decorate}] (0,-1) to[out=50,in=-90] (.5,.7) to[out=90,in=-90] (.12,.9) to[out=90,in=90] node[below]{\tiny $\gen_{1,3}$} (.52,.9)  to[out=-90,in=48] (0,-1);
  \draw[thin, looseness=1.2, postaction={decorate}] (0,-1) to[out=30,in=-90] (.7,.33) to[out=90,in=-90] (.55,.55) to[out=90,in=90] node[below]{\tiny $\gen_{2,1}$} (.72,.55)  to[out=-90,in=25] (0,-1);
  \draw[thin, looseness=1.2, postaction={decorate}] (0,-1) to[out=20,in=-90] (.83,.7) to[out=90,in=-90] (.55,.9) to[out=90,in=90] node[below]{\tiny $\gen_{2,2}$} (.85,.9)
  to[out=-90,in=15] (0,-1);  
 \end{tikzpicture}
 \caption{The standard generating set of a configuration $P\subset\cR$; we have $\arr_\hor(P)=2$ and $\arr_\ver(P)=3$.
 The dashed loop represents the product $c\gen_{1,3}\in\fG(P)$.}
 \label{fig:stdgenset}
\end{figure}
We will also make use of the following products of standard generators, compare with \cite[Notation 6.7]{Bianchi:Hur1}.
\begin{nota}
 \label{nota:cgenij}
 Use the notation from Definition \ref{defn:stdgenset}. For $0\leq i\leq p+1$ and $0\leq j\leq q+2$
 we denote by $c\gen^{P,\std}_{i,j}$ the product
 \[
 c\gen^{P,\std}_{i,j}=\gen^{P,\std}_{i,0}\gen^{P,\std}_{i,1}\dots\gen^{P,\std}_{i,j-1}\in\fG(P),
 \]
 where we set $\gen^{P,\std}_{i,j}=\one\in\fG(P)$ whenever $(i,j)$ does not belong to $I(P)$.
\end{nota}
One can represent $c\gen^{P,\std}_{i,j}$ by a simple loop $\gamma\subset\CmP$ satisfying the following:
\begin{itemize}
 \item $\gamma\subset\bS_{x_{i-1},x_{i+1}}\cap \set{z\in\C\,|\, \Im(z)<y_j}$, where we use the conventions $x_{-1}=-\infty$
 and $x_{q+2}=y_{q+2}=\infty$;
 \item $\gamma$ bounds a disc in $\C$ containing the points $z_{i,0},\dots,z_{i,j-1}$.
\end{itemize}
In particular $c\gen^{P,\std}_{i,j}\in\fQext(P)$, see Definition \ref{defn:fQextP}.
For $j=0$ we have $c\gen^{P,\std}_{i,j}=\one$.

\subsection{Characteristic maps of cells}
In this subsection we introduce maps
\[
e^{\ua}\colon\Delta^p\times\Delta^q\to\Hur(\cR;\hQ_+)
\]
depending on a non-degenerate array $\ua\in\Arr_{p,q}(\Q)$.
As we will see, each map $e^{\ua}$ sends the interior of $\Delta^p\times\Delta^q$ injectively inside
$\fFarr_{p+q}\Hur(\cR;\hQ_+)$, and sends
the boundary of $\Delta^p\times\Delta^q$ inside $\Farr_{p+q-1}\Hur(\cR;\hQ_+)$.

Recall from \cite[Definitions 5.8 and 6.6]{Bianchi:Hur1} that
the bisimplicial set $\Arr(\Q)$ consists of the sets $\Arr_{p,q}(\Q)\cong\hQ^{(p+2)\times(q+2)}$ for $p,q\ge0$. An element $\ua\in\Arr_{p,q}(\Q)$ is an \emph{array} of size $(p+2)\times(q+2)$ with entries in $\hQ$.
For $p\ge1$ and $0\le i\le p$, the $i$\textsuperscript{th} horizontal face map is denoted $d^{\hor}_i\colon \Arr_{p,q}(\Q)\to\Arr_{p-1,q}(\Q)$, and for $q\ge1$ and $0\le j\le q$, the $j$\textsuperscript{th} vertical face map is denoted
$d^{\ver}_j\colon \Arr_{p,q}(\Q)\to\Arr_{p,q-1}(\Q)$. Similarly $s^{\hor}_i$ and $s^{\ver}_j$ denote the horizontal and vertical degeneracy maps. Formulas for face and degeneracy maps are given in \cite[Lemma 6.8]{Bianchi:Hur1}.
An array $\ua\in\Arr_{p,q}(\Q)$ is non-degenerate if and only if it is not in the image of any
horizontal or vertical degeneracy map of the bisimplicial set $\Arr(\Q)$.
\begin{nota}
\label{nota:Iua}
For $\ua\in\Arr(p,q)$ we let $I(\ua)\subset\set{0,\dots,p+1}\times\set{0,\dots,q+1}$ denote the set
of pairs $(i,j)$ with $a_{i,j}\neq\one$.
\end{nota}
An array $\ua$ is non-degenerate if and only if the following conditions hold, compare with \cite[Subsection 6.3]{Bianchi:Hur1}:
\begin{itemize}
 \item for all $1\leq i\leq p$ there is $0\leq j\leq q+1$ with $(i,j)\in I(\ua)$;
 \item for all $1\leq j\leq q$ there is $0\leq i\leq p+1$ with $(i,j)\in I(\ua)$.
\end{itemize}
Our next goal is to define, for any non-degenerate array $\ua\in\Arr(p,q)$, a configuration $\fc_{\ua}\in\Hur(\cR;\hQ_+)$ with $\arr_{\hor}(\fc_{\ua})=p$ and $\arr_{\ver}(\fc_{\ua})=q$: it will be the ``central'' configuration in the image of $e^{\ua}$.
\begin{nota}
\label{nota:Ppq}
 We denote by $P^{p,q}\subset\cR$ the set of complex numbers $z^{p,q}_{i,j}:=\frac{i}{p+1}+\frac{j}{q+1}\sqrt{-1}$,
 with $0\leq i\leq p+1$ and $0\leq j\leq q+1$.
 
We let $P_{\ua}\subset P^{p,q}$ be the set containing all elements
$z^{p,q}_{i,j}$ for $(i,j)\in I(\ua)$.
\end{nota}
Since $\ua$ is admissible we have
$\arr_{\hor}(P_{\ua})=p$ and $\arr_{\ver}(P_{\ua})=q$.

\begin{nota}
 \label{nota:genstdua}
We denote by $(\gen^{\ua}_{i,j})_{(i,j)\in I(\ua)}$ the standard generating set $(\gen^{P_{\ua},\std}_{i,j})$
of $\fG(P_{\ua})$ (see Definition \ref{defn:stdgenset}). For $0\leq i\leq p+1$ and $0\leq j\leq q+2$
we denote by
$c\gen^{\ua}_{i,j}$ the product $c\gen^{P_{\ua},\std}_{i,j}$ (see Notation \ref{nota:cgenij}).
\end{nota}

\begin{defn}
 \label{defn:fcua}
We define $\fc_{\ua}$ as the configuration $(P_{\ua},\psi_{\ua})\in\Hur(\cR;\hQ_+)$,
where $\psi_{\ua}$ is defined by setting $\psi_{\ua}\colon\gen^{\ua}_{i,j}\mapsto a_{i,j}$ for all $(i,j)\in I(\ua)$.
\end{defn}
Note that since $\hQ$ is complete we have an equality $\fQext(P_{\ua})=\fQext(P_{\ua})_{\psi_{\ua}}$,
so that we can extend $\psi_{\ua}$ to a map of PMQs $\psiext_{\ua}\colon\fQext(P_{\ua})\to\hQ$;
the element $c\gen^{\ua}_{i,j}\in\fQext(P_{\ua})$ is mapped to the product $a_{i,0}\dots a_{i,j-1}\in\hQ$
along $\psiext_{\ua}$.

\begin{defn}
 \label{defn:eua}
Recall Notation \ref{nota:barycentresimplex} and Definition \ref{defn:bDeltapp}, and note that for all $\us\in\Delta^p$ the pair $(\ubar^p,\us)$ belongs to $\bDelta^{p,p}$.
 For a non-degenerate array $\ua\in\Arr(p,q)$ we define a continuous map
 $e^{\ua}\colon\Delta^p\times\Delta^q\to\Hur(\cR;\hQ_+)$ by the formula
 \[
  e^{\ua}(\us;\ut)=\cH^{p,q}_*\pa{\fc_{\ua}\,;\,\ubar^p\,,\,\us\,;\,\ubar^q\,,\,\ut}.
 \]
\end{defn}

\begin{lem}
 \label{lem:euafiltered}
 Let $\ua\in\Arr_{p,q}$ be non-degenerate; then $e^{\ua}$ satisfies the following:
 \begin{itemize}
 \item $e^{\ua}$ sends the interior
 of $\Delta^p\times\Delta^q$ injectively
 inside $\fFarr_{p+q}\Hur(\cR;\hQ_+)$;
 \item $e^{\ua}$ sends $\del(\Delta^p\times\Delta^q)$ inside $\Farr_{p+q-1}\Hur(\cR;\hQ_+)$.
 \end{itemize}
\end{lem}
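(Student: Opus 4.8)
The plan is to compute explicitly the support of the configuration $e^{\ua}(\us;\ut)$ and to read off its array bidegree; the key point is that the pushforward $\cH^{p,q}_*$ merely relocates the horizontal and vertical coordinates of $P_{\ua}$ according to $\us$ and $\ut$. First I would record the effect of $\cH^{p,q}_*$ on supports. Recall that $P_{\ua}$ consists of the points $z^{p,q}_{i,j}=\bary^p_i+\bary^q_j\sqrt{-1}$ for $(i,j)\in I(\ua)$. Since $\ua$ is non-degenerate, for every $1\le i\le p$ there is some $j$ with $(i,j)\in I(\ua)$, and for every $1\le j\le q$ there is some $i$ with $(i,j)\in I(\ua)$; hence the horizontal coordinates of $P_{\ua}$ lying in $(0,1)$ are precisely $\bary^p_1,\dots,\bary^p_p$, and the vertical ones precisely $\bary^q_1,\dots,\bary^q_q$. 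By Definition \ref{defn:cHp}, using the extended convention $s_0=0$, $s_{p+1}=1$, the map $\cH^p(-;\ubar^p,\us)$ fixes $0$ and $1$ and sends $\bary^p_i\mapsto s_i$; likewise $\cH^q(-;\ubar^q,\ut)$ sends $\bary^q_j\mapsto t_j$. Therefore $e^{\ua}(\us;\ut)$ is supported on $\set{s_i+t_j\sqrt{-1}\,:\,(i,j)\in I(\ua)}$.

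Next I would treat the interior. If $(\us,\ut)$ lies in the interior of $\Delta^p\times\Delta^q$, then $0<s_1<\dots<s_p<1$ and $0<t_1<\dots<t_q<1$, so the horizontal coordinates of the support in $(0,1)$ are the $p$ distinct values $s_1,\dots,s_p$ and the vertical ones the $q$ distinct values $t_1,\dots,t_q$. Thus $\arr_{\hor}(e^{\ua}(\us;\ut))=p$ and $\arr_{\ver}(e^{\ua}(\us;\ut))=q$, so $\abs{\arr}(e^{\ua}(\us;\ut))=p+q$ and the image lies in $\fFarr_{p+q}\Hur(\cR;\hQ_+)$. Injectivity on the interior is then immediate: the support determines the increasing tuples $(s_1,\dots,s_p)$ and $(t_1,\dots,t_q)$, and hence recovers $\us$ and $\ut$.

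Finally I would treat the boundary, using $\del(\Delta^p\times\Delta^q)=\del\Delta^p\times\Delta^q\cup\Delta^p\times\del\Delta^q$. If $\us\in\del\Delta^p$, there is $0\le i\le p$ with $s_i=s_{i+1}$ in the extended convention. For $1\le i\le p-1$ two interior horizontal coordinates coincide; for $i=0$ one has $s_1=0$ and for $i=p$ one has $s_p=1$, so a horizontal coordinate is pushed onto a vertical side of $\cR$ and drops out of the count $\Re(P)\setminus\set{0,1}$. In every case $\arr_{\hor}(e^{\ua}(\us;\ut))\le p-1$, while $\arr_{\ver}\le q$, so $\abs{\arr}\le p+q-1$; the symmetric argument applies when $\ut\in\del\Delta^q$. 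Hence $e^{\ua}$ maps $\del(\Delta^p\times\Delta^q)$ into $\Farr_{p+q-1}\Hur(\cR;\hQ_+)$.

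The step requiring the most care is not the degree bookkeeping but confirming that $e^{\ua}$ is genuinely valued in $\Hur(\cR;\hQ_+)$ once coordinates collide and points of the support merge: this is exactly where completeness of $\hQ$ and the augmentation of $\Q$ enter, and it is already encoded in the fact, established before Definition \ref{defn:eua}, that $\cH^{p,q}_*$ preserves $\Hur(\cR;\hQ_+)$. The only genuinely delicate combinatorial point is the identification, via non-degeneracy of $\ua$, of the interior horizontal and vertical coordinates of $P_{\ua}$ with the full barycentre coordinates $\ubar^p$ and $\ubar^q$; this is precisely what upgrades the degree counts to equalities in the interior and to the sharp drop on the boundary.
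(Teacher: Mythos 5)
Your proof is correct and follows essentially the same route as the paper's: push the support $P_{\ua}$ forward under $\cH^{p,q}(-;\ubar^p,\us;\ubar^q,\ut)$, observe that it becomes $\set{s_i+t_j\sqrt{-1}\,:\,(i,j)\in I(\ua)}$, and use non-degeneracy of $\ua$ to count $\Re(P)\setminus\set{0,1}$ and $\Im(P)\setminus\set{0,1}$, with equality to $p$ and $q$ exactly on the interior. You additionally make explicit the injectivity on the interior (recovering $\us$ and $\ut$ as the sorted real and imaginary parts of the support), a point the paper's written proof leaves implicit, so your write-up is if anything slightly more complete.
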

\begin{proof}
 Let $(\us,\ut)\in\Delta^p\times\Delta^q$, and let $\fc=(P,\psi):=e^{\ua}(\us,\ut)$.
 Then $P$ is the image of $P_{\ua}$ along the map
 $\cH^{p,q}(-;\ubar^p,\us;\ubar^q,\ut)\colon\C\to\C$.
 The latter map sends $z^{p,q}_{i,j}\mapsto s_i+t_j\sqrt{-1}$ for all $0\leq i\leq p+1$
 and $0\leq j\leq q+1$, in particular for $(i,j)\in I(\ua)$. It follows
 that $\Re(P)\setminus\set{0,1}=\set{s_1,\dots,s_p}\setminus\set{0,1}$ consists of at most
 $p$ points, and $\Im(P)\setminus\set{0,1}=\set{t_1,\dots,t_q}\setminus\set{0,1}$ consists
 of at most $q$ points. More precisely, using that $\ua$ is non-degenerate,
 we have that $|\Re(P)\setminus\set{0,1}|=p$
 if $0<s_1<\dots<s_p<1$, i.e. if $\us$ is in the interior of $\Delta^p$, whereas
 $|\Re(P)\setminus\set{0,1}|<p$ if $\us\in\del\Delta^p$. Similarly
 $|\Im(P)\setminus\set{0,1}|=q$ if $\ut$ is in the interior of $\Delta^q$,
 and $|\Im(P)\setminus\set{0,1}|<q$ if $\ut\in\del\Delta^q$.
 
 Hence $|\arr|(\fc)\leq p+q$; equality holds if and only if
 $(\us,\ut)\notin\del(\Delta^p\times\Delta^q)$.
\end{proof}

\begin{lem}
 \label{lem:unioneuasurjective}
 Let $\nu\geq 0$ and let $\fc\in\fFarr_{\nu}\Hur(\cR;\hQ_+)$; then there is precisely
 one couple of indices $p,q\geq0$ with $p+q=\nu$, and precisely one admissible array $\ua\in\Arr(p,q)$,
 such that $\fc$ is in the image of $e^{\ua}$.
\end{lem}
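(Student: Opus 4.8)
The plan is to read off both the bidegree $(p,q)$ and the array $\ua$ directly from the combinatorial data of $\fc=(P,\psi)$, and then to check separately that this $\ua$ works and that no other choice does. I would begin by pinning down the bidegree: set $p=\arr_{\hor}(P)$ and $q=\arr_{\ver}(P)$, so that the hypothesis $\fc\in\fFarr_\nu\Hur(\cR;\hQ_+)$ forces $p+q=|\arr|(P)=\nu$. This already shows that $(p,q)$ is determined by $\fc$. Moreover, Lemma \ref{lem:euafiltered} shows that if $\fc=e^{\ub}(\us',\ut')$ for any non-degenerate $\ub\in\Arr(p',q')$ with $p'+q'=\nu$, then $(\us',\ut')$ must lie in the interior of $\Delta^{p'}\times\Delta^{q'}$ (otherwise $\fc$ would lie in $\Farr_{\nu-1}$), and the proof of that lemma identifies $p'=\arr_{\hor}(\fc)=p$ and $q'=\arr_{\ver}(\fc)=q$. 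Thus the whole statement reduces to existence and uniqueness of a non-degenerate $\ua\in\Arr(p,q)$.

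Next I would construct the candidate array. Writing $\Re(P)\cup\set{0,1}=\set{0=x_0<\dots<x_{p+1}=1}$ and $\Im(P)\cup\set{0,1}=\set{0=y_0<\dots<y_{q+1}=1}$, I set $\us=(x_1,\dots,x_p)\in\mathring{\Delta}^p$ and $\ut=(y_1,\dots,y_q)\in\mathring{\Delta}^q$, and define $\ua$ by $a_{i,j}=\psi(\gen^{P,\std}_{i,j})$ whenever $x_i+y_j\sqrt{-1}\in P$, and $a_{i,j}=\one$ otherwise; here $\gen^{P,\std}_{i,j}\in\fQ(P)$ is the standard generator of Definition \ref{defn:stdgenset}, so $\psi$ is directly defined on it. Since $\fc$ is augmented, $a_{i,j}\neq\one$ precisely when $x_i+y_j\sqrt{-1}\in P$, whence $I(\ua)=I(P)$; non-degeneracy of $\ua$ then follows because each $x_i$ (for $1\le i\le p$) and each $y_j$ (for $1\le j\le q$) is realised by a point of $P$, so each interior row and column of $\ua$ contains a nontrivial entry.

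I would then verify $\fc=e^{\ua}(\us,\ut)$. Because $(\us,\ut)$ is interior, the map $\xi:=\cH^{p,q}(-;\ubar^p,\us;\ubar^q,\ut)\colon\C\to\C$ is a pointed homeomorphism carrying $P_{\ua}$ onto $P$ (it sends $z^{p,q}_{i,j}\mapsto x_i+y_j\sqrt{-1}$), and, since it preserves the vertical-strip decomposition $\bS_{\bullet,\bullet}$ and the vertical order of points, it carries the standard arc system of $P_{\ua}$ to a standard arc system of $P$. Hence the induced isomorphism $\xi^*\colon\fG(P)\to\fG(P_{\ua})$ satisfies $\xi^*(\gen^{P,\std}_{i,j})=\gen^{\ua}_{i,j}$, so the monodromy of $e^{\ua}(\us,\ut)=\cH^{p,q}_*(\fc_{\ua};\ubar^p,\us;\ubar^q,\ut)$ evaluated on $\gen^{P,\std}_{i,j}$ equals $\psi_{\ua}(\gen^{\ua}_{i,j})=a_{i,j}=\psi(\gen^{P,\std}_{i,j})$. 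As a configuration in $\Hur(\cR;\hQ)$ supported on $P$ is determined by the values of its monodromy on a standard generating set of $\fG(P)$ (via \cite[Theorem 3.3]{Bianchi:Hur1}), this yields $e^{\ua}(\us,\ut)=\fc$. For uniqueness, any non-degenerate $\ub$ with $\fc$ in the image of $e^{\ub}$ gives $\fc=e^{\ub}(\us',\ut')$ with $(\us',\ut')$ interior and bidegree $(p,q)$; reading the support $P$ off $\fc$ forces $\us'=\us$ and $\ut'=\ut$, and the same computation recovers $b_{i,j}=\psi(\gen^{P,\std}_{i,j})=a_{i,j}$, so $\ub=\ua$.

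The main obstacle I anticipate is the middle step: showing that $\xi^*$ genuinely identifies the two standard generating sets. This requires checking that the interpolation homeomorphism $\cH^{p,q}(-;\ubar^p,\us;\ubar^q,\ut)$ respects the strips $\bS_{\bullet,\bullet}$ that fix the standard arcs — it is an honest vertical-strip-preserving map on $\bH$, where all points lie, but one must also control its behaviour in the fan of strips near the basepoint $*$ — and then invoking the isotopy-uniqueness of standard arcs recorded in Definition \ref{defn:stdgenset} to conclude that $\xi^*(\gen^{P,\std}_{i,j})$ is genuinely $\gen^{\ua}_{i,j}$ rather than merely a conjugate.
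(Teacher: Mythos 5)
Your proposal is correct and follows essentially the same route as the paper: read off $(p,q)$ from the array bidegree of the support, define $a_{i,j}=\psi(\gen^{P,\std}_{i,j})$, and use that $\cH^{p,q}(-;\ubar^p,\us;\ubar^q,\ut)$ is a homeomorphism carrying $P_{\ua}$ to $P$ and the standard generating set of $\fG(P_{\ua})$ to that of $\fG(P)$, both for existence and for uniqueness. The only difference is that you make explicit the strip-preservation argument behind the identification of standard generating sets, which the paper states as an observation without further justification.
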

\begin{proof}
We first show the existence of $p,q$ and $\ua$ with the required properties.
Use Notation \ref{nota:fc} for $\fc$, and let $\Re(P)\cup\set{0,1}=\set{0<s_1<\dots<s_p<1}$
have $p+2$ elements, for some $p\geq0$; similarly let $\Im(P)\cup\set{0,1}=\set{0<t_1<\dots<t_q<1}$ have $q+2$ elements, for some $q\geq0$. By the hypothesis
that $\fc\in\fFarr_{\nu}\Hur(\cR;\hQ_+)$ we have the equality $p+q=|\arr|(\fc)=\nu$.

Define an array $\ua$ of size $(p+2)\times(q+2)$
by setting $a_{i,j}=\psi(\gen^{P,\std}_{i,j})\in\hQ$ for all $(i,j)\in I(P)$, and $a_{i,j}=\one$
for all $(i,j)\in \set{0,\dots,p+1}\times\set{0,\dots,q+1}\setminus I(P)$
(see Definition \ref{defn:stdgenset}). The hypothesis that $\fc$ lies in $\Hur(\cR;\hQ_+)\subseteq\Hur(\cR;\hQ)$ ensures
that $\ua$ is a non-degenerate array in $\Arr_{p,q}(\Q)$. Moreover we have $I(\ua)=I(P)$.

We claim that $e^{\ua}$ sends $(\us,\ut)\in\Delta^p\times\Delta^q$ to
$\fc$. Indeed
$\cH^{p,q}\pa{-;\ubar^p,\us;\ubar^q,\ut}$ is a homeomorphism of $\C$, sending
$P_{\ua}$ bijectively to $P$, and sending the standard generating set $(\gen^{\ua}_{i,j})_{(i,j)\in I(\ua)}$
of $\fG(P_{\ua})$ to the standard generating set $(\gen^{P,\std}_{i,j})_{(i,j)\in I(P)}$ of $\fG(P)$.
This proves the existence of $p$, $q$ and $\ua$ as desired.

For uniqueness, suppose that we are given two integers $p',q'\geq0$ and a non-degenerate array $\ua'\in\Arr(p,q)$,
such that $p'+q'=\nu$ and such that there is a point $(\us',\ut')\in\Delta^{p'}\times\Delta^{q'}$
with $e^{\ua'}\colon(\us',\ut')\mapsto\fc$. Then by Lemma \ref{lem:euafiltered}
we have that $(\us',\ut')$ lies in the interior of $\Delta^{p'}\times\Delta^{q'}$,
since $\fc$ lies in $\fFarr_{\nu}\Hur(\cR;\hQ_+)$. Again the map
$\cH^{p',q'}\pa{-;\ubar^{p'},\us';\ubar^{q'},\ut'}$
is a homeomorphism of $\C$ mapping the set
$P_{\ua'}$ bijectively to the set $P$ by the formula $z_{i,j}^{p',q'}\mapsto s'_i+\sqrt{-1}t'_j$.

It follows that $\set{0<s'_1<\dots<s'_{p'}<1}$ is equal to $\Re(P)\cup\set{0,1}$,
and similarly $\set{0<t'_1<\dots<t'_{q'}<1}$ is equal to $\Im(P)\cup\set{0,1}$; in particular,
comparing with the construction above, we have $p=p'$, $q=q'$, $\us=\us'$, $\ut=\ut'$ and $I(\ua')=I(P)$.

Since $\cH^{p',q'}\pa{-;\ubar^{p'},\us';\ubar^{q'},\ut'}$ gives a bijection between the standard
generating sets $(\gen^{\ua'}_{i,j})_{(i,j)\in I(\ua')}$ and
$(\gen^{P,\std}_{i,j})_{(i,j)\in I(P)}$, it also follows that
\[
a'_{i,j}=\psi_{\ua'}(\gen^{\ua'}_{i,j})=\psi(\gen_{i,j})=a_{i,j}
\]
for all $(i,j)\in I(P)$, where $\psi_{\ua'}$ is the monodromy of the configuration $\fc_{\ua'}$,
see Definition \ref{defn:fcua}; hence $\ua'=\ua$.
\end{proof}

\subsection{Face restrictions and the bijection \texorpdfstring{$\upsilon$}{upsilon}}
In the following two propositions we analyse the restriction of $e^{\ua}$ to a face of
$\Delta^p\times\Delta^q$, and thus establish a link between
the bisimplicial set $\Arr(\Q)$ and the cell stratification on $\Hur(\cR;\hQ_+)$.

\begin{nota}
 \label{nota:facebisimplex}
For $0\leq i\leq p$ we denote by $d^{\hor}_i\Delta^p\times\Delta^q$ the face
$(d_i\Delta^p)\times\Delta^q\subset\Delta^p\times\Delta^q$; for $0\leq j\leq q$
we denote by $d^{\ver}_j\Delta^p\times\Delta^q$ the face
$\Delta^p\times(d_j\Delta^q)\subset\Delta^p\times\Delta^q$.
\end{nota}
Each face $d_i\Delta^p\subset\Delta^p$ can be identified with a the simplex $\Delta^{p-1}$
by using either the coordinates $(s_1,\dots,s_{i-1},s_{i+1},\dots,s_p)$,
for $i\neq 0$, or the coordinates $(s_1,\dots,s_i,s_{i+2},\dots,s_p)$, for $i\neq p$;
for $1\le i\le p-1$ the two choices give rise to the \emph{same} identification.
Similarly, there are canonical identifications of
$d^{\hor}_i\Delta^p\times\Delta^q$ with $\Delta^{p-1}\times\Delta^q$, and of
$d^{\ver}_j\Delta^p\times\Delta^q$ with $\Delta^p\times\Delta^{q-1}$.

\begin{prop}
 \label{prop:facerestrictionh}
 Let $\ua$ be a non degenerate array in $\Arr_{p,q}(\Q)$, for some $p\geq 1$ and $q\geq0$,
 and let $0\leq i\leq p$. Then the restriction of
 $e^{\ua}\colon\Delta^p\times\Delta^q\to\Hur(\cR;\hQ_+)$ to the face
 $d^{\hor}_i\Delta^p\times\Delta^q\cong\Delta^{p-1}\times\Delta^q$ is equal
 to the map $e^{\ua'}$, where $\ua'=d^{\hor}_i\ua$.
\end{prop}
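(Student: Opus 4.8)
The plan is to reduce the statement to a single ``collapse'' identity by exploiting the composition property of the maps $\cH^{p,q}_*$ (Lemma \ref{lem:bDeltacompositionC}), and then to verify that identity by comparing the two configurations on standard generators. First I would fix the identification $d^{\hor}_i\Delta^p\times\Delta^q\cong\Delta^{p-1}\times\Delta^q$: a point of the facet $d_i\Delta^p$ is a point $\us\in\Delta^p$ with $s_i=s_{i+1}$ (using the convention $s_0=0$, $s_{p+1}=1$), which is the image $\us=d^i(\us')$ of the corresponding $\us'\in\Delta^{p-1}$ under the coface map $d^i\colon\Delta^{p-1}\to\Delta^p$. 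Set $\us_\delta:=d^i(\ubar^{p-1})\in\Delta^p$; explicitly $\us_\delta$ has coordinate $\bary^{p-1}_{\sigma(m)}=\tfrac{\sigma(m)}{p}$ in position $m$, where $\sigma\colon\set{0,\dots,p+1}\to\set{0,\dots,p}$ is the order-preserving surjection with $\sigma(i)=\sigma(i+1)=i$. The only coincidence of coordinates of $\us_\delta$ is $s_i=s_{i+1}$, so both $(\ubar^p,\us_\delta)$ and $(\us_\delta,\us)$ lie in $\bDelta^{p,p}$ for every $\us=d^i(\us')$.

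Next I would apply Lemma \ref{lem:bDeltacompositionC} with horizontal data $\ubar^p,\us_\delta,\us$ and vertical data $\ubar^q,\ubar^q,\ut$ to split
\[
e^{\ua}(\us,\ut)=\cH^{p,q}_*\pa{\fc_{\ua};\ubar^p,\us;\ubar^q,\ut}
=\cH^{p,q}_*\pa{\cH^{p,q}_*(\fc_{\ua};\ubar^p,\us_\delta;\ubar^q,\ubar^q)\,;\,\us_\delta,\us;\ubar^q,\ut}.
\]
Writing $\delta:=\cH^{p,q}(-;\ubar^p,\us_\delta;\ubar^q,\ubar^q)\colon\C\to\C$, the vertical component of $\delta$ is the identity and the horizontal component sends $\bary^p_m\mapsto\bary^{p-1}_{\sigma(m)}$, so $\delta$ carries the grid point $z^{p,q}_{m,n}$ to $z^{p-1,q}_{\sigma(m),n}$, collapsing the two vertical lines of real index $i$ and $i+1$ onto one. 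The crux is the identity $\delta_*(\fc_{\ua})=\fc_{\ua'}$ with $\ua'=d^{\hor}_i\ua$. Granting it, I would finish by observing that the two self-maps $\cH^{p,q}(-;\us_\delta,\us;\ubar^q,\ut)$ and $\cH^{p-1,q}(-;\ubar^{p-1},\us';\ubar^q,\ut)$ of $\C$ are literally equal: both are the identity in the vertical coordinate and, horizontally, are the piecewise-linear self-map of $\R$ fixing $\R\setminus(0,1)$ with the same breakpoints $\tfrac{m}{p}$ and the same images $s_m$ (consistently, since $s_i=s_{i+1}$). Hence they induce the same map on $\Hur(\cR;\hQ_+)$, and applying it to $\fc_{\ua'}$ yields exactly $e^{\ua'}(\us',\ut)$, as required.

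It remains to prove the identity $\delta_*(\fc_{\ua})=\fc_{\ua'}$, which is the main obstacle and the only genuinely geometric point. By Subsection \ref{subsec:LNCfunctoriality} we have $\delta_*(\fc_{\ua})=(\delta(P_{\ua}),\psiext_{\ua}\circ\delta^*)$, and since both $\delta(P_{\ua})$ and $P_{\ua'}$ are supported on the grid $P^{p-1,q}$, it suffices to compare monodromies on the standard generators of $\fG(\delta(P_{\ua}))$. For a standard generator at a column $m'\neq i$ the pullback $\delta^*$ is, up to conjugacy, the standard generator $\gen^{\ua}_{\sigma^{-1}(m'),n}$, so its $\psiext_{\ua}$-value is the unchanged entry $a_{\sigma^{-1}(m'),n}=a'_{m',n}$. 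For the merged column $m'=i$, the pullback of a small clockwise loop encloses both $z^{p,q}_{i,n}$ and $z^{p,q}_{i+1,n}$ and, as an element of $\fQext(P_{\ua})$, decomposes as the ordered product of the two adjacent standard generators $\gen^{\ua}_{i,n}$ and $\gen^{\ua}_{i+1,n}$ in the order matching the horizontal face map of \cite[Lemma 6.8]{Bianchi:Hur1}; applying $\psiext_{\ua}$ and using completeness of $\hQ$ gives the entry $a'_{i,n}$ of $d^{\hor}_i\ua$.

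Finally, because $\hQ$ is augmented, these products never collapse to $\one$, so the supports match and $(i,n)\in I(\ua')$ precisely when $(i,n)\in I(\ua)$ or $(i+1,n)\in I(\ua)$; the boundary cases $i=0$ and $i=p$, where one merging line lies on a side of $\cR$, are handled identically once one reads the conventions $s_0=0$, $s_{p+1}=1$. Thus $\psiext_{\ua}\circ\delta^*$ and $\psi_{\ua'}$ agree on all standard generators, proving the identity and hence the proposition. I expect the delicate step to be the loop-theoretic verification that the pulled-back merged generator equals the ordered product $\gen^{\ua}_{i,n}\cdot\gen^{\ua}_{i+1,n}$ with exactly the order and conventions of $d^{\hor}_i$, since this is where the arc choices of Definition \ref{defn:stdgenset} and the combinatorics of \cite[Lemma 6.8]{Bianchi:Hur1} must be matched.
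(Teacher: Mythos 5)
Your overall architecture coincides with the paper's: factor $\cH^{p,q}_*(-;\ubar^p,\us;\ubar^q,\ut)$ through the barycentre of the face using Lemma \ref{lem:bDeltacompositionC}, reduce everything to the collapse identity $\delta_*(\fc_{\ua})=\fc_{\ua'}$ (this is exactly the paper's Lemma \ref{lem:facerestrictionh}), and verify that identity on standard generators; your observation that $\cH^{p,q}(-;\us_\delta,\us;\ubar^q,\ut)$ literally equals $\cH^{p-1,q}(-;\ubar^{p-1},\us';\ubar^q,\ut)$ is also how the paper concludes the reduction.

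The gap sits in the one step you yourself flag as delicate, and the explicit formula you commit to there is wrong. The pullback along $\delta$ of the standard generator at the merged point $z^{p-1,q}_{i,n}$ is \emph{not} the plain product $\gen^{\ua}_{i,n}\cdot\gen^{\ua}_{i+1,n}$. The preimage of $z^{p-1,q}_{i,n}$ is a horizontal segment joining $z^{p,q}_{i,n}$ to $z^{p,q}_{i+1,n}$, and a small clockwise loop around that segment, rewritten in the standard generating set of $\fG(P_{\ua})$, is
\[
\pa{\gen^{\ua}_{i,n}}^{c\gen^{\ua}_{i+1,n}}\cdot \gen^{\ua}_{i+1,n},
\]
where $c\gen^{\ua}_{i+1,n}=\gen^{\ua}_{i+1,0}\cdots\gen^{\ua}_{i+1,n-1}$ is the partial column product of Notation \ref{nota:cgenij}. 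The paper gets at this by first computing $\delta^*$ on the elements $c\gen^{\ua'}_{i,n}$ — for those the clean multiplicative formula $c\gen^{\ua}_{i,n}\cdot c\gen^{\ua}_{i+1,n}$ \emph{does} hold, because the relevant loops are simple and nested correctly — and then solving for the individual generators, which is where the conjugation appears. Applying $\psiext_{\ua}$ to your product gives $a_{i,n}a_{i+1,n}$, whereas the $(i,n)$ entry of $d^{\hor}_i\ua$ is $a_{i,n}^{a_{i+1,0}\cdots a_{i+1,n-1}}\cdot a_{i+1,n}$; these differ whenever the conjugation action is nontrivial, so as written your computation does not produce $\fc_{\ua'}$. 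Everything else — the treatment of the unmerged columns, the support and augmentation argument, the boundary cases $i\in\set{0,p}$ — goes through once this formula is corrected.
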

\begin{prop}
 \label{prop:facerestrictionv}
 Let $\ua$ be a non degenerate array in $\Arr_{p,q}(\Q)$, for some $p\geq 0$ and $q\geq1$,
 and let $0\leq j\leq q$. Then the restriction of
 $e^{\ua}\colon\Delta^p\times\Delta^q\to\Hur(\cR;\hQ_+)$ to the face
 $d^{\ver}_j\Delta^p\times\Delta^q\cong\Delta^p\times\Delta^{q-1}$ is equal
 to the map $e^{\ua'}$, where $\ua'=d^{\ver}_j\ua$.
\end{prop}
The expressions ``$d^{\hor}_i\ua$'' and ``$d^{\ver}_j\ua$'' refer to the bisimplicial set $\Arr(\Q)$.
The proof of Propositions \ref{prop:facerestrictionh} and \ref{prop:facerestrictionv}
is in Subsections \ref{subsec:facerestrictionh} and \ref{subsec:facerestrictionv}
of the appendix.

Recall from \cite[Lemma 6.10]{Bianchi:Hur1} that there is a semi-bisimplicial set
$\Arr^{\ndeg}(\Q)$ containing all non-degenerate arrays of $\Arr(\Q)$, and with vertical and horizontal
face maps given by the restriction of those of $\Arr(\Q)$.
Consider the geometric realisation $|\!|\Arr^{\ndeg}(\Q)|\!|$ of the semi-bisimplicial complex
$\Arr^{\ndeg}(\Q)$, and note that there is a homeomorphism $|\!|\Arr^{\ndeg}(\Q)|\!|\cong|\Arr(\Q)|$.

By Propositions \ref{prop:facerestrictionh} and \ref{prop:facerestrictionv}
we obtain a continuous map
\[
\upsilon\colon|\Arr(\Q)|\to\Hur(\cR;\hQ_+).
\]
Lemmas \ref{lem:euafiltered} and \ref{lem:unioneuasurjective} imply that $\upsilon$ is bijective; if we consider
on $|\Arr(\Q)|$ the skeletal filtration and on $\Hur(\cR;\hQ_+)$ the filtration $\Farr_{\bullet}$,
then Lemma \ref{lem:euafiltered} also implies that $\upsilon$ is a map of filtered spaces.

We say that an entry $a_{i,j}$ of an array $\ua\in\Arr_{p,q}(\Q)$ 
is in \emph{boundary position} if $i\in\set{0,p+1}$ or
$j\in\set{0,q+1}$ (or both conditions hold). Recall
from \cite[Definition 6.11 and Lemma 6.12]{Bianchi:Hur1}
that we have a sub-bisimplicial set $\NAdm(\Q)\subset\Arr(\Q)$
of non-admissible arrays: an array $\ua\in\Arr(p,q)$ is non-admissible
if either of the following requirements is satisfied:
 \begin{itemize}
  \item there exists an entry $a_{i,j}$ lying in $\hQ\setminus\Q$;
  \item there exists an entry $a_{i,j}\neq \one$ in boundary position.
 \end{itemize}

\begin{lem}
 \label{lem:upsilonrestriction}
 The map $\upsilon$ restricts to continuous bijections
 \[
 \upsilon \colon  |\NAdm(\Q)|\to\Hur(\cR;\hQ_+)\setminus\Hur(\mcR;\Q_+);
 \]
 \[
 \upsilon \colon  \Hur^{\Delta}(\Q)=|\Arr(\Q)|\setminus|\NAdm(\Q)|\to\Hur(\mcR;\Q_+).
 \]
\end{lem}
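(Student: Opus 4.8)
The plan is to reduce the whole statement to a single set-theoretic identity and then harvest everything else formally. Concretely, I would first show that $\upsilon$ carries $|\NAdm(\Q)|$ \emph{onto} $\Hur(\cR;\hQ_+)\setminus\Hur(\mcR;\Q_+)$. Granting this, the second bijection comes for free by passing to complements: $\upsilon\colon|\Arr(\Q)|\to\Hur(\cR;\hQ_+)$ is already a continuous bijection, and both sides split as disjoint unions of two pieces, so a bijection between one pair of pieces forces a bijection between the other. Continuity of each restriction is automatic, being the corestriction of the continuous map $\upsilon$ to a subspace of the target containing the image of the relevant source subspace; here $|\NAdm(\Q)|$ is a subcomplex (it is the realisation of a sub-bisimplicial set by \cite[Lemma 6.12]{Bianchi:Hur1}) and $\Hur(\mcR;\Q_+)$ is an open subspace of $\Hur(\cR;\hQ_+)$, so both carry the subspace topology, and bijectivity of each restriction is inherited from that of $\upsilon$.

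To prove the set identity I would argue cell by cell. Via the bijection $\upsilon$, each configuration $\fc\in\Hur(\cR;\hQ_+)$ corresponds (using Lemma \ref{lem:unioneuasurjective}) to a unique pair consisting of a non-degenerate array $\ua\in\Arr_{p,q}(\Q)$ and an interior point $(\us,\ut)$ of $\Delta^p\times\Delta^q$ with $\fc=e^{\ua}(\us,\ut)$; and the corresponding point of $|\Arr(\Q)|$ lies in $|\NAdm(\Q)|$ precisely when $\ua$ is non-admissible. It therefore suffices to establish, for every non-degenerate $\ua$ and every interior $(\us,\ut)$, the equivalence
\[
 \ua\text{ is non-admissible} \iff e^{\ua}(\us,\ut)\notin\Hur(\mcR;\Q_+).
\]

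The heart of the matter is matching array data with configuration features. Writing $\fc=(P,\psi)=e^{\ua}(\us,\ut)$, I would recall from the proof of Lemma \ref{lem:unioneuasurjective} that $\cH^{p,q}(-;\ubar^p,\us;\ubar^q,\ut)\colon\C\to\C$ (Definition \ref{defn:cHpq}) is a homeomorphism sending $z^{p,q}_{i,j}\mapsto s_i+t_j\sqrt{-1}$ (in the extended coordinates of Notation \ref{nota:extracoordsimplex}) and carrying the standard generators of $\fG(P_{\ua})$ to those of $\fG(P)$, so that $\psi(\gen^{P,\std}_{i,j})=a_{i,j}$ for all $(i,j)\in I(P)=I(\ua)$. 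The key preservation property is that, since each coordinate map $\cH^p$ fixes $(-\infty,0]\cup[1,\infty)$ pointwise and, for $\us$ in the interior of $\Delta^p$, restricts to an endpoint-preserving homeomorphism of $[0,1]$, the map $\cH^{p,q}(-;\ubar^p,\us;\ubar^q,\ut)$ preserves both $\mcR$ and $\del\cR$. Hence $s_i+t_j\sqrt{-1}\in\del\cR$ exactly when $z^{p,q}_{i,j}\in\del\cR$, which by Notation \ref{nota:Ppq} happens exactly when $i\in\set{0,p+1}$ or $j\in\set{0,q+1}$, i.e. when $(i,j)$ is a boundary position.

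With this in hand the equivalence is immediate: $\fc\in\Hur(\mcR;\Q_+)$ holds iff (a) $P\subset\mcR$ and (b) every local monodromy $a_{i,j}=\psi(\gen^{P,\std}_{i,j})$, $(i,j)\in I(\ua)$, lies in $\Q$. By the preceding paragraph, (a) fails exactly when some entry $a_{i,j}\neq\one$ occupies a boundary position, while (b) fails exactly when some entry lies in $\hQ\setminus\Q$ (such an entry is automatically $\neq\one$, hence indexed by $I(\ua)$). These two clauses are precisely the two conditions defining non-admissibility of $\ua$ recalled from \cite[Definition 6.11]{Bianchi:Hur1}, which yields the equivalence and hence the lemma. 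I expect the only delicate point to be this final bookkeeping — in particular verifying that the combinatorial notion of a boundary position matches exactly a point of $P$ landing on $\del\cR$, independently of the interior point $(\us,\ut)$, which is exactly what the two preservation properties of $\cH^{p,q}$ guarantee; everything else is formal manipulation of the already-established bijection $\upsilon$.
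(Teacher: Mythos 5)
Your proof is correct and follows essentially the same route as the paper: both reduce the lemma to the observation that, under the correspondence of Lemma \ref{lem:unioneuasurjective}, the entries $a_{i,j}=\psi(\gen^{P,\std}_{i,j})$ lie in $\Q$ exactly when $\fc\in\Hur(\cR;\Q_+)$, and the nontrivial entries sit in boundary position exactly when $P$ meets $\del\cR$. The only cosmetic difference is direction — you start from a cell $(\ua;\us,\ut)$ and compute its image, while the paper starts from a configuration $\fc$ and reads off its array — but these are the two sides of the same bijection, so the arguments coincide.
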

\begin{proof}
Let $\fc\in\Hur(\cR;\hQ_+)$, and use Notation \ref{nota:fc}. In the proof of Lemma \ref{lem:unioneuasurjective} we have given a construction, depending on $\fc$,
of a couple of numbers
$p,q\geq0$, a point $(\us,\ut)$ in the interior of $\Delta^p\times\Delta^q$
and a non-degenerate array $\ua\in\Arr(p,q)$ such that $\fc=e^{\ua}(\us,\ut)$.
The data $(\ua;\us,\ut)$ represent a point in $|\Arr(\Q)|$, which is precisely
the preimage $\upsilon^{-1}(\fc)$; we have $\upsilon^{-1}(\fc)\in|\NAdm(\Q)|$ if and only
if $\ua$ is non-admissible.

The array $\ua$ was constructed by considering the standard generating set
of $\fG(P)$, and by setting $a_{i,j}=\psi(\gen^{P,\std}_{i,j})$
for $(i,j)\in I(P)$, and $a_{i,j}=\one$ otherwise. It follows that $\ua$
has all entries in $\Q$ if and only if $\psi\colon\fQ(P)\to\hQ$ has image in $\Q$, that is,
$\fc\in\Hur(\cR;\Q_+)$; and $\ua$ has all entries in boundary position equal to $\one$ if and only if
$P\subset\mcR$, that is, $\fc\in\Hur(\mcR;\hQ_+)$.
We have therefore
\[
\fc\in \Hur(\mcR;\Q_+)=\Hur(\cR;\Q_+)\cap\Hur(\mcR;\hQ_+)\subset\Hur(\cR;\hQ_+)
\]
if and only if $\ua$ is admissible.
\end{proof}

\section{Locally finite and Poincar\'e PMQs}
\label{sec:Poincare}
We consider the Hurwitz-Ran spaces $\Hur(\mcR;\Q)$ in the special cases of
a locally finite PMQ and of a Poincar\'e PMQ $\Q$. Recall that a PMQ $\Q$ is Poincar\'e if
each connected component of $\Hur^{\Delta}(\Q)$ is a topological manifold; this condition implies
that $\hQ$ is endowed with an intrinsic norm $h\colon\hQ\to\N$ such that $\Hur^{\Delta}(\Q)(a)$
is an orientable manifold of dimension $2h(a)$ for all $a\in\hQ$, see \cite[Proposition 6.23]{Bianchi:Hur1}. A Poincar\'e PMQ is always locally finite, and a locally finite PMQ is always augmented.

\subsection{Locally finite PMQs}
In this subsection we prove the following theorem.
\begin{thm}
 \label{thm:upsilonhomeo}
 Let $\Q$ be a locally finite PMQ. Then the bijection $\upsilon\colon \Hur^{\Delta}(\Q)\to\Hur(\mcR;\Q_+)$
 is a homeomorphism.
\end{thm}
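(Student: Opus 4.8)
The plan is to deduce the statement from the fact, established just before Lemma \ref{lem:upsilonrestriction}, that $\upsilon\colon\Hur^{\Delta}(\Q)\to\Hur(\mcR;\Q_+)$ is a continuous bijection, the target being Hausdorff by Proposition \ref{prop:Hurtopology}. It remains only to prove that $\upsilon^{-1}$ is continuous, and for this I would use the following elementary criterion: if $f\colon X\to Y$ is a continuous bijection onto a Hausdorff space, and every point $y\in Y$ admits an open neighbourhood $O$ and a compact set $K\subseteq X$ with $O\subseteq f(K)$, then $f$ is a homeomorphism. Indeed $f|_K\colon K\to f(K)$ is then a continuous bijection from a compact space to a Hausdorff space, hence a homeomorphism, and restricting $(f|_K)^{-1}$ to the open set $O\subseteq f(K)$ shows that $f^{-1}$ is continuous at $y$. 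Thus for each $\fc\in\Hur(\mcR;\Q_+)$, writing $x=\upsilon^{-1}(\fc)$, I must produce a compact neighbourhood $K$ of $x$ in $\Hur^{\Delta}(\Q)$ whose image $\upsilon(K)$ is a neighbourhood of $\fc$.

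First I would use local finiteness to control the source. Local finiteness of $\Q$ (\cite[Definition 4.12]{Bianchi:Hur1}) ensures that every element of $\hQ$ admits only finitely many factorisations with factors in $\Q_+$; since the horizontal and vertical face maps of $\Arr(\Q)$ act by multiplying adjacent entries of an array (\cite[Lemma 6.8]{Bianchi:Hur1}), it follows that each non-degenerate array is a face of only finitely many non-degenerate arrays. Hence each cell of $|\Arr(\Q)|$ is incident to only finitely many cells, i.e. $|\Arr(\Q)|$ is a locally finite CW complex and in particular locally compact. Since $\Hur^{\Delta}(\Q)=|\Arr(\Q)|\setminus|\NAdm(\Q)|$ is the complement of a subcomplex, it is open in $|\Arr(\Q)|$, hence locally compact and Hausdorff; I may therefore choose a compact neighbourhood $K$ of $x$ contained in $\Hur^{\Delta}(\Q)$, for instance the realisation of the (finite) closed star of the cell containing $x$, trimmed to lie inside $\Hur^{\Delta}(\Q)$.

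The heart of the argument is to show that $\upsilon(K)$ is a neighbourhood of $\fc$. Writing $\fc=e^{\ua}(\us,\ut)$ with $\ua\in\Arr_{p,q}(\Q)$ non-degenerate and admissible and $(\us,\ut)$ in the interior (as in the proof of Lemma \ref{lem:unioneuasurjective}), I would choose an adapted covering $\uU$ of the support $P$ small enough that every configuration in the normal neighbourhood $\fU(\fc,\uU)\cap\Hur(\mcR;\Q_+)$ is obtained by splitting each point of $P$ into finitely many points inside its component of $\uU$, keeping all points in $\mcR$. Via the product decomposition of normal neighbourhoods (Theorem \ref{thm:productneighbourhood}) and the standard generating set of Definition \ref{defn:stdgenset}, such a configuration $\fc'$ has an array $\ua'$ whose entries are obtained by factoring the entries of $\ua$ in $\Q_+$; by local finiteness there are only finitely many such $\ua'$, and each arises from $\ua$ by the face--degeneracy combinatorics of $\Arr(\Q)$. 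Consequently $\upsilon^{-1}(\fU(\fc,\uU)\cap\Hur(\mcR;\Q_+))$ meets only the finitely many cells incident to $x$, all of which lie in the closed star $K$; shrinking $\uU$ if necessary so that these incident cells are met only in the part already contained in $K$, I obtain $\fU(\fc,\uU)\cap\Hur(\mcR;\Q_+)\subseteq\upsilon(K)$, as required. Applying the criterion then finishes the proof.

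The main obstacle is exactly this last step: a priori the weak topology carried by the CW complex $|\Arr(\Q)|$ is strictly finer than the Hurwitz topology, and it is only the finiteness of the factorisations guaranteed by local finiteness that forces a normal neighbourhood to be covered by the finitely many cells adjacent to a given point. Making precise the correspondence between splittings of a point with monodromy $a_{i,j}$ inside a small disc and the face--degeneracy structure of $\Arr(\Q)$, and checking that the trimming of $\uU$ and of $K$ can be performed compatibly, is where the bulk of the careful work lies; everything else is formal once local compactness of the source is in hand.
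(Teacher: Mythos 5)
Your overall strategy---establish local compactness via local finiteness and then apply a ``continuous bijection from a compact set onto a neighbourhood of each point'' criterion---is essentially the strategy of the paper, which organises the same idea by total monodromy: for $a\in\Q$ the whole component $|\Arr(\Q)(a)|$ is compact, and for generic $a\in\hQ$ the product decomposition of normal neighbourhoods (Theorem \ref{thm:productneighbourhood}) reduces everything to factors indexed by elements of $\Q$, yielding local compactness of both sides and properness of $\upsilon_a$. Your pointwise, star-neighbourhood packaging is a legitimate variant, and your topological criterion is correct.

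There is, however, one concretely false step. Local finiteness of $\Q$ constrains the factorisations of elements of $\Q$, \emph{not} of $\hQ$: it does not follow that every element of $\hQ$ admits only finitely many factorisations with factors in $\Q_+$, and consequently $|\Arr(\Q)|$ is \emph{not} in general a locally finite CW complex. For instance, take $\Q=\set{\one}\cup C$ for $C$ an infinite conjugacy class in a group, with trivial partial product: this $\Q$ is locally finite for trivial reasons, yet an element $g_1g_2\in\hQ\setminus\Q$ carries an entire (typically infinite) Hurwitz orbit of length-two decompositions, so the corresponding non-admissible $1$-cells are faces of infinitely many $2$-cells. This failure at non-admissible cells is precisely why the paper treats $a\in\Q$ and generic $a\in\hQ$ separately. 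Your argument is repairable, because you only need local finiteness of the CW structure \emph{at the admissible cells} constituting $\Hur^{\Delta}(\Q)$: if $\ua$ is admissible its entries lie in $\Q$, and since $\cJ(\Q)=\hQ\setminus\Q$ is an ideal closed under conjugation, any array admitting $\ua$ as an iterated face has all entries in $\Q$ and produces decompositions of the entries of $\ua$, of which there are finitely many. Two further points need justification: the ``shrinking $\uU$'' step should be argued by noting that for each of the finitely many cells $e''$ meeting the preimage, $\upsilon\bigl(\bar{e}''\setminus\mathring{K}\bigr)$ is a compact set not containing $\fc$, so Hausdorffness of $\Hur(\cR;\hQ_+)$ supplies a normal neighbourhood of $\fc$ avoiding it; and the identification of the arrays of split configurations with refinements of $\ua$ (which you rightly flag as the bulk of the work) is exactly what the paper extracts from Theorem \ref{thm:productneighbourhood} together with the standard generating sets of Definition \ref{defn:stdgenset}.
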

 We first note that both $|\Arr(\Q)|$ and $\Hur(\cR;\hQ_+)$ decompose as topologial disjoint unions of subspaces
\[
 |\Arr|=\coprod_{a\in\hQ}|\Arr(\Q)(a)|,\quad\quad\quad \Hur(\cR;\hQ_+)=\coprod_{a\in\hQ}\Hur(\cR;\hQ_+)_a.
\]
Recall from \cite[Definition 6.1]{Bianchi:Hur1} the category $\hQ\borel\hQ$ and the notion of $\hQ$-crossed object in a category.
The space $|\Arr(\Q)(a)|$ is the geometric realisation of the bisimplicial set
$\Arr(\Q)(a)$, which is the value at $a\in\hQ\borel\hQ$ of the $\hQ$-crossed bisimplicial set $\Arr(\Q)$:
concretely, $\Arr_{p,q}(\Q)(a)$ contains all arrays $\ua$ satisfying $\prod_{i=0}^{p+1}\pa{\prod_{j=0}^{q+1}a_{i,j}}=a\in\hQ$.
For the space $\Hur(\cR;\hQ_+)_a$, see Notation \ref{nota:Hurwithtotmon}.

The map $\upsilon$ restricts for all $a\in\hQ$ to a bijection
$\upsilon_a\colon|\Arr(\Q)(a)|\to\Hur(\cR;\hQ_+)_a$.
Consider first an element $a\in \Q\subset\hQ$. The hypothesis that $\Q$ is locally finite implies
that $\Arr(\Q)(a)$ is a bisimplicial complex with finitely many non-degenerate arrays:
hence the geometric realisation $|\Arr(\Q)(a)|$ is compact. The bijection $\upsilon_a$ has thus
a compact space as source and a Hausdorff space as target, and is therefore a homeomorphism. Restricting
to $\Hur^{\Delta}(\Q)(a)$ and $\Hur(\mcR;\Q_+)_a$, we have a homeomorphism
$\upsilon_a\colon\Hur^{\Delta}(\Q)(a)\to\Hur(\mcR;\Q_+)_a$.

Consider now a generic element $a\in\hQ$, let $\fc\in\Hur(\mcR;\Q_+)_a$, use Notation
\ref{nota:fc} and the notation of Subsection \ref{subsec:stdgenset}.
Let $\uU$ be an adapted covering of $P$, and assume that 
for all $(i,j)\in I(P)$ the component $U_{i,j}\subset\uU$ containing
$z_{i,j}$ satisfies the following properties:
\begin{itemize}
 \item $\Re(U_{i,j})$ intersects $\Re(P)\cup\set{0,1}$ only in $\Re(z_{i,j})$;
 \item $\Im(U_{i,j})$ intersects $\Im(P)\cup\set{0,1}$ only in $\Im(z_{i,j})$.
\end{itemize}
Let $\bar{\uU}$ denote the union
$\bigcup_{(i,j)\in I(P)}\bar U_{i,j}$, i.e. the closure of $\uU$, and note that $\bar{\uU}$ is compact.
The normal neighbourhood $\fU(\fc;\uU)$ can be regarded as an open subspace of
$\Hur(\bar{\uU};\Q_+)_a$, which by the argument of
Proposition \ref{prop:productneighbourhood} is homeomorphic to a product
of spaces
\[
 \Hur(\bar{\uU};\Q_+)_a\cong \prod_{(i,j)\in I(P)}\Hur(\bar U_{i,j};\Q_+)_{a_{i,j}},
\]
for suitable elements $a_{i,j}\in\Q$. Note that if Proposition \ref{prop:productneighbourhood}
is applied using the arcs $(\zeta_{i,j})_{(i,j)\in I(P)}$ yielding the standard generating
set of $\fG(P)$, then the elements $a_{i,j}$ are precisely the entries different from $\one$
of the array $\ua$ describing the cell of $\Hur^{\Delta}(\Q)(a)$ containing $\upsilon^{-1}(\fc)$.

The previous analysis shows that each factor $\Hur(\bar U_{i,j};\Q_+)_{a_{i,j}}$ is compact;
it follows that $\Hur(\bar{\uU};\Q_+)_a$ is compact
i.e. $\Hur(\mcR;\Q_+)_a$ is locally compact.

Consider $\Hur(\bar{\uU};\Q_+)_a$ as a subspace of $\Hur(\cR;\hQ_+)_a$: the hypothesis that $\Q$ is locally
finite implies that the preimage $\upsilon_a^{-1}(\Hur(\bar{\uU};\Q_+)_a)$ intersects
only finitely many cells in the cell decomposition of $|\Arr(\Q)(a)|$. Hence
$\upsilon_a^{-1}(\Hur(\bar{\uU};\Q_+)_a)$ is compact,
being a closed subset of a finite cell sub-complex of $|\Arr(\Q)(a)|$.
Since $\upsilon_a^{-1}(\Hur(\bar{\uU};\Q_+)_a)$ contains the open neighbourhood
$\upsilon_a^{-1}(\fU(\fc;\uU))$ of $\upsilon_a^{-1}(\fc)$, we obtain that
$\Hur^{\Delta}(\Q)(a)$ is also locally compact.

We conclude that $\upsilon_a\colon\Hur^{\Delta}(\Q)(a)\to\Hur(\mcR;\Q_+)_a$ is a proper continuous
bijection between locally compact spaces, hence it is a homeomorphism.

\subsection{A counterexample to Theorem \ref{thm:upsilonhomeo} for non-locally finite PMQs}
For an augmented but not locally finite PMQ $\Q$, the bijection
$\upsilon$ may not restrict to a homeomorphism $\Hur^{\Delta}(\Q)\to\Hur(\mcR;\Q_+)$:
see the following example.
\begin{ex}
\label{ex:nothomeo}
 Let $\hQ$ be the completion of $\Q:=\bF^2$, the free group on two generators $\gen_1,\gen_2$ endowed with \emph{trivial} multiplication, as in
\cite[Example 4.13]{Bianchi:Hur1}.

Let $\fc=(P,\psi)\in\Hur(\mcR;\hQ_+)$ be a configuration supported on $P:=\set{\zcentre}=\set{\frac{1+\sqrt{-1}}{2}}$
 with $\psi$ defined by sending the unique element $[\gamma]\in\fQ(P)\setminus\set{\one}$
 to $w=\hat{\gen_1}\hat{\gen_2}\in\hQ$. 
 For all $0<\epsilon\le 1/2$ we have an adapted covering of $P$ of the form
 $U_\epsilon=\set{z\in\C\,\colon\,\abs{z-\zcentre}<\epsilon}$; the associated normal neighbouroods
 $\fU(\fc;U_\epsilon)$ form a fundamental system of neighbouroods of $\fc\in\Hur(\cR;\hQ_+)$.
 
 For $\epsilon>0$, denote by $P_\epsilon$ the set of two points $\set{\zcentre\pm \epsilon/2}$, and note that $P_\epsilon\subset U_\epsilon$. For every decomposition $w=a\cdot b$ with respect to $\hQ_1$ we can define
 a configuration $\fc_{\epsilon,a,b}=(P_\epsilon,\psi_{a,b})\in \fU(\fc;U_{\epsilon})$, where $\psi_{a,b}$
 sends the standard generators $\gen^{P_\epsilon,\std}_{1,1}$ and $\gen^{P_\epsilon,\std}_{2,1}$
 to $a$ and $b$ respectively. Using that $w$ has infinitely many non-trivial decompositions with respect to $\hQ_1$,
 we obtain for all
 $0<\epsilon\le 1/2$ an infinite family of configurations $\fc_{\epsilon,a,b}$ supported
 on the same set $P_\epsilon$ and contained in an arbitrary small normal neighbourood $\fU(\fc;U_\epsilon)$.
 
 Note that the configurations $\upsilon^{-1}(\fc_{\epsilon,a,b})$, for fixed $\epsilon$ and varying $a,b$ with $w=ab$,
 belong to different open cells of the cell stratification of $\Hur^{\Delta}(\hQ)$.
 By a diagonal argument one can find a neighbourood
 of $\upsilon^{-1}(\fc)$ in $\Hur^{\Delta}(\hQ)$ containing, for all $\epsilon>0$, only finitely many points
 $(\ua;\us,\ut)$ such that $\upsilon(\ua;\us,\ut)$ has support precisely $P_{\epsilon}$.
 Thus $\upsilon\colon\Hur^{\Delta}(\hQ)\to\Hur(\mcR;\hQ_+)$ is not a homeomorphism.
\end{ex}
In light of Example \ref{ex:nothomeo} one could argue that the topology
on $\Hur(\cR;\hQ_+)$, described in Section \ref{sec:defnHurPMQ},
is not the \emph{correct} topology to consider on Hurwitz-Ran spaces, and that one should rather consider
the CW topology induced by $|\Arr(\Q)|$ along the bijection $\upsilon$.
This would indeed simplify
the discussion in this section, by making Theorem \ref{thm:upsilonhomeo} tautological.
Nevertheless it would become much more elaborate to replace the topology
on $\Hur(\fT;\Q,G)$, for a generic nice couple $\fT$ and a generic PMQ-group pair $(\Q,G)$,
with the topology of a difference of CW complexes.
Moreover, the functoriality of Hurwitz-Ran spaces with respect to morphisms of nice couplex,
discussed Section \ref{sec:functoriality}, would also become much more complicated to prove.

\subsection{Poincar\'e PMQs}
In this subsection we prove the following theorem.
\begin{thm}
 \label{thm:Poincare}
 Let $\Q$ be a locally finite PMQ. If for all $a\in\Q\subset\hQ$ the space $\Hur^{\Delta}(\Q)(a)$
 is a topological manifold of some dimension, then $\Q$ is Poincar\'e.
\end{thm}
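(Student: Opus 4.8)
The plan is to combine the homeomorphism $\upsilon$ of Theorem \ref{thm:upsilonhomeo} with the product description of normal neighbourhoods from Theorem \ref{thm:productneighbourhood}. Since $\Q$ is locally finite, Theorem \ref{thm:upsilonhomeo} provides, for every $a\in\hQ$, a homeomorphism $\upsilon_a\colon\Hur^{\Delta}(\Q)(a)\to\Hur(\mcR;\Q_+)_a$; hence $\Q$ is Poincare if and only if every component $\Hur(\mcR;\Q_+)_a$, for $a\in\hQ$, is a topological manifold. The hypothesis supplies exactly this for $a\in\Q\subset\hQ$, so the entire content of the theorem is to upgrade the conclusion from $a\in\Q$ to an arbitrary $a\in\hQ$. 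Since being a manifold is a local property, I would fix an arbitrary $a\in\hQ$ and a configuration $\fc=(P,\psi)\in\Hur(\mcR;\Q_+)_a$, and aim to produce a Euclidean neighbourhood of $\fc$.

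Write $P=\set{z_1,\dots,z_k}\subset\mcR$ and let $a_i\in\Q_+$ denote the local monodromy of $\psi$ around $z_i$. As $P$ is a finite subset of the open set $\mcR$, I can choose an adapted covering $\uU=(U_1,\dots,U_k)$ with each $U_i\subset\mcR$ a small open disc around $z_i$. Theorem \ref{thm:productneighbourhood}, restricted to the augmented subspaces (a configuration in $\fU(\fc,\uU)$ is augmented precisely when each of its single-disc pieces is augmented), then yields single-point configurations $\fc'_i\in\Hur(\mcR;\Q_+)$ supported on $\set{z_i}$ with local monodromy $a_i$, together with a homeomorphism
\[
 \fU(\fc,\uU)\cong\prod_{i=1}^k\fU(\fc'_i,U_i).
\]
The decisive observation is that, although the total monodromy $a$ is the ordered product of the $a_i$ taken in $\hQ$ and may well lie in the ideal $\hQ\setminus\Q$, each \emph{individual} factor monodromy $a_i$ lies in $\Q$, simply because $\psi$ takes values in $\Q_+\subset\Q$. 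This is precisely what makes the hypothesis applicable factor by factor.

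Each factor $\fU(\fc'_i,U_i)$ is a normal neighbourhood of $\fc'_i$ in $\Hur(\mcR;\Q_+)$, since $U_i\subset\mcR$; as the total monodromy is locally constant (Definition \ref{defn:totmon}), it is an open subset of the component $\Hur(\mcR;\Q_+)_{a_i}$. Because $a_i\in\Q$, the hypothesis together with $\upsilon_{a_i}$ shows that $\Hur(\mcR;\Q_+)_{a_i}$ is a topological manifold, so the open subset $\fU(\fc'_i,U_i)$ is one as well. A finite product of topological manifolds without boundary is again a topological manifold, so $\fU(\fc,\uU)$ is a manifold neighbourhood of $\fc$ inside $\Hur(\mcR;\Q_+)_a$. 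As $\fc$ and $a$ were arbitrary, every component $\Hur(\mcR;\Q_+)_a$ is a manifold, and transporting back along $\upsilon$ shows that $\Q$ is Poincare.

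The main obstacle is precisely the step isolated above: controlling the discrepancy between the index set $\hQ$ of components and the subset $\Q$ over which the hypothesis is verified. The product-neighbourhood theorem is the right tool because localising at a single disc $U_i$ replaces the global, possibly non-$\Q$ total monodromy $a$ by a genuine element $a_i\in\Q$. The remaining points — checking that the decomposition of Theorem \ref{thm:productneighbourhood} restricts correctly to the augmented subspaces $\Hur(\mcR;\Q_+)$, and that passing to open subsets and finite products preserves the manifold property — are routine, but the augmented-restriction bookkeeping should be stated with care.
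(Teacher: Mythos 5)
Your proposal is correct and follows essentially the same route as the paper's own proof: reduce via the homeomorphism $\upsilon$ of Theorem \ref{thm:upsilonhomeo} to showing each $\Hur(\mcR;\Q_+)_a$ is a manifold, apply Theorem \ref{thm:productneighbourhood} to a normal neighbourhood, and observe that each single-point factor has total monodromy in $\Q$ (not merely $\hQ$), so the hypothesis applies factor by factor. The augmented-restriction bookkeeping you flag is handled in the paper by the same remark, namely that the argument of Theorem \ref{thm:productneighbourhood} shows each $\fc'_i$ already lies in $\Hur(\mcR;\Q_+)$.
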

\begin{proof}
 By Theorem \ref{thm:upsilonhomeo} the simplicial Hurwitz space $\Hur^{\Delta}(\Q)$ is homeomorphic
 to $\Hur(\mcR;\Q_+)$, so it suffices to prove that for all $b\in\hQ$ the space $\Hur(\mcR;\Q_+)_b$
 is a topological manifold. In the following we fix $b\in\hQ$.
 
 Let $\fc\in\Hur(\mcR;\Q_+)_b$, use Notations \ref{nota:fc} and \ref{nota:uUcovering}
 and let $\uU$ be an adapted covering of $P$. By Proposition \ref{prop:productneighbourhood}
 the normal neighbourhood $\fU(\fc,\uU)\subset\Hur(\mcR;\Q)$ is homeomorphic to a product
 of normal neighbourhoods $\prod_{i=1}^k\fU(\fc'_i,U_i)$ for suitable configurations $\fc'_i\in\Hur(\mcR;\Q)$; the argument of the proof
 of Proposition \ref{prop:productneighbourhood} shows in fact that $\fc'_i$ is a configuration
 in $\Hur(\mcR;\Q_+)$ supported on the single point $z_i$, and that there is a restricted homeomorphism
 \[
  \fU(\fc,\uU)\cap\Hur(\mcR;\Q_+)_b\cong\prod_{i=1}^k\pa{\fU(\fc'_i,\uU)\cap \Hur(\mcR;\Q_+)_{b_i}},
 \]
with $b_i=\omega(\fc'_i)$. A priori $b_i\in\hQ$; since $\fc'_i$ is supported on one point we
have $b_i\in\Q$.

The hypothesis on $\Q$ ensures that each space $\Hur(\mcR;\Q_+)_{b_i}$ is a topological manifold;
thus also each open subset $\fU(\fc'_i,\uU)\cap \Hur(\mcR;\Q_+)_{b_i}$ is a topological manifold,
and therefore $\fU(\fc,\uU)\cap\Hur(\mcR;\Q_+)_b$ is a topological manifold. This shows that
each configuration in $\Hur(\mcR;\Q_+)_b$ has a neighbourhood which is a topological manifold,
and thus the space $\Hur(\mcR;\Q_+)_b$, which is Hausdorff, is a topological manifold.
\end{proof}
The proof of Theorem \ref{thm:Poincare} can be generalised to homology manifolds as follows.
\begin{defn}
 \label{defn:RPoincare}
Let $R$ be a commutative ring. A locally finite PMQ $\Q$ is \emph{$R$-Poincar\'e}
if for all $a\in\hQ$
the space $\Hur(\mcR;\Q)$ is a $R$-homology manifold of some dimension, i.e. for all
$\fc\in\Hur(\mcR;\Q_+)$ the local homology
\[
\tilde{H}_*\pa{\Hur(\mcR;\Q_+)_a\ ,\ \Hur(\mcR;\Q_+)_a\setminus\set{\fc}\ ;\ R}
\]
is isomorphic to $R$ in a single degree, and vanishes in all other degrees.
\end{defn}

\begin{lem}
 \label{lem:cone}
Let $\Q$ be a locally finite PMQ, let $a\in\Q_+$ and let $z_0\in\mcR$;
then the space $\Hur(\cR;\Q_+)_a$ is homeomorphic to the cone over the space
\[
\del\Hur(\cR;\Q_+)_a:=\Hur(\cR;\Q_+)_a\setminus\Hur(\mcR;\Q_+)_a,
\]
with vertex the unique configuration $\fc_{z_0,a}\in\Hur(\cR;\Q_+)_a$
supported on $z_0$.
\end{lem}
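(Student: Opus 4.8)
The plan is to realise the required homeomorphism as the \emph{cone map} of an explicit radial contraction of $\cR$ towards $z_0$, fed through the functoriality of Section \ref{sec:functoriality}. Throughout write $X=\Hur(\cR;\Q_+)_a$, $\mathring X=\Hur(\mcR;\Q_+)_a$ and $\del X=X\setminus\mathring X$, let $v=\fc_{z_0,a}$, and for $t\in[0,1]$ let $\sigma_t\colon\C\to\C$ be the affine map $\sigma_t(z)=z_0+t(z-z_0)$. For $t\in(0,1]$ the map $\sigma_t$ is an orientation-preserving homeomorphism of $\C$ fixing $*$ and carrying $\cR$ into itself (convexity and $z_0\in\cR$), hence a non-lax morphism of nice couples $(\cR,\emptyset)\to(\cR,\emptyset)$; moreover $\sigma_t(\cR)\subset\mcR$ for $t<1$ since $z_0\in\mcR$.

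The first key input I would establish is compactness of $X$. Since $\Q$ is locally finite, hence augmented, and $a\in\Q$, the fact that $\cJ(\Q)=\hQ\setminus\Q$ is an ideal in $\hQ$ forces every configuration of total monodromy $a$ to have all its local monodromies already in $\Q$; consequently $X=\Hur(\cR;\hQ_+)_a$. By the argument in the proof of Theorem \ref{thm:upsilonhomeo}, $|\Arr(\Q)(a)|$ is compact, so its continuous image $\Hur(\cR;\hQ_+)_a=X$ under $\upsilon_a$ is compact; as $X$ is Hausdorff by Proposition \ref{prop:Hurtopology}, it is compact Hausdorff, and $\del X$, being closed in $X$, is compact as well.

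Next I would define $\kappa\colon\del X\times[0,1]\to X$ by $\kappa(\fc',t)=(\sigma_t)_*(\fc')$ for $t>0$ (using Theorem \ref{thm:funnicecouple}, noting $(\sigma_t)_*$ preserves the total monodromy $a$ and the augmented condition, hence lands in $X$) and $\kappa(\fc',0)=v$. Continuity on $\del X\times(0,1]$ follows from Proposition \ref{prop:functorialityhomotopy} applied to the continuous family $\cH(z,t)=\sigma_t(z)$ over $\cS=(0,1]$, restricted to the $\Q_+$-subspaces. Continuity at $t=0$ I would verify by hand: given a basic neighbourhood $\fU(v;U)$ with $U$ a disc about $z_0$, boundedness of $\cR$ yields a uniform $\delta>0$ with $\sigma_t(\cR)\subset U$ for all $t<\delta$, so every point of $\kappa(\fc',t)$ lies in $U$; since the total monodromy remains $a=\hat\totmon(v)$ and $\hQ$ is complete (so that the relevant extension $\psiext$ is always defined), this gives $\kappa(\fc',t)\in\fU(v;U)$ for all $\fc'\in\del X$ and $t<\delta$. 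As $\kappa$ collapses $\del X\times\set{0}$ to $v$, it descends to a continuous map $\bar\kappa\colon C(\del X)\to X$.

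Finally I would show $\bar\kappa$ is bijective and conclude by compactness. For surjectivity, given $\fc\neq v$ with support $P$, set $r^*(\fc)=\max\set{r\ge1:\sigma_r(P)\subseteq\cR}$, which is finite since $P\neq\set{z_0}$; pushing $\fc$ forward along the homeomorphism $\sigma_{r^*}$ of $\C$ (which carries $P$ into $\cR$ with a point on $\del\cR$) yields $\fc'\in\del X$, and since $\sigma_{1/r^*}\circ\sigma_{r^*}=\Id$ one gets $\kappa(\fc',1/r^*)=\fc$. For injectivity modulo the cone relation, one checks $r^*(\fc')=1$ for $\fc'\in\del X$ and $r^*(\kappa(\fc',t))=1/t$, so both $t=1/r^*(\fc)$ and then $\fc'$ are recovered from $\kappa(\fc',t)$. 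Thus $\bar\kappa$ is a continuous bijection from the compact space $C(\del X)$ to the Hausdorff space $X$, hence a homeomorphism. The main obstacle is precisely the interplay between the compactness input and the behaviour of $\kappa$ at the vertex: securing compactness of $X$ through the cell structure of Theorem \ref{thm:upsilonhomeo} is what lets one avoid proving continuity of the radial coordinate $\fc\mapsto r^*(\fc)$ directly, leaving continuity of $\kappa$ across $t=0$ as the only genuinely delicate verification.
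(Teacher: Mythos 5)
Your overall strategy is the same as the paper's: contract $\cR$ radially onto $z_0$, use functoriality to turn this into a map from the cone on $\del\Hur(\cR;\Q_+)_a$ to $\Hur(\cR;\Q_+)_a$, and conclude via ``continuous bijection from a compact space to a Hausdorff space''. The one genuine defect is that your maps $\sigma_t(z)=z_0+t(z-z_0)$ do \emph{not} fix the basepoint $*=-\sqrt{-1}$ (for $t<1$ one has $\sigma_t(*)=(1-t)z_0+t\,*\neq *$ since $z_0\in\mcR$), so as written they are not morphisms of nice couples in the sense of Definition \ref{defn:mapnicecouples}, which requires a pointed map $(\C,*)\to(\C,*)$; without this, the induced homomorphism $\xi^*\colon\fG(P')\to\fG(P)$ on fundamental groups based at $*$ is not defined and neither Theorem \ref{thm:funnicecouple} nor Proposition \ref{prop:functorialityhomotopy} applies. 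This is exactly why the paper only prescribes the formula $sz_0+(1-s)z$ \emph{on} $\cR$ and postulates an extension to all of $\C$ that is a (lax) morphism of nice couples; you need the same repair (interpolate outside $\cR$, in the spirit of Definition \ref{defn:taut}). The fix is routine, but the claim that $\sigma_t$ ``fixes $*$'' is false as stated.

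Beyond that, the two arguments diverge only in how they treat the vertex. The paper passes to the completion $\hQ$ and uses a single homotopy through \emph{lax} morphisms (Proposition \ref{prop:functorialityhomotopyLNC}), so that the collapse at time $1$ is itself a lax morphism and continuity at the vertex comes for free; you instead keep honest homeomorphisms for $t>0$ (so only Proposition \ref{prop:functorialityhomotopy} and no completeness is needed there) and verify continuity at $t=0$ by hand via small normal neighbourhoods $\fU(v;U)$ --- that verification is correct, since membership of a configuration in $\fU(v;U)$ amounts to having nonempty support in $U$ and total monodromy $a$, both of which you control uniformly in $\fc'$. You also make the bijectivity of the quotient map explicit through the radial coordinate $r^*$, which the paper leaves implicit; your identification of $X$ with $\Hur(\cR;\hQ_+)_a$ (using that $\hQ\setminus\Q$ is an ideal) and the resulting compactness via $\upsilon_a$ match the paper's use of local finiteness. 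With the basepoint issue repaired, the proof is complete.
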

\begin{proof}
Let $\hQ$ be the completion of $\Q$, and note that $\Hur(\cR;\Q_+)_a$ is homeomorphic to
$\Hur(\cR;\Q_+)_a$. Without loss of generality, we may assume that $\Q$ is already complete.
Note that the space $\del\Hur(\cR;\Q_+)_a$ is a closed subspace of 
$\Hur(\cR;\Q_+)_a$, containing all configurations
$\fc\in\Hur(\cR;\Q_+)_a$ whose support intersects $\del\cR$.

Fix a map $\cH^{z_0}\colon\C\times[0,1]\to\C$ satisfying the following properties:
\begin{itemize}
 \item $\cH^{z_0}(z,s)=sz_0+(1-s)z$ for all $z\in\cR$ and $0\le s\le 1$;
 \item $\cH^{z_0}(-,s)$ is a lax morphism of nice couples $(\cR,\emptyset)\to(\cR,\emptyset)$
 for all $0\le s\le 1$.
\end{itemize}
By Proposition \ref{prop:functorialityhomotopyLNC} we obtain a continuous map
$\cH^{z_0}_*\colon \Hur(\cR;\Q_+)_a\times[0,1]\to \Hur(\cR;\Q_+)_a$, that we can restrict
to a map
\[
 \del\cH^{z_0}_*\colon \del\Hur(\cR;\Q_+)_a\times[0,1]\to \Hur(\cR;\Q_+)_a.
\]
The map $\del\cH^{z_0}_*$ sends the subspace $\del\Hur(\cR;\Q_+)_a\times\set{1}$ constantly
to the configuration $\fc_{z_0,a}$. The quotient map
\[
\del\cH^{z_0}_*\colon \del\Hur(\cR;\Q_+)_a\times[0,1]\ /\ \del\Hur(\cR;\Q_+)_a\times\set{1}\to \Hur(\cR;\Q_+)_a
\]
is a continuous bijection between compact Hausdorff spaces, hence it is a homeomorphism.
\end{proof}

In particular for all $z_0\in\mcR$ we have an isomorphism of homology groups,
where $R$-coefficients are understood:
\[
\tilde{H}_*\!\pa{\Hur(\mcR;\Q_+)_a\,,\, \Hur(\mcR;\Q_+)_a\setminus\set{\fc_{z_0,a}}}\!\cong\!
\tilde{H}_*\!\pa{|\Arr(\Q)(a)|\, ,\, |\NAdm(\Q)(a)|}.
\]
The argument used in the proof of Theorem \ref{thm:Poincare}, together with
the K\"unneth isomorphism, implies directly the following theorem.
\begin{thm}
 \label{thm:RPoincare}
 Let $R$ be a commutative ring and let $\Q$ be a locally finite PMQ.
 Suppose that for all $a\in\Q$ the relative homology groups
 \[
  \tilde H_*\pa{|\Arr(\Q)(a)|,|\NAdm(\Q)(a)|;R}
 \]
 are supported in a single degree, with corresponding homology group equal to $R$. 
 Then $\Q$ is $R$-Poincar\'e.
\end{thm}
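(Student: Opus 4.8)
The plan is to verify the $R$-homology manifold condition of Definition \ref{defn:RPoincare} directly, by computing for every $b\in\hQ$ and every $\fc\in\Hur(\mcR;\Q_+)_b$ the local homology of $\Hur(\mcR;\Q_+)_b$ at $\fc$. The argument is the homological analogue of the proof of Theorem \ref{thm:Poincare}: one replaces the manifold property of each single-point factor with a local homology computation, and recombines the factors by the Künneth theorem.

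First I would record the base case of a configuration supported on a single point. For $a\in\Q$ and $z_0\in\mcR$, the displayed isomorphism preceding the theorem (a consequence of Lemma \ref{lem:cone}) reads
\[
\tilde{H}_*\pa{\Hur(\mcR;\Q_+)_a\,,\,\Hur(\mcR;\Q_+)_a\setminus\set{\fc_{z_0,a}}\,;\,R}\cong\tilde{H}_*\pa{|\Arr(\Q)(a)|\,,\,|\NAdm(\Q)(a)|\,;\,R}.
\]
By hypothesis the right-hand side is isomorphic to $R$ in a single degree and vanishes otherwise; hence the local homology of $\Hur(\mcR;\Q_+)_a$ at the single-point configuration $\fc_{z_0,a}$ is $R$, concentrated in one degree, for every $a\in\Q$ and every $z_0\in\mcR$.

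Next, for a general $\fc\in\Hur(\mcR;\Q_+)_b$ with $b\in\hQ$, I would choose an adapted covering $\uU$ of $P$ and invoke the product decomposition established in the proof of Theorem \ref{thm:Poincare},
\[
\fU(\fc,\uU)\cap\Hur(\mcR;\Q_+)_b\cong\prod_{i=1}^k\pa{\fU(\fc'_i,\uU)\cap\Hur(\mcR;\Q_+)_{b_i}},
\]
under which $\fc$ corresponds to the tuple $(\fc'_1,\dots,\fc'_k)$ of its marked factors. Here each $\fc'_i$ is supported on the single point $z_i$, so its total monodromy $b_i=\totmon(\fc'_i)$ lies in $\Q$ rather than merely in $\hQ$, and $\fc'_i=\fc_{z_i,b_i}$. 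Since local homology depends only on an arbitrarily small open neighbourhood, excision identifies the local homology of $\Hur(\mcR;\Q_+)_b$ at $\fc$ with that of the product at $(\fc'_1,\dots,\fc'_k)$, and the local homology of each factor $\fU(\fc'_i,\uU)\cap\Hur(\mcR;\Q_+)_{b_i}$ at $\fc'_i$ with that of $\Hur(\mcR;\Q_+)_{b_i}$ at $\fc'_i$, which by the base case is $R$ concentrated in a single degree $d_i$.

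The remaining point, and the only step requiring genuine care, is the local homology of a product. For two factors $X,Y$ with marked points $x,y$, the sets $(X\setminus\set{x})\times Y$ and $X\times(Y\setminus\set{y})$ are open (the Hurwitz spaces are $T_1$) and cover $(X\times Y)\setminus\set{(x,y)}$, so they form an excisive couple and the relative Künneth theorem applies to the pairs $(X,X\setminus\set{x})$ and $(Y,Y\setminus\set{y})$. Since each factor's local homology is the free $R$-module $R$ in a single degree, every $\mathrm{Tor}$ term vanishes and the Künneth formula collapses to a single tensor summand; iterating over the $k$ factors yields local homology $R$ concentrated in degree $\sum_{i=1}^k d_i$. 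Thus $\Hur(\mcR;\Q_+)_b$ has the local homology of an $R$-homology manifold at every configuration and for every $b\in\hQ$, so $\Q$ is $R$-Poincare. I expect the main obstacle to be exactly the bookkeeping of this Künneth step — verifying excisiveness of the product cover and the vanishing of all $\mathrm{Tor}$ contributions — which is routine once the factors are known to be free $R$-modules concentrated in a single degree.
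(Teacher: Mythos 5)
Your proof is correct and takes essentially the same route as the paper, which establishes Theorem \ref{thm:RPoincare} by combining the cone isomorphism displayed just before the statement (your single-point base case) with the product-neighbourhood decomposition from the proof of Theorem \ref{thm:Poincare} and the K\"unneth isomorphism. The excision and $\mathrm{Tor}$-vanishing bookkeeping you flag is exactly the content the paper leaves implicit in the phrase ``together with the K\"unneth isomorphism, implies directly,'' and your treatment of it is sound.
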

Theorem \ref{thm:RPoincare} is the non-trivial arrow of an ``if and only if'' statement:
if $\Q$ is $R$-Poincar\'e, then in particular for all $a\in\Q$ the space
$\Hur^{\Delta}(\Q)(a)\cong\Hur(\mcR;\Q_+)_a$ is a $R$-homology manifold;
since this space is contractible (see Proposition 
\ref{prop:totmoncontractibility}), by Poincar\'e-Lefschetz duality
the relative homology groups
\[
\tilde H_*\pa{|\Arr(\Q)(a)|,|\NAdm(\Q)(a)|;R}
\]
are supported in one degree, namely the $R$-homology dimension of $\Hur^{\Delta}(\Q)(a)$,
with corresponding group isomorphic to $H^0(\Hur^{\Delta}(\Q)(a);R)\cong R$.

The proofs of \cite[Proposition 6.23]{Bianchi:Hur1} and \cite[Proposition 6.24]{Bianchi:Hur1}
generalise to give the following Proposition.
\begin{prop}
 Let $R$ be a commutative ring and let $\Q$ be a $R$-Poincar\'e PMQ. Then $\Q$ is
 coconnected and admits an intrinsic norm $h\colon\Q\to \N$.
 
 If we denote by $h\colon\hQ\to\N$ also the extension of the intrinsic norm to the completion $\hQ$
 of $\Q$, then for all $a\in\hQ$ the space $\Hur(\mcR;\Q_+)_a$ is a $R$-homology manifold
 of dimension $2h(a)$.
\end{prop}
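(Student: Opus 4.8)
The plan is to follow the proofs of \cite[Propositions 6.21 and 6.22]{Bianchi:Hur1}, replacing everywhere the notion of topological manifold by that of $R$-homology manifold, and the topological local dimension by the $R$-homology dimension, i.e. the unique degree in which the local homology of Definition \ref{defn:RPoincare} is non-zero. Throughout I would use Theorem \ref{thm:upsilonhomeo} to identify $\Hur^{\Delta}(\Q)(a)$ with $\Hur(\mcR;\Q_+)_a$, so that the $R$-Poincare hypothesis is exactly the statement that each $\Hur(\mcR;\Q_+)_a$ is an $R$-homology manifold.

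First I would treat the case $a\in\Q$. By Corollary \ref{cor:totmoncontractibilityPMQ} the space $\Hur(\mcR;\Q_+)_a$ is contractible, and by hypothesis it is an $R$-homology manifold; such a space has a well-defined $R$-homology dimension $d(a)$, since by Definition \ref{defn:RPoincare} the local homology is $R$ in one and the same degree at every point. I would then set $h(a):=d(a)/2$ and verify that this is a natural number by identifying $d(a)$ with twice the maximal length of a decomposition of $a$ into elements of $\Q_+$. Concretely, the cell structure of $\Hur(\mcR;\Q_+)_a\cong\Hur^{\Delta}(\Q)(a)$ coming from $\upsilon$ (Lemma \ref{lem:upsilonrestriction}) has an open cell $e^{\ua}(\mathring{\Delta}^p\times\mathring{\Delta}^q)$ of dimension $p+q$ for each non-degenerate admissible $\ua\in\Arr_{p,q}(\Q)(a)$; the cells of top dimension are precisely those coming from configurations in \emph{general position}, that is, with pairwise distinct non-boundary real and imaginary parts, for which $p=q=n$ equals the number of points, so $p+q=2n$ and $a=a_1\cdots a_n$ is a decomposition into elements of $\Q_+$. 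Since the local homology at an interior point of a top cell detects the maximal cell dimension, $d(a)$ equals $2h(a)$ with $h(a)$ the maximal such $n$; this simultaneously shows that $d(a)$ is even.

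Next I would verify the axioms of an intrinsic norm and coconnectedness exactly as in \cite[Proposition 6.21]{Bianchi:Hur1}: the normalisations $h(\one)=0$ and $h(a)\ge 1$ for $a\neq\one$ are immediate, while additivity $h(ab)=h(a)+h(b)$, whenever $ab$ is defined, follows from the local product decomposition of Theorem \ref{thm:productneighbourhood} together with the K\"unneth isomorphism: a normal neighbourhood of a two-point configuration with monodromies $a$ and $b$, intersected with the appropriate total monodromy locus, is homeomorphic to $\fU(\fc'_1,U_1)\times\fU(\fc'_2,U_2)$, so its $R$-homology dimension is $d(a)+d(b)$, whence $d(ab)=d(a)+d(b)$. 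Coconnectedness follows as in loc.\ cit.\ since each $\Hur(\mcR;\Q_+)_a$ is connected, being contractible.

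Finally, to pass to an arbitrary $a\in\hQ$ I would use that every $\fc\in\Hur(\mcR;\Q_+)_a$ admits, by Theorem \ref{thm:productneighbourhood}, a normal neighbourhood homeomorphic to $\prod_i\bigl(\fU(\fc'_i,U_i)\cap\Hur(\mcR;\Q_+)_{b_i}\bigr)$ with each $\fc'_i$ supported on a single point; as observed in the proof of Theorem \ref{thm:Poincare}, single-point support forces $b_i\in\Q$, and $a=\prod_i b_i$. Extending $h$ to $\hQ$ by $h(a):=\sum_i h(b_i)$ is well-defined, because $2\sum_i h(b_i)$ equals the $R$-homology dimension of $\Hur(\mcR;\Q_+)_a$ at $\fc$, a quantity intrinsic to the space and hence independent of the chosen decomposition; the K\"unneth isomorphism then shows that $\Hur(\mcR;\Q_+)_a$ is an $R$-homology manifold of dimension $2h(a)$ at every point, as required. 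The main obstacle will be the identification of the abstract $R$-homology dimension $d(a)$ provided by the $R$-Poincare hypothesis with the combinatorial quantity $2h(a)$: one must argue both that the general-position cells are genuinely of maximal dimension among all cells of $\Hur^{\Delta}(\Q)(a)$, and that for this finite cell complex the $R$-homology dimension of an $R$-homology manifold coincides with the maximal cell dimension, the parity statement being a reflection of the fact that each point of $\C$ contributes two real dimensions to a configuration space.
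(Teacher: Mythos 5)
Your treatment of the intrinsic norm is essentially sound, though you take a slightly different route from the paper: you define $h(a)$ as half the $R$-homology dimension and then identify it with the maximal decomposition length via a global analysis of the top cells of $\Hur^{\Delta}(\Q)(a)$, whereas the paper defines $h(a)$ combinatorially from the outset (maximal $r$ with $a=a_1\dots a_r$, $a_i\in\Q_+$) and computes the local homology dimension at a single well-chosen configuration supported on $r$ points, whose normal neighbourhood is an open subset of $(\mcR)^r$ by Theorem \ref{thm:productneighbourhood}. The paper's version avoids having to argue that general-position cells are of maximal dimension and that this maximum is detected homologically; your version is workable but, as you note yourself, carries that extra burden. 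The additivity argument via the local product decomposition and the extension to $\hQ$ are the same in both.

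The genuine gap is coconnectedness. You dispose of it with ``coconnectedness follows as in loc.\ cit.\ since each $\Hur(\mcR;\Q_+)_a$ is connected, being contractible,'' but coconnectedness of $\Q$ is not the statement that the components $\Hur^{\Delta}(\Q)(a)$ are connected; it concerns the sub-PMQ $\Q_{\le1}\subset\Q$ of elements of norm $\le1$, namely that the map $\widehat{\Q_{\le1}}\to\hQ$ is injective over each $a$ and that $\Hur^{\Delta}(\Q_{\le1})(a)$ is connected. The paper proves this by a homological comparison that your sketch does not touch: the inclusion $\Q_{\le1}\subset\Q$ induces a map $|\Arr(\Q_{\le1})(a)|\to|\Arr(\Q)(a)|$ which is a bijection on bisimplices of dimensions $2h(a)$ and $2h(a)-1$, hence an isomorphism
\[
H_{2h(a)}\pa{|\Arr(\Q_{\le1})(a)|,|\NAdm(\Q_{\le1})(a)|;R}\cong
H_{2h(a)}\pa{|\Arr(\Q)(a)|,|\NAdm(\Q)(a)|;R}.
\]
The right-hand side has rank $1$ by Poincar\'e--Lefschetz duality (this is where the $R$-Poincare hypothesis and the contractibility of $\Hur^{\Delta}(\Q)(a)$ actually enter), while the left-hand side has rank equal to the number of connected components of $\coprod_{a'}\Hur^{\Delta}(\Q_{\le1})(a')$ over the preimages $a'$ of $a$; equating the two ranks gives both uniqueness of $a'$ and connectedness. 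Without an argument of this kind your proof establishes the intrinsic norm and the dimension statement but not the coconnectedness claim in the proposition.
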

\begin{proof}
Recall that for a locally finite PMQ $\Q$ the candidate for the intrinsic norm $h\colon\Q\to\N$
is the function of sets associating with $a\in\Q$ the maximum $r\ge0$ for which there exist
a decomposition $a=a_1\dots a_r$ with $a_i\in\Q_+$.

If $a\in\Q_+$ is irreducible, then $\Hur(\mcR;\Q_+)_a$ is homeomorphic to $\mcR$, which is a $R$-homology
manifold of dimension $2=2h(a)$; more generally, if $a=a_1\dots a_r$ is a decomposition witnessing
the equality $h(a)=r$, then we can fix a configuration $\fc=(P,\psi)\in\Hur(\mcR;\Q_+)_a$
supported on a subset $P\subset\mcR$ of precisely $r$ points. By Proposition \ref{prop:productneighbourhood}
a normal neighbourhood of $\fc$ is homeomorphic to an open subset of $(\mcR)^r$; it follows
that the $R$-homology dimension of $\Hur(\mcR;\Q_+)_a$, computed around $\fc$, is equal to $2h(a)$.

The same argument, applied to any decomposition $a=bc$ in $\Q$, shows that the $R$-homology
dimension of $\Hur(\mcR;\Q_+)_a$ is equal to the sum of the $R$-homology dimensions
of $\Hur(\mcR;\Q_+)_b$ and $\Hur(\mcR;\Q_+)_c$: in fact we can find an open set of $\Hur(\mcR;\Q_+)_a$
homeomorphic to the product of two open sets of $\Hur(\mcR;\Q_+)_b$ and $\Hur(\mcR;\Q_+)_c$
respectively. It follows that $h(a)=h(b)+h(c)$, i.e. $h$ is an intrinsic norm. The $R$-homology
dimension of $\Hur(\mcR;\Q_+)_a$ can be computed to be $h(a)$ for a generic $a\in\hQ$ by
the same argument, after fixing a decomposition of $a$ as product of elements of $\Q$.

This shows that $\Q$ admits an intrinsic norm, and in particular it is maximally decomposable.
To prove that $\Q$ is coconnected, let $\Q_{\le1}\subset\Q$ be the sub-PMQ containing elements of norm $\le 1$; the inclusion of augmented PMQs
$\Q_{\le1}\subset\Q$ induces, for all $a\in\hQ$ a surjective, bisimplicial map 
$|\Arr(\Q_{\le1})(a)|\cong|\Arr(\Q)(a)|$, which is a bijection when restricted to bisimplices of dimension $2h(a)$
and $2h(a)-1$ (see the proof of \cite[Proposition 6.23]{Bianchi:Hur1}). Here we 
write $|\Arr(\Q_{\le1})(a)|$ for the disjoint union $\coprod_{a'}|\Arr(\Q_{\le1})(a')|$,
where $a'$ ranges among all elements of $\widehat{\Q_{\le1}}$ which are sent to $a\in\hQ$ along the (surjective, but a priori not bijective) map $\widehat{\Q_{\le1}}\to\hQ$.

It follows that the induced map
\[
 H_{2h(a)}\pa{|\Arr(\Q_{\le1})(a)|,|\NAdm(\Q_{\le1})(a)|}\to
 H_{2h(a)}\pa{|\Arr(\Q)(a)|,|\NAdm(\Q)(a)|},
\]
with $R$-coefficients for homology understood, is an isomorphism of $R$-modules. The rank of the second $R$-module is 1, because
$H_{2h(a)}\pa{|\Arr(\Q)(a)|,|\NAdm(\Q)(a)|;R}\cong H_0(\Hur^{\Delta}(\Q)(a))$,
and the space $\Hur^{\Delta}(\Q)(a)$ is contractible.
Similarly the rank of the first $R$-module is the number of connected components
of $\Hur^{\Delta}(\Q_{\le1})(a)$. It follows that there is exactly one element
$a'\in\widehat{\Q_{\le1}}$ which is mapped to $a$ along the map $\widehat{\Q_{\le1}}\to\hQ$, and that 
$\Hur^{\Delta}(\Q_{\le1})(a)$ is connected. This shows that $\Q$ is coconnected. 
\end{proof}

\appendix
\section{Deferred proofs}
\label{sec:deferred}
\subsection{Proof of Proposition \ref{prop:fQgeneratesfQext}}
\label{subsec:fQgeneratesfQext}
 Let $\fg\in \fQext(P)$, and assume first that $\fg=[\gamma]$ is
 represented by a \emph{simple} loop $\gamma$ in $\CmP$. The loop
 $\gamma$ is freely isotopic to a simple closed curve in $\C\setminus\cY$.
 In particular $\gamma$ bounds a disc $D$ in $\C$ which intersects $P$ only in points of $P\setminus \cY$;
 without loss of generality, assume that $D\cap P$ consists of the points $z_1,\dots,z_r$ for some $1\leq r\leq l$.
 
 We can then find an admissible generating set $\gen_1,\dots,\gen_k$ of $\fG(P)$ such that $\fg=\gen_1\cdot\dots\cdot\gen_r\in\fQext(P)$
 (see Definition \ref{defn:admgenset}): for this it suffices to choose the arcs $\arc_1,\dots,\arc_r$
 inside $D$ in a convenient way.
 This gives a decomposition $(\gen_1,\dots,\gen_r)$ of $\fg$ with respect to $\fQ(P)$ as required.
   
 If $\fg\in\fQext(P)$ is not represented by a simple loop,
 we can still find a conjugate $\fg'$ of $\fg$ in $\fQext(P)\subset\fG(P)$, with $\fg'=[\gamma']$ represented by a simple loop
 $\gamma'$. By the previous argument we can decompose $\fg'=\fg'_1\dots\fg'_r$,
 with all $\fg'_i\in\fQ(P)$; we can then conjugate the previous decomposition in $\fG(P)$ to obtain a decomposition
 $\fg=\fg_1\dots\fg_r$, with all $\fg_i$ still lying in $\fQ(P)$.

\subsection{Proof of Proposition \ref{prop:fQextwelldefined}}
\label{subsec:fQextwelldefined}
 Let $\fg\in\fQext(P)$ and assume first that $\fg=[\gamma]$ is
 represented by a \emph{simple} loop $\gamma$ in $\CmP$. Let $\fg=\fg_1\dots\fg_{\rho}$ be a decomposition
 of $\fg$ in elements $\fg_i\in\fQext(P)$. 
 Each $\fg_i$ can be further decomposed, by Proposition \ref{prop:fQgeneratesfQext},
 as $\fg_{i,1}\dots\fg_{i,r_i}$, with $\fg_{i,j}\in\fQ(P)$; therefore we obtain a 
 decomposition
 \[
 \fg=\fg_{1,1}\dots\fg_{1,r_1}\fg_{2,1}\dots\fg_{2,r_2}\dots\dots\fg_{\rho,1}\dots\fg_{\rho,r_{\rho}}
 \]
 of $\fg$ with respect to $\fQ(P)$.
 Our aim to show that, for all $1\leq i<j\leq\rho$,
 the product $\fg_i\dots\fg_j$ belongs to $\fQext(P)$. It suffices to prove
 the same statement for the second, finer decomposition involving the elements $\fg_{i,j}$.
 Hence, from now on, we assume that the elements $\fg_1,\dots,\fg_{\rho}$ already belong
 to $\fQ(P)$.
 According to \cite[Definition 3.5]{Bianchi:Hur1}, $(\fg_1,\dots,\fg_{\rho})$ is then a decomposition of $\fg$ with respect to $\fQ(P)$.
 
 By the same argument used in the proof of Proposition \ref{prop:fQgeneratesfQext}, we can find an admissible generating set
 $\gen_1,\dots,\gen_k$ of $\fG(P)$ such that, for some $1\leq r\leq l$, we have $\fg=\gen_1\dots\gen_r$,
 and such that $\gen_1,\dots,\gen_r$ are contained in the subgroup $\pi_1\pa{D\setminus P,*}\cong\bF^r$
 of $\fG(P)$, where $D$ is the disc bounded by $\gamma$.
 
 We note that $(\gen_1,\dots,\gen_r)$ is also a decomposition of $\fg$ with respect to $\fQ(P)$, and
 a simple argument involving the projection onto the abelianisation of $\fG(P)$ shows that $\rho=r$
 (see the remark after \cite[Definition 3.5]{Bianchi:Hur1}).
 The decompositions $(\gen_1,\dots,\gen_r)$ and $(\fg_1,\dots,\fg_r)$ are connected by a sequence of standard
 moves (see \cite[Definition 3.6, Proposition 3.7]{Bianchi:Hur1}).
 
 A consequence of the previous argument is that $\fg_1,\dots,\fg_r\in\fG(P)$ can be generated using the elements $\gen_1,\dots,\gen_r$, and therefore $\fg_1,\dots,\fg_r$ also lie in the subgroup $\pi_1\pa{D\setminus P,*}\subseteq\fG(P)$.
 
 In analogy with Definition \ref{defn:admgenset}, we say that $\gen_1,\dots,\gen_r$ is an admissible generating
 set of $\pi_1\pa{D\setminus P,*}$, meaning that each $\gen_i$ is represented by a simple loop that spins around one of the $r$ points of $D\cap P$, and these loops only intersect at $*$.
 
 It is now a classical fact that standard
 moves on admissible generating sets of $\pi_1\pa{D\setminus P,*}$ can be implemented by homeomorphisms
 of $D$. More precisely, if $\pa{\tilde{\gen}_1,\dots,\tilde{\gen}_r}$ is an admissible
 generating set of $\pi_1\pa{D\setminus P,*}$ and the sequence $\pa{\tilde{\fg}_1,\dots,\tilde{\fg}_r}$
 of elements of $\pi_1\pa{D\setminus P,*}$ is obtained from the sequence $\pa{\tilde{\gen}_1,\dots,\tilde{\gen}_r}$
 by a standard move, then there is a homeomorphism $\xi\colon D\to D$ such that
 \begin{itemize}
  \item $\xi$ fixes $\gamma=\partial D$ pointwise: in particular $\xi(*)=*$;
  \item $\xi$ fixes $D\cap P$ as a set: in particular, $\xi$ restricts to a homeomorphism of $D\setminus P$;
  \item the map $\xi_*\colon\pi_1\pa{D\setminus P,*}\to \pi_1\pa{D\setminus P,*}$ sends $\tilde{\gen}_i\mapsto \tilde{\fg}_i$
  for all $1\leq i\leq r$.
 \end{itemize}
 By applying this argument several times, we obtain that $\fg_1,\dots,\fg_r$ is also an admissible
 generating set of $\pi_1\pa{D\setminus P,*}$, and the fact that the product $\fg=\fg_1\dots\fg_r$
 is represented by a simple loop implies that the elements $\fg_1,\dots,\fg_r$ are ordered in a
 standard way, so that for all $1\leq i<j\leq r$ also the product $\fg_i\dots\fg_j$ is represented
 by a simple loop in $D\setminus P\subset\CmP$; thus $\fg_i\dots\fg_j\in\fQext(P)$.

 The case in which $\fg$ is not represented by a simple loop $\gamma$ is treated in the same way
 as in the proof of Proposition \ref{prop:fQgeneratesfQext}: we can find a conjugate $\fg'\in\fG(P)$ represented
 by a simple loop $\gamma'$, in particular $\fg'\in\fQext(P)$; we conjugate the factorisation $\fg=\fg_1\dots\fg_{\rho}$ 
 to obtain a factorisation $\fg'=\fg'_1\dots\fg'_{\rho}$; by the previous argument each product
 $\fg'_i\dots\fg'_j$ lies in $\fQext(P)$, and therefore also its conjugate $\fg_i\dots\fg_j$ lies in $\fQext(P)$.
 
\subsection{Proof of Proposition \ref{prop:fQextPpsi}}
\label{subsec:fQextPpsi}
Let $\fg=\fg_1\dots\fg_{\rho}$ be a decomposition of $\fg\in\fQext(P)_{\psi}$
with $\fg_i\in\fQext(P)_{\psi}$ for all $1\leq i\leq \rho$.
As in the proof of Proposition \ref{prop:fQextwelldefined},
we replace each $\fg_i$ by a decomposition
$\fg_{i,1}\dots\fg_{i,r_i}$, with $\fg_{i,j}\in\fQ(P)$; thus we obtain a decomposition of $\fg$
with respect to $\fQ(P)$
\[
 \fg=\fg_{1,1}\dots\fg_{1,r_1}\fg_{2,1}\dots\fg_{2,r_2}\dots\dots\fg_{\rho,1}\dots\fg_{\rho,r_{\rho}}.
\]
Since $\fg\in\fQext(P)_{\psi}$, the following product is defined in $\Q$:
\[
\psiext(\fg)=\psi(\fg_{1,1})\dots\psi(\fg_{1,r_1})\psi(\fg_{2,1})\dots\psi(\fg_{2,r_2})\dots\dots\psi(\fg_{\rho,1})\dots\psi(\fg_{\rho,r_{\rho}});
\]
In particular for all $1\leq i<j\leq\rho$, the sub-product
$\psi_(\fg_{i,1})\dots\psi(\fg_{j,r_j})$ is defined in $\Q$. Together
with Proposition \ref{prop:fQextwelldefined}, this shows that $\fg_i\dots\fg_j$ lies in $\fQext(P)_{\psi}$,
hence $\fQext(P)_{\psi}\subseteq\fG(P)$ satisfies the hypotheses of \cite[Definition 2.8]{Bianchi:Hur1}.

The same argument shows also that $\psiext(\fg)=\psiext(\fg_1)\dots\psiext(\fg_r)$ in $\Q$, hence $\psiext$
is a map of partial monoids. It is also evident that $\psiext$ restricts to $\psi$ on $\fQ(P)$.
To see that $\psiext$ also preserves conjugation, let
$\fg,\fg'\in\fQext(P)_{\psi}$ and choose decompositions $(\fg_1,\dots,\fg_r)$ and
$(\fg'_1,\dots,\fg'_{r'})$ of $\fg$ and $\fg'$ respectively with respect to $\fQ(P)$.
We have a chain of equalities
{\small
\[
\begin{split}
 \psiext\pa{\fg^{\fg'}}\!& =\psiext\pa{\fg_1^{\fg'}\dots\fg_r^{\fg'}}
		     \ =\ \psiext\pa{\fg_1^{\fg'}}\dots\psiext\pa{\fg_r^{\fg'}}
		     \ =\ \psi\pa{\fg_1^{\fg'}}\dots\psi\pa{\fg_r^{\fg'}}\\
		     & =\psi\pa{\pa{\dots\pa{\fg_1^{\fg'_1}}^{\fg'_2}\dots}^{\fg'_{r'}}}\dots\psi\pa{\pa{\dots\pa{\fg_r^{\fg'_1}}^{\fg'_2}\dots}^{\fg'_{r'}}}\\
		     & =\!\pa{\!\dots\!\pa{\psi\pa{\fg_1}^{\psi\pa{\fg'_1}}}^{\psi\pa{\fg'_2}}\!\!\!\!\dots}^{\psi\pa{\fg'_{r'}}}
		     \!\!\!\!\!\!\dots\pa{\!\dots\pa{\psi\pa{\fg_r}^{\psi\pa{\fg'_1}}}^{\psi\pa{\fg'_2}}\!\!\!\!\dots}^{\psi\pa{\fg'_{r'}}}\\
		     &=\!\!\pa{\!\!\dots\!\pa{\!\psiext\!\pa{\fg_1}^{\psiext\!\pa{\fg'_1}}\!}^{\psiext\!\pa{\fg'_2}}\!\!\!\!\!\dots\!}^{\!\!\psiext\!\pa{\fg'_{r'}}}
		     \!\!\!\!\!\!\!\dots\!\pa{\!\!\dots\!\pa{\psiext\!\pa{\fg_r}^{\psiext\!\pa{\fg'_1}}\!}^{\psiext\!\pa{\fg'_2}}\!\!\!\!\!\dots\!}^{\!\!\psiext\!\pa{\fg'_{r'}}}\\
		     & =\psiext\pa{\fg_1}^{\psiext\pa{\fg'}}\dots\psiext\pa{\fg_r}^{\psiext\pa{\fg'}}
		     \ =\ \psiext\pa{\fg}^{\psiext\pa{\fg'}}. 
\end{split}
\]
}
Thus $\psiext\colon\fQext(P)_{\psi}\to\Q$ is a map of PMQs, restricting to the map $\psi\colon\fQ(P)\to\Q$.
The fact that $\psiext$ is the unique map of PMQs with these properties is a direct consequence of Proposition \ref{prop:fQgeneratesfQext}.

\subsection{Proof of Lemma \ref{lem:xirestrictsfQ}}
\label{subsec:xirestrictsfQ}
 Let $z'\in P'\setminus\cY'$ and let $\gamma'$ be a based loop in $\CmP'$
 which is freely homotopic to a simple closed curve $\beta'\subset\CmP'$ spinning clockwise
 around $z'$: in particular $\beta'$ bounds a closed disc $D'\subset\CmP'$, with $D'\cap P'=\set{z'}$.
 
 We have that $D=\xi^{-1}(D')$ is also a disc in $\CmP$, and by
 property (5) in Definition \ref{defn:mapnicecouples} and by definition of $P':=\xi(P)$,
 there is a unique $z\in P$ with $\xi(z)=z'$. We consider $\beta=\partial D$
 as a simple closed curve in $\CmP$ spinning clockwise around $z$: then $\xi$
 restricts to a homotopy equivalence $\beta\to\beta'$, since:
 \begin{itemize}
  \item both spaces are homotopy equivalent to $S^1$, hence it suffices to prove that
  $\xi$ induces a cohomology equivalence;
  \item the inclusions $\beta\subset D\setminus z$ and $\beta'\subset D'\setminus z'$
  are homotopy equivalences, in particular cohomology equivalences;
  \item the map $\xi\colon D\setminus z\to D'\setminus z' $ is a cohomology equivalence:  
  this can be seen by comparing the cohomology long exact sequences of the couples $(D,D\setminus z)$
  and $(D',D'\setminus z')$, using in particular that the map $\xi^*\colon H^2(D',D'\setminus z')\to H^2(D,D\setminus z)$ can be rewritten as $\xi^*\colon H^2_c(\C)\to H^2_c(\C)$, and is thus an isomorphism.
 \end{itemize}
 Moreover property (2) in Definition \ref{defn:mapnicecouples} implies that
 $\xi \colon\beta\to\beta'$ is orientation-preserving, if both curves are oriented clockwise.
 
 This implies that the conjugacy class represented by $\beta'$ is mapped along $\xi^*$ inside
 the conjugacy class represented by $\beta$, which is contained in $\fQ_{\fT}(P)$.

\subsection{Proof of Lemma \ref{lem:xirestrictsfQLNC}}
\label{subsec:xirestrictsfQLNC}
Let $\gamma'\subset\CmP'$ be a based loop homotopic to a simple closed curve $\beta'$, with $\beta'$ contained
in $\C\setminus\cY'$ and $\beta'$ oriented clockwise, such that $\beta'$ bounds a disc $D'\subset\C\setminus\cY'$.

Let $D=\xi^{-1}(D')$, which is a topological disc contained in $\C\setminus\cY$, and let
$\beta=\partial D$. Let $K'\subset\mathring{D}'$ be a smaller, closed disc containing
$P'\cap D'$, and denote $K=\xi^{-1}(K')$.

Then $\xi\colon \beta\to\beta'$ is a homotopy equivalence, since:
 \begin{itemize}
  \item both spaces are homotopy equivalent to $S^1$, hence it suffices to prove that
  $\xi$ induces a cohomology equivalence;
  \item the inclusions $\beta\subset D\setminus K$ and 
  $\beta'\subset D'\setminus K'$ are homotopy equivalences, in particular cohomology
  equivalences;
  \item the map $\xi\colon D\setminus K\to D'\setminus K'$ is a cohomology equivalence:
  this can be seen by comparing the cohomology long exact sequences of the couples $(D,D\setminus K)$
  and $(D',D'\setminus K')$, using in particular that the map $\xi^*\colon H^2(D',D'\setminus K')\to H^2(D,D\setminus K)$ can be rewritten as $\xi^*\colon H^2_c(\C)\to H^2_c(\C)$, and is thus
  an isomorphism.
 \end{itemize}
 Moreover property (2) in Definition \ref{defn:mapnicecouples} implies that
 $\xi \colon\beta\to\beta'$ is orientation-preserving, if both curves are oriented clockwise.
 
 It follows that $\xi^*$ maps the conjugacy class of $\beta'$ inside the conjugacy class of $\beta$,
 which is contained in $\fQext_{\fT}(P)$.
 
\subsection{Proof of Lemma \ref{lem:usefulhomeocYempty}}
\label{subsec:usefulhomeocYempty}

 Let $\fc=(P,\psi,\phi)\in\Hur(\fT;\Q,\cG(\Q))$, use Notation \ref{nota:P} and let $\gen_1,\dots,\gen_k$ be an admissible
 generating set for $\fG(P)$. Since we are dealing with the nice couple $(\cX,\emptyset)$, whose second space is empty,
 we have that $\gen_1,\dots,\gen_k\in\fQ(P)$. By Definition \ref{defn:Hurset} we have $\phi(\gen_i)=\eta_{\Q}(\psi(\gen_i))\in\cG(\Q)$;
 since $\gen_1,\dots,\gen_k$ exhibit $\fG(P)$ as a free group, we have that $\phi\colon\fG(P)\to \cG(\Q)$ is
 uniquely determined by $\psi$.
 On the other
 hand, by \cite[Theorem 3.3]{Bianchi:Hur1}, given any finite subset $P\subset\cX$ and a map of PMQs $\psi\colon\fQ(P)\to\Q$, one can
 use the assignment $\gen_i\mapsto \eta_{\Q}(\psi(\gen_i))$ to define a group homomorphism 
 $\phi\colon\fG(P)\to\cG(\Q)$ making $(\psi,\phi)\colon (\fQ(P),\fG(P))\to(\Q,\cG(\Q))$ into a map of PMQ-group pairs.
 
 Let $\fc'=(P',\psi',\phi')$ be the image of $\fc$ along $(\Id_{\Q},\cG(\fe))_*$; then
 we have $P'=P$ and $\psi'=\psi$; from the previous discussion it follows that
 $\fc$ can be reconstructed from $\fc'$, and this proves injectivity of $(\Id_{\Q},\cG(\fe))_*$.
 
 Viceversa, let $\fc'=(P,\psi,\phi')$ be any configuration in $\Hur(\fT;\Q,G)$; then the previous discussion shows that one can construct
 a configuration $\fc\in\Hur(\fT;\Q,\cG(\Q))$ which is sent to $\fc'$ along $(\Id_{\Q},\cG(\fe))_*$: it suffices to take $\fc=(P,\psi,\phi)$,
 with $\phi$ defined as above by setting $\gen_i\mapsto \eta_{\Q}(\psi(\gen_i))$; this proves surjectivity
 of $(\Id_{\Q},\cG(\fe))_*$.
 
 To conclude, note that for all adapted coverings $\uU$ of $P$, the map $(\Id_{\Q},\cG(\fe))_*$ restricts
 to a bijection from $\fU(\fc;\uU)\subset\Hur(\fT;\Q,\cG(\Q))$ to $\fU(\fc';\uU)\subset\Hur(\fT;\Q,G)$,
 where again we let $\fc'$ be the image of $\fc$ along $(\Id_{\Q},\cG(\fe))_*$. This shows that
 $(\Id_{\Q},\cG(\fe))_*$ is a homeomorphism.

\subsection{Proof of Lemma \ref{lem:usefulhomeoGG}}
\label{subsec:usefulhomeoGG}
The proof is analogous to the one of Lemma \ref{lem:usefulhomeocYempty}.
 Let $\fc=(P,\psi,\phi)\in\Hur(\fT;G,G)$, use Notation \ref{nota:P} and let $\gen_1,\dots,\gen_k$ be an admissible
 generating set for $\fG(P)$.
 Since we are dealing with the PMQ-group pair $(G,G)$, the composition $\fQ_{\fT}(P)\subset\fG(P)\overset{\phi}{\to} G$ equals
 $\psi\colon\fQ_{\fT}(P)\to G$. In particular $\psi$ can be recovered from $\phi$.
 
 On the other hand, by \cite[Theorem 3.3]{Bianchi:Hur1}, given any finite subset $P\subset\cX$ and a map of groups $\phi\colon\fG(P)\to G$, one can
 use the assignment $\psi\colon\gen_i\mapsto \phi(\gen_i)$ for $1\leq i\leq l$ (using Notation \ref{nota:P})
 to define a map of PMQs $\psi\colon\fQ_{\fT}(P)\to G$ making $(\psi,\phi)\colon (\fQ_{\fT}(P),\fG(P))\to(G,G)$
 into a map of PMQ-group pairs.

 Let $\fc'=(P',\psi',\phi')$ be the image of $\fc$ along $(\Id_{\C})_*$; then
 we have $P'=P$ and $\phi'=\phi$; from the previous discussion it follows that
 $\fc$ can be reconstructed from $\fc'$, and this proves injectivity of $(\Id_{\C})_*$.

 Viceversa, let $\fc'=(P,\psi,\phi')$ be any configuration in $\Hur(\cX,\cX;G,G)$; then the previous discussion shows that one can construct
 a configuration $\fc\in\Hur(\fT;G,G)$ mapping to $\fc'$ along $(\Id_{\C})_*$: it suffices to take $\fc=(P,\psi,\phi)$,
 with $\psi$ defined as above by setting $\psi\colon\gen_i\mapsto \phi(\gen_i)$; this proves surjectivity
 of $(\Id_{\C})_*$.
 
 To conclude, note that if $\uU$ is an adapted covering of $P$ with respect to $\fT$, then $\uU$ is also adapted with respect to $(\cX,\cX)$,
 and the map $(\Id_{\C})_*$ restricts
 to a bijection from $\fU(\fc;\uU)\subset\Hur(\fT;G,G)$ to $\fU(\fc';\uU)\subset\Hur(\cX,\cX;G,G)$,
 where again we let $\fc'$ be the image of $\fc$ along $(\Id_{\C})_*$. This proves that
$(\Id_{\C})_*$ is a homeomorphism.

\subsection{Proof of Proposition \ref{prop:actiongfc}}
\label{subsec:actiongfc}
 We focus on the left-based case.
 Let $\fc\in\Hur(\fT;\Q,G)_{\zleft}$, use Notations \ref{nota:fc} and \ref{nota:uUcovering} and let $\uU$ be an adapted covering of $P$.
 Denote by $\Uleft$ the component of $\uU$ containing $\zleft$; possibly
 up to shrinking $\Uleft$, we can assume that the simple closed curve
 $\del\Uleft$ is cut by $\bS_{\Re(\zleft),\Re(\zleft)}$ in two arcs. We decompose
 $\C$ as the union of two subspaces: the first subspace is $\bT_1$, which is defined as the closure in $\C$ of 
 $\bS_{-\infty,\Re(\zleft)}\cup \Uleft$; the other subspace is
 $\bT_2=\bS_{\Re(\zleft),\infty}\setminus\Uleft$. The first subspace contains $P_1\colon=\set{\zleft}$
 in its interior, the second subspace contains $P_2\colon= P\setminus\set{\zleft}$ in its interior.
 The two subspaces $\bT_1$ and $\bT_2$ intersect in a contractible space containing $*$. 
 
 Using the theorem of Seifert and van Kampen we can write
 $\fG(P)$ as the free product $\pi_1(\bT_1\setminus P_1,*)\star\pi_1(\bT_2\setminus P_2,*)$: the first factor is
 freely generated by $\genleft$, the second factor is freely generated by the other generators $\gen_i$
 in a left-based admissible generating set.
 The map $\phi'$ in Definition \ref{defn:actiongfc} can then be equivalently defined by
 setting $\phi'(\genleft)=g\cdot \phi(\genleft)$, and by imposing that $\phi'$ and $\phi$ agree on the second factor.
 
 Moreover, consider the nice couple $\fT_2:=(\cX\cap\bT_2,\cY\cap\bT_2)$: then
 $P_2$ is contained in $\cX\cap\bT_2$. The composition
 \[
 \begin{tikzcd}
  \fQ_{\fT_2}(P_2)\ar[r,"\subseteq"] & \fG(P_2) & \pi_1\pa{\bT_2\setminus P_2,*}\ar[l,"\cong"]\ar[r,"\subseteq"] &\fG(P)
 \end{tikzcd}
 \]
 has image in $\fQ_{\fT}(P)$ and identifies $\fQ_{\fT_2}(P)$ with the sub-PMQ of $\fQ_{\fT}(P)$
 containing homotopy classes which can be represented by a simple
 loop in $\bT_2\setminus P_2$ spinning clockwise around one of the points $z_1,\dots,z_l$.
 The map $\psi'\colon\fQ_{\fT}(P')\to\Q$
 from Definition \ref{defn:actiongfc} can be characterised by the following two properties:
 \begin{itemize}
  \item $\psi$ and $\psi'$ have the same restriction on $\fQ_{\fT_2}(P)$, regarded as a subset of $\fQ_{\fT}(P)$ as explained above.
  \item $(\psi',\phi')$ is a map of PMQ-group pairs $(\fQ_{\fT}(P),\fG(P))\to (\Q,G)$.
 \end{itemize}
 The fact that this is a characterisisation (i.e. existence and uniqueness of
 $\psi'$ with these properties) is shown using a choice
 of a left-based admissible generating set for $\fG(P)$ and using \cite[Theorem 3.3]{Bianchi:Hur1}; but the characterising
 properties of $\phi'$ and $\psi'$ are now stated without reference to a left-based admissible generating set.
 
 The fact that the collection of all maps $g\cdot-$ gives an action of $G$ on the set $\HurTQG_{\zleft}$ follows directly
 from the formulas in Definition \ref{defn:actiongfc}.
 To prove continuity of $g\cdot -$, note that for all adapted coverings $\uU$ of $P$
 the map $g\cdot -$ establishes a bijection between the open subspaces
 $\fU(\fc,\uU)_{\zleft}$ and $\fU(g\cdot \fc,\uU)_{\zleft}$ of $\HurTQG_{\zleft}$.
 In particular $g\cdot-$ is a homeomorphism of $\HurTQG$ with inverse $g^{-1}\cdot-$.
 
The right-based case is analogous; the main difference is that, in the first part, 
one considers the component $\Uright$ of $\uU$
covering $\zright$, and decomposes $\C$ as the union of $\bT_1=\bS_{-\infty,\Re(\zright)}\setminus\Uright$
and $\bT_2$ being the closure in $\C$ of $\bS_{\Re(\zright),\infty}\cup\Uright$.

 \subsection{Proof of Proposition \ref{prop:explosion}}
  \label{subsec:explosion}
Fix $(\fc,t)\in\Hur(\fT;\Q,G)\times[0,1]$, denote $\fc'=\expl_*(\fc,t)$ and $\fc''=\rho(\fc)$, and use
Notation \ref{nota:fc}. Without loss of generality
assume that $z_1,\dots,z_r\in P\setminus\cY$ are precisely the inert points of $\fc$,
for some $0\leq r\leq l$. Then $P''=P\setminus\set{z_1,\dots,z_r}$ and $P'=P''\cup\expl(P,t)$.

Let $\uU'$ be an adapted covering of $P'$. Our aim is to find a neighbourhood of
$(\fc,t)\in\Hur(\fT;\Q,G)\times[0,1]$ which is mapped by $\expl_*$ inside $\fU(\fc',\uU')$.
Let $\uU'_{\expl(P,t)}\subset\uU'$ denote the restriction of $\uU'$ to $\expl(P,t)\subset P'$,
i.e. the sequence of components of $\uU'$ containing a point in $\expl(P,t)$. Then by continuity
of $\expl$ we can find an adapted covering $\uU$ of $P$ and a neighbourhood $V$ of $t\in [0,1]$
such that $\expl$ maps the entire product neighbourhood
$\fU(P,\uU)\times V$ inside $\fU(\expl(P,t),\uU'_{\expl(P,t)})\subset\Ran(\fT)$.

Use Notation \ref{nota:uUcovering}: up to shrinking the components
of $\uU$, we may assume that $U_i\subset U'_i$ whenever $z_i$ belongs to $P''\subset P\cap P'$, that is
$U_{r+1}\cup\dots\cup U_k\subseteq \uU'$.
We claim that $\fU(\fc,\uU)\times V$ is mapped by $\expl_*$ inside $\fU(\fc',\uU')$; the rest of the proof
is devoted to this claim.

We fix $\check\fc=(\check P,\check\psi,\check\phi)\in\fU(\fc,\uU)$ and $\check t\in V$, and let $\check\fc'=(\check P',\check\psi',\check\phi')=\expl_*(\check\fc,\check t)$.
First, we prove that $\check P'\subset\uU'$. We can partition
$\check P$ into subsets $\check P_1,\dots,\check P_k$, with $\check P_i\subset U_i$.
Note that for all $1\leq i\leq r$ and for all $\check z\in \check P_i$, the point
$\check z$ is inert for $\check \fc$: indeed $\check z\in U_i\subset\C\setminus\cY$
because $\uU$ is an adapted covering of $P$, hence $\check z\in\cX\setminus\cY$;
moreover $\psi$ sends each element of $\fQ(\check P,\check z)$ to
a factor of $\one$ in the augmented PMQ $\Q$, i.e. to $\one$.
It follows that $\check P'$ is a subset of $\check P_{r+1}\cup\dots\cup\check P_k\cup\expl(\check P,\check t)$,
and the latter set is contained in $\uU'$ by our choice of $\uU$.

Second, we prove that every component of $\uU'$ intersects $\check P'$
in at least one point. Let $U'_i$ be the component of $\uU'$ containing the point $z'_i\in P'$,
for some $1\leq i\leq k'$. There are several cases to consider.
\begin{itemize}
 \item If $z'_i\in \expl(P,t)\subset P'$, then there is $z_j\in P$ with $z'_i\in\expl(z_j,t)$,
 and there is $\check z\in \check P\cap U_j$.
 We can restrict $\uU'$ to an adapted covering $\uU'_{\expl(z_j,t)}$ of $\expl(z_j,t)\subset P'$,
 by selecting the relevant connected components;
 then our hypothesis on $\uU$ implies that $\expl$ sends $(U_j\cap \cX)\times V$ inside
 $\fU(\expl(z_j,t),\uU'_{\expl(z_j,t)})$, where we use Notation \ref{nota:uUcovering}.
 In particular $\expl(\check z,\check t)\in\fU(\expl(z_j,t),\uU'_{\expl(z_j,t)})$,
 implying that $\expl(\check z,\check t)\subset\check P'$ contains a point lying in $U'_i$.
 \item If $z'_i\in\cY$, then $z'_i\in P$ and $z'_i\in\expl(z'_i,t)$, so we fall in the previous case.
 \item If $z'_i\in\cX\setminus\cY$ and $z'_i\in P'\setminus \expl(P,t)$, then $z'_i$ must be a non-inert
 point of $P$ for $\fc$. Since $\Q$ is augmented, there is a point $\check z\in \check P\cap U'_i$
 which is non-inert for $\check\fc$. This point $\check z$ also belongs to $\check P'\cap U'_i$.
\end{itemize}
The previous discussion shows that
$\fU(P,\uU)\times V$ is mapped by $\expl$ inside $\fU(P',\uU')$.

Let now $\gamma\subset\C\setminus \uU'$ be a simple loop spinning clockwise around a component $U'_i$;
up to slightly perturbing $\gamma$ we may assume that it is also disjoint from the finite set $P$,
i.e. $\gamma$ avoids the points $z_1,\dots,z_r$. 
Then we have the following chain of equalities
\[
 \phi'([\gamma])=\phi([\gamma])=\check\phi([\gamma])=\check\phi'([\gamma]).
\]
If $\gamma$ represents a class in $\fQ(P')$, we also have the following chain of equalities
\[
 \psi'([\gamma])=\psiext([\gamma])=\check\psi^{\mathrm{ext}}([\gamma])=(\check\psi')^{\mathrm{ext}}([\gamma]),
\]
where we use that $[\gamma]$ represents elements of
$\fQext(P)_{\psi}$, $\fQext(\check P)_{\check\psi}$ and $\fQext(\check P')_{\check\psi'}$,
and refer to Definition \ref{defn:fQextPpsi} and Proposition \ref{prop:fQextPpsi}.
This shows that $\check\fc'\in\fU(\fc',\uU')$.

Suppose now that $\expl$ is standard and let $\fc\in\HurTQG$;
then $\expl_*(\fc,0)$ is computed by first
deleting all inert points of $\fc$ via $\rho$, and then by readding these inert points
through the external product of $\rho(\fc)$ with $P:=\epsilon(\fc)=\expl(P,0)\in\Ran(\fT)$.

\subsection{Proof of Proposition \ref{prop:facerestrictionh}}
\label{subsec:facerestrictionh}
We introduce some notation for barycentres of faces of simplices.
Recall Notation \ref{nota:barycentresimplex}:
for $p\geq 1$ and $0\leq i\leq p$ we denote by
$\ubar^{p-1,i}=\pa{\frac 1p,\frac 2p,\dots,\frac{i-1}p,\frac ip,\frac ip,\frac{i+1}p,\dots,\frac{p-1}p}\in\Delta^p$ the barycentre of the face $d_i\Delta^p$.
\begin{lem}
 \label{lem:facerestrictionh}
Recall Definition \ref{defn:fcua}. The map $e^{\ua}$ sends
$(\ubar^{p-1,i},\ubar^q)\in\Delta^p\times\Delta^q$ to $\fc_{\ua'}$,
where $\ua'=d^{\hor}_i(\ua)$.
\end{lem}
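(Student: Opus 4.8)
The plan is to unwind the definition of $e^{\ua}$ at the point $(\ubar^{p-1,i},\ubar^q)$ and identify the resulting lax self-morphism of $(\cR,\emptyset)$ explicitly. By Definition \ref{defn:eua} we have $e^{\ua}(\ubar^{p-1,i},\ubar^q)=\cH^{p,q}_*\pa{\fc_{\ua};\ubar^p,\ubar^{p-1,i};\ubar^q,\ubar^q}$, so I would first analyse the map $\xi:=\cH^{p,q}(-;\ubar^p,\ubar^{p-1,i};\ubar^q,\ubar^q)\colon\C\to\C$. By Definition \ref{defn:cHpq} its imaginary part is $\cH^q(-;\ubar^q,\ubar^q)=\Id_\R$ (the diagonal case noted after Definition \ref{defn:cHpq}), so $\xi$ moves only real parts. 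For the real part $\cH^p(-;\ubar^p,\ubar^{p-1,i})$ I read off the coordinates of $\ubar^{p-1,i}$: it fixes $0$ and $1$, sends $\tfrac{i'}{p+1}\mapsto\tfrac{i'}{p}$ for $0\le i'\le i$, sends $\tfrac{i+1}{p+1}\mapsto\tfrac ip$, and sends $\tfrac{i'}{p+1}\mapsto\tfrac{i'-1}{p}$ for $i+2\le i'\le p+1$; the boundary cases $i=0,p$ are read off the corresponding barycentres $\ubar^{p-1,0}=(0,\tfrac1p,\dots,\tfrac{p-1}p)$ and $\ubar^{p-1,p}=(\tfrac1p,\dots,\tfrac{p-1}p,1)$, which I verify equal $d_i(\ubar^{p-1})$. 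Consequently $\xi$ sends the grid point $z^{p,q}_{i',j'}$ to $z^{p-1,q}_{c(i'),j'}$, where $c\colon\set{0,\dots,p+1}\to\set{0,\dots,p}$ is the order-preserving surjection collapsing $\set{i,i+1}$ onto $i$.

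Next I would identify the support and the monodromy of $\fc':=\xi_*(\fc_{\ua})$ and match them with $\fc_{\ua'}$ for $\ua'=d^{\hor}_i\ua$. For the support, $\xi(P_{\ua})=\set{z^{p-1,q}_{c(i'),j'}\colon (i',j')\in I(\ua)}$; here I use that $\hQ$ is augmented, so a product $a_{i,n}a_{i+1,n}$ is trivial if and only if both factors are (any factor of $\one$ equals $\one$, as in the proof of Lemma \ref{lem:Huraugmentedclosed}). This shows $z^{p-1,q}_{i,n}$ lies in $\xi(P_{\ua})$ exactly when $a_{i,n}a_{i+1,n}\neq\one$, so $\xi(P_{\ua})=P_{\ua'}$ by the formula for $d^{\hor}_i$ in \cite[Lemma 6.8]{Bianchi:Hur1}, and in particular $\fc'\in\Hur(\cR;\hQ_+)$. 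For the monodromy I use that $\xi$ is a lax morphism, so by the functoriality of Subsection \ref{subsec:LNCfunctoriality} (Lemma \ref{lem:xirestrictsfQLNC} and Proposition \ref{prop:fQextPpsi}) the monodromy of $\fc'$ is $\psiext_{\ua}\circ\xi^*$ restricted to $\fQ(P')$. I then compute $\xi^*$ on each standard generator $\gen^{P',\std}_{m,n}$ of $\fG(P')$: for a column $m\neq i$ the fibre $\xi^{-1}(z^{p-1,q}_{m,n})$ meets $P_{\ua}$ in the single point $z^{p,q}_{c^{-1}(m),n}$, so $\xi^*(\gen^{P',\std}_{m,n})$ is the corresponding standard generator and $\psi'(\gen^{P',\std}_{m,n})=a_{m,n}$ for $m<i$ (resp. $a_{m+1,n}$ for $m>i$); for $m=i$ the fibre meets $P_{\ua}$ in the two points $z^{p,q}_{i,n},z^{p,q}_{i+1,n}$, so $\xi^*(\gen^{P',\std}_{i,n})$ is the ordered product $\gen^{P_{\ua},\std}_{i,n}\gen^{P_{\ua},\std}_{i+1,n}\in\fQext(P_{\ua})$ and $\psiext_{\ua}$ carries it to $a_{i,n}a_{i+1,n}$. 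Comparing with \cite[Lemma 6.8]{Bianchi:Hur1} gives $\psi'=\psi_{\ua'}$, whence $\fc'=\fc_{\ua'}$.

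The main obstacle I expect is the monodromy computation in the merged column $m=i$: I must show that the pullback of the small standard loop around $z^{p-1,q}_{i,n}$ is \emph{precisely} the ordered product of the two standard generators of $\fG(P_{\ua})$ in the order dictated by the standard generating set, and not merely some product of conjugates. This requires choosing the arcs defining the standard generating set of $\fG(P')$ so that, under $\xi$, they pull back into the strip $\bS_{x_{i-1},x_{i+1}}$ compatibly with the arcs defining the standard generators of $\fG(P_{\ua})$, so that the free-homotopy class of the preimage loop is represented by a simple loop enclosing exactly $z^{p,q}_{i,n}$ and $z^{p,q}_{i+1,n}$ in the correct cyclic order. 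Here I would rely on the fact that a lax morphism satisfies property (4) of Definition \ref{defn:mapnicecouples} (contractible fibres), hence is a local homotopy equivalence, to control $\xi$ on a neighbourhood of the collapsed segment and to pin down orientations via property (2). The boundary cases $i=0$ and $i=p$, where the merged column lies on $\del\cR$, are then handled by exactly the same computation.
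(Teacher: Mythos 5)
Your setup — identifying $\xi:=\cH^{p,q}(-;\ubar^p,\ubar^{p-1,i};\ubar^q,\ubar^q)$, checking that it only collapses the real coordinate, and showing $\xi(P_{\ua})=P_{\ua'}$ (your explicit appeal to augmentation of $\hQ$ here is a legitimate point that the paper leaves implicit) — matches the paper's proof. The gap is exactly at the step you flag as ``the main obstacle'', and you resolve it in the wrong direction. The pullback of the standard generator of the merged column is \emph{not} the plain ordered product $\gen^{\ua}_{i,n}\cdot\gen^{\ua}_{i+1,n}$; it is
\[
\xi^*\pa{\gen^{\ua'}_{i,n}}=\pa{\gen^{\ua}_{i,n}}^{c\gen^{\ua}_{i+1,n}}\cdot\gen^{\ua}_{i+1,n},
\]
a product whose first factor is conjugated by the partial column product $c\gen^{\ua}_{i+1,n}=\gen^{\ua}_{i+1,0}\cdots\gen^{\ua}_{i+1,n-1}$ of Notation \ref{nota:cgenij}. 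Applying $\psiext_{\ua}$ therefore yields $a_{i,n}^{(a_{i+1,0}\cdots a_{i+1,n-1})}\cdot a_{i+1,n}$ rather than $a_{i,n}a_{i+1,n}$, and it is the former that agrees with the horizontal face map $d^{\hor}_i$ of \cite[Lemma 6.8]{Bianchi:Hur1}. The plain entrywise product is what occurs for the \emph{vertical} face (Proposition \ref{prop:facerestrictionv}); the asymmetry between the two cases is the essential feature of the Fox--Neuwirth-type combinatorics, and for a noncommutative $\Q$ your formula would simply fail to reproduce $d^{\hor}_i\ua$.

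Geometrically, the problem is that the arc $\arc^{P',\std}_{i,n}$ lies in the strip to the right of the merged column, so its preimage approaches the horizontal segment $\xi^{-1}(z'_{i,n})$ joining $z^{\ua}_{i,n}$ to $z^{\ua}_{i+1,n}$ from the right; a based simple loop around that segment must be slid past the points $z^{\ua}_{i+1,0},\dots,z^{\ua}_{i+1,n-1}$ before it can be written in the standard generators, and this sliding is where the conjugation enters. Controlling $\xi$ via contractibility of fibres and orientations, as you propose, only pins down the free-homotopy class of the preimage loop, not its based class, so it cannot rule out the conjugates. The paper circumvents the based-homotopy bookkeeping by first computing $\xi^*$ on the partial column products $c\gen^{\ua'}_{i,j}$, whose pullbacks are unambiguous simple loops around unions of horizontal segments (one finds $\xi^*(c\gen^{\ua'}_{i,j})=c\gen^{\ua}_{i,j}\cdot c\gen^{\ua}_{i+1,j}$), and then recovering each generator algebraically from $\gen^{\ua'}_{i,j}=(c\gen^{\ua'}_{i,j})^{-1}\cdot c\gen^{\ua'}_{i,j+1}$. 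I recommend adopting that device: it turns the step you were worried about into a short free-group computation and produces the correct conjugated formula automatically.
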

Before proving Lemma \ref{lem:facerestrictionh}, we will argue how Proposition \ref{prop:facerestrictionh}
follows from it. Let $(\us,\ut)\in d^{\hor}_i(\Delta^p\times\Delta^q)\subset\Delta^p\times\Delta^q$ (see Notation
\ref{nota:facebisimplex}). Then the pair $(\ubar^{p-1,i},\ubar^q;\us,\ut)$ belongs to $\bDelta^{p,p}\times\bDelta^{q,q}$,
and we can factor $\cH^{p,q}_*(-;\ubar^p,\ubar^q;\us,\ut)$ as a composition
$\cH^{p,q}_*(-;\ubar^{p-1,i},\ubar^q;\us,\ut)\circ\cH^{p,q}_*(-;\ubar^p,\ubar^q;\ubar^{p-1,i},\ubar^q)$ by Lemma \ref{lem:bDeltacompositionC}.
Assuming Lemma \ref{lem:facerestrictionh}, we have that $\cH^{p,q}_*(-;\ubar^p,\ubar^q;\ubar^{p-1,i},\ubar^q)$
sends $\fc_{\ua}\mapsto\fc_{\ua'}$; then by definition the second map $\cH^{p,q}_*(-;\ubar^{p-1,i},\ubar^q;\us,\ut)$
sends $\fc_{\ua'}$ to $e^{\ua'}(\us,\ut)$ (regarding $(\us,\ut)$ as a point in $\Delta^{p-1}\times\Delta^q$),
and the composition $\cH^{p,q}_*(-;\ubar^p,\ubar^q;\us,\ut)$ sends $\fc_{\ua}\mapsto e^{\ua}(\us,\ut)$.

The rest of the subsection is devoted to the proof of Lemma \ref{lem:facerestrictionh}.
By definition,
$e^{\ua}(\ubar^{p-1,i},\ubar^q)$ is the image of $\fc_{\ua}$ under
$\cH^{p,q}_*(-;\ubar^p,\ubar^{p-1,i};\ubar^q,\ubar^q)$; in the following
we abbreviate by $\xi\colon\C\to \C$ the map $\cH^{p,q}(-;\ubar^p,\ubar^{p-1,i};\ubar^q,\ubar^q)$.

Recall Notation \ref{nota:Ppq}: the map $\xi$ sends $z^{p,q}_{i',j}\mapsto z^{p-1,q}_{i',j}$ for $0\leq i'\leq i$,
and $z^{p,q}_{i',j}\mapsto z^{p-1,q}_{i'-1,j}$ for $i+1\leq i'\leq p+1$;
it follows that the image of $P_{\ua}$ along $\xi$
is the set $P_{\ua'}$. This shows that $e^{\ua}(\ubar^{p-1,i},\ubar^q)$ is a configuration supported
on $P_{\ua'}$.

Recall Notation \ref{nota:genstdua}, and consider the standard generating sets $(\gen^{\ua'}_{i',j})_{(i',j)\in I(\ua')}$
of $\fG(P_{\ua'})$ and $(\gen^{\ua}_{(i',j)})_{(i',j)\in I(\ua)}$ of $\fG(P_{\ua})$,
and the homomorphism $\xi^*\colon \fG(P_{\ua'})\to\fG(P_{\ua})$ from Subsection
\ref{subsec:functorialityHurNC}:
for all $0\leq i'\leq p$ and $0\leq j\leq q+2$,
the homomorphism $\xi^*$ maps the product of standard generators $c\gen^{\ua'}_{i',j}$ to
\[
 \xi^*(c\gen^{\ua'}_{i',j})\mapsto \left\{
 \begin{array}{ll}
  c\gen^{\ua}_{i',j} &\mbox{if }i'\leq i-1;\\[6pt]
  c\gen^{\ua}_{i,j} \cdot c\gen^{\ua}_{i+1,j} & \mbox{if }i'=i;\\[6pt]
  c\gen^{\ua}_{i'+1,j} & \mbox{if }i'\geq i+1.
 \end{array}
 \right.
\]
This follows from the description of $c\gen^{\ua'}_{i',j}$ as the class of a simple loop supported on $\bS_{x_{i'-1},x_{i'+1}}\cap\set{\Im\leq y_j}$ and spinning clockwise around the points $z^{\ua'}_{i',0},\dots,z^{\ua'}_{i',j-1}$. For $i'=i$
we have in particular that $\xi^*(c\gen^{\ua'}_{i',j})$ is represented by a loop spinning around the 
horizontal segments joining $z^{\ua}_{i,j'}$ with $z^{\ua}_{i+1,j'}$, for $0\leq j'\leq j$: these horizontal
segments are the preimages along $\xi$ of the points $z^{\ua'}_{i,j'}$ for $0\leq j'\leq j$.

We can now use that $\xi^*$ is a group homomorphism and compute $\xi^*(\gen^{\ua}_{i',j})$
for all $(i',j)\in I(\ua')$. In particular, for $i'=i$ we obtain
\[
\begin{split}
 \xi^*(\gen^{\ua'}_{i,j}) & = \xi^*\pa{ \pa{c\gen^{\ua'}_{i,j}}^{-1}\cdot c\gen^{\ua'}_{i,j+1}}
 \ =\  \pa{c\gen^{\ua}_{i,j}\cdot c\gen^{\ua}_{i+1,j} }^{-1}\cdot c\gen^{\ua}_{i,j+1}\cdot c\gen^{\ua}_{i+1,j+1}\\
 & = \pa{c\gen^{\ua}_{i+1,j}}^{-1}\cdot \pa{c\gen^{\ua}_{i,j}}^{-1}\cdot c\gen^{\ua}_{i,j+1}\cdot c\gen^{\ua}_{i+1,j+1}
 \ =\  \pa{c\gen^{\ua}_{i+1,j}}^{-1}\cdot \gen^{\ua}_{i,j}\cdot c\gen^{\ua}_{i+1,j+1}\\
 & = \pa{c\gen^{\ua}_{i+1,j}}^{-1}\cdot \gen^{\ua}_{i,j}\cdot c\gen^{\ua}_{i+1,j}\cdot \gen^{\ua}_{i+1,j}
  \ =\  \pa{\gen^{\ua}_{i,j}}^{c\gen^{\ua}_{i+1,j}}\cdot \gen^{\ua}_{i+1,j}.
\end{split}
\]
Similarly, for $i'<i$ we have $\xi^*(\gen^{\ua'}_{i',j})=\gen^{\ua}_{i',j}$, and for $i'>i$
we have $\xi^*(\gen^{\ua'}_{i',j})=\gen^{\ua}_{i'+1,j}$.

Applying $\psi_{\ua}$ to $\xi^*(\gen^{\ua'}_{i',j})$,
we obtain the equality $a'_{i',j}=\psi_{\ua'}(\gen^{\ua'}_{i',j})$,
which together with \cite[Lemma 6.8]{Bianchi:Hur1} yields $\ua'=d^{\hor}_i\ua$.

\subsection{Proof of Proposition \ref{prop:facerestrictionv}}
\label{subsec:facerestrictionv}
Similarlz as for Proposition \ref{prop:facerestrictionh},
the statement of Proposition \ref{prop:facerestrictionv} follows directly from the following lemma.
\begin{lem}
 \label{lem:facerestrictionv}
The map $e^{\ua}$ sends
$(\ubar^p,\ubar^{q-1,j})\mapsto\fc_{\ua'}$,
where $\ua'=d^{\ver}_j(\ua)$.
\end{lem}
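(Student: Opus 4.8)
The plan is to mirror the proof of Lemma \ref{lem:facerestrictionh}, exploiting the fact that collapsing two adjacent vertical levels is actually simpler than collapsing two horizontal strips, because no conjugations appear. Abbreviate $\xi=\cH^{p,q}(-;\ubar^p,\ubar^p;\ubar^q,\ubar^{q-1,j})\colon\C\to\C$, where $\ubar^{q-1,j}\in\Delta^q$ is the barycentre of the face $d_j\Delta^q$. By Definition \ref{defn:eua}, the configuration $e^{\ua}(\ubar^p,\ubar^{q-1,j})$ is the image of $\fc_{\ua}$ under $\cH^{p,q}_*(-;\ubar^p,\ubar^p;\ubar^q,\ubar^{q-1,j})$. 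Since the horizontal data of $\xi$ is the diagonal pair $(\ubar^p,\ubar^p)$, the map $\cH^p(-;\ubar^p,\ubar^p)$ is the identity of $\R$, so $\xi$ is the identity on real parts and on imaginary parts is the collapse $\cH^q(-;\ubar^q,\ubar^{q-1,j})$, which identifies the $j$-th and $(j+1)$-st levels. Concretely $\xi(z^{p,q}_{i,j'})=z^{p,q-1}_{i,j'}$ for $0\le j'\le j$ and $\xi(z^{p,q}_{i,j'})=z^{p,q-1}_{i,j'-1}$ for $j+1\le j'\le q+1$, so that $\xi(P_{\ua})=P_{\ua'}$ and $e^{\ua}(\ubar^p,\ubar^{q-1,j})$ is supported on $P_{\ua'}$, as required.

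Next I would compute the homomorphism $\xi^*\colon\fG(P_{\ua'})\to\fG(P_{\ua})$ of Subsection \ref{subsec:functorialityHurNC} on the products of standard generators. Using the geometric description of $c\gen^{\ua'}_{i,j'}$ as the class of a simple loop supported in $\bS_{x_{i-1},x_{i+1}}\cap\set{z\in\C\,|\,\Im(z)<y_{j'}}$ and spinning clockwise around $z^{\ua'}_{i,0},\dots,z^{\ua'}_{i,j'-1}$, one reads off from the preimages under $\xi$ that
\[
 \xi^*\pa{c\gen^{\ua'}_{i,j'}}=\begin{cases} c\gen^{\ua}_{i,j'} & 0\le j'\le j,\\ c\gen^{\ua}_{i,j'+1} & j+1\le j'\le q+1.\end{cases}
\]
In contrast with the horizontal case, each $\xi^*(c\gen^{\ua'}_{i,j'})$ is a \emph{single} column product rather than a product of two, precisely because the two merged levels lie in the same vertical strip $\bS_{x_i,x_{i+1}}$; this is the feature that makes the conjugations of Lemma \ref{lem:facerestrictionh} disappear. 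Writing $\gen^{\ua'}_{i,j'}=\pa{c\gen^{\ua'}_{i,j'}}^{-1}c\gen^{\ua'}_{i,j'+1}$ and substituting, I obtain $\xi^*(\gen^{\ua'}_{i,j'})=\gen^{\ua}_{i,j'}$ for $j'<j$, $\xi^*(\gen^{\ua'}_{i,j'})=\gen^{\ua}_{i,j'+1}$ for $j'>j$, and the only interesting value
\[
 \xi^*\pa{\gen^{\ua'}_{i,j}}=\pa{c\gen^{\ua}_{i,j}}^{-1}c\gen^{\ua}_{i,j+2}=\gen^{\ua}_{i,j}\,\gen^{\ua}_{i,j+1}.
\]

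Finally, applying $\psiext_{\ua}$ (legitimate since $\hQ$ is complete, whence $\fQext(P_{\ua})=\fQext(P_{\ua})_{\psi_{\ua}}$) to these identities gives $a'_{i,j'}=\psi_{\ua'}(\gen^{\ua'}_{i,j'})$ equal to $a_{i,j'}$ for $j'<j$, to $a_{i,j'+1}$ for $j'>j$, and $a'_{i,j}=a_{i,j}\cdot a_{i,j+1}$; comparing with the vertical face map formula of \cite[Lemma 6.8]{Bianchi:Hur1} this is exactly $\ua'=d^{\ver}_j\ua$, so $e^{\ua}(\ubar^p,\ubar^{q-1,j})=\fc_{\ua'}$, proving the lemma. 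The main obstacle is the middle step: one must pin down the geometric pullback of $c\gen^{\ua'}_{i,j'}$ carefully enough to be certain that no conjugation enters at the collapsed level $j$, i.e. that $\xi^*(\gen^{\ua'}_{i,j})$ really is the \emph{unconjugated} product $\gen^{\ua}_{i,j}\gen^{\ua}_{i,j+1}$ matching the normalisation of $d^{\ver}_j$ rather than a conjugate of it; the boundary conventions $x_{-1}=-\infty$ and $y_{q+2}=\infty$ and the degenerate-segment cases of $\cH^q$ also require a quick verification.
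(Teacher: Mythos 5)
Your proof is correct and follows essentially the same route as the paper's: the same collapse map $\xi=\cH^{p,q}(-;\ubar^p,\ubar^p;\ubar^q,\ubar^{q-1,j})$, the same computation of $\xi^*$ on the column products $c\gen^{\ua'}_{i,j'}$, the same deduction $\xi^*(\gen^{\ua'}_{i,j})=\gen^{\ua}_{i,j}\gen^{\ua}_{i,j+1}$ with no conjugation, and the same comparison with the vertical face map formula. Your remark that one should apply $\psiext_{\ua}$ rather than $\psi_{\ua}$ to the product $\gen^{\ua}_{i,j}\gen^{\ua}_{i,j+1}\in\fQext(P_{\ua})$ is, if anything, slightly more careful than the paper's phrasing.
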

The rest of the subsection is devoted to the proof of Lemma \ref{lem:facerestrictionv}.
By definition,
$e^{\ua}(\ubar^p,\ubar^{q-1,j})$ is the image of $\fc_{\ua}$ under the map
$\xi_*$, where for the rest of the proof we abbreviate by $\xi\colon\C\to\C$ the map
$\cH^{p,q}(-;\ubar^p,\ubar^p;\ubar^q,\ubar^{q-1,j})$.

The map $\xi$ sends $z^{p,q}_{i,j'}\mapsto z^{p,q-1}_{i,j'}$ for $0\leq j'\leq j$,
and $z^{p,q}_{i,j'}\mapsto z^{p,q-1}_{i,j'-1}$ for $j+1\leq j'\leq q+1$;
it follows that the image of $P_{\ua}$ along $\xi$
is the set $P_{\ua'}$, and as in the horizontal case we obtain
that $e^{\ua}(\ubar^p,\ubar^{q-1,j})$ is a configuration supported
on $P_{\ua'}$.

Consider now the two standard generating sets $(\gen^{\ua'}_{i,j'})_{(i,j')\in I(\ua')}$
of $\fG(P_{\ua'})$, and $(\gen^{\ua}_{(i,j')})_{(i,j')\in I(\ua)}$ of $\fG(P_{\ua})$,
and consider the homomorphism $\xi^*\colon \fG(P_{\ua'})\to\fG(P_{\ua})$.

The key observation is that, for all $0\leq i\leq p+1$ and $0\leq j'\leq q+1$,
the homomorphism $\xi^*$ maps the product of standard generators $c\gen^{\ua'}_{i,j'}$ to
\[
 \xi^*(c\gen^{\ua'}_{i,j'})\mapsto \left\{
 \begin{array}{ll}
  c\gen^{\ua}_{i,j'} &\mbox{if }j'\leq j;\\[6pt]
  c\gen^{\ua}_{i,j'+1} & \mbox{if }j'\geq j+1.
 \end{array}
 \right.
\]
This follows from the description of $c\gen^{\ua'}_{i,j'}$  as the class of a simple loop
supported on $\bS_{x_{i-1},x_{i+1}}\cap\set{\Im\leq y_{j'}}$
and spinning clockwise around the points $z^{\ua'}_{i,0},\dots,z^{\ua'}_{i,j'-1}$.
For $j'\geq j+1$, in particular, $\xi^*(c\gen^{\ua'}_{i,j'})$ is represented by a loop spinning around the 
points $z^{\ua}_{i,0},\dots,z^{\ua}_{i,j-1},z^{\ua}_{i,j+2},\dots,z^{\ua}_{i,j'-1}$
and around the vertical segment joining $z^{\ua}_{i,j}$ with $z^{\ua}_{i,j+1}$:
note that this segment is in the preimage along $\xi$ of the point $z^{\ua'}_{i,j}$.

We can now use that $\xi^*$ is a group homomorphism and compute $\xi^*(\gen^{\ua}_{i,j'})$
for all $(i,j')\in I(\ua')$. In particular, for $j'=j$ we obtain
\[
 \xi^*(\gen^{\ua'}_{i,j}) = \xi^*\pa{ \pa{c\gen^{\ua'}_{i,j}}^{-1}\cdot c\gen^{\ua'}_{i,j+1}} = \pa{c\gen^{\ua}_{i,j}}^{-1}\cdot c\gen^{\ua}_{i,j+2}
 = \gen^{\ua}_{i,j}\cdot\gen^{\ua}_{i,j+1}.
\]
Similarly, for $j'<j$ we have $\xi^*(\gen^{\ua'}_{i,j'})=\gen^{\ua}_{i,j'}$, and for $j'>j$
we have $\xi^*(\gen^{\ua'}_{i,j'})=\gen^{\ua}_{i,j'+1}$.
Applying $\psi_{\ua}$ to $\xi^*(\gen^{\ua'}_{i,j'})$,
we obtain the equality $a'_{i,j'}=\psi_{\ua'}(\gen^{\ua'}_{i,j'})$,
which together with \cite[Lemma 6.8]{Bianchi:Hur1} yields $\ua'=d^{\hor}_i\ua$.

\bibliography{Bibliography2}{}

\begin{thebibliography}{EVW16}

\bibitem[BD04]{BDChiral}
Alexander Beilinson and Vladimir Drinfeld.
\newblock {\em Chiral Algebras}, volume~51 of {\em Colloquium Publications}.
\newblock AMS, 2004.

\bibitem[Bia20]{BianchiPhD}
Andrea Bianchi.
\newblock {\em Moduli spaces of branched coverings of the plane}.
\newblock PhD thesis, Universit\"{a}t Bonn, 2020.
\newblock \url{https://bonndoc.ulb.uni-bonn.de/xmlui/handle/20.500.11811/8434}.

\bibitem[Bia21a]{Bianchi:Hur1}
Andrea Bianchi.
\newblock { P}artially multiplicative quandles and simplicial {H}urwitz spaces,
  2021.
\newblock \href{https://arxiv.org/abs/2106.09425}{arXiv:2106.09425}.

\bibitem[Bia21b]{Bianchi:Hur3}
Andrea Bianchi.
\newblock Deloopings of {H}urwitz spaces, 2021.
\newblock \href{https://arxiv.org/abs/2107.13081}{arXiv:2107.13081}.

\bibitem[Bia21c]{Bianchi:Hur4}
Andrea Bianchi.
\newblock Moduli spaces of {R}iemann surfaces as {H}urwitz spaces, 2021.
\newblock To appear in Advances in Mathematics.
  \href{https://arxiv.org/abs/2112.10864}{arXiv:2112.10864}.

\bibitem[EVW16]{EVW:homstabhur}
Jordan Ellenberg, Akshay Venkatesh, and Craig Westerland.
\newblock Homological stability for {H}urwitz spaces and the {C}ohen-{L}enstra
  conjecture over function fields.
\newblock {\em Annals of Mathematics}, 183(3):729--786, 2016.

\bibitem[Lur17]{LurieHA}
Jakob Lurie.
\newblock Higher algebra, 2017.
\newblock Available at
  \url{http://people.math.harvard.edu/~lurie/papers/HA.pdf}.

\bibitem[MW07]{MadsenWeiss}
Ib~Madsen and Michael Weiss.
\newblock The stable moduli space of {R}iemann surfaces: {M}umford{'}s
  conjecture.
\newblock {\em Annals of Mathematics}, 165:843–941, 2007.

\bibitem[Sma57]{Smale57}
Stephen Smale.
\newblock A {V}ietoris mapping theorem for homotopy.
\newblock {\em Proceedings of the American Mathematical Society}, (8):604--610,
  1957.

\end{thebibliography}
\bibliographystyle{alpha}

\end{document}